\documentclass[11pt,reqno]{amsart}
\usepackage{graphicx,enumerate,amssymb,bbold}
\usepackage[notrig]{physics}
\usepackage[T1]{fontenc}
\usepackage[applemac]{inputenc}
\usepackage[font=footnotesize,labelfont=small,position=top]{subcaption}
\usepackage[export]{adjustbox}
\usepackage[ruled,vlined,linesnumbered]{algorithm2e}
\usepackage{booktabs,multirow}
\usepackage{pgfplots}
\usepackage{color}
\usepackage{colortbl}
\usepackage{tikz}
\usepackage{siunitx}
\usepackage{mathtools}
\usepackage{float}
\usepackage{bm}
\usepackage{tabularx}
\usepackage{pifont}
\usepackage{url}
\usetikzlibrary{spy,backgrounds}
%%%%%%%%%%
%\usepackage[table]{xcolor} % For table coloring
%\usepackage{refcheck} %package showing unused references
%\usepackage[left=2.5cm,right=2.5cm,top=2.5cm,bottom=2.5cm,includeheadfoot]{geometry}
%\geometry{margin=1.0in,a4paper}
\usepackage[colorlinks=true, pdfstartview=FitV, linkcolor=blue, 
citecolor=blue, urlcolor=blue]{hyperref}
\definecolor{Mea_Deep_Color}{rgb}{0.3, 0.35, 0.45}
\definecolor{Mea_Cov_Color}{rgb}{0.5, 0.58, 0.7}
%\definecolor{myblue}{RGB}{95,145,166}
\pgfplotsset{width=7cm,compat=1.13}
\usepackage[left=2.5cm,right=2.5cm,top=2.5cm,bottom=2.5cm,includeheadfoot]{geometry}

\usepackage{siunitx}

% Environment for proof reference
\usepackage{cleveref}
\newenvironment{delayedproof}[1]
{\begin{proof}[\raisedtarget{#1}Proof of \Cref{#1}]}
	{\end{proof}}
\newcommand{\raisedtarget}[1]{%
	\raisebox{\fontcharht\font`P}[0pt][0pt]{\hypertarget{#1}{}}%
}
\newcommand{\proofref}[1]{\hyperlink{#1}{\Cref{#1}}}

%%%%%

\sisetup{separate-uncertainty,bracket-numbers=false}
\newcommand{\cmark}{\ding{51}}%
\newcommand{\xmark}{\ding{55}}%

\usepackage[colorlinks=true, pdfstartview=FitV, linkcolor=blue, 
            citecolor=blue, urlcolor=blue]{hyperref}

\newbox{\myorcidaffilbox}
\sbox{\myorcidaffilbox}{\large\includegraphics[height=1.7ex]{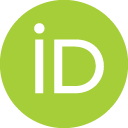}}

\newcommand{\orcidaffil}[1]{%
	\href{https://orcid.org/#1}{\usebox{\myorcidaffilbox}\,#1}}

\numberwithin{equation}{section}
\numberwithin{figure}{section}
\setcounter{tocdepth}{3}

%\renewcommand{\arraystretch}{1.2}

%\setcapindent{0pt}
%-> Kein Indent
%\setcapindent*{0pt}
%-> Zeilenumbruch nach "Figure" & kein Indent

%%%%%%%%

%%%%%%%%%%%%%%%%%%%%%%%%%%%%%%%%%
\theoremstyle{plain}
\newtheorem{theorem}{Theorem}[section]
\newtheorem{lemma}[theorem]{Lemma}
\newtheorem{proposition}[theorem]{Proposition}

\theoremstyle{definition}
\newtheorem{definition}[theorem]{Definition}
\newtheorem{remark}[theorem]{Remark}
%\newtheorem{example}[theorem]{Example}
%\newtheorem{beispiel}[definition]{Beispiel}
%\newtheorem{bemerkung}[definition]{Bemerkung}
%\newtheorem{note}{Note}[section]

%\newtheorem{theorem}{Theorem}
%\newtheorem{corollary}[theorem]{Corollary}
%\newtheorem{definition}{Definition}
%\newtheorem{lemma}{Lemma}
%\newtheorem{proposition}{Proposition}
%\newtheorem{exercise}{Exercise}
%\newtheorem{remark}{Remark}
%\newtheorem{example}{Example}
%\newtheorem{warning}{Warning}
%%%%%%%%%%%%%%%%%%%%%%%%%%%%%%%%%

\newcommand{\bitem}{\begin{itemize}}
\newcommand{\eitem}{\end{itemize}}
\newcommand{\mc}[1]{\mathcal{#1}}

\newcommand{\N}{\mathbb{N}}
\newcommand{\R}{\mathbb{R}}

\newcommand{\Z}{\mathbb{Z}}

\newcommand{\bpm}{\begin{pmatrix}}
\newcommand{\epm}{\end{pmatrix}}
\newcommand{\bvm}{\begin{vmatrix}}
\newcommand{\evm}{\end{vmatrix}}
\newcommand{\bsm}{\left(\begin{smallmatrix}}
\newcommand{\esm}{\end{smallmatrix}\right)}
\newcommand{\T}{\top}

\newcommand{\ol}[1]{\overline{#1}}

\newcommand{\wt}[1]{\widetilde{#1}}
\newcommand{\la}{\langle}
\newcommand{\ra}{\rangle}
\newcommand{\mrm}[1]{\mathrm{#1}}

\newcommand{\veps}{\varepsilon}

\newcommand{\w}{\omega}

\newcommand{\gdw}{\Leftrightarrow}

\newcommand{\eins}{\mathbb{1}}

\DeclareMathSymbol{\mydiv}{\mathbin}{symbols}{"04}

\DeclareMathOperator{\Diag}{Diag}

\DeclareMathOperator{\intr}{int}
\DeclareMathOperator{\rint}{rint}

\DeclareMathOperator{\argmin}{arg min}
\DeclareMathOperator{\argmax}{arg max}
\DeclareMathOperator{\supp}{supp}

\DeclareMathOperator{\vvec}{vec}

\DeclareMathOperator{\ggrad}{grad}

\DeclareMathOperator{\Exp}{Exp}

\DeclareMathOperator{\KL}{KL}

\usepackage{lipsum}                     % Dummytext
\usepackage{xargs}                      % Use more than one optional parameter 
%in a new commands
\usepackage{pgfplots}
\usepackage{changepage}
\usepackage[colorinlistoftodos,prependcaption,textsize=tiny]{todonotes}
\newcommandx{\unsure}[2][1=]{\todo[linecolor=red,backgroundcolor=red!25,bordercolor=red,#1]{#2}}
\newcommandx{\change}[2][1=]{\todo[linecolor=blue,backgroundcolor=blue!25,bordercolor=blue,#1]{#2}}
\newcommandx{\info}[2][1=]{\todo[linecolor=OliveGreen,backgroundcolor=OliveGreen!25,bordercolor=OliveGreen,#1]{#2}}
\newcommandx{\improvement}[2][1=]{\todo[linecolor=Plum,backgroundcolor=Plum!25,bordercolor=Plum,#1]{#2}}
\newcommandx{\thiswillnotshow}[2][1=]{\todo[disable,#1]{#2}}
\usetikzlibrary{positioning}

%\newcommand{\MonetaryLevel}{Monetary level}
%\newcommand{\RealLevel}{Real level}
%\newcommand{\Firms}{Firms}
%\newcommand{\Households}{Households}
%\newcommand{\Banks}{Banks}
%\newcommand{\Commodities}{Commodities}
%\newcommand{\LaborPower}{Labor power}
%\newcommand{\Wages}{Wages}
%\newcommand{\Consumption}{Consumption}
%\newcommand{\Credits}{Credits}
%\newcommand{\Withdrawals}{Withdrawals}
%\newcommand{\Deposits}{Deposits}
%\newcommand{\Repayments}{Repayments}
%\newcommand{\yslant}{0.5}
%\newcommand{\xslant}{-0.6}

% Circled Numbers

\usepackage{xcolor}
%\usepackage{color}
%\usepackage{tikz}
%\usetikzlibrary{shapes}
\definecolor{neigh_7}{rgb}{0.12156863, 0.46666667, 0.70588235}
\definecolor{neigh_9}{rgb}{0.1, 0.88039216, 0.90980392}
\definecolor{neigh_11}{rgb}{0.9.        , 0.49803922, 0.05490196}
\definecolor{neigh_13}{rgb}{0.3254902, 0.62745098, 0.17254902}
\usetikzlibrary{shapes}
\usepackage{color}
\usetikzlibrary{spy,arrows}
\newcommand{\markerone}{\raisebox{-2pt}{\tikz{\node[fill = 
			green,circle,minimum size=3pt,scale=0.7, label = 
			{[below = 
				0.5ex]}] () {};}}}
	
\newcommand{\markertwo}{\raisebox{-2pt}{\tikz{\node[fill = 
			red,circle,minimum size=1pt,scale=0.7, label = 
			{[below = 
				0.5ex]}] () {};}}}
\newcommand{\intreriornode}{\raisebox{1.5pt}{\tikz{\filldraw[radius = 
1pt,black] circle[];}}}

\newcommand{\boundarynode}{\raisebox{1.1pt}{\tikz{
			\filldraw[radius = 2pt,gray] circle[];
			\filldraw[radius = 1pt,black] circle[];}}}
\definecolor{Layer_1}{rgb}{0.12156863, 0.46666667, 0.70588235}
\definecolor{Layer_2}{rgb}{0.68235294, 0.78039216, 0.90980392}
\definecolor{Layer_3}{rgb}{1.        , 0.49803922, 0.05490196}
\definecolor{Layer_4}{rgb}{0.17254902, 0.62745098, 0.17254902}
\definecolor{Layer_5}{rgb}{0.59607843, 0.8745098 , 0.54117647}
\definecolor{Layer_6}{rgb}{0.83921569, 0.15294118, 0.15686275}
\definecolor{Layer_7}{rgb}{0.58039216, 0.40392157, 0.74117647}
\definecolor{Layer_8}{rgb}{0.77254902, 0.69019608, 0.83529412}
\definecolor{Layer_9}{rgb}{0.54901961, 0.3372549 , 0.29411765}
\definecolor{Layer_10}{rgb}{0.89019608, 0.46666667, 0.76078431}
\definecolor{Layer_11}{rgb}{0.96862745, 0.71372549, 0.82352941}
\definecolor{Layer_12}{rgb}{0.49803922, 0.49803922, 0.49803922}
\definecolor{Layer_13}{rgb}{0.7372549 , 0.74117647, 0.13333333}
\definecolor{Layer_14}{rgb}{0.85882353, 0.85882353, 0.55294118}
\definecolor{Stein_line}{rgb}{0,0.5,0,1}
\definecolor{Euc_line}{rgb}{1,0.3,0,0}
\definecolor{Riemann}{rgb}{0.58,0.08,0}

\newcommand{\markerfour}{\raisebox{-2pt}{\tikz{\node[fill = 
			Layer_4,circle,minimum size=3pt,scale=0.7, label = 
			{[below = 
				0.5ex]}] () {};}}}

\newcommand{\markersix}{\raisebox{-2pt}{\tikz{\node[fill = 
			Layer_10,circle,minimum size=3pt,scale=0.7, label = 
			{[below = 
				0.5ex]}] () {};}}}
\newcommand{\markerseven}{{\tikz{
		\filldraw[radius = 2pt,red] (0.2,0.5) circle[];
		\filldraw[radius = 1pt,black] 
		(0.2,0.5) circle[];}
		} }
\newcommand{\markereight}{\raisebox{1.5pt}{\tikz{
			\filldraw[radius = 1pt,black] 
			(0.2,0.5) circle[];}
} }

\newcommand{\markernine}{\raisebox{-2pt}{\tikz{\node[fill = 
			red,circle,minimum size=3pt,scale=0.7, label = 
			{[below = 
				0.5ex]}] () {};}}}
\newcommand{\markerten}{\raisebox{-2pt}{\tikz{\node[fill = 
			green,circle,minimum size=3pt,scale=0.7, label = 
			{[below = 
				0.5ex]}] () {};}}}
\newcommand{\markereleven}{\raisebox{-2pt}{\tikz{\node[fill = 
			blue,circle,minimum size=3pt,scale=0.7, label = 
			{[below = 
				0.5ex]}] () {};}}}
\newcommand{\alignedintertext}[1]{%
	\noalign{%
		\vskip\belowdisplayshortskip
		\vtop{\hsize=\linewidth#1\par
			\expandafter}%
		\expandafter\prevdepth\the\prevdepth
	}%
}

\usepackage{tikz}
\usetikzlibrary{backgrounds}
\usetikzlibrary{arrows,shapes}
\usetikzlibrary{tikzmark}
\usetikzlibrary{calc}

\usepackage{amsmath}
\usepackage{amsthm}
\usepackage{amssymb}
\usepackage{mathtools, nccmath}
\usepackage{wrapfig}
\usepackage{comment}

% To generate dummy text
\usepackage{blindtext}

%color
%\usepackage[dvipsnames]{xcolor}
% \usepackage{xcolor}

%\usepackage[pdftex]{graphicx}
\usepackage{graphicx}
% declare the path(s) for graphic files
%\graphicspath{{../Figures/}}

% extensions so you won't have to specify these with
% every instance of \includegraphics
% \DeclareGraphicsExtensions{.pdf,.jpeg,.png}

% for custom commands
\usepackage{xspace}

% table alignment
\usepackage{array}
\usepackage{ragged2e}
\newcolumntype{P}[1]{>{\RaggedRight\hspace{0pt}}p{#1}}
\newcolumntype{X}[1]{>{\RaggedRight\hspace*{0pt}}p{#1}}

% color box
\usepackage{tcolorbox}

% for tikz
\usepackage{tikz}
%\usetikzlibrary{trees}
\usetikzlibrary{arrows,shapes,positioning,shadows,trees,mindmap}
\usepackage[edges]{forest}
\usetikzlibrary{arrows.meta}
\colorlet{linecol}{black!75}
\usepackage{xkcdcolors} % xkcd colors

% for colorful equation
\usepackage{tikz}
\usetikzlibrary{backgrounds}
\usetikzlibrary{arrows,shapes}
\usetikzlibrary{tikzmark}
\usetikzlibrary{calc}
% Commands for Highlighting text -- non tikz method

%\newcommand{\highlight}[2]{\colorbox{#1!17}{$#2$}}

% my custom colors for shading
%\colorlet{mhpurple}{Plum!80}

% Commands for Highlighting text -- non tikz method

% Some math definitions

%%%%%%%%

\title[A Nonlocal Graph-PDE and Higher-Order 
Geometric Integration for Image Labeling]{A Nonlocal Graph-PDE and \\ Higher-Order Geometric Integration  for Image Labeling}
\author[D.~Sitenko, B.~Boll, C.~Schn\"{o}rr]{Dmitrij Sitenko, Bastian Boll, 
Christoph Schn\"{o}rr}
\address{IPA Group, Institute of Applied Mathematics, Heidelberg 
	University, Germany}
%\email{dmitrij.sitenko@iwr.uni-heidelberg.de}
%\address[B.~Boll]{Image and Pattern Analysis Group (IPA) and Heidelberg 
%Collaboratory for Image Processing (HCI), Heidelberg %
%	University, Germany}
%\email{bastian.boll@iwr.uni-heidelberg.de}
%\address[C.~Schn\"{o}rr]{Image and Pattern Analysis Group, Heidelberg 
%University, Germany} 
%\email{schnoerr@math.uni-heidelberg.de}
\urladdr{\url{https://ipa.math.uni-heidelberg.de}}
\date{} 

%\thanks{Support by the German Research Foundation (DFG) is gratefully acknowledged, grant SCHN457/11.}

%\keywords{compressed sensing, underdetermined systems of linear equations, sparsity, large deviation, tail bound, nonnegative least squares, algebraic reconstruction, TomoPIV}

\subjclass[2010]{34B45, 34C40, 62H35, 68U10, 68T05, 90C26, 91A22}
%%%%%%%%%%%%%%%%%%%%%

%%% BEGIN DOCUMENT
\begin{document}
\begin{abstract}
% !TEX root =  ../Major_Revision.tex
This paper introduces a novel nonlocal 
partial difference equation (G-PDE) for labeling metric data on graphs. The G-PDE is derived as nonlocal  reparametrization of the assignment flow approach that was introduced in 
\textit{J.~Math.~Imaging \& Vision} 58(2), 2017. Due to this parameterization, solving the G-PDE numerically is shown to be equivalent to computing the Riemannian gradient flow with respect to a nonconvex potential. We devise an entropy-regularized difference-of-convex-functions (DC) decomposition of this potential and show that the basic geometric Euler scheme for integrating the assignment flow is equivalent to solving the G-PDE by an established DC programming scheme. Moreover, the viewpoint of geometric integration reveals a basic way to exploit higher-order information of the vector field that drives the assignment flow, in order to devise a novel accelerated DC programming scheme. A detailed convergence analysis of both numerical schemes is provided and illustrated by numerical experiments.

\end{abstract}

\keywords{assignment flows, image labeling, replicator equation, nonlocal graph-PDE, geometric integration, DC programming, information geometry.}

\maketitle
\tableofcontents

%%%
\section{Introduction}
% !TEX root =  ../Major_Revision.tex

\subsection{Overview, Motivation.} \textit{Nonlocal} iterative operations for data processing on graphs constitute a basic operation that underlies many major image and data processing frameworks, including variational methods and PDEs on graphs for denoising, morphological processing and other regularization-based methods of data analysis
\cite{Gilboa:2007aa,Elmoataz:2008aa,Gilboa:2009aa,Buades:2010aa,Elmoataz:2015aa}. This includes deep networks \cite{Goodfellow:2016aa} and time-discretized neural ODEs \cite{Chen:2018ab} whose layers generate sequences of nonlocal data transformations.

Among the extensions of such approaches to \textit{data labeling} on graphs, that is the assignment of an element of a finite set of labels to data points observed at each vertex, one may distinguish approaches whose mathematical structure is directly dictated by the labeling task,  and approaches that combine traditional data processing with as subsequent final discretization step: 
\begin{itemize}
\item
Examples of the former class are discrete graphical models \cite{Wainwright:2008aa,Kappes:2015aa} that encode directly the combinatorial label assignment task, as a basis for the design of various sequential nonlocal processing steps performing approximate inference, like belief propagation. However, the intrinsic non-smoothness of discrete graphical models constitutes a major obstacle for the design of hierarchical models and for efficient parameter learning. Graphical models, therefore, have been largely superseded by deep networks during the last decade. 
\item
Examples of the latter class include the combination of established PDE-based diffusion approaches and threshold operations \cite{Merriman:1994aa,Gennip:2014aa, Bertozzi:2016aa}. The mathematical formulations inherit the connection between total variation (TV) based variational denoising, mean curvature motion and level set evolution \cite{Osher:1988aa,Rudin:1992aa,Garcke:2013aa,Caselles:2015ws}, and they exhibit also connections to gradient flows in terms of the Allen-Cahn equation with respect to the Ginzburg-Landau functional \cite{Garcke:2013aa,Gennip:2014aa}. Regarding data labeling, however, a conceptual shortcoming of these approaches is that they do not provide a direct and natural mathematical problem formulation. As a consequence, this renders difficult to cope with the assignment of dozens or hundreds of labels to data, and to learn efficiently parameters in order to tailor regularization properties to the problem and the class of data at hand.
\end{itemize}
\textit{Assignment flows} \cite{Astrom:2017ac,Schnorr:2019aa} constitute a mathematical approach tailored to the data labeling problem, aimed to overcome the aforementioned shortcomings. The basic idea is to represent label assignments to data by a \textit{smooth} dynamical process, based on the Fisher-Rao geometry of discrete probability distributions and on a weighted (parametrized) coupling of local flows for label selection across the graph. As a result, no extrinsic thresholding or rounding is required since the underlying geometry enables to perform both spatial diffusion for assignment regularization and rounding to an integral solution just by integrating the assignment flow.

Stability and convergence to integral solutions of assignment flows hold under mild conditions \cite{Zern:2020aa}. A wide range of numerical schemes exist \cite{Zeilmann:2020aa} for integrating geometrically assignment flows with GPU-conforming operations. Generalized assignment flows for unsupervised and self-supervised scenarios \cite{Zern:2020ab,Zisler:2020aa} are more involved computationally but do not essentially change the overall \textit{mathematical} structure. 

Assignment flows \textit{regularize} the assignment of labels to data by 
parameters $\Omega$ that couple the local flows at edges across the graph. 
These parameters can be determined either directly in a data-driven way as 
demonstrated in Figure \ref{fig:Labeling_OCT_example} or learned offline in a 
supervised way. Learning the parameters of assignment flows from data can be 
accomplished using symplectic numerical integration \cite{Huhnerbein:2021th} 
or, alternatively and quite efficiently, using exponential integration of 
linearized assignment flows \cite{Zeilmann:2021wt,Zeilmann:2022ul}. In 
particular, deep parametrizations of assignment flows do not at all change the 
mathematical structure which enables to exploit recent progress on PAC-Bayes 
bounds in order to compute a statistical performance certificate of 
classifications performed by deep linearized assignment flows in applications 
\cite{Boll:2022wz}. The assignment flow approach is introduced 
in Section \ref{sec:Assignment-Flow} and illustrated by Figure 
\ref{fig:Assignment_Flow_Illustration}.

\subsection{Contribution, Organization}
This paper makes two contributions, illustrated by Figure 
\ref{fig:summary-of-results}:
\begin{enumerate}[(a)]
\item
Given an undirected weighted regular grid graph $\mc{G}=(\mc{V},\mc{E},\Omega)$, we show that solving a particular parametrization of the assignment flow is equivalent to solving the \textit{nonlocal nonlinear} partial \textit{difference} equation (\textit{G-PDE}) on the underlying graph $\mc{G}$,
	\begin{subequations}\label{eq:S-Non_Local_PDE}
		\begin{align}
		&\partial_{t}S(x,t) = R_{S(x,t)}\Big( 
		\frac{1}{2}\mathcal{D}^{\alpha}\big(\Theta 
		\mathcal{G}^{\alpha}(S)\big)+\lambda S \Big)(x,t),&\quad &\text{ on }  
		\mathcal{V}\times \R_{+},\\
		&S(x,t) = 0, &\quad &\text{ on } 
		\mathcal{V}^{\alpha}_{\mathcal{I}}\times 
		\R_{+} ,\\
		&S(x,0) = \overline{S}(x)(0), &\quad &\text{ on } 
		\ol{\mc{V}}\times\R_{+},
		\end{align}
	\end{subequations}
where the vector field $S$ takes values at $x\in\mc{V}$ in the relative interior of the probability simplex that is equipped with the Fisher-Rao metric. $\mc{D}^{\alpha}$ and $\mc{G}^{\alpha}$ are nonlocal divergence and gradient operators based on established calculus \cite{Du:2012aaD,Du:2013aa}. The linear mapping $R_{S(x),t}$ is the inverse metric tensor corresponding to the Fisher-Rao metric,  expressed in ambient coordinates. 

The G-PDE \eqref{eq:S-Non_Local_PDE} confirms and provides a generalized \textit{nonlocal} formulation of a PDE that was heuristically derived by \cite[Section 4.4]{Savarino:2019ab} in the continuous-domain setting. In particular, \eqref{eq:S-Non_Local_PDE} addresses the data labeling problem \textit{directly} without any further pre- or postprocessing step and thus contributes to the line of PDE-based research of image analysis initiated by Alvarez et al.~\cite{Alvarez:1993aa} and Weickert \cite{Weickert:1998aa}.
\item 
The particular parametrization of the assignment flow that we show in this paper to be equivalent to \eqref{eq:S-Non_Local_PDE}, constitutes a Riemannian gradient flow with respect to a non-convex potential \cite[Section 3.2]{Savarino:2019ab}. We consider a \textit{Difference-of-Convex (DC)} function decomposition \cite{Horst:1999aa} of this potential and show
\begin{enumerate}[(i)]
\item that the simplest first-order geometric numerical scheme for integrating the assignment flow can be interpreted as basic two-step iterative method of DC-programming \cite{Hoai-An:2005aa};
\item that a corresponding tangent-space parametrization of the assignment flow and second-order derivatives of the tangent vector field can be employed to \textit{accelerate} the basic DC iterative scheme.
\end{enumerate}
Due to result (a), both schemes (i) and (ii) also solve the G-PDE \eqref{eq:S-Non_Local_PDE}. In addition, we point out that while a rich literature exists about accelerated \textit{convex} optimization, see e.g.~\cite{Beck:2012aa,Krichene:2016ab,Fazylab:2018aa} and references therein, methods for accelerating \textit{nonconvex} iterative optimization schemes have been less explored.
\end{enumerate}
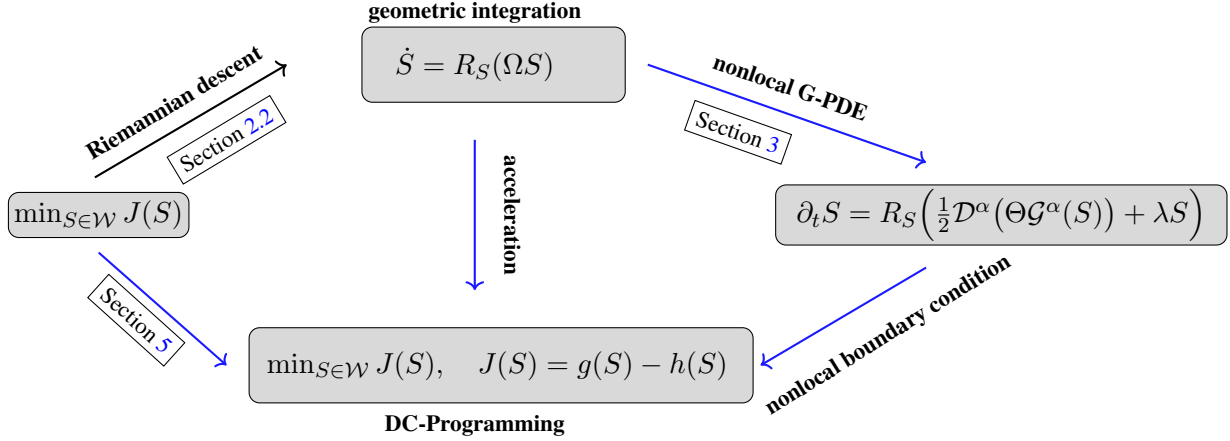
\begin{figure}
\begin{tikzpicture}
	\draw[fill = gray!30,draw = black,rounded corners] (3.5,1.5) 
	rectangle (7,2.5);
	\draw[fill = gray!30,draw = black,rounded corners] (2,-1.5) 
	rectangle (8.7,-2.5);
	\draw[fill = gray!30,draw = black,rounded corners] (-1.2,0.3) rectangle 
	(1.2,-0.3);
	\draw[fill = gray!30,draw = black,rounded corners] (9,-0.4) rectangle 
	(15,0.4);
	\node (A) at (0,0) {$ \min_{S\in \mathcal {W}} J(S)$};
	\node (C) at (12,0) {$\partial_{t}S = R_{S}\Big( 
		\frac{1}{2}\mathcal{D}^{\alpha}\big(\Theta 
		\mathcal{G}^{\alpha}(S)\big)+\lambda S \Big)$};
	\node (B) at (5,2) {$ \dot{S} = R_S(\Omega S)$};
	\node (D) at (5.3,-2) {$ \min_{S\in \mathcal {W}} J(S), \quad J(S) = 
	g(S)-h(S)$};
	\node[scale=0.8] at (5,2.7) {\textbf{geometric integration}};
	\node[scale=0.8] at (5,-2.8) {\textbf{DC-Programming}};
	\path[draw,->,thick] (-0.05,0.5) -- (2.5,2);
	\path[draw,->,thick,blue!90] (7.3,2) -- (11,0.7);
	\path[draw,->,thick,color=blue!90] (11,-0.7) -- (8.8,-2);
	\path[draw,->,thick,color=blue!90] (0,-0.5) -- (1.7,-2);
	\path[draw,->,thick,color=blue!90] (5,1) -- (5,-1);
	\node[scale=0.8,rotate=270] at (5.5,0) {\textbf{acceleration}};
	\node[scale=0.8,rotate=32] at (10.5,-1.6) 
	{\textbf{nonlocal boundary condition}};
	\node[draw,scale=0.8,rotate=-19.5] at (8.5,1.1) {Section 
	\ref{sec:Non_Local_PDE}};
	\node[draw,scale=0.8,rotate=-42] at (0.5,-1.4) {Section 
	\ref{sec:Non_Convex_Optimization}};
	\node[scale=0.8,rotate=31] at (1,1.5) {\textbf{Riemannian descent}};
	\node[scale=0.8,draw,rotate=31] at (1.7,1) {Section 
	\ref{sec:Assignment-Flow}};
	\node[scale=0.8,rotate=-19.5] at (9.2,1.7) {\textbf{nonlocal G-PDE}};
\end{tikzpicture}
\captionsetup{font=footnotesize}
\caption{\textbf{Summary of results.} Starting point (Section 
\ref{sec:Assignment-Flow}) is a particular formulation of the assignment flow 
ODE (\textbf{top}) that represents the Riemannian gradient descent of a 
functional $J$ (\textbf{left}). The first main contribution of this paper is an 
equivalent alternative representation of the assignment flow equation in terms 
of a partial difference equation on the underlying graph (\textbf{right}), with 
a nonlocal data-driven diffusion term in divergence form and further terms 
induced by the information-geometric approach to the labeling problem. The 
second major contribution concerns a DC-decomposition of the nonconvex 
functional $J$ (\textbf{bottom}) and a novel accelerated minimization algorithm 
using a second-order tangent space parametrization of the assignment flow. }
\label{fig:summary-of-results}
\end{figure}

\textbf{Organization.} Our paper is organized as follows. Section \ref{sec:Prel} introduces non-local calculus and the assignment flow, respectively. The equivalence of the assignment flow and the G-PDE \eqref{eq:S-Non_Local_PDE} is derived in Section \ref{sec:Non_Local_PDE}, together with a tangent space parametrization as basis for the development of iterative numerical solvers, and with a balance law that reveals how spatial diffusion interacts with label assignment by solving \eqref{eq:S-Non_Local_PDE}. Section \ref{sec:Related-Work-PDE} is devoted to explicitly working out common aspects and differences of \eqref{eq:S-Non_Local_PDE} to related work:
\begin{itemize}
\item[--]
continuous-domain nonlocal diffusion \cite{Andreu-Vaillo:2010aa}, 
\item[--]
nonlocal variational approaches to image analysis \cite{Gilboa:2009aa} and 
\item[--]
nonlocal G-PDEs on graphs \cite{Elmoataz:2008aa,Elmoataz:2015aa}. 
\end{itemize}
As summarized by Figure \ref{fig:ablation_study} and Table 
\ref{tab:ablation_study}, these approaches can be regarded as special cases 
from the mathematical viewpoint. They differ however regarding the scope and 
the class of problems to be solved: the approach \eqref{eq:S-Non_Local_PDE} is 
only devoted to the data \textit{labeling} problem which explains its 
mathematical form.
Finally, we show how our work extends the result of \cite{Savarino:2019ab}. 
Section \ref{sec:Non_Convex_Optimization} details contribution (b) on 
DC-programming from the viewpoint of geometric integration. The corresponding 
convergence analysis is provided in Section \ref{sec:Convergence}. Numerical 
results that illustrate our findings are reported in Section \ref{sec:Exp_Dis}. 
We conclude in Section \ref{sec:Conclusion}.

%\subsection{Related Work}\label{sec:related_work}
%\input{TexInput/RelatedWork}

\section{Preliminaries}\label{sec:Prel}
This section contains basic material required in the remainder of this paper. A list of symbols and their meaning follows. 

\vspace{0.25cm}
	\scalebox{0.8}{
		\begin{tabularx}{\linewidth}{lll@{\hskip 1em}l@{\hskip 10em}l@{\hskip 
		1em}c}
			&\textbf{Symbol} & \hskip 5cm \textbf{Description} & 
			\\
			\hline
			\addlinespace
			&$\mc{G} = (\mc{V},\mc{E},\Omega)$ & A graph with vertex set $\mathcal{V}$, edge set $\mathcal{E}$ and weights $\Omega$. \\
			& $\mathcal{V}$  & Set of vertices representing the discrete 
			domain $\mc{V}\subset\Z^{d}$. &\\
			& $n$  & Total number $n=|\mc{V}|$ of nodes in the graph $\mathcal{G}$  & \\
			& $d$  & Dimension of the discrete domain associated with $\mc{V}$.  & \\
			& $\Omega$  & Weighted symmetric adjacency matrix of the graph $\mc{G}$. & 
			\\
			& $\mc{N}(x)$ & neighborhood of $x\in\mc{V}$ induced by $\Omega$. \\
			& $E$  & Subset of an Euclidean space.  & \\
			& $\mathcal{F}_{\mathcal{V}},\mathcal{F}_{\mathcal{V},E}$  & 
			Space of one-point functions defined on $\mathcal{V}$, taking values in $\R$ resp.~$E$.  
			& 
			\\
			& $\mathcal{F}_{\mathcal{V}\times 
				\mathcal{V}},\mathcal{F}_{\mathcal{V}\times \mathcal{V},E}$  
			& Space of two-point functions defined on $\mathcal{V}\times 
			\mathcal{V}$, taking values in $\R$ resp.~$E$.  & 
			\\
			& $\alpha \in \mathcal{F}_{\ol{\mathcal{V}}\times 
			\ol{\mathcal{V}}}$  & 
			Antisymmetric mapping that defines the interaction of 
			nodes  $x,y \in \Z^d$. & \\
			& $\Theta \in \mathcal{F}_{\ol{\mathcal{V}} \times 
			\ol{\mathcal{V}}}$  
			& 
			Nonnegative scalar-valued symmetric mapping that parametrizes the introduced 
			nonlocal diffusion process. & \\
			& $\mathcal{V}_{\mathcal{I}}^{\alpha}$ & Nonlocal interaction 
			domain 
			which represents the connectivity of nodes $x \in \mathcal{V}$ to 
			nodes 
			$y \in \Z^d\setminus \mathcal{V}$.  & \\
			& $\ol{\mathcal{V}}$  & Extension of the discrete domain associated with 
			$\mathcal{V}$ by the nodes in 
			$\mathcal{V}_{\mathcal{I}}^{\alpha}$.& \\
			& $\mathcal{D}^{\alpha},\mathcal{G}^{\alpha}$  & Nonlocal 
			divergence 
			and gradient operators parametrized by the mapping $\alpha$. & \\
			& $\mathcal{N}^{\alpha}$  & Nonlocal interaction operator parametrized by the mapping $\alpha$. & \\
			& $\mathcal{L}_{\omega}$  & Nonlocal Laplacian with weight function $\omega$. & \\
			& $\mathcal{X}_n$ & Data on the graph $\mathcal{G}$ taking values in a 
			metric space $\mathcal{X}$. & \\
			& $X(x)$ & Data point $X\in\mc{X}_{n}$ given at $x\in\mc{V}$. \\
			& $\mc{X}^{\ast}$ & set of labels $\{X_{j}^{\ast}\colon j\in\mc{J}\}\subset\mc{X}$. \\
			& $c$ & Number of labels $c=|\mc{J}|$, one of which is uniquely assigned to each data point. \\
			& $\Delta_{c}$ & Probability simplex in $\R^{c}$ of dimension $c-1$. \\
			& $\mathcal{S}$ & Relative interior of the probability simplex $\Delta_{c}$, forming the factors of the product manifold $\mc{W}$. \\
			& $T_{0}$ & Tangent space corresponding to $\mc{S}$. \\
			& $\mathcal{W}$, $\mathcal{T}_0$ & Assignment manifold and the 
			corresponding tangent space at the barycenter 
			$\mathbb{1}_{\mathcal{W}}$. & \\
			& $S,W \in \mathcal{W}$ & Points on the assignment manifold taking values $S(x), W(x)\in\mc{S}$ at $x\in\mc{V}$. & \\
			& $S^{\ast},W^{\ast} \in \ol{\mathcal{W}} \setminus \mathcal{W}$ & 
			Integral vectors on the boundary of $\mathcal{W}$. 
			\\
			& $V \in \mathcal{T}_0 $& Points in the tangent space taking values $V(x)\in T_{0}$ at $x\in\mc{V}$. & \\
			& $\Pi_0$ & Orthogonal projection onto the tangent space 
			$\mathcal{T}_0$. \\
			& $R_{S}$  & Replicator map at $S \in \mathcal W$. & \\
			\addlinespace
		\end{tabularx}
}
\subsection{Nonlocal Calculus}\label{sec:Nonlocal-Calculus}
% !TEX root =  ../Major_Revision.tex

Following 
\cite{Du:2012aaD}, we collect some basic notions of nonlocal calculus which will be used throughout this paper. 
See \cite{Du:2019us} for a detailed exposition.

Let $(\mathcal{V},\mathcal{E},\Omega)$ be an undirected weighted regular grid graph with 
\begin{equation}\label{def:weighted-graph}
n = |\mathcal{V}|,\qquad \mc{V}\subset \Z^{d},\qquad 2 \leq d \in\N
\end{equation}
nodes, with edge set $\mathcal{E} \subset \mathcal{V} \times \mathcal{V}$ that has no self-loops, and with the weighted adjacency matrix $\Omega$ that satisfies
\begin{equation}\label{eq:Omega_Mat}
0 \leq \Omega(x,y) \leq 1 , \qquad \Omega(x,y) = \Omega(y,x),\qquad \forall x, y\in\mc{V}.
\end{equation}
$\Omega$ defines the neighborhoods
\begin{equation}\label{eq:def-neighborhoods}
\mathcal{N}(x):= \{y \in \mathcal{V}\colon\Omega(x,y) > 0\},\qquad x\in\mc{V}
\end{equation}
and serves as a function $\Omega\colon\mc{V}\times\mc{V}\to\R$ measuring the similarity of adjacent nodes.

We define the function spaces
\begin{subequations}\label{eq:weight_func}
	\begin{align}
		\mathcal{F}_{\mathcal{V}} &:= \{f \colon \mathcal{V} \to \R\}, &\qquad 
	 	\mathcal{F}_{\mathcal{V} \times \mathcal{V}} &:= \{F \colon \mathcal{V} 
	 	\times 
	 	\mathcal{V} \to \R \},\\
	 	\mathcal{F}_{\mathcal{V}, E} &:= \{F \colon \mathcal{V} \to 
	 	E\}, &\qquad 	\mathcal{F}_{\mathcal{V}\times 
	 	\mathcal{V},E} &:= \{F \colon \mathcal{V}\times \mathcal{V} 
	 	\to 
	 E\},\label{eq:Vec_Val_Map}
	 \end{align}
\end{subequations}
where $E$ denotes a (possibly improper) subset of an Euclidean space. 
The spaces $\mathcal{F}_{\mathcal{V}}$ and 
$\mathcal{F}_{\mathcal{V}\times \mathcal{V}}$ respectively are equipped with the inner products 
\begin{align}\label{eq:Inner_Products}
\langle f,g \rangle_{\mathcal{V}} := \sum_{x \in 
	\mathcal{V}}  f(x)g(x), \qquad 	\langle F,G \rangle_{\mathcal{V}\times 
	\mathcal{V}} 
:= \sum_{ (x,y) \in 
	\mathcal{V}\times \mathcal{V}} F(x,y)G(x,y).
\end{align} 
We set
\begin{equation}\label{eq:def-olV}
\overline{\mathcal{V}} := \mathcal{V} \dot{\cup} 
\mathcal{V}_{\mathcal{I}}^{\alpha}\qquad 
\text{(disjoint union)}, 
\end{equation}
where the \emph{nonlocal 
interaction domain $\mathcal{V}_{\mathcal{I}}^{\alpha}$ with respect to} an \textit{antisymmetric} mapping 
\begin{equation}\label{eq:def-alpha}
\alpha \in \mathcal{F}_{\ol{\mathcal{V}}\times 
\ol{\mathcal{V}}},\qquad
\alpha(x,y) = -\alpha(y,x),\qquad
\forall x, y\in\ol{\mc{V}}
\end{equation}
is defined as 
\begin{align}\label{eq:Interaction_Dom}
\mathcal{V}_{\mathcal{I}}^{\alpha} := \{ x \in \Z^d \setminus \mathcal{V}: 
\alpha(x,y) \neq 0 \text{  for 
some } y \in \mathcal{V} \}.
\end{align}
$\mathcal{V}_{\mathcal{I}}^{\alpha}$ serves discrete formulations of conditions on nonlocal boundaries with positive measure in a Euclidean domain. Such conditions are distinct from traditional conditions imposed on boundaries that have measure zero. Figure 
\ref{fig:Non_Local_Boundary} displays a possible nonlocal boundary 
configuration. 

We state the following identity induced by \eqref{eq:def-alpha}
\begin{align}\label{eq:Zero_Flux_Identity}
		\sum_{x,y \in 
		\overline{\mathcal{V}}}\big(F(x,y)\alpha(x,y)-F(y,x)\alpha(y,x)\big) = 0, \qquad
		\forall F\in \mathcal{F}_{\overline{\mathcal{V}}\times 
			\overline{\mathcal{V}}}.
\end{align} 
The \emph{nonlocal 
divergence operator} $\mathcal{D}^{\alpha}$ and the \emph{nonlocal interaction 
operator} 
$\mathcal{N}^{\alpha}$  are defined by 
\begin{subequations}\label{eq:NL-Div_op}
\begin{align}
\mathcal{D}^{\alpha}\colon \mathcal{F}_{\overline{\mathcal{V}} 
\times \overline{\mathcal{V}}} \to 
\mathcal{F}_{\overline{\mathcal{V}}},\qquad
	\mathcal{D}^{\alpha}(F)(x) &:= \sum_{y\in \overline{\mathcal{V}}} 
	\big(F(x,y)\alpha(x,y)-F(y,x)\alpha(y,x)\big), &\text{ } &x \in 
	\overline{\mathcal{V}}, 
	\label{eq:NL-Div_op-a} \\ \label{eq:NL-Div_op-b}
	\mathcal{N}^{\alpha} \colon \mathcal{F}_{\overline{\mathcal{V}} \times 
\overline{\mathcal{V}}} 
\to \mathcal{F}_{\mathcal{V}_{\mathcal{I}}^{\alpha}},\qquad
\mathcal{N}^{\alpha}(F)(x) &:= -\sum_{y\in \overline{\mathcal{V}}} 
\big(F(x,y)\alpha(x,y)-F(y,x)\alpha(y,x)\big), &\text{ } & x \in 
	\mathcal{V}^{\alpha}_{\mathcal{I}}.  
	\end{align}
\end{subequations}  

Based on the mapping $\alpha$ given by \eqref{eq:def-alpha}, the 
operator \eqref{eq:NL-Div_op-b} is nonzero in general and accounts for the density of a \textit{nonlocal flux} from the entire domain  
$\overline{\mathcal{V}}$ to nodes $x \in 
\mathcal{V}_{\mathcal{I}}^{\alpha}$ \cite{Du:2019us}. This generalizes the 
notion \textit{local 
flux} density $\la q(x), n(x)\ra$ on continuous domains $\Omega \subset \R^d$ with outer normal vector field $n(x)\in \R^d$ on the 
boundary $\partial \Omega$, and with a vector-valued function $q(x)$ on $\partial \Omega$ that typically stems from an underlying constitutive 
physical relation. Due to the identity 
\eqref{eq:Zero_Flux_Identity}, the operators 
\eqref{eq:NL-Div_op} satisfy the \emph{nonlocal Gauss theorem} 
\begin{align}\label{eq:Non_local_Gauss}
\sum_{x \in \mathcal{V}}\mathcal{D}^{\alpha}(F)(x) = \sum_{y \in 
\mathcal{V}_{\mathcal{I}}^{\alpha}} \mathcal{N}^{\alpha}(F)(y).
\end{align}
The operator $\mathcal{D}^{\alpha}$ maps two-point 
	functions $F(x,y)$ to $\mathcal{D}^{\alpha}(F) \in 
	\mathcal{F}_{\overline{\mathcal{V}}}$, 
	whereas $\mathcal{N}^{\alpha}(F)$ is defined on the domain
	$\mathcal{V}_{\mathcal{I}}^{\alpha}$ given by \eqref{eq:Interaction_Dom} 
	where nonlocal boundary 
	conditions are imposed.
	 
The adjoint mapping $(\mathcal{D}^{\alpha})^{\ast}$ with respect to the inner product \eqref{eq:Inner_Products} is 
determined by the relation 
\begin{equation}
	\langle f, \mathcal{D}^{\alpha} (F) \rangle_{\overline{\mathcal{V}}} = 
	\langle 
	(\mathcal{D}^{\alpha})^{\ast}(f), F \rangle_{\overline{\mathcal{V}} \times 
	\overline{\mathcal{V}}}, \qquad \forall f \in 
	\mathcal{F}_{\overline{\mathcal{V}}},\qquad \forall F\in 
	\mathcal{F}_{\overline{\mathcal{V}}\times \overline{\mathcal{V}}},
\end{equation}
which yields the operator 
\begin{equation}\label{eq:NL-Grad_op}
(\mathcal{D}^{\alpha})^{\ast} \colon \mathcal{F}_{\overline{\mathcal{V}}} \to 
\mathcal{F}_{\overline{\mathcal{V}} \times \overline{\mathcal{V}}},\qquad
(\mathcal{D}^{\alpha})^{\ast}(f)(x,y):= -(f(y) - f(x))\alpha(x,y), \qquad 
\forall f\in\mc{F}_{\ol{\mc{V}}}.
\end{equation}
The \textit{nonlocal gradient operator} is defined as
\begin{equation}\label{eq:def-nonl-grad}
\mathcal{G}^{\alpha} \colon \mathcal{F}_{\overline{\mathcal{V}}} \to 
\mathcal{F}_{\overline{\mathcal{V}} \times \overline{\mathcal{V}}},\qquad
\mathcal{G}^{\alpha}(f)(x,y):=-(\mathcal{D}^{\alpha})^{\ast}(f)(x,y),\qquad \forall f\in\mc{F}_{\ol{\mc{V}}}.
\end{equation}
For \textit{vector-valued} mappings, the operators \eqref{eq:NL-Div_op} and \eqref{eq:NL-Grad_op} naturally extend to $ 
	\mathcal{F}_{\overline{\mathcal{V}}\times 
		\overline{\mathcal{V}},E}$ and $
	\mathcal{F}_{\overline{\mathcal{V}},E}$, respectively, 
 by acting \textit{componentwise}.

 Using the mappings 
 \eqref{eq:NL-Grad_op}, \eqref{eq:def-nonl-grad}, the nonlocal Gauss 
 theorem \eqref{eq:Non_local_Gauss} implies  \textit{Greens nonlocal first 
	identity} 
\begin{equation}\label{eq:Green_First_Identity}
\sum_{x \in \mathcal{V}} u(x)\mathcal{D}^{\alpha}(F)(x)-\hskip -0.1cm 
\sum_{x \in \overline{\mathcal{V}}}  \sum_{y \in 
	\overline{\mathcal{V}} }\mathcal{G}^{\alpha}(u)(x,y)F(x,y) = 
\sum_{x \in \mathcal{V}^{\alpha}_{\mathcal{I}}}u(x) \mathcal{N}^{\alpha}(F)(x),\qquad
\begin{aligned}
u &\in \mathcal{F}_{\overline{\mathcal{V}}}, \\
F &\in 
	\mathcal{F}_{\overline{\mathcal{V}}\times \overline{\mathcal{V}}}.
\end{aligned}
\end{equation}
Given a function $f\in\mc{F}_{\overline{\mathcal{V}}}$ and a symmetric mapping 
\begin{equation}\label{eq:def-Theta}
\Theta \in \mathcal{F}_{\overline{\mathcal{V}} \times \overline{\mathcal{V}}}
\quad\text{with}\quad 
\Theta(x,y) = \Theta(y,x),
\end{equation}
we define the \emph{linear nonlocal diffusion operator} 
\begin{equation}\label{eq:Non_Local_Dif}
\begin{aligned}
\mathcal{D}^{\alpha}\big(\Theta \mathcal{G}^{\alpha}(f)\big)(x) =  
2\sum_{y \in 
	\overline{\mathcal{V}}}\mathcal{G}^{\alpha}(f)(x,y)\Theta(x,y)\alpha(x,y),\qquad
f\in\mc{F}_{\overline{\mathcal{V}}}.
\end{aligned}
\end{equation}
For the particular case with no interactions, i.e.~$\alpha(x,y) = 0$ if $x \in 
\mathcal{V}$ and $y \in \mathcal{V}_{\mathcal{I}}^{\alpha}$, expression
\eqref{eq:Non_Local_Dif} reduces with $\Theta(x,y) = 1, x,y 
\in \mathcal{V}$ to 
\begin{equation}\label{eq:combinatorial_Laplacian}
	\mathcal{L}_{\omega}f(x) 
	\overset{\eqref{eq:def-neighborhoods}}{=} 
	\sum_{y \in 
	\mathcal{N}(x)}\omega(x,y)\big(f(y)-f(x) 
	\big),\qquad \omega(x,y) = 2\alpha(x,y)^2,
\end{equation} 
which coincides with the \emph{combinatorial 
	Laplacian} \cite{Chung:1996aa,Chung:1997aa} after reversing the sign. 
\begin{figure}
\captionsetup{font=footnotesize}
\begin{subfigure}{0.49\linewidth}
	\scalebox{0.9}{
		\begin{tikzpicture}[scale=1,every node/.style={minimum size=1cm},on 
		grid] 

		\draw[fill=gray,opacity = 0.8] plot[smooth 
		cycle,tension=1] 
		coordinates 
		{(0.5,0.2) (3,2) (2,2.4) (1,3.1)};
		\filldraw[radius = 2pt,gray] (1.5,3) circle[];
		\filldraw[radius = 2pt,gray] (0.5,2) circle[];
		\filldraw[radius = 2pt,gray] (1.5,0.5) circle[];
		\filldraw[radius = 1pt,black] (1.5,0.5) circle[];
		\foreach \x in {-1,-0.5,0}{
			\foreach \y in {-1,-0.5,...,4}{ 
				\draw[radius = 1pt,black] (\x,\y) circle[];}}
		\foreach \x in {3,3.5,4}{
			\foreach \y in {-1,-0.5,...,4}{ 
				\draw[radius = 1pt,black] (\x,\y) circle[];}}
		\foreach \x in {-1,-0.5}{
			\foreach \y in {-1,-0.5,...,4}{ 
				\draw[radius = 1pt,black] (\x,\y) circle[];}}
		\foreach \x in {-1,-0.5,...,4}{
			\foreach \y in {-1,-0.5,0,0.5,3.5,4}{
				\draw[radius = 1pt,black] (\x,\y) circle[];}}
		\filldraw[radius = 2pt,gray] (3,2) circle[];
		\filldraw[radius = 1pt,black] (3,2) circle[];
		\foreach \x in {0.5,1,...,2.5,3}{
			\foreach \y in {1,1.5,...,3}{
				\filldraw[radius = 1pt,black] (\x,\y) circle[];}}
		\filldraw[radius = 2pt,white] (0.5,3) circle[];
		\draw[radius = 1pt,black] (0.5,3) circle[];
		\filldraw[radius = 1.2pt,white] (0.5,2.5) circle[];
		\draw[radius = 1pt,black] (0.5,2.5) circle[];
		\foreach \x in {2,2.5,...,4}{
			\filldraw[radius = 1.2pt,white] (\x,3) circle[];
			\draw[radius = 1pt,black] (\x,3) circle[];}
		\foreach \x in {2.5,3}{
			\foreach \y in {2.5,1}{
				\filldraw[radius = 1.2pt,white] (\x,\y) circle[];
				\draw[radius = 1pt,black] (\x,\y) circle[];}}
		\filldraw[radius = 2pt,gray] (3,2) circle[];
		\filldraw[radius = 1pt,black] (3,2) circle[];
		\filldraw[radius = 2pt,gray] (3,2) circle[];
		\filldraw[radius = 1pt,black] (3,2) circle[];
		\filldraw[radius = 2pt,gray] (3,2) circle[];
		\filldraw[radius = 1pt,black] (3,2) circle[];
		\filldraw[radius = 1pt,black] (0.5,0.5) circle[];
		\filldraw[radius = 1pt,black] (1,0.5) circle[];
		\filldraw[radius = 1.2pt,white] (2,2.5) circle[];
		\draw[radius = 1pt,black] (2,2.5) circle[];
		\node at (1.2,1.3) {$\Omega$};
		\node[scale=0.7] at (0.75,2.2) {$\partial \Omega$};

		\filldraw[radius = 1pt,black] (4.3,4) circle[];
		\filldraw[radius = 2pt,gray] (4.3,3.5) circle[];
		\filldraw[radius = 1pt,black] (4.3,3.5) circle[];
		\draw[radius = 1pt,black] (4.3,3) circle[];
		\node at (4.9,4)   {$\in \mathcal{V} $};
		\node at (4.97,3.5) {$\in \partial \Omega$};
		\node at (5.2,3) {$\in \Z^2 \setminus \overline{\mathcal{V}}$};
		\end{tikzpicture}
	}
\end{subfigure}
\begin{subfigure}{.34\linewidth}
	\scalebox{0.9}{
\begin{tikzpicture}[scale=1,every node/.style={minimum size=1cm},on grid]  
\draw[double distance=6pt,fill=gray,opacity = 0.8] plot[smooth cycle,tension=1] 
coordinates 
{(0.5,0.2) (3,2) (2,2.4) (1,3.1)};
\filldraw[radius = 2pt,gray] (1,3) circle[];
\filldraw[radius = 2pt,gray] (1.5,3) circle[];
\filldraw[radius = 2pt,gray] (0.5,2) circle[];
\filldraw[radius = 2pt,gray] (2,2.5) circle[];
\filldraw[radius = 2pt,gray] (0.5,1.5) circle[];
\filldraw[radius = 2pt,gray] (2.5,1.5) circle[];
\filldraw[radius = 2pt,gray] (2,1) circle[];
\filldraw[radius = 2pt,gray] (1.5,0.5) circle[];
\filldraw[radius = 1pt,black] (1.5,0.5) circle[];
\foreach \x in {-1,-0.5,0}{
	\foreach \y in {-1,-0.5,...,4}{ 
		\draw[radius = 1pt,black] (\x,\y) circle[];}}
\foreach \x in {3,3.5,4}{
	\foreach \y in {-1,-0.5,...,4}{ 
		\draw[radius = 1pt,black] (\x,\y) circle[];}}
\foreach \x in {-1,-0.5}{
	\foreach \y in {-1,-0.5,...,4}{ 
		\draw[radius = 1pt,black] (\x,\y) circle[];}}
\foreach \x in {-1,-0.5,...,4}{
	\foreach \y in {-1,-0.5,0,0.5,3.5,4}{
		\draw[radius = 1pt,black] (\x,\y) circle[];}}
\filldraw[radius = 2pt,gray] (3,2) circle[];
\filldraw[radius = 1pt,black] (3,2) circle[];
\foreach \x in {0.5,1,...,2.5,3}{
	\foreach \y in {1,1.5,...,3}{
		\filldraw[radius = 1pt,black] (\x,\y) circle[];}}
\filldraw[radius = 2pt,white] (0.5,3) circle[];
\draw[radius = 1pt,black] (0.5,3) circle[];
\filldraw[radius = 1.2pt,white] (0.5,2.5) circle[];
\draw[radius = 1pt,black] (0.5,2.5) circle[];
\foreach \x in {2,2.5,...,4}{
	\filldraw[radius = 1.2pt,white] (\x,3) circle[];
	\draw[radius = 1pt,black] (\x,3) circle[];}
\foreach \x in {2.5,3}{
	\foreach \y in {2.5,1}{
		\filldraw[radius = 1.2pt,white] (\x,\y) circle[];
		\draw[radius = 1pt,black] (\x,\y) circle[];}}
\filldraw[radius = 2pt,gray] (3,2) circle[];
\filldraw[radius = 1pt,black] (3,2) circle[];
\filldraw[radius = 2pt,gray] (3,2) circle[];
\filldraw[radius = 1pt,black] (3,2) circle[];
\filldraw[radius = 2pt,gray] (3,2) circle[];
\filldraw[radius = 1pt,black] (3,2) circle[];
\filldraw[radius = 1pt,black] (0.5,0.5) circle[];
\filldraw[radius = 1pt,black] (1,0.5) circle[];
\node at (1.2,1.35) {$\Omega$};
\node[scale=0.7] at (0.9,2.2) {$\partial \Omega$};
	\filldraw[radius = 1pt,black] (4.3,4) circle[];
\filldraw[radius = 2pt,gray] (4.3,3.5) circle[];
\filldraw[radius = 1pt,black] (4.3,3.5) circle[];
\draw[radius = 1pt,black] (4.3,3) circle[];
\node at (4.9,4)   {$\in \mathcal{V} $};
\node at (4.97,3.5) {$\in \mathcal{V}_{\mathcal{I}}^{\alpha}$};
\node at (5.2,3) {$\in \Z^2 \setminus \overline{\mathcal{V}}$};

\draw[line width = 0.04cm,->] (2,1.5)--(2.5,1.5);
\draw[line width = 0.04cm,->] (2,1.5)--(1.5,1.5);
\draw[line width = 0.04cm,->] (2,1.5)--(2,2);
\draw[line width = 0.04cm,->] (2,1.5)--(2,1);
\node at (2.2,1) {$y$};
\node at (2.15,1.65) {$x$};
\end{tikzpicture}
}
\end{subfigure}
\caption{\textbf{Schematic visualization of a nonlocal boundary.} \textbf{Left:} A bounded open domain $\Omega \subset \R^2$ with  
\textit{local} boundary $\partial \Omega$ overlaid by the grid $\Z^2$. 
\textbf{Right:} 
A bounded open domain $\Omega$ with nonlocal boundary (light gray color). Nodes 
\protect\intreriornode \hspace{0.1cm} and \protect\boundarynode \hspace{0.1cm}, respectively, 
are vertices on the graph $\mathcal{V}$ and on the interaction domain $\mathcal{V}_{\mathcal{I}}^{\alpha}$ given by
\eqref{eq:Interaction_Dom}.}
\label{fig:Non_Local_Boundary}
\end{figure}
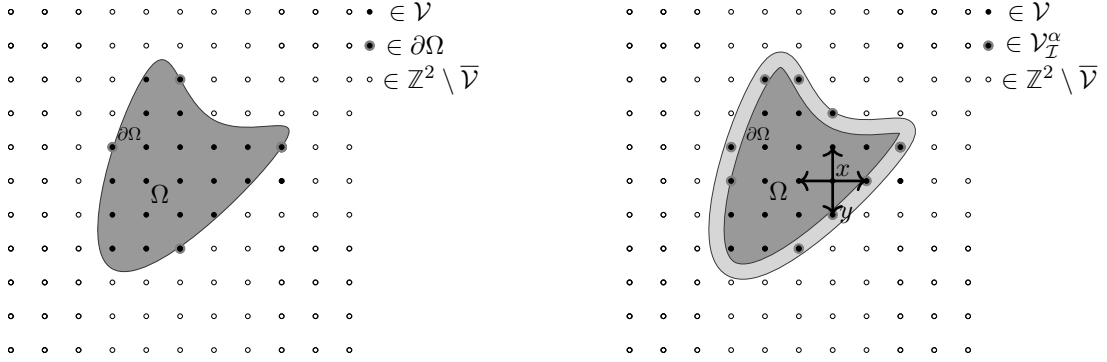

  The next remark 
provides an intuition for appropriate setup of parameters $\alpha,\Theta \in 
\mathcal{F}_{\ol{\mathcal{V}}\times \ol{\mathcal{V}}}$.
\begin{remark}(\textbf{Role of parameters in modeling nonlocal diffusion 
processes.}) In our work we differentiate the parameters $\alpha,\Theta$ by 
their 
role played in modeling 
nonlocal diffusion processes of the form
\eqref{eq:Non_Local_Dif}. More precisely, we use the
antisymmetric mapping $\alpha \in 
\mathcal{F}_{\ol{\mathcal{V}}\times \ol{\mathcal{V}}}$ for definition 
of first 
order derivative operators $\mathcal{D}^{\alpha},\mathcal{G}^{\alpha}, 
\mathcal{N}^{\alpha}$ and the symmetric 
mapping  
$\Theta \in \mathcal{F}_{\ol{\mathcal{V}}\times \ol{\mathcal{V}}}$ for 
specifying
the constitutive function at each $x \in \mathcal{V}$ that controls the 
smoothing 
properties of operator \eqref{eq:combinatorial_Laplacian}. 
Instances of 
$\alpha,\Theta$ along with an analytical ablation study will be presented in 
section \ref{sec:Related-Work-PDE}. 
\end{remark}

%Required operators and mappings from Gunzburger et al.
\subsection{The Assignment Flow Approach}\label{sec:Assignment-Flow}
% !TEX root =  ../Major_Revision.tex
%%%%%%%%%%%%%%%%%%%%%%%%%%%%%%%%%%%
We summarize the assignment flow approach introduced by \cite{Astrom:2017ac} 
and refer to \cite{Schnorr:2019aa} for more background and a 
review of related work.

\subsubsection{Assignment Manifold.}\label{subsec:Assign_Man}
Let $(\mc{X},d_{\mc{X}})$ be a metric space and 
\begin{equation}\label{eq:def-metric-data}
\mc{X}_{n} 
= \{X(x) \in \mathcal{X} \colon x \in \mathcal{V}\}
\end{equation}
be given data on a graph 
$(\mathcal{V},\mathcal{E},\Omega)$ as specified in Section \ref{sec:Nonlocal-Calculus}. 
We encode assignments of data $X(x),\; x \in \mc{V}$, to a set 
\begin{equation}\label{eq:def-prototypes}
\mc{X}^{\ast} 
= \{X^{\ast}_j \in \mc{X}, j \in \mc{J}\},\qquad
c:=|\mc{J}|
\end{equation}
of predefined 
prototypes by \textit{assignment vectors}
\begin{equation}\label{eq:def-Wx} 
W(x) = 
(W_{1}(x),\dotsc,W_{c}(x))^{\T} \in \mc{S}, 
\end{equation}
where $\mc{S}=\rint\Delta_{c}$ denotes
the relative interior of the probability simplex $\Delta_{c}\subset 
\R_{+}^{c}$ that we turn into a Riemannian 
manifold $(\mc{S},g)$ with the Fisher-Rao metric $g$ from 
information geometry \cite{Amari:2000aa,Ay:2017aa} at each $p \in 
\mc{S}$ 
\begin{equation}\label{eq:Fisher_Rau}
g_{p}(u,v) = \sum_{j \in \mc{J}} \frac{u_{j} {v}_{j}}{p_{j}} = \langle 
u,v\rangle_p,\qquad
u,v \in T_{0},
\end{equation}
with tangent space $T_{0}$ given by \eqref{eq:def-mcT0}. 
The \textit{assignment manifold} $(\mc{W},g)$ is defined as the 
product space $\mc{W}=\mc{S}\times\dotsb\times\mc{S}$ of $n = |\mc{V}|$ such 
manifolds. Points on the assignment manifold row-stochastic matrices with full 
support are denoted by
\begin{equation}\label{eq:def-mcW}
W = (\dotsc,W(x),\dotsc)^{\T}\in \mc{W}\subset\R_{++}^{n\times c},\qquad x\in\mc{V}.
\end{equation}
The assignment manifold has the trivial tangent bundle $T\mc{W}$ with $T_{W}\mc{W}=\mc{T}_{0},\,\forall W\in\mc{W}$ and  tangent space 
\begin{equation}\label{eq:def-mcT0}
\mc{T}_{0} = T_{0}\times\dotsb\times T_{0},\qquad\quad
T_{0}=\{v\in\R^{c}\colon \la\eins_{c},v\ra=0\}.
\end{equation}
The metric \eqref{eq:Fisher_Rau} naturally extends to
\begin{equation}
g_{W}(U,V) = \sum_{x\in\mc{V}} g_{W(x)}\big(V(x),U(x)\big),\qquad U,V \in \mc{T}_{0}.
\end{equation}
The orthogonal projection onto $T_{0}$ is given by
\begin{equation}\label{eq:def-Pi0}
\Pi_{0}\colon \R^{c}\to T_{0}, \qquad
\Pi_{0}(u) = u-\la\eins_{\mc{S}},u\ra\eins_{c},\qquad
\eins_{\mc{S}}:=\frac{1}{c}\eins_{c}.
\end{equation}
The orthogonal projection onto $\mc{T}_{0}$, also denoted by $\Pi_{0}$ for simplicity, is 
\begin{equation}
\Pi_{0}\colon \R^{n\times c}\to\mc{T}_{0},\qquad
\Pi_{0}D = \big(\dotsc,\Pi_{0}D(x),\dotsc\big)^{\T}.
\end{equation}

\subsubsection{Assignment Flows.}
Based on the given data and prototypes, we define the distance vector field on 
$\mc{V}$ by
\begin{equation}\label{eq:Dis_F}
	D_{\mc{X}}(x) = 
	\big(d_{\mc{X}}(X(x),X_{1}^{\ast}),\dotsc,d_{\mc{X}}(X(x),X_{c}^{\ast})\big)^{\T},\qquad
	x \in \mc{V}.
\end{equation}
This data representation is lifted to $\mc{W}$ 
to obtain the \textit{likelihood 
vectors}
\begin{equation}\label{schnoerr-eq:def-Li}
L(x) \colon \mc{S} \to \mc{S},\qquad
L(W)(x)
= \frac{W(x)\odot e^{-\frac{1}{\rho} D_{\mc{X}}(x)}}{\la W(x),e^{-\frac{1}{\rho} 
D_{\mc{X}}(x)} \ra},\qquad x \in \mc{V},\quad\rho >0,
\end{equation}
where the exponential function applies componentwise and $\odot$ denotes the componentwise multiplication
\begin{equation}\label{eq:def-odot}
(p\odot q)_{j} = p_{j}q_{j},\quad j\in[c],\qquad p, q\in\mc{S}
\end{equation}
of vectors $p, q$. Accordingly, we denote componentwise division of vectors by
\begin{equation}
\frac{v}{p} = \Big(\frac{v_{1}}{p_{1}},\dotsc,\frac{v_{c}}{p_{c}}\Big)^{\T},\qquad p\in\mc{S}
\end{equation}
for strictly positive vectors $p$.
 
The map \eqref{schnoerr-eq:def-Li} is based on the affine $e$-connection of information geometry \cite{Amari:2000aa,Ay:2017aa}. The 
scaling parameter $\rho > 0$ normalizes the a priori unknown scale 
of the components of $D_{\mc{X}}(x)$. 
%that depends on the specific application 
%at hand. 
Likelihood vectors are spatially regularized by the 
\textit{similarity map} and the \textit{similarity vectors}, respectively, given 
for each $x \in \mc{V}$ by
\begin{equation}\label{eq:def-Si}
S(x) \colon \mc{W} \to \mc{S},\qquad
S(W)(x) = \Exp_{W(x)}\Big(\sum_{y \in \mc{N}(x)}
\Omega(x,y) 
\Exp_{W(x)}^{-1}\big(L(W)(y)\big)\Big),
\end{equation}
where 
\begin{equation}\label{eq:def-Exp}
\Exp\colon \mc{S}\times T_{0} \to\mc{S},\qquad \Exp_{p}(v) = \frac{p \odot e^{\frac{v}{p}}}{\la p,e^{\frac{v}{p}}\ra},\qquad\quad
\frac{v}{p} = \Big(\frac{v_{1}}{p_{1}},\dotsc,\frac{v_{c}}{p_{c}}\Big)^{\T}
\end{equation}
is the exponential map 
corresponding to the $e$-connection. If the exponential map of the Riemannian 
(Levi Civita) connection were used instead, then the term in the round bracket 
of \eqref{eq:def-Si} would be the optimality condition for the weighted 
Riemannian mean of the vectors $\{L(W)(y)\colon y\in\mc{N}(x)\}$ \cite[Lemma 
6.9.4]{Jost:2017aa}. Using the exponential map of the e-connection enables to 
evalute the right-hand side of \eqref{eq:def-Si} in closed form and to define 
the similarity vectors as geometric means of the likelihood vectors 
\cite{Schnorr:2019aa}.

The weights $\Omega(x,y)$ 
determine the regularization properties of the similarity map, cf.~Remark \ref{rem:regularization} below. 
They satisfy \eqref{eq:Omega_Mat} and the additional constraint
\begin{align}\label{eq:Omega_AF}
	\sum_{y\in \mathcal{N}(x)} \Omega(x,y) = 1 ,\quad \forall x \in \mathcal{V}.
\end{align} 
The \textit{assignment flow} is induced on the assignment manifold $\mc{W}$ by solutions $W(t,x) = W(x)(t)$ of the system of nonlinear ODEs
\begin{equation}\label{eq:assignment-flow}
\dot W(x) = R_{W(x)} S(W)(x),\qquad W(0,x)=W(x)(0)\in \eins_{\mc{S}},\quad x 
\in 
\mc{V},
\end{equation}
where the map 
\begin{equation}\label{eq:def-Rp}
R_{p}=\Diag(p)-p p^{\T},\qquad p\in\mc{S} 
\end{equation}
corresponds to the inverse metric tensor expressed in the embedding coordinates of the ambient Euclidean space $\R^{c}$, which 
turns the right-hand side 
into the tangent vector field 
\begin{equation}
\mc{V}\ni x\mapsto R_{W(x)}S(W)(x)
= \Diag\big(W(x)\big)S(W)(x)-\la W(x),S(W)(x)\ra W(x) \;\in\; T_{0}. 
\end{equation}
Integrating the system \eqref{eq:assignment-flow} numerically 
\cite{Zeilmann:2020aa} yields integral assignment vectors 
$W(t,x),\;x\in\mc{V}$, for $t\to\infty$, that uniquely assign a label from the 
set $\mc{X}^{\ast}$ to each data point $X(x)$ \cite{Zern:2020aa}.
\begin{remark}[\textbf{Regularization}]\label{rem:regularization}
From the viewpoint of variational imaging, \textit{regularization} of the 
assignment flow has to be understood in a broad sense: The parameters $\Omega$ 
define by \eqref{eq:def-Si}, at each location $x$ and locally within 
neighborhoods $\mc{N}(x)$, what similarity of the collection of likelihood 
vectors $L(W)(y),\,y\in\mc{N}(x)$, which represent the input data, really means 
in terms of a corresponding geometric average, called similarity vector 
$S(W)(x)$. Unlike traditional variational approaches where regularization 
affects the primary variables directly, regularization of the assignment flow 
is accomplished more effectivly by affecting \textit{velocities} that 
\textit{generate} the primary assignment variables: the vector field $S(W)$ 
drives the assignment flow \eqref{eq:assignment-flow}. Figure 
\ref{fig:Labeling_OCT_example} illustrates two applications of the assignment 
flow approach using data-driven nonlocal regularization. Learning the 
regularization parameters $\Omega$ from data was studied by 
\cite{Huhnerbein:2021th,Zeilmann:2022ul}.
\end{remark}

\begin{figure}
	\captionsetup{font=footnotesize}
	\centering
	\scalebox{0.95}{%\begin{center}
	\begin{tikzpicture}[scale=1.5, remember picture]
		\draw[ultra thick,loosely dotted] (2.5,3) -- (2.5,2.25);
		\draw[ultra thick,loosely dotted] (2.5,1.9) -- (2.5,-1.3);
		\node[rectangle, rounded corners] (L) at (4,2) {$L(W)(x)$};
		\node[rectangle, rounded corners] (S) at (5,1) 
		{$S(W)(x)$};
		\node[rectangle, rounded corners] (W) at (3,1) 
		{$W(t,x)$};
		\node[rectangle, rounded corners] (dotW) at (4,0) 
		{$\dot{W} = 
			R_{W(x)}S(W)(x)$};
		\node[rectangle, rounded corners] (D) at (1,2) {$D_{\mc{X}}(x)$};
		\node[rectangle, rounded corners] (G) at (1.8,1) {$X^{\ast}_j, j \in 
			\mathcal{J}$};
		\node[rectangle, rounded corners] (F) at (0,1) {$X(x), x  \in 
		\mathcal{V}$};
		\node (textD) at (1,2.75) [text width=5em,align=center] {distance 
			matrix};
		\node (textS) at (6,1) [text width=5em,align=center] {similarity 
			matrix};
		\node (textF) at (0.05,1.7) {data};
		\node (textFlow) at (4,-0.5) {assignment flow};% \scriptsize [JMIV 
		%2017]};
	\draw[fill = gray!20,draw = black,rounded corners,dashed] (-4.8,1.5) 
	rectangle (-1.8,-0.3);
	\node[rectangle, rounded corners] (feature) at (0.75,-1.2) 
	{\large metric space $\mathcal{X}$};
	\node[rectangle, rounded corners] (assignment) at (4.5,-1.2) 
	{\large assignment manifold $\mathcal{W}$};
	\path (F) edge[->,thick] (D);
	\path (G) edge[->,thick] (D);
	\path (D) edge[->,thick] node[anchor=south] (exp) {$\exp_{W(x)}$} (L);
	\path (L) edge[->,thick] node[anchor=south west] (mean) {} (S);
	\path (S) edge[->,thick] (dotW);
	\path (dotW) edge[->,thick] (W);
	\path (W) edge[->,thick] (L);
	\node[rectangle, rounded corners] (S0) at (-2.1,2) {$S(0)(x)$};
	%\path (D) edge[->,thick] (S0);	
	\draw[ultra thick,loosely dotted] (-0.75,3) -- (-0.75,2.25);
	\draw[ultra thick,loosely dotted] (-0.75,1.9) -- (-0.75,-1.3);
	\node (exp_1) at (-0.5,2.23)
	{$\exp_{\mathbb{1}_{\mathcal{W}}}(-\Omega D_{\mathcal{X}})$} ;
	\path (D) edge[->,thick]  (-1.6,2);
	\node[rectangle, rounded corners] (S) at (-3.3,1) 
	{$\min \limits_{S \in \mathcal{W}} J(S) = -\frac{1}{2}\langle S, \Omega S 
		\rangle$};
	\node[rectangle, rounded corners] (S) at (-3.3,0.5) 
	{via};
	\node[rectangle, rounded corners] (S) at (-3.2,0) 
	{$\dot{S}(x) = R_{S}(\Omega S)(x)$};
	\node at (-3.35,-0.5) 
	{nonlocal geometric diffusion};
	\path (S0) edge[->,thick] (-3.2,1.7);
	\path[<->,thick,draw] (-1.45,0.5)--(3,0.5) -| (W);
	\node[scale=0.8,rectangle,rounded corners] at (0.8,0.2) {$W(t) = 
		\exp_{\eins_{\mc{W}}}\Big(\int_{0}^{t}\Pi_{0}S(\tau)\textup{d}\tau\Big)$};
			
	\path[<-,thick,draw] (-4.8,0.5)--(-5,0.5) -- (-5,-1.2) -- (-4.8,-1.2);
	\draw[fill = gray!20,draw = black,rounded corners,dashed] (-5.5,-0.7) 
	rectangle (-0.8,-1.4);
	\node[scale=0.9] at (-3.15,-1) {$\partial_{t}S(x,t) = R_{S(x,t)}\Big( 
		\frac{1}{2}\mathcal{D}^{\alpha}\big(\Theta 
		\mathcal{G}^{\alpha}(S)\big)+\lambda S \Big)(x,t)$};
	\node[draw,rotate=90] at (-5.25,-0.1) {Section \ref{sec:Non_Local_PDE}};
\end{tikzpicture}
%\end{center}}
	\caption{\textbf{Inference of label assignments via assignment 
	flows.}  \textbf{Center 
	column:} Application 
	task of assigning data to prototypes in a metric space. 
	\textbf{Right column:} Overview of the geometric approach \cite{Astrom:2017ac}. The data 
	are represented by the 
	distance matrix $D_{\mathcal{X}}$ and by the likelihood vector field $L(W)$ on the assignment manifold $\mathcal{W}$. 
	The similarity vectors $S(W)(x)$, determined through geometric averaging of the likelihood vectors, drive the assignment flow whose numerical geometric
	integration result in spatially coherent and unique label assignment to the data. \textbf{Left column:} 
	Alternative equivalent reformulation of the assignment flow 
	\cite{Savarino:2019ab} which separates (i) the influence of the data that 
	only determine the initial point of the flow (cf.~\eqref{eq:S-flow-S}), and 
	(ii) the influence of the parameters $\Omega$ that  parametrize the vector 
	field which drives the assignment flow. This enables to derive the novel 
	nonlocal geometric diffusion equation in Section \ref{sec:Non_Local_PDE}.}
	\label{fig:Assignment_Flow_Illustration}
\end{figure}
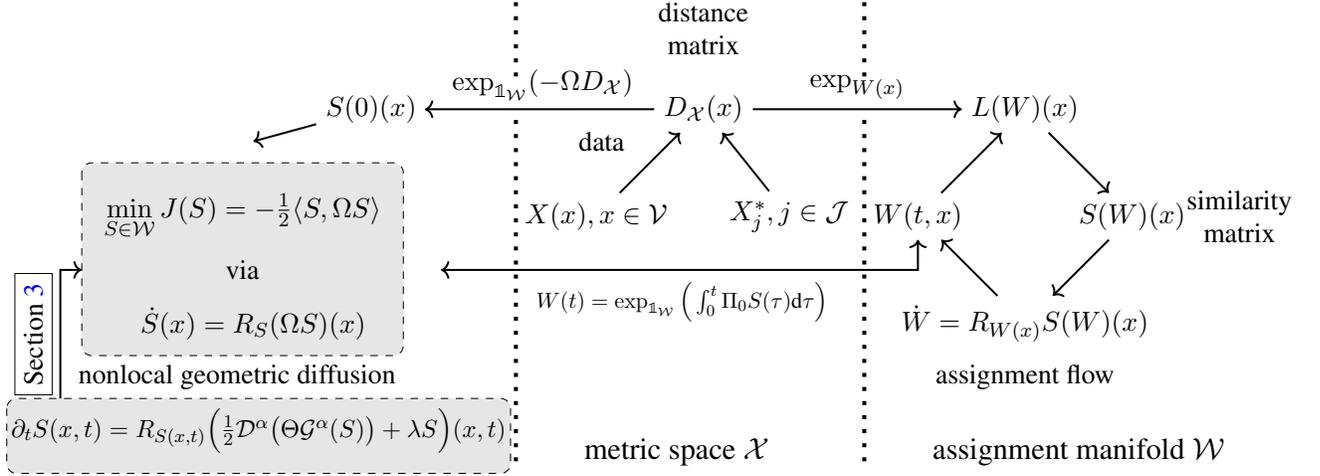

\subsubsection{S-Flow Parametrization.}
We adopt from \cite[Prop.~3.6]{Savarino:2019ab} the \textit{S-parametrization} of the assignment flow system \eqref{eq:assignment-flow}
\begin{subequations}\label{eq:S_parameterization}
\begin{align}
	\label{eq:S-flow-S}
	\dot S = R_{S}(\Omega S),&\qquad
	S(0) = \exp_{\mathbb{1}_\mathcal{W}}(-\Omega D_{\mc{X}}),\\
	\dot W = R_W(S),&\qquad W(0) = \mathbb{1}_{\mathcal{W}}, \quad \eins_{\mc{W}}(x)=\eins_{\mc{S}},\quad x\in\mc{V},
	\label{eq:W_fl_couples} 
\end{align}
\end{subequations}
where both $S$ and $W$ are points on $\mc{W}$ and hence have the format \eqref{eq:def-mcW} and
\begin{gather}
R_{S}(\Omega S)(x) = R_{S(x)}\big((\Omega S)(x)\big),\quad
(\Omega S)(x) = \sum_{y\in\mc{N}(x)}\Omega(x,y)S(y),
\label{eq:RS-Omega} \\
\exp_{\eins_{\mc{W}}}(-\Omega D_{\mc{X}}):=\big(\dotsc,\Exp_{\eins_{\mc{S}}}\circ 
R_{\eins_{\mc{S}}}(-(\Omega 
D_{\mc{X}})(x)),\dotsc\big)^{\T}\in\mc{W},\quad x\in\mc{V}, \label{eq:Small_exp}
\end{gather}
with the mappings $\Exp_{p}, R_{p},\,p\in\mc{S}$ defined by \eqref{eq:def-Exp} and \eqref{eq:def-Rp}, respectively. In view of \eqref{eq:Small_exp}, we define the \textit{lifting map}
\begin{equation}\label{eq:def-lifting-map}
\exp_{p}\colon T_{0}\to\mc{S},\qquad
\exp_{p}(v):= \Exp_{p}\circ R_{p} v
= \frac{p\odot e^{v}}{\la p,e^{v}\ra},\qquad
p\in\mc{S},\; v\in T_{0}
\end{equation}
which satisfies
\begin{subequations}\label{eq:exp-sum-constant}
\begin{align}\label{eq:exp-sum}
\exp_{\exp_{p}(v)}(v')
&=\exp_{p}(v+v'),\qquad
p\in\mc{S},\quad v,v'\in T_{0}.
\intertext{
In addition, one has (cf.~\eqref{eq:def-mcT0}, \eqref{eq:def-Pi0})
} \label{eq:exp-constant}
\exp_{p}(d) 
&= \exp_{p}(\Pi_{0}d),\quad \forall d\in\R^{c}.
\end{align}
\end{subequations}
Analogous to \eqref{eq:Small_exp}, the lifting map \eqref{eq:def-lifting-map} extends to
\begin{subequations}
\begin{align}
\exp_{S}\colon \mc{T}_{0}\to\mc{W},\qquad
\exp_{S}(V)=\big(\dotsc, \exp_{S(x)}\big(V(x)\big),\dotsc\big)
\end{align}
\end{subequations}
and the relations \eqref{eq:exp-sum-constant} extend to
\begin{subequations}
\begin{align}
\exp_{\exp_{S}(V)}(V') &= \exp_{S}(V+V'),\qquad
S\in\mc{W},\quad V,V'\in\mc{T}_{0},
\label{eq:exp-S-group} \\
\exp_{S}(D) &= \exp_{S}(\Pi_{0}D),\qquad
\forall D\in\R^{n\times c}.
\end{align}
\end{subequations}

Parametrization \eqref{eq:S_parameterization} has the advantage that $W(t)$ 
depends on $S(t)$, but not vice versa. As a consequence, it suffices to focus 
on \eqref{eq:S-flow-S} since its solution $S(t)$ determines the solution to 
\eqref{eq:W_fl_couples} by \cite[Prop.~2.1.3]{Zern:2020aa}
\begin{equation}\label{eq:S-flow-W}
	W(t) = 
	\exp_{\eins_{\mc{W}}}\Big(\int_{0}^{t}\Pi_{0}S(\tau)\textup{d}\tau\Big).
\end{equation}
In addition, \eqref{eq:S-flow-S} was shown in \cite{Savarino:2019ab} to be the \textit{Riemannian gradient descent flow} with respect to the potential 
\begin{equation}\label{eq:S-Flow_Pot}
J\colon\mc{W}\to\R,\qquad
	J(S) = -\frac{1}{2}\langle S,\Omega S \rangle =	\frac{1}{4} \sum_{x \in 
	\mc{V}}\sum_{y \in \mathcal{N}(x)} \Omega(x,y) 
	\|S(x)-S(y)\|^2 -\frac{1}{2}\|S\|_{F}^{2},
\end{equation}
where $\|\cdot\|_{F}$ denotes the Frobenius (matrix) norm and the vector field $\mc{V}\ni x\mapsto S(x)\in\mc{S}$ is identified with the matrix
\begin{equation}\label{eq:def-S-matrix}
S = (S_{j}(x))_{x\in\mc{V},\,j\in [c]} \in \R_{++}^{n\times c}
\end{equation} 
such that \eqref{eq:RS-Omega} can be written as
\begin{equation}\label{eq:def-Omega-S-matrix}
\big((\Omega S)(x)\big)_{j}
= \sum_{y\in\mc{N}(x)} \big(\Omega(x,y) S(y)\big)_{j}
= \sum_{y\in\mc{N}(x)} \Omega(x,y) S(y,j)
= (\Omega S)_{x,j}.
\end{equation}
Convergence and stability results for the gradient flow \eqref{eq:S-flow-S} have been established by 
\cite{Zern:2020aa}.

\begin{figure*}[ht]
		\scalebox{0.8}{\begin{tikzpicture}
			\foreach \i/\Layer/\Name in
			{1/Layer_12/BM,2/Layer_11/RPE,3/Layer_10/PR2,4/Layer_9/PR1,5/Layer_7/ONL,6/Layer_6/OPL,
				7/Layer_5/INL,8/Layer_4/IPL,9/Layer_3/GCL,10/Layer_2/RNFL,11/Layer_1/ILM}{
				\node[fill = \Layer,minimum size=1pt,scale=0.75, label =
				{[below
					= 0.5ex]}] at (-4.9+0.8*\i,-1){\Name};}
		\node at (9.2,-2) 
		{\includegraphics[height=3cm,width=8cm]{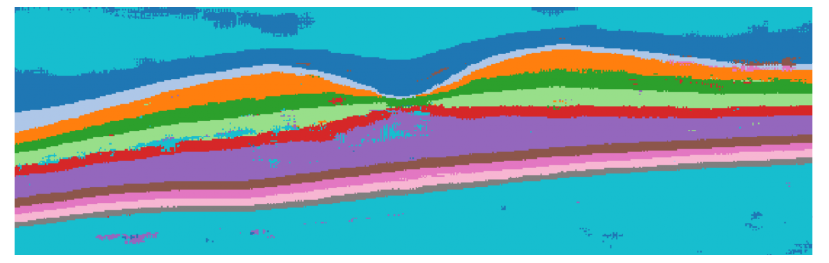}};
	\node at (9.2,-5.2) 
{\includegraphics[height=3cm,width=8cm]{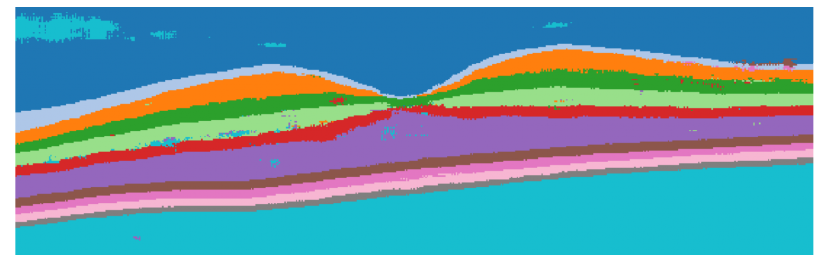}};
	\node at (9.2,-8.4) 
{\includegraphics[height=3cm,width=8cm]{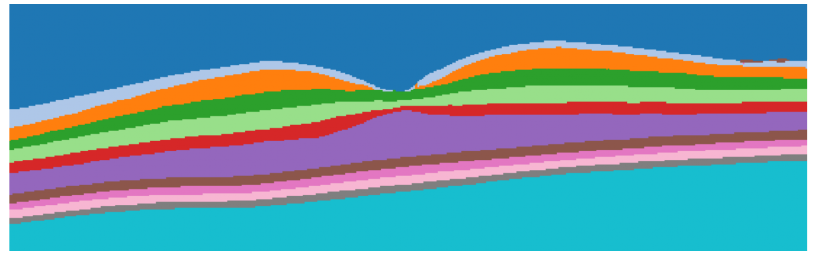}};	
	\node at (0,-4) 
{\includegraphics[height=3cm,width=8cm]{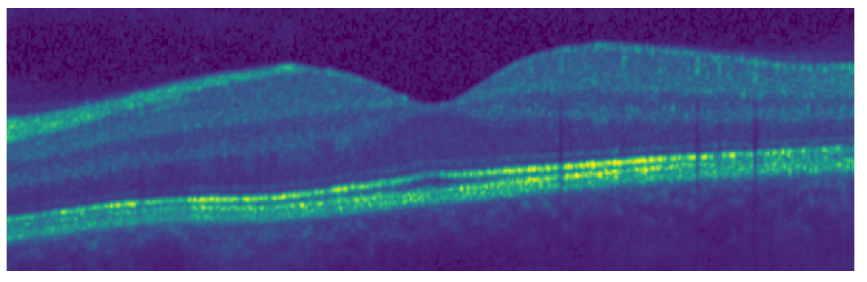}};
\node at (0,-8) 
{\includegraphics[height=3cm,width=8cm]{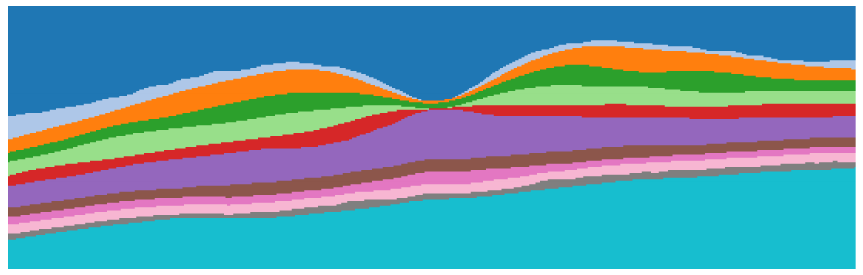}};	
\node[rotate=90] at (-4.35,-4) {\textbf{Noisy data}};
\node[rotate=90] at (-4.35,-7.95) {\textbf{Ground Truth}};
\node[rotate=90] at (5,-2) {\textbf{Resnet}};
\node[rotate=90] at (5,-5.2) {\textbf{Local}};
\node[rotate=90] at (5,-8.4) {\textbf{Nonlocal}};
\node at (-2.4,-12.3) 
{\includegraphics[height=4.3cm,width=4.3cm]{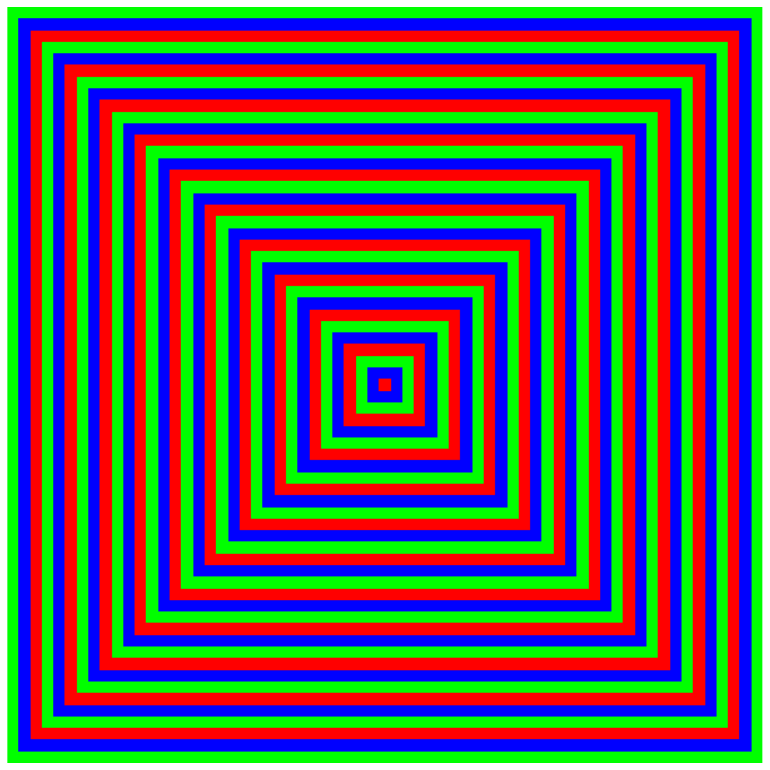}};
\node at (2.12,-12.3) 
{\includegraphics[height=4.3cm,width=4.3cm]{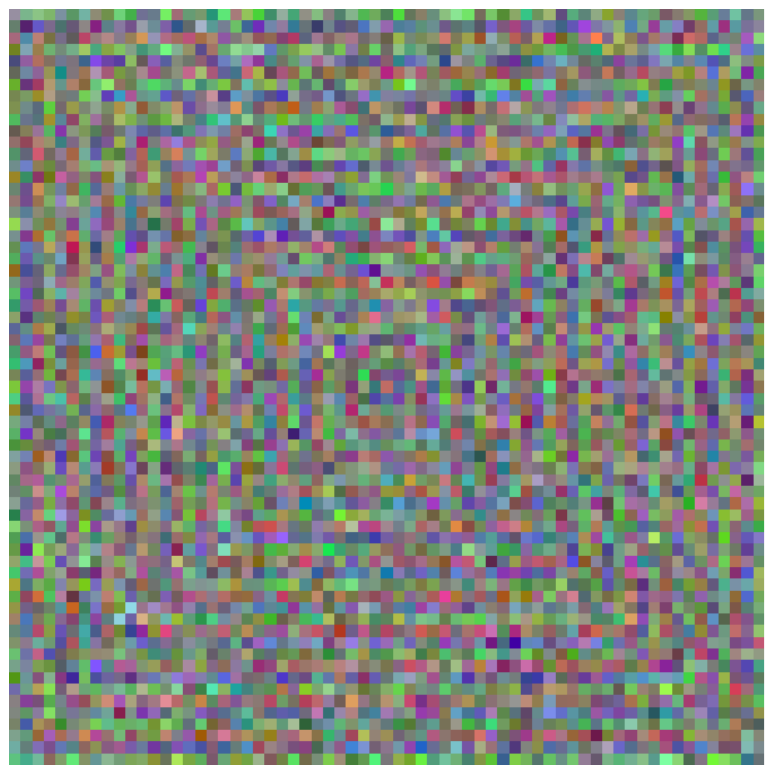}};
\node at (6.6,-12.3) 
{\includegraphics[height=4.3cm,width=4.3cm]{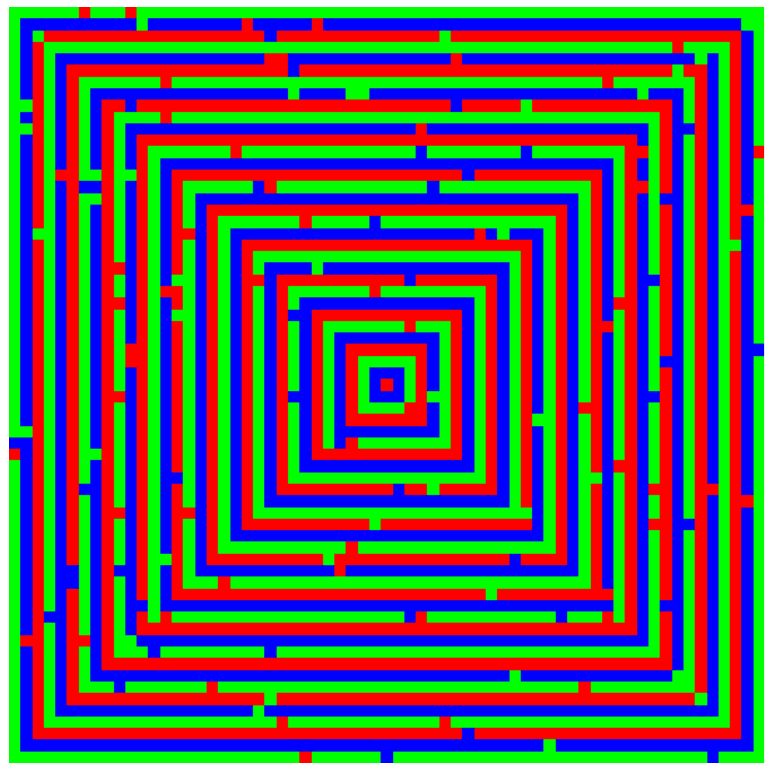}};
\node at (11.1,-12.3) 
{\includegraphics[height=4.3cm,width=4.3cm]{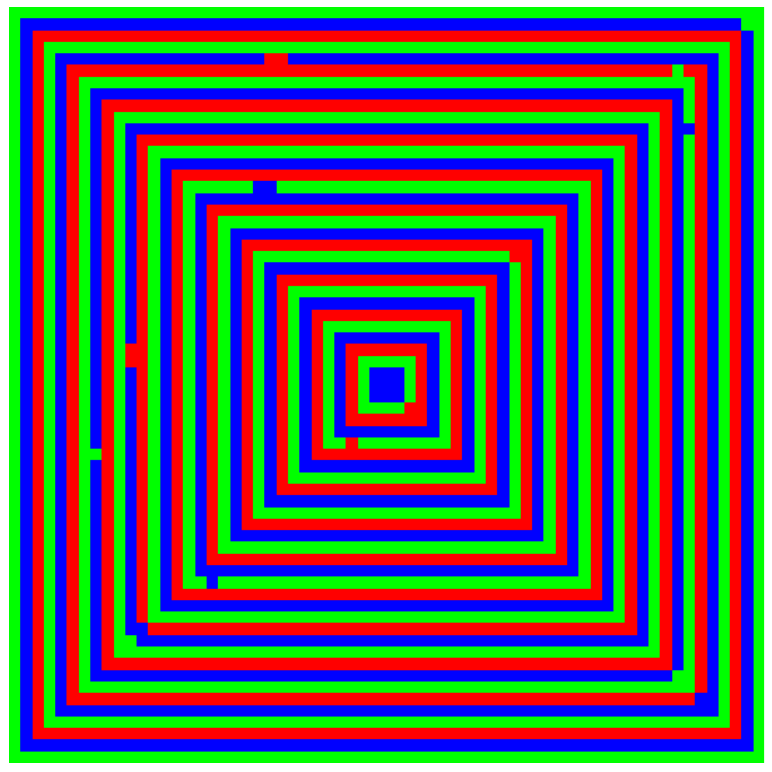}};
\node[scale=0.9] at (-2.35,-15) {\textbf{(f)}};
\node[scale=0.9] at (2.07,-15) {\textbf{(g)}};
\node[scale=0.9] at (6.1,-15) {\textbf{(h)}};
\node[scale=0.9] at (10.6,-15) {\textbf{(i)}};
\node[scale=0.9] at (0,-5.8) {\textbf{(a)}};
\node[scale=0.9] at (0,-9.8) {\textbf{(b)}};
\node[scale=0.9] at (9.2,-3.64) {\textbf{(c)}};
\node[scale=0.9] at (9.2,-6.8) {\textbf{(d)}};
\node[scale=0.9] at (9.2,-10) {\textbf{(e)}};
		\end{tikzpicture}}
	\captionsetup{font=footnotesize}
	\caption{Two image labeling scenarios demonstrating the 
	\textbf{influence 
	of nonlocal regularization}. \textbf{Top}:  
	Application of assignment flows to a 3D medical imaging problem for 
	segmenting the human retina (see \cite{Sitenko:2021vu} a detailed 
	exposition). \textbf{(a)}: A B-scan from a 3D OCT-volume showing a 
	section of the human retina that is corrupted by speckle noise. \textbf{(b)}: 
	The corresponding ground truth labeling with ordered retina layers. 
	\textbf{(c)}: Output from a Resnet 
	that serves as the  
	distance matrix \eqref{eq:Dis_F}. \textbf{(d)}: Result of applying 
	assignment flow with local neighborhoods given by a 3D seven point 
	stencil. \textbf{(e)}: Labeling obtained with \textit{nonlocal} uniform 
	neighborhoods of 
	size $|\mathcal{N}| = 11 \times 11 \times 11$. Increasing the connectivity 
	leads to more accurate labeling that satisfy the ordering constraint depicted in (b).
	\textbf{Bottom}: Labeling of noisy data by assignment flows with data-driven parameters $\Omega$ 
	determined by nonlocal means \cite{Buades:2010aa} using patches of size $7\times 7$ pixels. \textbf{(f)}: Synthetic image with thin repetitive 
	structure. \textbf{(g)}: Severly corrupted input image to be labeled with $\mathcal{X}^{\ast} = \{\protect\markernine, 
	\protect\markerten,\protect\markereleven \}$. \textbf{(h)},\textbf{(i)}: 
	Labeling by the assignment flow that was regularized with neighborhood sizes $|\mc{N}|=3\times 3$ and $|\mc{N}|=11\times 11$,  
	respectively. Enlarging the neighborhood size $|\mc{N}|$ increases  
	labeling accuracy. }         
	\label{fig:Labeling_OCT_example}
\end{figure*}

\vspace{0.2cm}

\section{Nonlocal Graph-PDE}\label{sec:Non_Local_PDE}
% !TEX root =  ../Major_Revision.tex

In this section, 
we show that the assignment flow corresponds to a 
particular nonlocal diffusion process. This results in an equivalent 
formulation of the Riemannian gradient flow \eqref{eq:S-flow-S} in terms of a suitable \textit{nonlinear} extension of the nonlocal \textit{linear} diffusion operator \eqref{eq:Non_Local_Dif}.

\subsection{$S$-Flow: Non-Local PDE 
Formulation}\label{sec:Nonlocal_PDE_Formulation}
We start with specifying 
a general class of parameter matrices $\Omega$ satisfying \eqref{eq:Omega_Mat} and 
\eqref{eq:Omega_AF}  
in terms of an anti-symmetric and symmetric mappings $\alpha 
\in \mathcal{F}_{\overline{\mathcal{V}}\times \overline{\mathcal{V}}}$ and 
$\Theta \in \mathcal{F}_{\overline{\mathcal{V}}\times \overline{\mathcal{V}}}$ 
respectively.
\begin{lemma}\label{Help_Lemma}
	Let 
\begin{equation}
\begin{aligned}\label{eq:alpha-Omega-constraints}
\alpha &\in \mathcal{F}_{\overline{\mathcal{V}}\times 
	\overline{\mathcal{V}}},
	\\
\Theta &\in \mathcal{F}_{\overline{\mathcal{V}}\times 
	\overline{\mathcal{V}}},
\end{aligned}
\qquad\qquad
\begin{aligned}
\alpha(y,x) &= -\alpha(x,y),\quad\qquad
\forall x, y\in\mc{F}_{\ol{\mc{V}}\times\ol{\mc{V}}}, \\
\Theta(x,y) &= \Theta(y,x)\geq 0,\qquad
\forall x, y\in\mc{F}_{\ol{\mc{V}}\times\ol{\mc{V}}},
\end{aligned}
\end{equation}
	be anti-symmetric and nonnegative symmetric 
	mappings, respectively. Assume further that $\alpha$ satisfies
\begin{equation}\label{eq:Mapping_assumption}
\alpha(x,y) = 0, \qquad  
\forall x,y \in \mathcal{V}_{\mathcal{I}}^{\alpha}.
\end{equation}
Then, for neighborhoods $\mc{N}(x)$ defined by  
\eqref{eq:def-neighborhoods} and with parameter matrix
\begin{equation}\label{eq:Avaraging_Matrix}
	\Omega(x,y) =	\begin{cases}
		\Theta(x,y)\alpha^2(x,y), \quad &\text{if } x \neq y,  \\
		\Theta(x,x), \quad &\text{if } x = 
		y, 
	\end{cases} \qquad
	x,y \in \mc{V},	 
\end{equation} for each function $f \in 
\mathcal{F}_{\overline{\mathcal{V}}}$ with 
$f|_{\mathcal{V}_{\mathcal{I}}^{\alpha}} = 
0$, 
the identity 
	\begin{equation}\label{eq:Reform_Av}
	\sum_{y \in \mathcal{V}}\Omega(x,y)f(y) = 
	\frac{1}{2}\mathcal{D}^{\alpha}\big(\Theta 
	\mathcal{G}^{\alpha}(f)\big)(x)+\lambda(x)f(x), \qquad \forall x 
	\in \mathcal{V},\quad
\forall f\in\mc{F}_{\ol{\mc{V}}}\colon
f\big|_{\mc{V}_{\mc{I}}^{\alpha}}=0
	\end{equation}
holds with $\mathcal{D}^{\alpha}, \mathcal{G}^{\alpha}$ given by 
\eqref{eq:NL-Div_op},\eqref{eq:def-nonl-grad} and  
\begin{equation}\label{eq:def-lambda-x}
	\lambda(x) = \sum_{y \in 
		\overline{\mathcal{V}}} \Theta(x,y)\alpha^{2}(x,y)+\Theta(x,x),\quad x\in\mc{V}.
\end{equation}
In addition, if $\lambda(x) \leq 1$ in \eqref{eq:def-lambda-x} for all $x 
\in \mathcal{V}$, then $\Omega$ given by \eqref{eq:Avaraging_Matrix} satisfies 
\eqref{eq:Omega_Mat}, and equality $\lambda(x) = 1, \forall x \in \mathcal{V}$ is achieved if property \eqref{eq:Omega_AF} holds.
\end{lemma}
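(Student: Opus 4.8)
The identity \eqref{eq:Reform_Av} is essentially a bookkeeping computation: expand the nonlocal diffusion operator on the right-hand side using the formulas already derived in Section \ref{sec:Nonlocal-Calculus} and match terms against the left-hand side. Concretely, I would start from \eqref{eq:Non_Local_Dif}, which gives
\begin{equation*}
\mathcal{D}^{\alpha}\big(\Theta \mathcal{G}^{\alpha}(f)\big)(x) = 2\sum_{y\in\overline{\mathcal{V}}} \mathcal{G}^{\alpha}(f)(x,y)\,\Theta(x,y)\,\alpha(x,y),
\end{equation*}
and then substitute $\mathcal{G}^{\alpha}(f)(x,y) = (f(y)-f(x))\alpha(x,y)$ from \eqref{eq:NL-Grad_op}--\eqref{eq:def-nonl-grad}. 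This turns the right-hand side of \eqref{eq:Reform_Av} into
\begin{equation*}
\sum_{y\in\overline{\mathcal{V}}} \big(f(y)-f(x)\big)\Theta(x,y)\alpha^{2}(x,y) + \lambda(x) f(x).
\end{equation*}

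The next step is to split the sum over $\overline{\mathcal{V}} = \mathcal{V}\,\dot\cup\,\mathcal{V}_{\mathcal{I}}^{\alpha}$ and use the hypothesis $f|_{\mathcal{V}_{\mathcal{I}}^{\alpha}} = 0$: on the interaction domain the term $f(y)\Theta(x,y)\alpha^2(x,y)$ vanishes, so only the $-f(x)$ part survives there. Collecting the $f(x)$ terms, I get $f(x)\big(\lambda(x) - \sum_{y\in\overline{\mathcal{V}}}\Theta(x,y)\alpha^2(x,y)\big)$, and by the definition \eqref{eq:def-lambda-x} of $\lambda(x)$ this simplifies to $\Theta(x,x) f(x)$. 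The remaining term is $\sum_{y\in\mathcal{V}} f(y)\Theta(x,y)\alpha^2(x,y)$, where for $y\neq x$ the coefficient is $\Omega(x,y)$ by the off-diagonal case of \eqref{eq:Avaraging_Matrix}, and for $y=x$ we need $f(x)\Theta(x,x)\alpha^2(x,x)$; since $\alpha$ is antisymmetric, $\alpha(x,x) = 0$, so this diagonal contribution vanishes and is instead supplied by the $\Theta(x,x)f(x)$ term collected above, matching the diagonal case $\Omega(x,x) = \Theta(x,x)$ of \eqref{eq:Avaraging_Matrix}. Combining everything yields exactly $\sum_{y\in\mathcal{V}}\Omega(x,y)f(y)$, proving \eqref{eq:Reform_Av}.

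For the final assertion, observe that $\lambda(x)$ as defined in \eqref{eq:def-lambda-x} equals $\sum_{y\in\mathcal{V}}\Omega(x,y)$: indeed, $\sum_{y\in\overline{\mathcal{V}}}\Theta(x,y)\alpha^2(x,y) + \Theta(x,x) = \sum_{y\in\mathcal{V},\,y\neq x}\Theta(x,y)\alpha^2(x,y) + \Theta(x,x) = \sum_{y\in\mathcal{V}}\Omega(x,y)$, using $\alpha(x,x)=0$ and the hypothesis \eqref{eq:Mapping_assumption} together with antisymmetry to discard $y\in\mathcal{V}_{\mathcal{I}}^{\alpha}$ terms. Nonnegativity of each $\Omega(x,y)$ is immediate from $\Theta\geq 0$, so $\lambda(x)\leq 1$ gives $0\leq\Omega(x,y)\leq\sum_{z}\Omega(x,z)=\lambda(x)\leq 1$, i.e.\ \eqref{eq:Omega_Mat} (symmetry being inherited from that of $\Theta$ and $\alpha^2$), and $\lambda(x)=1$ is exactly the row-sum condition \eqref{eq:Omega_AF}.

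I do not anticipate a genuine obstacle here; the only points requiring a little care are (i) correctly tracking which terms of the $y$-sum live on $\mathcal{V}$ versus $\mathcal{V}_{\mathcal{I}}^{\alpha}$ and invoking $f|_{\mathcal{V}_{\mathcal{I}}^{\alpha}}=0$ at the right moment, and (ii) the bookkeeping of the diagonal $y=x$ term, where one must notice that $\alpha(x,x)=0$ so the diagonal weight $\Theta(x,x)$ enters through $\lambda(x)$ rather than through the quadratic-in-$\alpha$ part. The role of hypothesis \eqref{eq:Mapping_assumption} is only needed to ensure the equivalence between the two expressions for $\lambda(x)$ in the last paragraph (so that no spurious $\mathcal{V}_{\mathcal{I}}^{\alpha}$-to-$\mathcal{V}_{\mathcal{I}}^{\alpha}$ weights are implicitly assumed), and should be cited at that step.
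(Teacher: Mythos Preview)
Your argument for the main identity \eqref{eq:Reform_Av} is correct and is essentially the paper's computation run in the opposite direction (the paper starts from the left-hand side and works toward the diffusion operator, you start from the right-hand side and work back). No issue there.

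The final paragraph, however, contains a genuine error. You assert that $\lambda(x) = \sum_{y\in\mathcal{V}}\Omega(x,y)$ by ``using $\alpha(x,x)=0$ and the hypothesis \eqref{eq:Mapping_assumption} together with antisymmetry to discard $y\in\mathcal{V}_{\mathcal{I}}^{\alpha}$ terms.'' But hypothesis \eqref{eq:Mapping_assumption} only says $\alpha(x,y)=0$ when \emph{both} $x,y\in\mathcal{V}_{\mathcal{I}}^{\alpha}$; here $x\in\mathcal{V}$, so \eqref{eq:Mapping_assumption} does not apply. In fact, by the very definition \eqref{eq:Interaction_Dom} of $\mathcal{V}_{\mathcal{I}}^{\alpha}$, every $y\in\mathcal{V}_{\mathcal{I}}^{\alpha}$ has $\alpha(x,y)\neq 0$ for \emph{some} $x\in\mathcal{V}$, so the term $\sum_{y\in\mathcal{V}_{\mathcal{I}}^{\alpha}}\Theta(x,y)\alpha^{2}(x,y)$ need not vanish. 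The correct relation is the inequality
\[
\sum_{y\in\mathcal{V}}\Omega(x,y) \;=\; \lambda(x) - \sum_{y\in\mathcal{V}_{\mathcal{I}}^{\alpha}}\Theta(x,y)\alpha^{2}(x,y) \;\le\; \lambda(x).
\]
This still suffices: for \eqref{eq:Omega_Mat} one bounds each individual $\Omega(x,y)\le\lambda(x)\le 1$ directly (each is a nonnegative summand in $\lambda(x)$), and for the last claim one argues by squeezing: if \eqref{eq:Omega_AF} holds then $1=\sum_{y\in\mathcal{V}}\Omega(x,y)\le\lambda(x)\le 1$, forcing $\lambda(x)=1$. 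This is exactly how the paper handles it.
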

\noindent
\textit{Proof.} Appendix \ref{app:Nonlocal_PDE_Formulation}.

\vspace{0.2cm}
\begin{remark}[\textbf{Comments}]
\label{rem:Omega_Restriction}
Lemma \ref{Help_Lemma} characterizes a class of parameter matrices $\Omega$ 
whose action \eqref{eq:Reform_Av} admits an representation using the nonlocal 
operators from Section \ref{sec:Nonlocal-Calculus}.

 Some comments follow on parameter matrices \textit{not} covered by  Lemma 
 \ref{Help_Lemma}, due to the imposed constraints.
	\begin{enumerate}[(i)]
		\item By ignoring the \textit{nonnegativity constraint} of \eqref{eq:alpha-Omega-constraints} imposed on 
		$\Omega$ through the mapping $\Theta$, Lemma \ref{Help_Lemma} additionally 
		covers a class of nonlocal graph Laplacians proposed in 
		\cite{Elmoataz:2015aa} and \cite{Gilboa:2009aa} for the aim of image 
		inpainting. We refer to Section \ref{sec:Related-Work-PDE} for a more detailed 
		discussion.        
		\item Due to assuming \textit{symmetry} of the mapping 
		$\Theta$, formulation \eqref{eq:Avaraging_Matrix} does \textit{not} cover 
		nonlocal 
		diffusion processes on \textit{directed} graphs 
		$(\mathcal{V},\mathcal{E},\Omega)$.  
		\item Imposing \textit{zero nonlocal Dirichlet 
		boundary conditions} is essential for relating assignment flows to the 
		specific class of nonlocal PDEs related to \eqref{eq:Reform_Av}, see 
		Proposition \ref{Prop:Non_Local_PDE} below.
	\end{enumerate}
As argued in \cite{Zern:2020aa} by a range of counterexamples, using  
nonsymmetric parameter matrices $\Omega$ compromises convergence of the assignment flow 
\eqref{eq:S-flow-S} to integral solutions (labelings) and is therefore not 
considered. The study of more general parameter matrices is left for future 
work, see Section \ref{sec:Conclusion} and Section 
\ref{sec:general_processes} for modifying the identity \eqref{eq:Reform_Av} 
in view of nonsymmetric parameter matrices $\Omega$.
\end{remark}

Next, we generalize the common \textit{local} boundary conditions for PDEs to nonlocal \textit{volume 
constraints} for \textit{nonlocal} PDEs on discrete domains. Following \cite{Du:2012aaD}, given an 
antisymmetric mapping $\alpha$ as in \eqref{eq:Interaction_Dom} and Lemma \ref{Help_Lemma}, the 
 natural domains $\mathcal{V}^{\alpha}_{\mathcal{I}_{N}},  
\mathcal{V}^{\alpha}_{\mathcal{I}_D}$ for imposing nonlocal
\emph{Neumann} and \emph{Dirichlet} constraints are given by a 
disjoint decomposition of the interaction domain \eqref{eq:Interaction_Dom}
\begin{equation}
\mathcal{V}^{\alpha}_{\mathcal{I}} = \mathcal{V}^{\alpha}_{\mathcal{I}_{N}} 
\dot \cup 
\mathcal{V}^{\alpha}_{\mathcal{I}_D}. 
\end{equation}
The following 
proposition reveals how the flow \eqref{eq:S-flow-S},  with $\Omega$ satisfying 
the assumptions of Lemma 
\ref{Help_Lemma}, can be reformulated as a 
nonlocal partial difference equation with zero nonlocal Dirichlet boundary  
condition imposed on the entire interaction domain, i.e. 
$\mathcal{V}^{\alpha}_{\mathcal{I}} = 
\mathcal{V}^{\alpha}_{\mathcal{I}_D}$. Recall the definition of the manifold $\mc{S}$ of discrete probability vectors with full support in 
connection with Eq.~\eqref{eq:def-Wx}. 
\begin{proposition}{\textbf{(S-flow as nonlocal 
G-PDE})}\label{Prop:Non_Local_PDE}
	Let $\alpha,\Theta \in 
	\mathcal{F}_{\overline{\mathcal{V}}\times\overline{\mathcal{V}}}$ be as in  
	\eqref{eq:Mapping_assumption}. Then the flow \eqref{eq:S-flow-S} 
	with 
	$\Omega$ given through \eqref{eq:Avaraging_Matrix} admits the 
	representation 
	\begin{subequations}\label{eq:Non_Local_PDE}
		\begin{align}
		&\partial_{t}S(x,t) = R_{S(x,t)}\Big( 
		\frac{1}{2}\mathcal{D}^{\alpha}\big(\Theta 
		\mathcal{G}^{\alpha}(S)\big)+\lambda S \Big)(x,t),&\quad &\text{ on }  
		\mathcal{V}\times \R_{+},\label{S_Flow_PDE}\\
		&\ol{S}(x,t) = 0, &\quad &\text{ on } 
		\mathcal{V}^{\alpha}_{\mathcal{I}}\times 
		\R_{+} \label{S_Flow_boundary1},\\
		&S(x,0) = \overline{S}(x)(0), &\quad &\text{ on } 
		\ol{\mc{V}}\times\R_{+},
		\label{S_Flow_boundary2}
		\end{align}
	\end{subequations}
	where $\lambda = \lambda(x)$ is given by \eqref{eq:Mapping_assumption} and
	$\ol{S}\in\mc{F}_{\ol{\mc{V}},\R_{+}^{c}}$ denotes the zero extension of the $\mc{S}$-valued 
	vector field $S \in 
	\mathcal{F}_{\mathcal{V},\mathcal{S}}$ to 
	the interaction domain 
	$\mc{V}_{\mc{I}}^{\alpha}$.
\end{proposition}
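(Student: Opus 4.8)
The plan is to reduce the statement to the pointwise algebraic identity of \Cref{Help_Lemma}, applied componentwise along the trajectory of the $S$-flow. First I would recall that for any initial point in $\mc{W}$ the flow \eqref{eq:S-flow-S} has a unique, globally defined solution $t\mapsto S(t)\in\mc{W}$, with $\mc{W}$ positively invariant \cite{Zern:2020aa}; hence at every $t\ge 0$ and every $x\in\mc{V}$ the vector $S(x,t)$ lies in $\mc{S}$, so the replicator map $R_{S(x,t)}$ of \eqref{eq:def-Rp} is well defined. I would then let $\ol{S}(\cdot,t)\in\mc{F}_{\ol{\mc{V}},\R_{+}^{c}}$ be the extension of $S(\cdot,t)$ by $0$ to the interaction domain $\mc{V}_{\mc{I}}^{\alpha}$: this is exactly the nonlocal Dirichlet condition \eqref{S_Flow_boundary1}, it is consistent since the flow \eqref{eq:S-flow-S} lives entirely on $\mc{V}$, and with this convention the initial condition of \eqref{eq:S-flow-S} becomes \eqref{S_Flow_boundary2}.

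Next I would fix $t$ and apply \Cref{Help_Lemma} to each component $\ol{S}_{j}(\cdot,t)\in\mc{F}_{\ol{\mc{V}}}$, for each $j\in[c]$ separately, which is admissible because $\ol{S}_{j}|_{\mc{V}_{\mc{I}}^{\alpha}}=0$ and because $\Omega$ has the form \eqref{eq:Avaraging_Matrix}. Since $\mc{D}^{\alpha}$ and $\mc{G}^{\alpha}$ act componentwise on vector-valued maps, collecting the $c$ scalar identities \eqref{eq:Reform_Av} into one vector identity gives, for every $x\in\mc{V}$,
\[
(\Omega S)(x,t)=\tfrac12\,\mc{D}^{\alpha}\big(\Theta\,\mc{G}^{\alpha}(S)\big)(x,t)+\lambda(x)\,S(x,t),
\]
with $\lambda$ as in \eqref{eq:def-lambda-x}. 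Substituting this expression into the right-hand side of \eqref{eq:S-flow-S} and using that $R_{S(x,t)}$ is linear in its argument yields \eqref{S_Flow_PDE}; together with \eqref{S_Flow_boundary1} and \eqref{S_Flow_boundary2} this shows that the $S$-flow, extended by zero, solves \eqref{eq:Non_Local_PDE}.

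For the converse I would simply reverse the computation: for a solution $S$ of \eqref{eq:Non_Local_PDE}, the boundary condition \eqref{S_Flow_boundary1} is precisely the hypothesis under which \Cref{Help_Lemma} applies, so on $\mc{V}$ the bracketed term in \eqref{S_Flow_PDE} equals $(\Omega S)(\cdot,t)$, whence $S|_{\mc{V}}$ satisfies \eqref{eq:S-flow-S} with the prescribed initial value, and uniqueness of solutions of \eqref{eq:S-flow-S} identifies it with the $S$-flow. The proof involves no real analytic difficulty beyond \Cref{Help_Lemma}; the only points needing care are bookkeeping ones --- that the componentwise action of $\mc{D}^{\alpha},\mc{G}^{\alpha}$ lets one pass from the scalar identity \eqref{eq:Reform_Av} to its vector version, that the zero extension is the correct notion of nonlocal boundary value matching the hypothesis of \Cref{Help_Lemma}, and that $R_{S(x,t)}$ may be moved through the identity because it acts linearly on the bracketed term.
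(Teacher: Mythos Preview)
Your proposal is correct and follows the same approach as the paper: both reduce the claim to the pointwise identity \eqref{eq:Reform_Av} of \Cref{Help_Lemma}, applied componentwise to the zero extension $\ol{S}$, and then push this identity through the linear map $R_{S(x,t)}$. Your write-up is more detailed than the paper's (you spell out well-posedness, the componentwise passage, and the converse direction), but the mathematical content is identical.
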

\noindent
\textit{Proof.} Appendix \ref{app:Nonlocal_PDE_Formulation}.

Proposition \ref{Prop:Non_Local_PDE} states the equivalence of the potential flow \eqref{eq:S-flow-S}, with $\Omega$ defined by 
\eqref{eq:Avaraging_Matrix}, and the nonlocal diffusion process 
\eqref{eq:Non_Local_PDE} with zero nonlocal Dirichlet boundary condition. We now explain 
that the system \eqref{S_Flow_PDE} can represent 
\textit{any} descent flow of the form \eqref{eq:S-flow-S} defined in terms of an 
\textit{arbitrary} nonnegative symmetric mapping $\Omega \in 
\mathcal{F}_{\mathcal{V}\times 
\mathcal{V}}$. Specifically, given such a mapping $\Omega$, 
let the mappings 
$\wt{\alpha},\wt{\Theta} \in 
	\mathcal{F}_{\mathcal{V}\times \mathcal{V}}$ be defined by
	\begin{equation}\label{eq:Omega-by-alpha}
	\wt{\Theta}(x,y) = \begin{cases}
	\Omega(x,y)&
	\text{if}\;y \in \mathcal{N}(x), \\
	0 &
	\text{else}
	\end{cases}, \qquad \wt{\alpha}^2(x,y) = 1, \qquad x,y \in \mathcal{V}.
	\end{equation}
	Further, denote by $\Theta,\alpha \in 
	\mathcal{F}_{\overline{\mathcal{V}} 
		\times \overline{\mathcal{V}}}$ the extensions of 
		$\wt{\alpha},\wt{\Theta}$ to $\ol{\mc{V}}\times\ol{\mc{V}}$ by $0$, 
		that is
	\begin{equation}\label{eq:Omega_Dec}
	\Theta(x,y)= \big(\delta_{\mathcal{V}\times \mathcal{V}}(
	\wt{\Theta})\big)(x,y), \quad \alpha(x,y):= 
	\big(\delta_{\mathcal{V}\times \mathcal{V}}(
	\wt{\alpha})\big)(x,y) \quad x,y \in \overline{\mathcal{V}},
	\end{equation}
	where $\delta_{\mathcal{V}\times \mathcal{V}}\colon \Z^{d}\times \Z^{d} 
	\to\{0,1\}$ is the indicator function of the set $\mc{V}\times\mc{V} 
	\subset \Z^{d}\times\Z^{d}$. Then the 
	potential flow \eqref{eq:S-flow-S} with $\Omega$ satisfying $\Omega(x,y) = 
	\Omega(y,x)$ is equivalently represented by 
	the system \eqref{eq:Non_Local_PDE} with an empty interaction domain 
	\eqref{eq:Interaction_Dom}. 
This shows how Proposition \ref{Prop:Non_Local_PDE} generalizes the assignment 
flow introduced in Section \ref{sec:Assignment-Flow} by ignoring the constraint 
\eqref{eq:Omega_AF} imposed on $\Omega$,  
and thus enables to use a broader class of 
parameter matrices $\Omega$ controlling the labeling process; see also Remark 
\ref{rem:Omega_Restriction}.

\subsection{Tangent-Space Parametrization of the $S$-Flow 
G-PDE}\label{sec:Tangent_Par}
Because $S(x,t)$ solving \eqref{eq:Non_Local_PDE} evolves on the non-Euclidean space $\mc{S}$, applying some standard discretization in order to evaluate \eqref{eq:Non_Local_PDE} numerically will not work. Therefore, motivated by the work \cite{Zeilmann:2020aa} on the geometric numerical integration of the original assignment flow system \eqref{eq:assignment-flow}, we devise a  parametrization of \eqref{eq:Non_Local_PDE} on the \textit{flat} tangent space \eqref{eq:def-mcT0} by means of the equation
\begin{equation}\label{eq:T0_parameterization}
	S(t) = \exp_{S^0}(V(t)) \in\mc{W},\qquad V(t)\in\mc{T}_{0},\qquad \quad S^0 
	= 
	S(0)\in\mc{W},
\end{equation}
where analogous to \eqref{eq:Small_exp}
\begin{equation}\label{eq:def-small-exp}
\exp_{S^0}(V(t)) 
= \big(\dotsc,\exp_{S^0(x)}(-V(x,t)),\dotsc\big)^{\T} \in\mc{W}
\end{equation}
with $\exp_{S^{0}(x)}$ given by \eqref{eq:def-lifting-map}. 
Applying $\frac{d}{dt}$ to both sides and using the expression of the 
differential of the mapping $\exp_{S^0}$ due to \cite[Lemma 
3.1]{Savarino:2019ab}, we get
\begin{equation}\label{eq:dt-S-exp-V}
\dot S(t) = R_{\exp_{S^0}(V(t))}\dot V(t)
\overset{\eqref{eq:T0_parameterization}}{=}
R_{S(t)}\dot V(t).
\end{equation}
Comparing this equation and \eqref{eq:S-flow-S}, and taking into account $R_{S}=R_{S}\Pi_{0}$, shows that $V(t)$ solving the nonlinear ODE
\begin{equation}\label{eq:T0_parameterization2}
\dot V(t) = \Pi_{0}\Omega \exp_{S^0}(V(t)),\qquad V(0) = 
0
\end{equation}
determines $S(t)$ by 
\eqref{eq:T0_parameterization} solving \eqref{eq:S-flow-S}. Hence it suffices to focus on 
\eqref{eq:T0_parameterization2} which evolves on the flat space $\mc{T}_{0}$. 
Repeating the derivation above that resulted in the G-PDE 
representation \eqref{eq:Non_Local_PDE} of the $S$-flow \eqref{eq:S-flow-S}, 
yields the  nonlinear PDE representation of \eqref{eq:T0_parameterization2}
	\begin{subequations}\label{eq:Non_Local_PDE_T0}
		\begin{align}
		&\partial_{t}V(x,t) = \Big( \frac{1}{2}\mathcal{D}^{\alpha}\big(\Theta 
		\mathcal{G}^{\alpha}(\exp_{S^0}(V))\big)+\lambda\exp_{S^0}(V) 
		\Big)(x,t) &  
		&\hskip -0.2cm \text{on }  
		\mathcal{V}\times \R_{+},\label{S_Flow_PDE_T0}\\
		&\ol{V}(x,t) = 0 & &\hskip -0.35cm \text{ on } 
		\mathcal{V}^{\alpha}_{\mathcal{I}}\times \R_{+}, 
		\label{S_Flow_T0_boundary1}\\
		&V(x,0) = \overline{V}(x)(0) & & \hskip -0.25cm \text{on } 
		\ol{\mc{V}}\times\R_{+},
		\label{S_Flow_T0_boundary2}
		\end{align}
	\end{subequations}
where $\ol{V}\in\mc{F}_{\ol{\mc{V}},\mc{T}_{0}}$ 
denotes the zero extension of the $\mc{T}_{0}$-valued vector field to 
the interaction domain $\mc{V}_{\mc{I}}^{\alpha}$. 
From the numerical point of view, this new formulation 
\eqref{eq:T0_parameterization}, \eqref{eq:Non_Local_PDE_T0} has the following expedient properties. Firstly, 
using a parameter matrix as specified by \eqref{eq:Avaraging_Matrix} and 
\eqref{eq:Omega_Dec} enables to define the entire system 
\eqref{eq:Non_Local_PDE_T0} on $\mc{V}$. Secondly, since $V(x,t)$ evolves on 
the flat space $T_{0}$, numerical techniques of geometric integration as 
studied by \cite{Zeilmann:2020aa} can here be applied as well. We utilize this 
fact in Section \ref{sec:Ral_Af_PDE} and in Section 
\ref{sec:Non_Convex_Optimization}.

\subsection{Nonlocal Balance Law}\label{sec:Non_Local_PDE_Bal_Law}
A key property of PDE-based models are balance laws implied by the model; see 
\cite[Section 7]{Du:2013aa} for a discussion of various scenarios. The 
following proposition reveals a \textit{nonlocal} balance law of the assignment 
flow based on the novel G-PDE-based parametrization 
\eqref{eq:Non_Local_PDE_T0}, that we express for this purpose in the form
\begin{subequations}\label{eq:balance-PDE}
\begin{align}\label{eq:balance-PDE-a}
\partial_t V(x,t) + \mathcal{D}^{\alpha}(F(V))(x,t) 
&= b(x,t), \qquad b(x,t) 
= \lambda(x) S(x,t),\qquad x \in \mathcal{V},
\\
F(V(t))(x,y) 
&= -\frac{1}{2}\Big( \Theta 
	\mathcal{G}^{\alpha}\big(\exp_{S^0}(V(t))\big)\Big)(x,y),
\end{align}
\end{subequations}
where $S(x,t) = \exp_{S^0}(V(x,t))$ is given by \eqref{eq:T0_parameterization} and 
$\lambda(x)$ is given by \eqref{eq:def-lambda-x}.
\begin{proposition}[\textbf{nonlocal balance law of assignment flows}]\label{Prop:Non_Local_Bal_Law}
	Under the assumptions of Lemma \ref{Help_Lemma}, let $V(t)$ solve \eqref{eq:Non_Local_PDE_T0}.
	Then, for each component $S_{j}(t) = \{S_{j}(x,t)\colon 
	x\in\mc{V}\},\,j\in[c]$, of $S(t) = \exp_{S^0}(V(t))$, the identity 
	\begin{equation}\label{eq:Non_Local_Balance_Law}
	\begin{aligned}
	\frac{1}{2}\frac{d}{d t} \langle S_j, \mathbb{1} \rangle_{\mathcal{V}} &+ 
	\frac{1}{2}\langle 
	\mathcal{G}^{\alpha}(S_j), \Theta \mathcal{G}^{\alpha}(S_j)   
	\rangle_{\overline{\mathcal{V}}\times \overline{\mathcal{V}}}   
	+ \langle S_j, \phi_{S} - \lambda S_j 
	\rangle_{\mathcal{V}}\\
	 &+ 
	\langle S_j, 
	\mathcal{N}^{\alpha}(\Theta \mathcal{G}^{\alpha}(S_j)) 
	\rangle_{\mathcal{V}_{\mathcal{I}^{\alpha}}} = 0
	\end{aligned}
	\end{equation}
	holds, where the inner products are given by \eqref{eq:Inner_Products} and \eqref{eq:def-olV}, and $\phi_S(\cdot) \in \mathcal{F}_{\mathcal{V}}$ is defined in terms of 
	$S(t) \in \mathcal{W}$ by 
	\begin{equation}\label{eq:Phi_Func}
		\phi_S:\mathcal{V} \to \R,\qquad
		x \mapsto \big\langle S(x),\Pi_{0} \big(\Omega S\big)(x)\rangle.
	\end{equation} 
	\end{proposition}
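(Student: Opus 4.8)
The plan is to read \eqref{eq:Non_Local_Balance_Law} as the nonlocal analogue of the classical identity obtained by ``multiplying the equation by the solution and integrating by parts'': I would pair the $j$-th coordinate of the $S$-flow with $S_j$ over the interior vertex set $\mc{V}$ and convert the resulting nonlocal divergence term into a Dirichlet-type energy plus an interaction-domain flux by means of Green's nonlocal first identity \eqref{eq:Green_First_Identity}. By the equivalences established in Sections~\ref{sec:Nonlocal_PDE_Formulation} and~\ref{sec:Tangent_Par}, $S(t)=\exp_{S^0}(V(t))$ with $V(t)$ solving \eqref{eq:Non_Local_PDE_T0} also solves the $S$-flow \eqref{eq:S-flow-S}, $\dot S = R_S(\Omega S)$, with the zero nonlocal Dirichlet condition making $\ol{S}$ vanish on $\mc{V}_{\mc{I}}^{\alpha}$; it therefore suffices to work with that form.

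Concretely, I would first write $\dot S = R_S(\Omega S)$ in coordinate $j$ at a fixed $x\in\mc{V}$: using $R_{S(x)}=\Diag\big(S(x)\big)-S(x)S(x)^{\T}$, the relation $R_{S(x)}=R_{S(x)}\Pi_0$ and $\la S(x),\eins_c\ra = 1$ for $S(x)\in\mc{S}$, this reads $\partial_t S_j(x,t)=S_j(x,t)\big(\Pi_0(\Omega S)(x,t)\big)_j-S_j(x,t)\,\phi_S(x,t)$ with $\phi_S$ exactly as in \eqref{eq:Phi_Func}. Summing over $x\in\mc{V}$ against the constant weight $\eins$ produces $\tfrac{d}{dt}\la S_j,\eins\ra_{\mc{V}}$ on the left and, on the right, $\la S_j,(\Omega S)_j\ra_{\mc{V}}-\la S_j,\phi_S\ra_{\mc{V}}$ up to the normalization term carried by $\Pi_0$, which is constant by \eqref{eq:Omega_AF}. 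Since $\ol{S}|_{\mc{V}_{\mc{I}}^{\alpha}}=0$, Lemma~\ref{Help_Lemma} applies coordinate-wise and lets me replace $(\Omega S)_j$ by the divergence form $\tfrac12\mc{D}^{\alpha}\big(\Theta\mc{G}^{\alpha}(S_j)\big)+\lambda S_j$; then Green's nonlocal first identity \eqref{eq:Green_First_Identity} with $u=S_j$ and the two-point function $F=\Theta\mc{G}^{\alpha}(S_j)$ (admissible because $\Theta$ is symmetric) turns $\la S_j,\mc{D}^{\alpha}(\Theta\mc{G}^{\alpha}(S_j))\ra_{\mc{V}}$ into the nonlocal Dirichlet energy $\la\mc{G}^{\alpha}(S_j),\Theta\mc{G}^{\alpha}(S_j)\ra_{\ol{\mc{V}}\times\ol{\mc{V}}}$ together with the interaction-domain flux $\la S_j,\mc{N}^{\alpha}(\Theta\mc{G}^{\alpha}(S_j))\ra_{\mc{V}_{\mc{I}}^{\alpha}}$ (the latter vanishing under the imposed zero Dirichlet condition, but retained so that \eqref{eq:Non_Local_Balance_Law} has the standard shape of a nonlocal balance law, cf.~\cite[Section~7]{Du:2013aa}). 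Collecting the $\mc{G}^{\alpha}$-, $\mc{N}^{\alpha}$-, $\lambda$- and $\phi_S$-contributions and rearranging yields \eqref{eq:Non_Local_Balance_Law}.

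The main obstacle is not a single estimate but the careful bookkeeping of several interacting pieces: (i)~the splitting of the ambient vector $(\Omega S)(x)\in\R^{c}$ into its $T_0$-component, which contracts against $S(x)$ to the scalar $\phi_S(x)=\la S(x),\Pi_0(\Omega S)(x)\ra$, and its $\eins_c$-direction, whose contraction with $S(x)$ is controlled by the normalization \eqref{eq:Omega_AF}; (ii)~keeping the summation ranges straight, since $\mc{D}^{\alpha}$ and $\mc{G}^{\alpha}$ produce sums over $\ol{\mc{V}}$ resp.~$\ol{\mc{V}}\times\ol{\mc{V}}$ while the test function $S_j$ only lives on $\mc{V}$ and $\mc{N}^{\alpha}$ is supported on $\mc{V}_{\mc{I}}^{\alpha}$; and (iii)~matching the factor $\tfrac12$ coming from the definition \eqref{eq:Non_Local_Dif} of the nonlocal diffusion operator (through Lemma~\ref{Help_Lemma}) with the one produced by Green's identity. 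The only place where the boundary condition \eqref{S_Flow_T0_boundary1} genuinely enters is in guaranteeing that $S_j$, extended by zero to $\mc{V}_{\mc{I}}^{\alpha}$, satisfies the hypothesis of Lemma~\ref{Help_Lemma} and simultaneously annihilates the boundary term produced by \eqref{eq:Green_First_Identity}.
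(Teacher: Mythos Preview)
Your approach is essentially the paper's: test the $j$-th component against $S_j$ on $\mc{V}$, apply Green's nonlocal first identity \eqref{eq:Green_First_Identity} with $u=S_j$ and $F=\Theta\mc{G}^{\alpha}(S_j)$, and track the $\phi_S$-term coming from the replicator structure. The only cosmetic difference is that the paper starts from the tangent-space form \eqref{eq:balance-PDE} for $V$, multiplies by $S_j$, applies Green, and then converts $(S\odot\dot V)_j$ back to $\dot S_j+\phi_S\,S_j$ via the chain-rule identity $\dot S=S\odot\dot V-\la S,\dot V\ra S$ for $S=\exp_{S^0}(V)$, whereas you work directly with $\dot S=R_S(\Omega S)$ and invoke Lemma~\ref{Help_Lemma} before Green; this merely reorders the last two steps and avoids the detour through $V$.
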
   
\noindent
\textit{Proof.} Appendix \ref{app:sec:Non_Local_PDE_Bal_Law}.

\vspace{0.3cm}
The nonlocal balance law 
\eqref{eq:Non_Local_Balance_Law} comprises four terms. Since $\sum_{j\in[c]}S_{j}(x)=1$ at each vertex $x\in\mc{V}$, the first term of \eqref{eq:Non_Local_Balance_Law} measures the \textit{rate of `mass'} assigned to label $j$ over the entire image. This rate is governed by two interacting processes corresponding to the three remaining terms:
\begin{enumerate}[(i)]
\item \textit{spatial propagation of assignment mass} through the nonlocal diffusion process including nonlocal boundary conditions (second and fourth term);
\item \textit{exchange of assignment mass} with the remaining labels $\{l \in [c]\colon l\neq j\}$  (third term comprising the function $\phi_{S}$ \eqref{eq:Phi_Func}).
\end{enumerate}
We point out that other approaches to image labeling, including Markov random fields and deep networks, do \textit{not} reveal the flow of information during inference in such an explicit manner.

\subsection{Illustration: Parametrization and Nonlocal Boundary Conditions}

In this section, we illustrate two 
aspects of the mathematical results presented above by numerical results:
\begin{enumerate}[(1)]
	\item The use of \textit{geometric integration} for numerically 
	solving the nonlocal G-PDE \eqref{eq:Non_Local_PDE}. 
	Here we exploit a basic numerical scheme established for the 
	assignment flow \eqref{eq:S-flow-S} and the one-to-one 
	correspondence to the nonlocal G-PDE 
	\eqref{eq:Non_Local_PDE}, due to 
	Proposition \ref{Prop:Non_Local_PDE}.
	\item The effect of zero vs.~non-zero nonlocal 
	Dirichlet boundary conditions and uniform vs.~non-uniform 
	parametrizations \eqref{eq:Avaraging_Matrix}. Using non-zero 
	boundary conditions refers to the observation stated above in 
	connection with Equations \eqref{eq:Omega-by-alpha}, 
	\eqref{eq:Omega_Dec}: the nonlocal G-PDE 
	\eqref{eq:Non_Local_PDE} 
	generalizes the assignment flow when constraints are dropped. Here 
	specifically: the homogeneous Dirichlet boundary condition may be 
	non-homogeneous, and the constraint \eqref{eq:Omega_AF} is ignored; 
	see also Remark \ref{rem:Omega_Restriction}.
\end{enumerate}
Topic (1) is addressed here to explain how the results illustrating topic (2) were computed, and to set the stage for Section \ref{sec:Non_Convex_Optimization} that presents an advanced numerical scheme. Item (2) merely illustrates basic choices of the parametrization and boundary conditions. More advanced generalizations of the assignment flow are conceivable, but beyond the scope of this paper; see Section \ref{sec:Conclusion}.

\subsubsection{Numerically Solving the Nonlocal G-PDE By 
Geometric Integration} 
\label{sec:Ral_Af_PDE} 
According to 
Section \ref{sec:Tangent_Par}, imposing the 
homogeneous Dirichlet condition via the interaction domain 
\eqref{eq:Interaction_Dom} makes the right-hand side of \eqref{S_Flow_PDE_T0} 
equivalent to 
\eqref{eq:T0_parameterization2}. Applying to \eqref{S_Flow_PDE_T0} a
simple explicit time discretization with stepsize $h$ 
results in the 
iterative update formula 
\begin{align}\label{eq:Update_Tangent0}
V(x,t+h) \approx V(x,t) +h\Pi_{0}\exp_{S^0(x)}(\Omega V(x,t)),\qquad h>0.
\end{align}
By virtue of the parametrization \eqref{eq:T0_parameterization}, one recovers 
with any nonnegative symmetric mapping $\Omega$ as in Lemma \ref{Help_Lemma} 
the 
\emph{explicit geometric Euler} scheme on $\mc{W}$ 
\begin{subequations}\label{eq:geometric_euler}
\begin{align}
S(t+h) 
&\approx \exp_{S^0}\big( V(t) + h\dot V(t) \big)
\overset{\substack{\eqref{eq:exp-sum} \\\eqref{eq:T0_parameterization}}}{=} 
\exp_{S(t)}\big(h\dot V(t)\big)
\\
&\overset{\substack{\eqref{eq:exp-constant} \\\eqref{eq:T0_parameterization2}}}{=}
\exp_{S(t)}\big( h\Omega S(t)\big).
\end{align}
\end{subequations}
Higher order geometric integration methods \cite{Zeilmann:2020aa} generalizing 
\eqref{eq:geometric_euler} can be applied in a similar way.
This provides new  
perspective on solving a certain class of nonlocal G-PDEs 
numerically, conforming 
to the
underlying geometry, as we demonstrate in Section \ref{sec:higher-order-geometric}.  

\subsubsection{Basic Parametrizations, Effect of 
Nonlocal Dirichlet Boundary Conditions} 
\label{sec:Num_Ex}
We consider two different parametrizations as well as zero and non-zero nonlocal Dirichlet boundary conditions.  
\begin{description}
\item[Uniform parametrization] Mappings $\Theta,\alpha\in\mc{F}_{\ol{\mc{V}}\times\ol{\mc{V}}}$ are given by
\begin{subequations}
\begin{align}
|\mc{N}(x)|&=\mc{N},\;\forall x,\qquad
|\mc{N}|=(2 k+1)\times (2 k+1),\; k\in\N
\label{eq:mcN-uniform} \\ \label{eq:uniform-params}
\alpha^{2}(x,y) &= \begin{cases}
\frac{1}{(2 k+1)^{2}} &\text{if}\;y\in\mc{N}(x) \\
0 &\text{otherwise}
\end{cases},
\qquad\qquad
\Theta(x,y) = \begin{cases}
\frac{1}{(2 k+1)^{2}} &\text{if}\;x=y \\
1 &\text{otherwise}
\end{cases}.
\end{align}
\end{subequations}
\item[Nonuniform parametrization] Uniform 
neighborhoods as in 
\eqref{eq:mcN-uniform} and mappings 
$\Theta,\alpha\in\mc{F}_{\ol{\mc{V}}\times\ol{\mc{V}}}$ by
\begin{equation}\label{eq:nonuniform-params}
	\begin{aligned}
\alpha^{2}(x,y) &= 
	\begin{cases} e^{-\frac{\|x-y\|^{2}}{2\sigma_{s}^{2}}} \quad 
	&\text{if } y \in \mc{N}(x)\\
	0 \quad &\text{otherwise}
\end{cases}, 
\qquad \qquad &\sigma_{s}>0, \\
\Theta(x,y) &= \begin{cases}
	e^{-G_{\sigma_{p}}\ast\|s(x)-s(y)\|^{2}} \quad &\text{if }y \in 
	\mathcal{N}(x)\\
	0 \quad &\text{otherwise}
\end{cases},
	\qquad \qquad  &\sigma_{p}>0,
\end{aligned}
\end{equation}
where the nonlocal function $\Theta$ is designed using a patchwise 
similarity measure analogous to the basic nonlocal means approach 
\cite{Buades:2010aa}: $s(x)=\{s(x,z)\colon z \in \ol{\mc{V}},s(x,z) = 
\ol{X}(z)\}$ with $\ol{X} \in \mc{F}_{\ol{\mc{V}},\R^c}$ denoting the 
zero extension of data $X \in \mc{F}_{\mc{V},\R^c}$ to 
$\mc{V}^{\alpha}_{\mc{I}}$. $G_{\sigma_{p}}$ is 
the Gaussian kernel at scale $\sigma_{p}$ and $\ast$ denotes spatial convolution. 
\end{description}
We iterated \eqref{eq:geometric_euler} with step size $h = 1$ until 
assignment states  \eqref{eq:W_fl_couples} of low average entropy $10^{-3}$ 
were reached. 
To ensure a fair comparison and to assess solely the effects of the boundary 
conditions through nonlocal regularization, we initialized 
\eqref{eq:Non_Local_PDE} in the same way as \eqref{eq:S-flow-S} and adopted an 
uniform encoding of the 31 labels as described by \cite[Figure 
6]{Astrom:2017ac}. 

Figure \ref{fig:Regularization_comparison} depicts labelings computed using the uniform parametrization with zero and non-zero nonlocal Dirichlet boundary conditions, respectively. Inspecting panels (c) (zero boundary condition) and (d) 
(non-zero boundary condition) shows that using the latter may improve labeling
near the boundary (cf.~close-up views), whereas the labelings almost agree in the interior of the domain. 

Figure \ref{fig:Entropy_Plot} 
shows how the average entropy values of label assignments decrease as the iteration proceeds (left panel) and 
the number of iterations required to converge (right panel), for different 
neighborhood sizes. Moreover, a closer look on the right 
panel 
of Figure \ref{fig:Entropy_Plot} reveals besides a slightly slower convergence 
of the scheme \eqref{eq:Update_Tangent0} applied to the nonlocal G-PDE
\eqref{eq:Non_Local_PDE_T0} (red curve), the dependence of number of iterations required until convergence is 
comparable to the $S$-flow (green curve). Consequently, generalizing the $S$-flow 
by the nonlocal model \eqref{eq:Non_Local_PDE} does not have a detrimental effect 
on the overall numerical behavior.  
  We observe, in particular, that integral label assignments 
corresponding to zero entropy are achieved no matter 
which boundary condition is used, at 
comparable computational costs.

\begin{figure}
	\includegraphics[width = \textwidth]{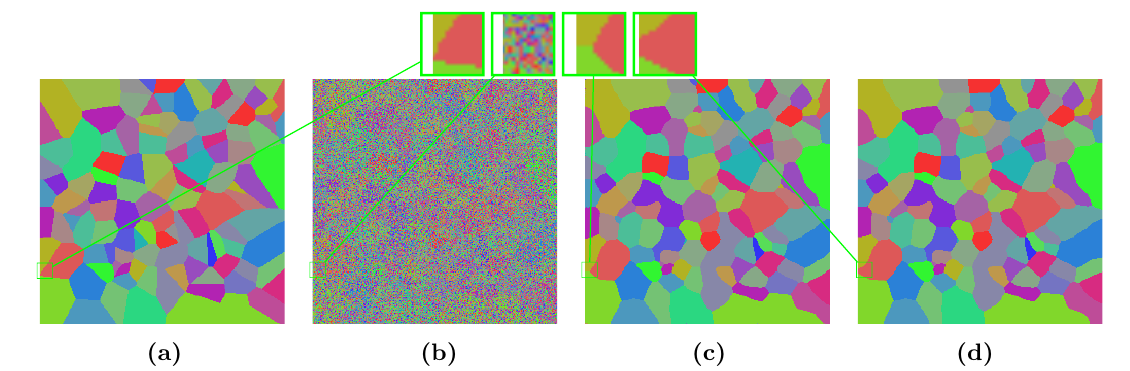}
	\vspace{-0.8cm}
	\captionsetup{font=footnotesize}
	\caption{Labeling through the nonlocal 
	geometric assignment flow with uniform 
	parametrization \eqref{eq:uniform-params} and 
	neighborhood size $|\mc{N}|=7$. \textbf{(a)} Ground 
		truth with 31 labels. 
		\textbf{(b)} Noisy input data  
		used to evaluate \eqref{eq:S-flow-S} and \eqref{eq:Non_Local_PDE}, respectively. 
		\textbf{(c)} Labeling returned when using the zero nonlocal Dirichlet boundary condition. \textbf{(d)} Labeling returned when using the non-zero nonlocal Dirichlet boundary condition (uniform extension to the interaction domain). 
The close-up views 
		show differences close to the boundary, whereas the results in the 
		interior domain are almost equal. }
	\label{fig:Regularization_comparison}
\end{figure}
\begin{figure}
	\centering
	\scalebox{0.8}{
		\begin{subfigure}[t]{0.6\textwidth}
			\scalebox{0.7}{\begin{tikzpicture}
\begin{axis}[
		%nodes near coords,
		every other node near coord/.append style={font=\tiny},
	legend style={row sep=0.05cm},
	height=8cm,
	width = 10cm,
	legend entries={{\small $|\mathcal{N}| = 7\times7$},{\small $|\mathcal{N}| 
	= 
	9\times9$},{\small $|\mathcal{N}| = 11\times11$},{\small $|\mathcal{N}| = 
	13\times13$}},
	legend pos=south west,
		mark repeat=2,
		xlabel = Number of iterations,
		ylabel = Average entropy,
		xmin=0,
		xmax=120,
		scaled ticks=false,
		/pgf/number format/.cd,
		use comma,
		1000 sep={}
		]
	\addplot[line join=round,mark = square*,color = neigh_7,line width=0.7pt]
	coordinates {(10,0.9997) (20,0.9994) (30,0.9988) (40,0.9972) (50,0.9912) 
		(60,0.9372) (70,0.5762) (80,0.1676) (90,0.0313) (100,0.0102) 
		(110,0.0063) (120,0.0047)};
	]			
	\addplot[line join=round,mark = trianlge*,color = neigh_9,line width=0.7pt]
	coordinates {(10,0.9998) (20,0.9995) (30,0.9990) (40,0.9978) (50,0.9939) 
		(60,0.9680) (70,0.7154) (80,0.2597) (90,0.0561) (100,0.0155) 
		(110,0.0088) (120,0.0064)};
	]
	\addplot[line join=round,mark = *,color = neigh_11,line width=0.7pt]
	coordinates {(10,0.9998) (20,0.9996) (30,0.9992) (40,0.9982) (50,0.9954) 
		(60,0.9810) (70,0.8233) (80,0.3901) (90,0.1006) (100,0.0276) 
		(110,0.0124) (120,0.0085)};
	]		
	\addplot[line join=round,mark = star,color = neigh_13,line width=0.7pt]
coordinates {(10,0.9998) (20,0.9997) (30,0.9993) (40,0.9986) (50,0.9964) 
	(60,0.9875) (70,0.8929) (80,0.5351) (90,0.1771) (100,0.0483) (110, 0.0189) 
	(120,0.0113)};

		\end{axis}
\end{tikzpicture}}
	\end{subfigure}}
	\scalebox{0.8}{
		\begin{subfigure}[t]{0.5\textwidth}
			\scalebox{0.7}{\begin{tikzpicture}
\begin{axis}[
%nodes near coords,
every other node near coord/.append style={font=\tiny},
legend columns=4,
legend style={row sep=1.58cm},
height=8cm,
width = 10cm,
legend entries={{\small $\mathcal{V}^{\alpha}_{\mathcal{I}} \neq 
\emptyset$},{\small $S$-Flow}},
legend pos=south east,
mark repeat=2,
xlabel = Neighbourhood size,
ylabel = Iterations,
xmin=0,
xmax=19,
scaled ticks=false,
/pgf/number format/.cd,
use comma,
1000 sep={}
]
%\addplot[line join=round,mark = star,color = Layer_3]
%	coordinates {(05,9035) (20,13959) (40,15868) (60,16159) (80,16912) 
%	(100,16681) (120,16840) (140,17341) (180,17747) (200,17768)};
%]
\addplot[line join=round,mark = square*,color = red]
coordinates {(3,93) (5,96) (7,101) (9,107) (11,116) 
	(13,124) (15,129) (17,136)};
]			
\addplot[line join=round,mark = square*,color = green]
coordinates {(3,92) (5,94) (7,99) (9,105) (11,112) 
	(13,120) (15,124) (17,130)};
]
%		\addplot table[line join=round,col sep=comma, y=PlusCost, 
%		x=Day]{DollarCommitment.csv};
%\addlegendentry{{\scriptsize +Cost}}
\end{axis}
\end{tikzpicture}}
	\end{subfigure}}
	\captionsetup{font=footnotesize}
	\caption{\textbf{Left: }Convergence rates of the scheme 
		\eqref{eq:geometric_euler} solving \eqref{eq:Non_Local_PDE} with 
		nonzero nonlocal Dirichlet boundary condition. The 
		convergence behavior is rather insensitive with respect to the 
		neighborhood size $|\mc{N}|$. 
		\textbf{Right:} Number of iterations until convergence for 
		\eqref{eq:Non_Local_PDE} (\protect\markertwo) and
		\eqref{eq:S-flow-S} 
		(\protect\markerone), with zero nonlocal boundary condition in the 
		latter case. The result shows that different nonlocal 
		boundary conditions have only a minor influence on the required number of geometric integration steps.}
	\label{fig:Entropy_Plot}
\end{figure}
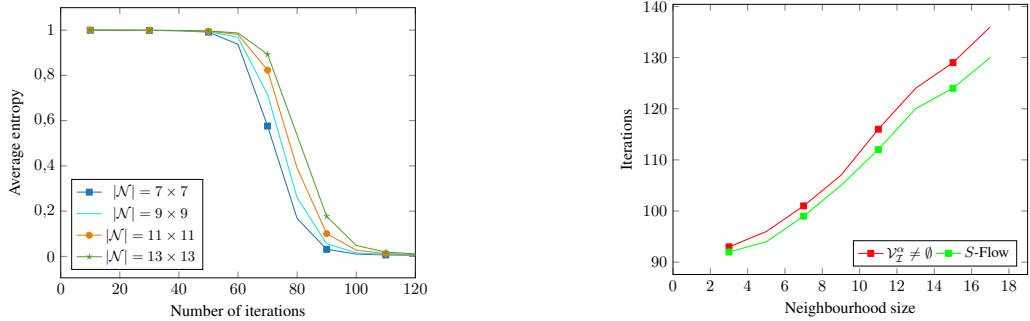

\begin{figure}
	\centering
	\includegraphics[width=\textwidth]{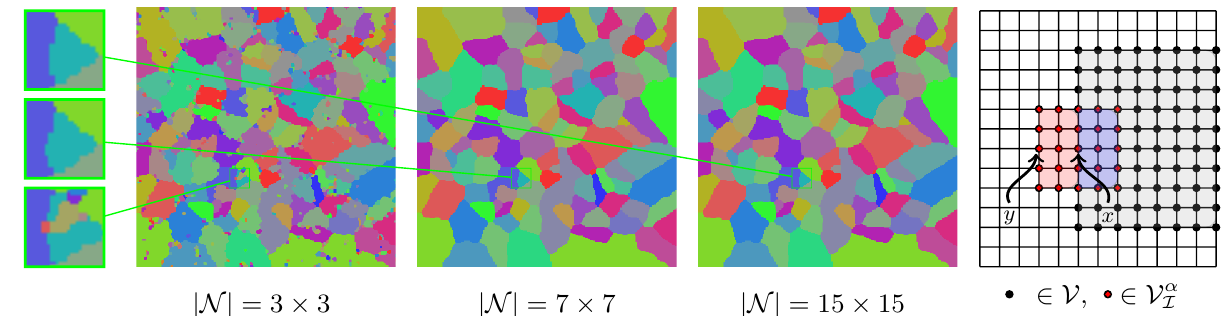}
	\captionsetup{font=footnotesize}
	\vspace{-0.5cm}
	\caption{\textbf{From left to 
			right:} Labeling results using \eqref{eq:Non_Local_PDE} 
			with the non-uniform parametrization 
			\eqref{eq:nonuniform-params}, zero non-local Dirichlet 
			boundary conditions and neighborhood sizes
		$|\mathcal{N}| \in\{3\times 3, 7\times 7, 15 \times 15\}$. Schematic 
		illustration of the 
		nonlocal interaction domain $y \in \mathcal{V}_{\mathcal{I}}^{\alpha}$ 
		(red 
		area) induced by nodes (blue area) in 
		$\mc{N}(x)$ with $|\mc{N}|=5\times 
		5$. Using nonuniform 
		weights \eqref{eq:nonuniform-params} improves labeling accuracy.}
	\label{fig:Segmentation_Non_Local_Mean}
	\vspace{-0.5cm}
\end{figure}

Iterating    
\eqref{eq:geometric_euler} with step size $h = 0.1$ and $\sigma_s = 1,\sigma_p 
= 5$ in \eqref{eq:nonuniform-params}  
 yields labeling results for different patch sizes as depicted by 
 Figure \ref{fig:Segmentation_Non_Local_Mean}. As 
 opposed to 
 segmentation results obtained with uniform parametrization 
 \eqref{eq:uniform-params} for $\mc{N} 
 = 7$ depicted in 
 Figure
 \ref{fig:Regularization_comparison}(d), a direct comparison with  
 Figure \ref{fig:Segmentation_Non_Local_Mean} (close up views) 
 indicates more accurate labelings when using
 regularization as given by the nonuniform 
 parametrization \eqref{eq:nonuniform-params}. 

%%%
\section{Related Work}\label{sec:Related-Work-PDE}
 In this section, we discuss how the system 
\eqref{eq:Non_Local_PDE} relates to 
approaches based on PDEs and variational models in the literature. Specifically, we conduct an \textit{analytical ablation} 
study of the 
nonlocal model \eqref{eq:Non_Local_PDE} in order to clarify the impact of omitting 
operators of the nonlocal model and the connection to existing methods. We exhibit 
both structural similarities from the viewpoint of diffusion processes and 
differences that account for the \textit{different scope} of our approach: \textit{labeling} 
metric data on graphs.

\subsection{General Nonlocal Processes on Graphs}\label{sec:general_processes}
We consider again the identity \eqref{eq:Reform_Av} that defines the 
nonlocal 
G-PDE \eqref{eq:Non_Local_PDE} in terms of \textit{symmetric} 
parameter mapping 
\eqref{eq:Avaraging_Matrix} and show next how \eqref{eq:Reform_Av} is 
generalized when a \textit{nonsymmetric} parameter matrix $\Omega \in \mathcal{F}_{\Z^d\times 
\Z^d}$ is used. Specifically, suppose a kernel $k \in 
\mathcal{F}_{\Z^d \times \Z^d}$ is given and the induced nonlocal functional
	\begin{equation}\label{eq:rem_nonlocal_diffusion_general}
		\mathcal{L}_{k}f(x) = \sum \limits_{y \in 
			\Z^d}\big(f(y)k(y,x)-f(x)k(x,y)\big).  
	\end{equation}
	Then, for a mapping
	$\alpha$ that 
	satisfies 
	$\alpha^2(x,y)  = 1$ whenever $k(x,y) \neq 0$, the decomposition
	\begin{equation}
		k = k^s+k^a \quad \text{with} \quad  k^s 
		= \frac{k + 
			k^{'}}{2} ,\quad  k^a = \frac{k - k^{'}}{2}, \qquad k^{'}(x,y) 
		\coloneqq 
		k(y,x), \quad x,y \in \Z^d,
	\end{equation}  
	results in the representation 
	\begin{equation}\label{eq:rem_kernel_repr}
	k(x,y) = \begin{cases}
			2 \Theta(x,y) \alpha^2(x,y) + \alpha(x,y) \nu (x,y) \quad &x \neq 
			y, \\
			2\Theta(x,x) \quad &x = y
		\end{cases} 
	\end{equation}	
	of the kernel $k$ in terms of 
	$\alpha,\Theta \in \mathcal{F}_{\Z^d \times \Z^d}$ and $\nu \in 
	\mathcal{F}_{\Z^d \times \Z^d}$ given by 
	\begin{equation}
		\Theta(x,y) \coloneqq \frac{1}{2} k^s(x,y), \qquad 
		\nu(x,y) \coloneqq k^a(x,y)\alpha(x,y),
	\end{equation}
	where the mapping $\nu$ is 
	a symmetric due to the antisymmetry of $\alpha$. Inserting 
	\eqref{eq:rem_kernel_repr} 
	into \eqref{eq:rem_nonlocal_diffusion_general} yields 
	\begin{equation}
		\mathcal{L}_k f(x) = 2 \sum \limits_{y \in \Z^d} 
		\Theta(x,y)\alpha^2(x,y)\big(f(y)-f(x)\big)-\sum \limits_{y \in \Z^d} 
		\alpha(x,y)\nu(x,y)\big(f(y)-f(x) \big).
	\end{equation}
	and applying nonlocal calculus of Section \ref{sec:Nonlocal-Calculus} 
	along with Lemma \eqref{Help_Lemma}, we arrive at an equivalent 
	representation 
	of $\mathcal{L}_k$ through nonlocal divergence and gradient operators 
	\begin{equation}\label{eq:rem_interaction_domain}
		\mathcal{L}_k f (x) \overset{\eqref{eq:rem_kernel_repr}}{=} 
		\underbrace{\mathcal{D}^{\alpha}\big( \Theta 
		\mathcal{G}^{\alpha}(f) \big)(x)}_\text{diffusion} - 
	\underbrace{\mathcal{D}^{\alpha}(\nu f)(x)}_\text{convection} + 
	\underbrace{\lambda(x) f(x)}_\text{fidelity},
	\end{equation} 
where $\nu$ plays the role of the convection parameter. Consequently, on a grid 
graph $\mc{G}$ with $\mathcal{V} \subset \Z^d$ and setting $\Omega$ by \eqref{eq:rem_kernel_repr}, 
we get  
	\begin{subequations}\label{eq:Nonlocal_Generall}
	\begin{align}	
		&\partial_t S(x,t) = R_{S(x,t)}\Big(\mathcal{D}^{\alpha}\big( \Theta 
		\mathcal{G}^{\alpha}(S) \big) - 
		\mathcal{D}^{\alpha}(\nu S)\Big)(x,t) + \lambda(x) S(x,t) &\qquad 
		&\text{on} \quad \mathcal{V}\times \R_{+}, \\
		&\ol{S}(x,t) = 0 &\qquad &\text{on} \quad
		\mathcal{V}_{\mathcal{I}}^{\alpha}\times \R_{+}, \\
		&\ol{S}(x,0) = S(x)(0) &\qquad &\text{on} \quad 
		\ol{\mc{V}}\times\R_{+},
	\end{align}
	\end{subequations}
	with the interaction domain \eqref{eq:Interaction_Dom} directly 
expressed through the connectivity of kernel $k$ by
\begin{equation}
	\mathcal{V}_{\mathcal{I}}^{\alpha} = \{ x \in \Z^d\setminus \mathcal{V} 
	\colon k(x,y) \neq 0 \text{ for some }  y \in \mathcal{V} \}.
\end{equation}  
In view of \eqref{eq:Nonlocal_Generall}, we therefore recognize 
the system \eqref{eq:Non_Local_PDE} as \textit{specific} nonlocal process that is 
induced by a \textit{nonnegative} 
\textit{symmetric} kernels $k$ with nonzero fidelity parameter $\lambda$, that 
account 
for nontrivial steady state solutions and zero 
convection $(\nu(x,y) = 0)$.

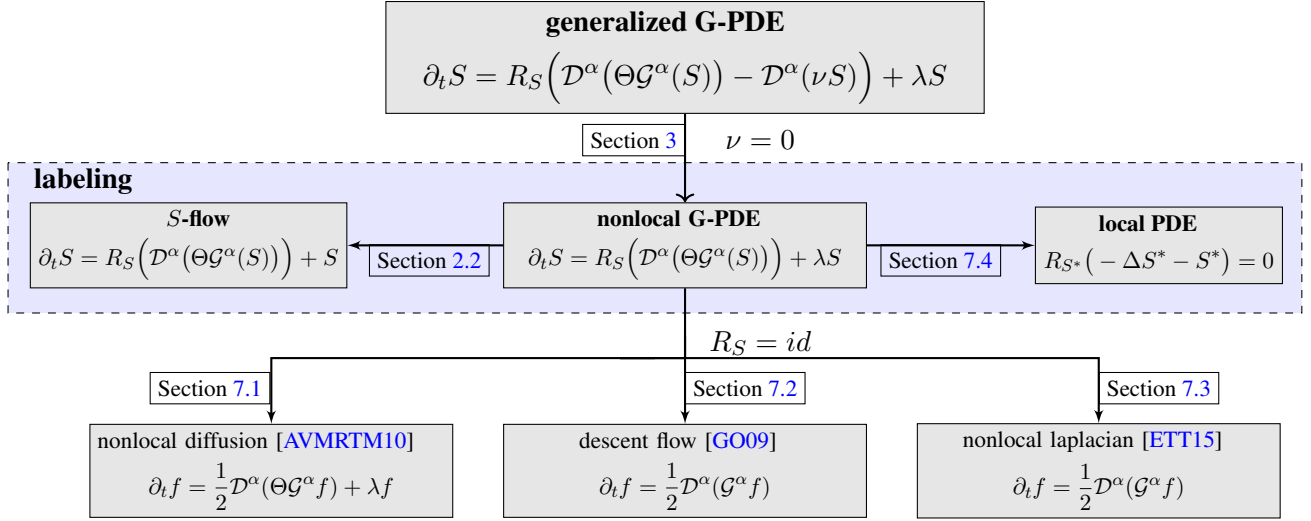
\begin{figure}
\begin{adjustwidth}{-0.1cm}{}
\begin{tikzpicture}[
	box/.style={rectangle,draw,fill=blue!10,node distance=1cm,text 
		width=15em,text centered,rounded corners,minimum height=2em,thick},
	arrow/.style={draw,-latex',thick},
	]
	\draw[draw,fill = blue!10,dashed] (-6,-1.4) rectangle ++(17.2,-2);
	\node[fill=gray!20,draw,text width=20 em] (G-G-PDE) at (3,0) 
	{\hspace{2cm}\textbf{generalized 
	G-PDE} \begin{equation*}
		\partial_t S = R_{S}\Big(\mathcal{D}^{\alpha}\big( \Theta 
		\mathcal{G}^{\alpha}(S) \big) - 
		\mathcal{D}^{\alpha}(\nu S)\Big) + \lambda S
\end{equation*}};
\node (nu) at (4,-1.1) {$\nu = 0$};
\node[fill=gray!20, draw,text width=15em,scale=0.8] (G-PDE) at 
(3,-2.5){\hspace{1.3cm} 
\textbf{nonlocal} 
\textbf{G-PDE} 
\begin{equation*}
		\partial_t S = R_{S}\Big(\mathcal{D}^{\alpha}\big( \Theta 
		\mathcal{G}^{\alpha}(S) \big)\Big) + \lambda S
	\end{equation*}};
\node[draw,left= 0.4cm of nu,scale=0.8] {Section \ref{sec:Non_Local_PDE}};
\path[->,draw,thick] (G-G-PDE) -- (G-PDE);
\node[fill = gray!20,draw,text width=15 em,scale=0.8] (NLD) at (-2.5,-5.5) 
{nonlocal 
diffusion \cite{Andreu-Vaillo:2010aa} \begin{equation*}
		\partial_{t} f = \frac{1}{2}\mathcal{D}^{\alpha}(\Theta 
		\mathcal{G}^{\alpha}f)+\lambda f
\end{equation*}}; 
\node (Left) at (-2.5,-4) {};  
\node (Middle) at (3,-4) {};
\node (Right) at (2,-4) {};
\node[fill = gray!20,draw,text width=13em,scale=0.8] (AF) at (-3.6,-2.5) 
{\hspace{2cm} \textbf{$S$-flow} 
\begin{equation*}
	\partial_t S = R_{S}\Big(\mathcal{D}^{\alpha}\big( \Theta 
	\mathcal{G}^{\alpha}(S) \big)\Big) + S
\end{equation*}};
\node[draw,scale=0.8] at (-0.4,-2.7) {Section \ref{sec:Assignment-Flow}};
\node[fill = gray!20,draw,text width=10em,scale=0.8] (LAF) at (9.3,-2.5) 
{ \hspace{0.8cm} \textbf{local PDE} 
\begin{equation*}
		R_{S^{\ast}}\big(-\Delta S^{\ast} - S^{\ast} \big) = 0 
\end{equation*}};
\node[draw,scale=0.8] at (6.4,-2.7) {Section 
	\ref{sec:Exp-subsec-4}};
\node[draw,scale=0.8] at (-3.315,-4.4) {Section 
	\ref{sec:Exp-subsec-1}};
\node[draw,scale=0.8] at (3.8,-4.4) {Section 
	\ref{sec:Exp-subsec-2}};
\node[draw,scale=0.8] at (9.3,-4.4) {Section 
	\ref{sec:Exp-subsec-3}};
\node[fill = gray!20,draw,text width=15em,scale=0.8] (NP) at (8.5,-5.5) 
{\hspace{0.5cm} nonlocal 
laplacian 
	\cite{Elmoataz:2015aa} \begin{equation*}
		\partial_{t} f = \frac{1}{2}\mathcal{D}^{\alpha}(\mathcal{G}^{\alpha}f)
\end{equation*}};
\path[draw,thick,arrow] (G-PDE) |- (Left)-| (NLD);
\node[fill = gray!20,draw,text width=15em , scale=0.8] (DF) at (3,-5.5) 
{\hspace{1cm} descent 
flow \cite{Gilboa:2009aa} 
\begin{equation*}
	\partial_{t} f = \frac{1}{2}\mathcal{D}^{\alpha}(\mathcal{G}^{\alpha}f)
\end{equation*}};
\path[draw,thick,arrow] (3,-3.7) -- (DF);
\path[draw,thick,arrow] (G-PDE) -- (AF);
\path[draw,thick,arrow] (G-PDE) -- (LAF);
\path[draw,thick,arrow] (2.6,-4) -| (Right) -| (NP);
\node at (4,-3.8) {$R_S = id$};
\node at (-5,-1.65) {\textbf{labeling}};
\end{tikzpicture}
\end{adjustwidth}
\captionsetup{font=footnotesize}
\caption{\textbf{Overview of nonlocal diffusion processes} proposed in 
related work  
\cite{Elmoataz:2015aa,Gilboa:2009aa,Andreu-Vaillo:2010aa} and their 
interrelations to the nonlocal G-PDE \eqref{eq:Nonlocal_Generall}. The approaches highlighted by the blue region only model the image labeling problem. Edge labels 
refer to the corresponding sections of the analytical ablation study.}
\label{fig:ablation_study}
\end{figure}

\begin{center}
	\begin{table}
		\begin{tabularx}{\linewidth}{llc@{\hskip 1em}c@{\hskip 1em}c@{\hskip 
		1em}c}
			& & & \hskip -3cm \textbf{Labeling} & & \hskip -5cm 
			\textbf{Denoising and 
				Inpainting}   
			\\
			\addlinespace
			&\textbf{Parameters} &\textbf{G-PDE \eqref{eq:Non_Local_PDE}} & 
			\textbf{Local 
				PDE} 
			\cite{Savarino:2019ab}   & \textbf{Nonl.~Laplacian} 
			\cite{Elmoataz:2015aa} &\textbf{Descent Flow}  
			\cite{Gilboa:2009aa} \\
			\hline
			\addlinespace
			& $\Theta \geq 0$ &\cmark  &\xmark         & \xmark        & \xmark 
			\\ 
			& $\lambda$&$\lambda > 0$  &$\lambda$ = 1  & $\lambda = 0$ 
			&$\lambda = 
			0$  \\
			& $R_S$    &\cmark  &\cmark         &\xmark         &\xmark  \\
			& $\nu$    &\xmark  &\xmark         &\xmark         &\xmark\\
			&$\mathcal{V}_{\mathcal{I}}^{\alpha}$ 
			&$\subseteq \Z^d \setminus 
			\mathcal{V}$ &$\partial \mathcal{V}^h$ &$\partial \mathcal{A} 
			\subset \mathcal{V}$ 
			&$\emptyset$ \\
			&$S^{\ast} (t \to \infty)$  &\cmark  &\cmark  &\xmark &\xmark \\
		\end{tabularx}
		\captionsetup{font=footnotesize}
		\caption{ \textbf{Summary of the analytical 
				ablation study.} Key differences of our approach to existing nonlocal diffusion 
				models are inclusion of the replicator 
				operator 
				$R_S$ and a nonzero fidelity term $\lambda S$ 
				that 
				results in nontrivial solution at the steady state 
				$S^{\ast}=S(t = 
				\infty)$. }
			\label{tab:ablation_study}
	\end{table}
\end{center}

 In the following sections, we relate different established nonlocal models to 
 the proposed G-PDE \eqref{eq:Non_Local_PDE} by adapting 
 the parameter mappings $\Theta,\alpha \in \mathcal{F}_{\ol{\mathcal{V}} \times 
 \ol{\mathcal{V}} }$ that parametrize the G-PDE and determine the interaction domain \eqref{eq:Interaction_Dom}. Figure \ref{fig:ablation_study} provides an overview of the analytical ablation study by specifying the model and the corresponding section where it is derived from the generalized G-PDE \eqref{eq:Nonlocal_Generall}. Table 
 \ref{tab:ablation_study} lists the involved parameters for each model.

\subsection{Relation to a Local PDE that Characterizes Labelings}
We focus on the connection of the system \eqref{eq:Non_Local_PDE} and the 
\textit{continuous-domain local} formulation of \eqref{eq:S-flow-S} on an open  
simply connected 
bounded domain $\mathcal{D} \subset \R^2$, as introduced by
\cite{Savarino:2019ab}. The \textit{variational} formulation has been 
rigorously derived in \cite{Savarino:2019ab} along with a PDE that formally 
characterizes solutions  $S^{\ast} = 
\lim_{t\to\infty}S(t) \in 
\ol{\mathcal{W}}$ only under strong regularity assumptions. This nonlinear PDE 
reads 
\begin{align}\label{eq:Continuous_Formulation_S_Flow}
	R_{S^{\ast}(x)}\big(-\Delta S^{\ast}(x) - S^{\ast}(x) \big) = 0, \quad x 
	\in 
	\mathcal{D}.
\end{align} 

We next show that our novel 
approach \eqref{eq:Non_Local_PDE} includes, as a special case, a natural discretization of 
\eqref{eq:Continuous_Formulation_S_Flow} on the spatial discrete grid 
$\mathcal{V}^h = h \Z^d \cap \mathcal{D}$ with boundary 
$\partial \mathcal{V}^h$ specified by a small spatial scale parameter $h > 0$. 
\eqref{eq:Continuous_Formulation_S_Flow} is complemented by \textit{local zero 
Dirichlet }
boundary conditions imposed on $S^{\ast}$ 
on $\partial \mathcal{V}^h$. Adopting the sign convention 
	$L_{\vartheta}^h = -\Delta_{\vartheta}^h$ for different discretizations 
	of the 
	\textit{continuous negative Laplacian} on 
	$\mathcal{V}^h$, by a nine-point stencil \cite{Welk:2021aa} 
	parametrized by $\vartheta \in [0,1]$, lead to strictly 
	positive entries 
	$L_{\vartheta}^h(x,x) > 0$ 
	on 
	the diagonal. 

 We introduce the weighted undirected graph $(\mathcal{V}^h,\Omega^h)$ and identify nodes $x = (k,l) \in 
 \mathcal{V}^h$ with interior grid points $(h k,h 
 l) \in \mathcal{V}^h$ (grid graph).  
	Let the parameter matrix $\Omega^h$ be given by 
	\eqref{eq:Avaraging_Matrix}
	and the mappings 
	$\alpha,\Theta \in 
	\mathcal{F}_{\overline{\mathcal{V}}\times \overline{\mathcal{V}}}$ defined 
	by 
	\begin{align}\label{eq:Local_Laplace_Dec}
		\alpha^2(x,y) = 
		\begin{cases}
			1 ,&\; y \in 
			\wt{\mathcal{N}}(x),\\
			0 ,&\; \text{else},
		\end{cases}, \qquad
		\Theta(x,y) = 
		\begin{cases}
			-L_{\vartheta}^h(x,y), & y \in \wt{\mathcal{N}}(x),\\
			1 - L_{\vartheta}^h(x,x), 
			& 
			x = y,\\
			0 & \text{ else },
		\end{cases}
	\end{align}
	where the neighborhoods $\wt{\mathcal{N}}(x) = \mathcal{N}(x)\setminus 
	\{x\}$ represent the connectivity of the stencil of the discrete Laplacian 
	$L^h_{\vartheta}$ on the mesh 
	$\mathcal{V}^h \dot{\cup} \partial \mathcal{V}^h$. Recalling the 
	definitions 
	from Section \ref{sec:Nonlocal-Calculus} with respect to undirected graphs 
	and setting $\alpha$ by \eqref{eq:Local_Laplace_Dec}, the 
	interaction domain \eqref{eq:Interaction_Dom} agrees for 
	parameter choices $\vartheta \neq 0$ with the discrete local 
	boundary, i.e. $\mathcal{V}_{\mathcal{I}}^{\alpha} = \partial 
	\mathcal{V}^h$; see Figure 
	\ref{fig:Local_Boundary_Recovery} and the caption for further explanation. 
Then, for each $x \in \mathcal{V}^h$, the action of $\Omega^{h}$ on $S$ reads
\begin{align}
	\hskip -0.25cm (\Omega^h S)(x)= \sum_{y \in 
		\wt{\mathcal{N}}(x)}\hskip 
	-0.2cm-L^h_{\vartheta}(x,y)S(y)+\big(1-L_{\vartheta}^h(x,x)\big)S(x)  = 
	-\big(-\Delta^h_{\vartheta}(S)-S\big)(x),
\end{align} 
which is the discretization of \eqref{eq:Continuous_Formulation_S_Flow} by 
$L_{\vartheta}^h$ multiplied by the minus sign.
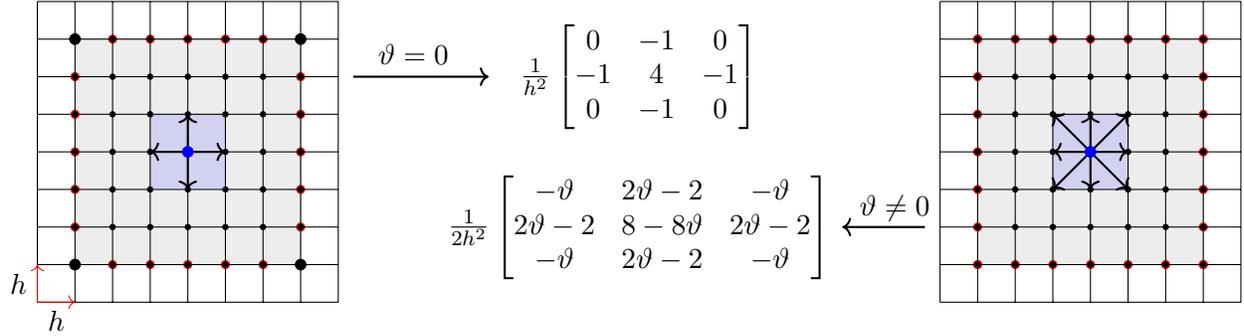
\begin{figure}
	\begin{tikzpicture}	
		\draw[rounded corners=3pt,fill=gray!70,opacity = 0.2]
		(0.5,0.5)--(0.5,3.5)--(3.5,3.5)--(3.5,0.5) ;
		\filldraw[radius=1.5pt,red]
		\foreach \x in {0.5, 3.5} {
			\foreach \y in {1,1.5,...,3} {
				(\x, \y) circle[]		
			}
		};
		\filldraw[radius=1.5pt,red]
		\foreach \x in {1,1.5,...,3} {
			\foreach \y in {0.5,3.5} {
				(\x, \y) circle[]		
			}
		};
		\filldraw[radius=2pt,black] (0.5,0.5) circle[];
		\filldraw[radius=2pt,black] (0.5,3.5)circle[];
		\filldraw[radius=2pt,black] (3.5,3.5)circle[];
		\filldraw[radius=2pt,black] (3.5,0.5)circle[];
		\draw (0,0) -- (0,4);
		\draw (0, 0) grid[step=0.5cm] (4,4);
		\filldraw[radius=1pt,black]
		\foreach \x in {0.5,1, ...,3.5} {
			\foreach \y in {0.5,1, ..., 3.5} {
				(\x, \y) circle[]		
			}	
		};
		\draw[rounded corners=3pt,fill=gray!70,opacity = 0.2]
		(12.5,0.5)--(12.5,3.5)--(15.5,3.5)--(15.5,0.5) ;
		\filldraw[radius=1.5pt,red]
		\foreach \x in {12.5, 15.5} {
			\foreach \y in {0.5,1,...,3.5} {
				(\x, \y) circle[]		
			}
		};
		\filldraw[radius=1.5pt,red]
		\foreach \x in {12.5,13,...,15.5} {
			\foreach \y in {0.5,3.5} {
				(\x, \y) circle[]		
			}
		};
		\draw (12,0) -- (12,4);
		\draw (12, 0) grid[step=0.5cm] (16,4);
		\filldraw[radius=1pt,black]
		\foreach \x in {12.5,13, ...,15.5} {
			\foreach \y in {0.5,1, ..., 3.5} {
				(\x, \y) circle[]		
			}	
		};
		\draw[rounded corners=3pt,fill=blue!60,opacity = 0.2]
		(13.5,1.5)--(13.5,2.5)--(14.5,2.5)--(14.5,1.5) ;
		\draw[->,thick] (14,2) -- (14.5,2); 
		\draw[->,thick] (14,2) -- (14,2.5); 
		\draw[->,thick] (14,2) -- (13.5,2); 
		\draw[->,thick] (14,2) -- (14,1.5); 
		\draw[->,thick] (14,2) -- (14.5,2.5); 
		\draw[->,thick] (14,2) -- (13.5,1.5); 
		\draw[->,thick] (14,2) -- (13.5,2.5); 
		\draw[->,thick] (14,2) -- (14.5,1.5); 
		\draw[rounded corners=3pt,fill=blue!60,opacity = 0.2]
		(1.5,1.5)--(1.5,2.5)--(2.5,2.5)--(2.5,1.5) ;
		\draw[->,thick] (2,2) -- (2.5,2); 
		\draw[->,thick] (2,2) -- (2,2.5); 
		\draw[->,thick] (2,2) -- (1.5,2); 
		\draw[->,thick] (2,2) -- (2,1.5);
		\draw[->,color = red] (0,0) -- (0.5,0);
		\draw[->,color = red] (0,0) -- (0,0.5);
		\node at (0.25,-0.25) {$h$}; 
		\node at (-0.25,0.25) {$h$};
		\filldraw[radius=2pt,blue] (2,2) circle[];
		\filldraw[radius=2pt,blue] (14,2) circle[];
		\node at (8,3) {$\frac{1}{h^2}\begin{bmatrix}
				0 & -1 & 0\\
				-1 &4 &-1 \\ 
				0 &-1 &0
			\end{bmatrix}$};
		\node at (8,1) {$\frac{1}{2h^2}\begin{bmatrix}
				-\vartheta & 2\vartheta-2 & -\vartheta\\
				2\vartheta-2 &8-8\vartheta & 2\vartheta-2 \\ 
				-\vartheta & 2\vartheta-2 &-\vartheta
			\end{bmatrix}$};
		\draw[->,thick] (4.2,3) -- (6,3);
		\draw[->,thick] (11.8,1) -- (10.7,1);
		\node at (5,3.3) {$\vartheta = 0$};
		\node at (11.4,1.25) {$\vartheta \neq 0$};
	\end{tikzpicture}
	\captionsetup{font=footnotesize}
	\caption{Illustration of the rectangular grid $\mathcal{V}^h$ 
			and the interaction 
			domain 
			$\mathcal{V}_{\mathcal{I}}^{\alpha}$ represented by 
			$(\protect\markereight)$ and $(\protect\markerseven)$, respectively, 
			with $\alpha \in \mathcal{F}_{\ol{\mathcal{V}} \times 
				\ol{\mathcal{V}}}$ given by 
			\eqref{eq:Local_Laplace_Dec} for a family of discrete Laplacians 
			$-\Delta^h_{\vartheta}$ 
			proposed in 
			\cite{Welk:2021aa}. \textbf{Left}: Neighborhood 
			$\widetilde{\mathcal{N}}(x)$ 
			specified in terms of the connectivity of the standard
			$5-$point stencil $(\vartheta = 0)$. The corresponding interaction 
			domain 
			is part 
			of the local boundary 
			$\mathcal{V}_{\mathcal{I}}^{\alpha} \subset \partial 
			\mathcal{V}^h$.   
			\textbf{Right}: Analogous construction with the $9$-point stencil 
			$(\vartheta 
			\neq 
			0)$. The interaction domain coincides with 
			the discrete 
			local boundary configuration, i.e.
			$\mathcal{V}_{\mathcal{I}}^{\alpha} = 
			\partial \mathcal{V}^h$. }
	\label{fig:Local_Boundary_Recovery}
\end{figure}
In particular, due to the relation $R_S(-W) 
= - 
R_S(W)$ for $W\in \mathcal{W}$, we conclude that the novel 
approach \eqref{eq:Non_Local_PDE} includes the \textit{local} PDE 
\eqref{eq:Continuous_Formulation_S_Flow} as special case and hence provides 
a \textit{natural nonlocal extension}.

\subsection{Continuous-Domain Nonlocal Diffusion 
Processes}\label{sec:subsec_4_1} We follow \cite{Andreu-Vaillo:2010aa}. 
Consider a bounded domain $\mathcal{D} \subset \R^d$ and let 
$J:\R^d \to \R_{+}$ be a radial continuous function satisfying 
\begin{equation}\label{eq:Cont_Prob_Kernel}
	\int_{\R^d}J(x-y)\text{d}y = 1, \quad J(0) > 0 \qquad \forall x \in \R^d.
\end{equation}
The term $J(x-y)$ in \eqref{eq:Cont_Prob_Kernel} may be interpreted as a 
probability density governing jumps from position $y \in \R^d$ to $x \in \R^d$.  
The authors of \cite{Andreu-Vaillo:2010aa} introduced the integral operator 
\begin{align}
		\mathcal{L} f(x) =
		\int_{\R^d}J(x-y)f(y,t)\text{d}y-f(x,t), \quad x \in \R^d
\end{align}
acting on $f\in C(\R^d,\R_{+})$ 
and studied \textit{nonlocal linear} diffusion processes of the form 
\begin{subequations}\label{eq:Cont_Form_Non_local}
	\begin{align}
	\partial_{t} f(x,t) = \mathcal{L} f(x,t) \qquad &\text{on } \quad  
	\mathcal{D} 
	\times 
	\R_+\\ \label{eq:Cont_Form_Non_local-b}
	f(x,t) = g(x) \qquad &\text{on } \quad \R^d\setminus \mathcal{D} \times 
	\R_+,\\
	f(x,0) = \overline{f}_0 \qquad &\text{on} \quad \R^d \times \R_+,
	\end{align}
\end{subequations}   
where $f_0 \in C(\mathcal{D},\R_{+})$ and $g \in C(\R^d\setminus \mathcal{D},\R_{+})$ 
specify the initial state and the nonlocal boundary condition of the system 
\eqref{eq:Cont_Form_Non_local}, respectively. We compare this 
system with our 
model \eqref{eq:Non_Local_PDE} and introduce, as in Section \ref{sec:subsec_4_1}, the 
weighted 
undirected graph $(\mathcal{V}^h,\Omega^h)$ with 
a Cartesian mesh $\mathcal{V}^h$, with boundary $\partial \mathcal{V}^h$  and 
neighborhoods 
\eqref{eq:def-neighborhoods}, and with $\Omega^h$ defined by 
\eqref{eq:Omega-by-alpha} through
\begin{align}
	\Theta(x,y) = 
	\begin{cases}
	0, &\quad \text{for } x,y \notin \mathcal{V}^h, \\
	J(0)-1, &\quad \text{for } x = y, \\
	1, &\quad \text{else},
	\end{cases}
	\qquad \alpha^2(x,y) = J(x-y).
\end{align} 
Then, for the particular 
case $g = 0$ in \eqref{eq:Cont_Form_Non_local-b} and using Equation 
\eqref{eq:Reform_Av} with $\lambda(x)$ defined 
by \eqref{eq:def-lambda-x}, the spatially 
discrete counterpart of 
\eqref{eq:Cont_Form_Non_local} is the \textit{linear nonlocal scalar-valued} diffusion process
 \begin{subequations}\label{eq:Eq_Rel}
	\begin{align}
	\partial_{t} f(x,t) &= \frac{1}{2}\mathcal{D}^{\alpha}(\Theta 
	\mathcal{G}^{\alpha}f)(x,t)+\lambda(x)f(x,t) \qquad &\text{on } &\quad  
	\mathcal{V} 
	\times \R_+, \\
	f(x,t) &= 0 \qquad &\text{on } &\quad \mathcal{V}_{\mathcal{I}}^{\alpha} 
	\times 
	\R_+, \\
	f(x,0) &= \overline{f}_0 \qquad &\text{on } &\quad \ol{\mathcal{V}} \times 
	\R_+.
	\end{align}
\end{subequations}
System \eqref{eq:Eq_Rel} possess a structure which resembles the structure of 
nonlinear system 
\eqref{eq:Non_Local_PDE} after dropping the replicator mapping $R_S$ and 
assuming $S(x)\in \R$ to be a scalar-valued rather than simplex-valued 
$S(x)\in\mc{S}$, as in our approach. 

This comparison shows by virtue of the 
structural similarity that assignment flows may be characterized as genuine 
nonlocal diffusion processes. Essential differences, i.e.~simplex-valued 
variables and the underlying geometry, reflect the entirely different scope of 
this process, however: labeling metric data on graphs. 
 
\subsection{Nonlocal Variational Models in Image Analysis}
We relate the 
system \eqref{eq:Eq_Rel} to variational approaches  presented 
in \cite{Gilboa:2009aa} and to graph-based nonlocal PDEs proposed by \cite{Elmoataz:2008aa,Elmoataz:2015aa}.

Based on a scalar-valued positive function 
$\phi(t)$ which is convex in $\sqrt{t}$ with $\phi(0) = 0$, Gilboa et al. 
\cite{Gilboa:2009aa} 
studied \textit{isotropic} and \textit{anisotropic nonlocal regularization 
functionals} on a continuous spatial domain $\mathcal{D} \subset \R^d$ defined 
in terms of a nonnegative symmetric mapping $\omega:\mathcal{D} \times 
\mathcal{D} \to \R_+$:
\begin{subequations}\label{eq:Gilboa_Functionals} 
\begin{align}
	J_i^{\phi}(f) &= 
	\int_{\mathcal{D}}\phi(|\nabla_{\omega}(f)(x)|^2)\text{d}x, 
	\qquad 
	&\text{\textbf{(isotropic)}}\label{eq:Func_Gilboa_isotrop}\\
	J_a^{\phi}(f) 
	&=\int_{\mathcal{D}}\int_{\mathcal{D}}\phi(f(y)-f(x))^2\omega(x,y)\text{d}y\text{d}x.
		 \qquad 
	&\text{\textbf{(anisotropic)}}
\end{align}
\end{subequations} 
\eqref{eq:Func_Gilboa_isotrop} involves the nonlocal graph-based gradient 
operator which for given neighborhoods $\mathcal{N}(x)$ reads  
\begin{align}\label{eq:Graph_Based_Gradient_Rel_Work}
	\nabla_{\omega} f(x) = \big(\dots,(f(y)-f(x))\sqrt{\omega(x,y)},\dots 
	\big)^T,\qquad y\in 
	\mathcal{N}(x).
\end{align}
Given an initial real valued function $f_0(x)$ on $\Omega$, the variational 
models of \eqref{eq:Gilboa_Functionals} define dynamics in terms of
the steepest descent flows 
\begin{align}\label{eq:Dynamical_System_Gilboa}
\partial_t f(x,t) = -\partial_f J^{\phi}_i(f)(x,t), \qquad \partial_t f(x,t) = 
-\partial_f J^{\phi}_a(f)(x,t), \qquad f(x,0) = f_0(x),
\end{align}
where the variation with respect to $f$ on right hand side of 
\eqref{eq:Dynamical_System_Gilboa} is expressed in 
terms of \eqref{eq:Graph_Based_Gradient_Rel_Work} via
\begin{align}\label{eq:divergence_expression_rel_work}
	\partial_f J^{\phi}_i(f)(x,t) &= -2\int_{\mathcal{D}}(f(y,t)-f(x,t)) 
	\omega(x,y)\Big(\phi^{'}(|\nabla_{\omega}f(y,t)|^2)(y)+\phi^{'}(|\nabla_{\omega}f(x,t)|^2)(x)\Big)\text{d}y, \\
	\partial_f J^{\phi}_a(f)(x,t) &= -4\int_{\mathcal{D}}\big( f(y,t)-f(x,t) 
	\big)\omega(x,y)\phi^{'}\big((f(y,t)-f(x,t))^2\omega(x,y)\big)\text{d}y.
\end{align} 
Then, given a 
graph $(\mathcal{V},\mathcal{E},\omega)$ with neighborhoods as in Section 
\ref{sec:Nonlocal-Calculus}, the discrete counterparts of the dynamical systems  
\eqref{eq:Dynamical_System_Gilboa} on $\mathcal{V}$ read
\begin{align}\label{eq:Discrete_Time_Dynamical_System:Gilboa}
	\dot{f}(x,t) = \sum_{y\in 
	\mathcal{N}(x)}A^{\phi}_{\omega,f}(x,y)f(y), 
	\qquad 
	\dot{f}(x,t) = \sum_{y\in 
	\mathcal{N}(x)}B^{\phi}_{\omega,f}(x,y)f(y),
\end{align}
where the mappings $A^{\phi}_{\omega,f},B^{\phi}_{\omega,f} \in 
\mathcal{F}_{\mathcal{V}\times 
\mathcal{V}}$ represent explicit expressions of the right-hand sides of 
\eqref{eq:Dynamical_System_Gilboa} on $\mathcal{V}$ 
\begin{subequations}\label{eq:Omega_Mappings}
\begin{align}
	A^{\phi}_{\omega,f}(x,y) &=
	\begin{cases}
	2 
	\omega(x,y)\Big(\phi^{'}(|\nabla_{\omega}f(y,t)|^2)(y)+\phi^{'}(|\nabla_{\omega}f(x,t)|^2)(x)\Big)\quad
	&x\neq y,\\
	-2\sum \limits_{\substack{z \in \mathcal{N}(x)\\z \neq 
			x}}\omega(x,z)\Big(\phi^{'}(|\nabla_{\omega}f(z,t)|^2)(z)+\phi^{'}(|\nabla_{\omega}f(x,t)|^2)(x)\Big)
				\quad &x = y,
	\end{cases} \\
	B^{\phi}_{\omega,f}(x,y) &=
	\begin{cases}
	4 \omega(x,y)\phi^{'}\big((f(z,t)-f(x,t))^2\omega(x,y)\big) \quad &x\neq 
	y,\\
	-4 \sum \limits_{\substack{z \in \mathcal{N}(x)\\z \neq 
	x}}\omega(x,z)\phi^{'}\big((f(z,t)-f(x,t))^2\omega(x,y)\big), \quad &x = y.
	\end{cases}
\end{align}
\end{subequations}
Depending on the specification of $\phi(t)$, the dynamics 
governed by the systems 
\eqref{eq:Discrete_Time_Dynamical_System:Gilboa} define nonlinear 
nonlocal diffusion processes with various smoothing properties according to the 
mappings 
\eqref{eq:Omega_Mappings}. Specifically, for $\phi(t) = t$, the functionals 
\eqref{eq:Gilboa_Functionals} coincide as do the systems \eqref{eq:Discrete_Time_Dynamical_System:Gilboa}, since the mappings 
\eqref{eq:Omega_Mappings} do not depend on $f(x,t)$,  but only on 
$\omega$ which is symmetric and nonnegative, and hence agree. Invoking Lemma \ref{Help_Lemma} 
with $\Omega \in \mathcal{F}_{\mathcal{V}\times \mathcal{V}}$ defined through 
\eqref{eq:Omega_Mappings}, setting $\Theta,\alpha \in 
\mathcal{F}_{\mathcal{V}\times \mathcal{V}}$ by $\Theta(x,y) = 
1,\alpha^2(x,y) = 4\omega(x,y), x \neq y$ and $\Theta(x,x) = 
-4\sum_{y\in\mathcal{N}(x)}\omega(x,y),x \in \mathcal{V}$ yields the 
 decomposition \eqref{eq:Avaraging_Matrix} which characterizes 
 \eqref{eq:Graph_Based_Gradient_Rel_Work} in terms of the nonlocal operators 
 from Section \ref{sec:Nonlocal-Calculus} if $f_{|\mathcal{V}_{\mathcal{I}}^{\alpha}} = 0$ 
 holds, by means of relation 
 \eqref{eq:Reform_Av}. 
 Consequently,
\eqref{eq:Discrete_Time_Dynamical_System:Gilboa} admits the representation 
by \eqref{eq:Eq_Rel} for the particular case of zero nonlocal Dirichlet 
conditions.
  
While the above approaches are well suited for 
image denoising and inpainting, our \textit{geometric} approach performs 
 \textit{labeling} of arbitrary metric data on arbitrary graphs.

\subsection{Nonlocal Graph Laplacians} Elmoataz et. al 
\cite{Elmoataz:2015aa} studied discrete nonlocal differential operators 
on weighted graphs 
$(\mathcal{V},\mathcal{E},\omega)$. Specifically, based on the nonlocal gradient 
operator \eqref{eq:Graph_Based_Gradient_Rel_Work},  a class of 
Laplacian operators acting on functions $f\in \mathcal{F}_{\mathcal{V}}$ was defined by  
\begin{subequations}\label{eq:Elmoataz_Model}
\begin{align}
\hskip -0.2cm \mathcal{L}_{\omega,p}f(x) &=
	 \hskip  -0.1cm\begin{cases}
		\beta^+(x)	\hskip -0.2cm\sum \limits_{y \in 
		\mathcal{N}^{+}(x)}\big(\nabla_{\omega}f(x,y)\big)^{p-1}+\beta^-(x)\hskip
		 -0.2cm\sum
		\limits_{y \in 
		\mathcal{N}^{-}(x)}(-1)^{p}\big(\nabla_{\omega}f(x,y)\big)^{p-1},\quad 
		&p \in 
		[2,\infty)\\
			\beta^+(x)\max \limits_{y \in 
			\mathcal{N}^{+}(x)}\big(\nabla_{\omega}f(x,y)\big)+\beta^-(x)\max
		\limits_{y \in 
			\mathcal{N}^{-}(x)}(-1)^{p}\big(\nabla_{\omega}f(x,y)\big),
		 \quad &p = \infty,
	\end{cases}
\intertext{where}
\mathcal{N}^{+}(x) &= \{y \in \mathcal{N}(x):f(y)-f(x)>0\}, \qquad 
\mathcal{N}^{-}(x) = \{y \in \mathcal{N}(x):f(y)-f(x)<0\}.
\end{align}
\end{subequations}
As detailed in \cite[Section 4]{Elmoataz:2015aa} depending on the weighting 
function $\omega \in \mathcal{F}_{\mathcal{V}\times 
\mathcal{V}}$ and on the positive functions $\beta^+,\beta^- \in 
\mathcal{F}_{\mathcal{V}}$  satisfying $\beta^+(x)+\beta^-(x) = 1, x \in 
\mathcal{V}$, the Laplacians \eqref{eq:Elmoataz_Model} enable to generalize a 
broad class of 
variational approaches including \cite{Elmoataz:2008aa} whose Euler Lagrange 
equations involve graph Laplacians. 

In the following, we focus on undirected graphs 
$(\mathcal{V},\mathcal{E},\omega)$ with $\omega(x,y) = \omega(y,x)$.  
Then, for the purpose of data inpainting and following \cite{Elmoataz:2015aa}, 
given a vertex set $\mathcal{A}\subset 
\mathcal{V}$ together with a function 
$g\in \mathcal{F}_{\partial 
\mathcal{A},\R^c}$ specifying the 
boundary condition imposed on 
\begin{align}
\partial \mathcal{A} = \{ x\in \mathcal{V}\setminus \mathcal{A}:\exists y 
\in \mathcal{A} \text{ with } y \in \mathcal{N}(x)\}, 
\end{align}
the nonlocal Laplacian \eqref{eq:Elmoataz_Model} generates a family of nonlocal discrete diffusion 
processes of the form
\begin{subequations}\label{eq:Elmoataz_Diffusion_Rel_Work}
\begin{align}
	\partial_{t} f(x,t) &= \mathcal{L}_{\omega,p}f(x,t) &\quad &\text{on } 
	\mathcal{A}\times \R_{+}, \\
	f(x,t) &= g(x,t) &\quad &\text{on } \partial \mathcal{A}\times \R_{+},\\
	f(x,0) &= f_0(x) &\quad &\text{on } \mathcal{A}.  
\end{align}
\end{subequations} 
To establish a comparison with the proposed nonlocal formulation 
\eqref{eq:Non_Local_PDE}, we 
represent the model \eqref{eq:Elmoataz_Diffusion_Rel_Work} with $g = 0$ on 
$\partial \mathcal{A}$ in terms of the 
operators introduced in Section \ref{sec:Nonlocal-Calculus}. Following 
\cite[Section 5]{Elmoataz:2015aa} and setting the 
weighting function     
\begin{align}\label{eq:Elmoataz_Omega}
	\alpha^f(x,y) = 
 \begin{cases}
 	\beta^{+}(x)\sqrt{\omega(x,y)}^{p-1}\big(\nabla_{\omega}f(x,y)\big)^{p-2}, 
 	\quad &\text{if } f(y)>f(x),\\
 	\beta^{-}(x) \sqrt{\omega(x,y)}^{p-1}\big(\nabla_{\omega}f(y,x)\big)^{p-2}, 
 	\quad &\text{if } f(y)<f(x),
 \end{cases} 
\end{align}
the particular case $p = 2$ simplifies to a linear diffusion process  
\eqref{eq:combinatorial_Laplacian} with \eqref{eq:Elmoataz_Omega} 
directly given in terms of weights $\omega(x,y)$ prescribed by the adjacency relation of the
graph $\mathcal{V}$. 
Moreover, if at each vertex $x\in \mathcal{V}$ the equation $\beta^+(x) = 
\beta^-(x) = 
\frac{1}{2}$ holds, then for any $p\in 
[2,\infty)$ the mapping \eqref{eq:Elmoataz_Omega} is nonnegative and symmetric. 
As a consequence, $\alpha^{f}$ from \eqref{eq:Elmoataz_Omega} can 
substitute $\w(x,y)$ in \eqref{eq:combinatorial_Laplacian} and hence 
specifies a representation of the form \eqref{eq:Non_Local_Dif} when choosing 
the antisymmetric mapping $\alpha \in \mathcal{F}_{\mathcal{V}\times 
\mathcal{V}}$ to satisfy $2\alpha^2(x,y) = \alpha^f(x,y)$. 
Finally, specifying the symmetric mapping 
$\Theta \in \mathcal{F}_{\mathcal{V}\times \mathcal{V}}$ as $\Theta(x,y) = 1$ 
if $x \neq y$ and $\Theta(x,x) = -\sum_{y \in \mathcal{N}(x)} 
\alpha^2(x,y)$, expresses the system \eqref{eq:Elmoataz_Diffusion_Rel_Work} 
through \eqref{eq:Eq_Rel} with $\mathcal{V}$ and 
$\mathcal{V}_{\mathcal{I}}^{\alpha}$ given by $\mathcal{A}$ and $\partial 
\mathcal{A}$, respectively. 

We conclude with a comment similar to the previous sections. While the 
similarity of the above mathematical structures to our approach is evident from 
the viewpoint of diffusion processes, the scope of our approach, data labeling, 
differs and is not directly addressed by established diffusion-based 
approaches.  We further point out the different role
of interaction domain \eqref{eq:Interaction_Dom}. While for model 
\eqref{eq:Elmoataz_Diffusion_Rel_Work} we set $\alpha$ through  
\eqref{eq:Elmoataz_Omega} to satisfy $\mathcal{V}_{\mathcal{I}}^{\alpha} = 
\partial \mathcal{A}$ which is 
subset of given set of vertices $\mathcal{V}$, i.e. $\ol{\mathcal{V}} = \mathcal{V}$ as 
illustrated by the right panel of 
\ref{fig:Nonlocal_Boundary_Ill}), 
we focus in our work on mappings $\alpha$ that lead to an \textit{extension} of 
$\mathcal{V}$ by vertices in $\Z^d\setminus \mathcal{V}$, as presented by the 
left panel of Figure 
\ref{fig:Nonlocal_Boundary_Ill}.
\begin{figure}[ht!]
	\centering
	\begin{tikzpicture}
		\filldraw[radius=1.5pt,red]
		\foreach \x in {0.75,3.25} {
			\foreach \y in {0.75,1,...,3.25} {
				(\x, \y) circle[]		
			}
		};
		\filldraw[radius=1.5pt,red]
		\foreach \x in {0.75,1,...,3.25} {
			\foreach \y in {0.75,3.25} {
				(\x, \y) circle[]		
			}
		};
		\filldraw[radius=1.5pt,red]
		\foreach \x in {0.5,3.5} {
			\foreach \y in {0.5,0.75,...,3.5} {
				(\x, \y) circle[]		
			}
		};
		\filldraw[radius=1.5pt,red]
		\foreach \x in {0.75,1,...,3.25} {
			\foreach \y in {0.5,3.5} {
				(\x, \y) circle[]		
			}
		};
		\draw[rounded corners=3pt,fill=gray!70,opacity = 0.2]
		(0.75,0.75)--(0.75,3.25)--(3.25,3.25)--(3.25,0.75) ;
		\draw[rounded corners=3pt,fill=red!70,opacity = 0.2]
		(0.5,0.5)--(0.5,3.5)--(0.75,3.5)--(0.75,0.5);
		\draw[rounded corners=3pt,fill=red!70,opacity = 0.2]
		(3.25,0.5)--(3.5,0.5)--(3.5,3.5)--(3.25,3.5);
		\draw[rounded corners=3pt,fill=red!70,opacity = 0.2]
		(0.75,3.5)--(3.25,3.5)--(3.25,3.25)--(0.75,3.25);
		\draw[rounded corners=3pt,fill=red!70,opacity = 0.2]
		(0.75,0.5)--(3.25,0.5)--(3.25,0.75)--(0.75,0.75);
		\draw (0,0) -- (0,4);
		\draw (0, 0) grid[step=0.25cm] (4,4);
		\filldraw[radius=1pt,black]
		\foreach \x in {0.5,0.75, ...,3.5} {
			\foreach \y in {0.5,0.75, ..., 3.5} {
				(\x, \y) circle[]		
			}
		};
		
		\filldraw[radius=1.5pt,red]
		\foreach \x in {9.5, 10.5} {
			\foreach \y in {1.5,1.75,...,2.5} {
				(\x, \y) circle[]		
			}
		};
		
		\filldraw[radius=1.5pt,red]
		\foreach \x in {9.5,9.75,...,10.25} {
			\foreach \y in {1.5,1.75,...,2.5} {
				(\x, \y) circle[]		
			}
		};
		
		\draw (8,0) -- (8,4);
		\draw (8, 0) grid[step=0.25cm] (12,4);
		\filldraw[radius=1pt,black]
		\foreach \x in {8.5,8.75, ...,11.5} {
			\foreach \y in {0.5,0.75, ..., 3.5} {
				(\x, \y) circle[]		
			}
			
		};
		
		\draw[rounded corners=3pt,fill=gray!70,opacity = 0.2]
		(8.5,0.5)--(8.5,3.5)--(11.5,3.5)--(11.5,0.5) ;

\filldraw[radius = 1pt,black] (4.4,3.8) circle[];
\filldraw[radius = 2pt,red] (4.4,3) circle[];
\filldraw[radius = 1pt,black] (4.4,3) circle[];
\filldraw[radius = 1pt,black] (4.3,2.2) circle[];
\filldraw[radius = 2pt,red] (4.5,2.2) circle[];
\filldraw[radius = 1pt,black] (4.5,2.2) circle[];
\node at (5,3.8) {$\in \mathcal{V}$};
\node at (5.1,3)   {$\in \mathcal{V}_{\mathcal{I}}^\mathcal{\alpha} $};
\node at (5,2.3) {$\in \ol{\mathcal{V}}$};		
		\filldraw[radius = 1pt,black] (12.3,3.8) circle[];
		\filldraw[radius = 2pt,red] (12.3,3) circle[];
		\filldraw[radius = 1pt,black] (12.3,3) circle[];
		\filldraw[radius = 1pt,black] (12.2,2.2) circle[];
		\filldraw[radius = 2pt,red] (12.4,2.2) circle[];
		\filldraw[radius = 1pt,black] (12.4,2.2) circle[];
		\node at (12.71,2.2) {$\in $};
		\node at (12.9,3.8)   {$\in \mathcal{A} $};
		\node at (13,3) {$\in \partial \mathcal{A}$};
		\node at (2,-0.5) {\textbf{nonlocal G-PDE} \eqref{eq:Non_Local_PDE}};
		\node at (10.3,-0.5) {\textbf{nonlocal approach} \cite{Elmoataz:2015aa}};
		\node at (13.1,2.2) {$\mathcal{V}$};
		\draw[rounded corners=3pt,fill=red!70,opacity = 0.2]
		(9.5,1.5)--(9.5,2.5)--(10.5,2.5)--(10.5,1.5);
	\end{tikzpicture}
	\captionsetup{font=footnotesize}
	\caption{ Schematic illustration of two different 
	instances of 
		$\mathcal{V}_{\mathcal{I}}^{\alpha}$. Nodes 
		$(\protect\markerseven)$ and 
		$(\protect\markereight)$ 
		represent points of the interaction 
		domain 
		$\mathcal{V}_{\mathcal{I}}^{\alpha}$ and the vertex set 
		$\mathcal{V}$,   
		respectively, in terms of the mapping 
		$\alpha \in \mathcal{F}_{\ol{\mathcal{V}} \times 
			\ol{\mathcal{V}}}$.  
		\textbf{Left}: Boundary configuration for the nonlocal G-PDE 
		\eqref{eq:Non_Local_PDE} introduced in this paper. 
		Nonzero interaction of nodes in $\mathcal{V}$ with nodes outside the 
		graph
		$\Z^d\setminus \mathcal{V}$ results in an extended domain 
		$\ol{\mathcal{V}}$ according to \eqref{eq:def-olV}. 
		\textbf{Right}: 
		Boundary configuration for the task of inpainting as proposed in 
		\cite{Elmoataz:2015aa}. The parameter $\alpha$ is specified entirely on 
		$\mathcal{V}$ resulting in a disjoint decomposition $\mathcal{V} = 
		\mathcal{A}\dot{\cup}\partial \mathcal{A}$ where 
		 now $\mathcal{V}_{\mathcal{I}}^{\alpha}$ satisfies 
		 $\mathcal{V}^{\alpha}_{\mathcal{I}} = \partial \mathcal{A}$ to 
		 represent the set of all 
		 nodes with missing information $\mathcal{V}\setminus 
		 \mathcal{A}$.} 
	\label{fig:Nonlocal_Boundary_Ill}
\end{figure}
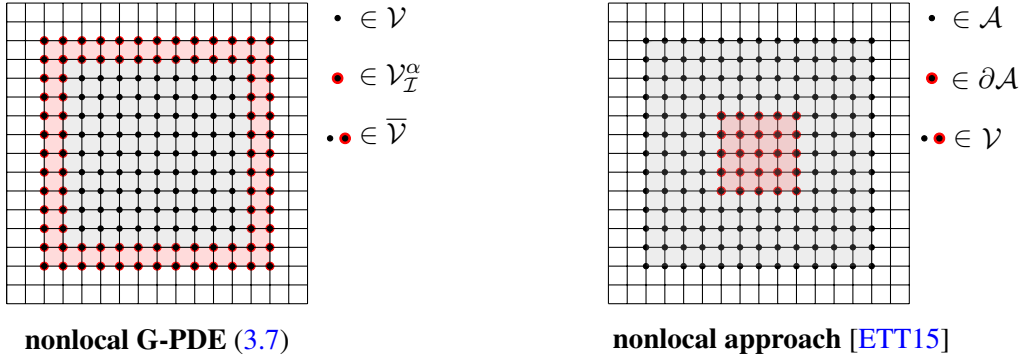
\section{Nonconvex Optimization by Geometric 
Integration}\label{sec:Non_Convex_Optimization}
% !TEX root =  ../Major_Revision.tex
We show in Section \ref{sec:first-order-geometric-DC} how geometric integration provides a numerical scheme 
for solving the nonlocal partial difference equation \eqref{eq:Non_Local_PDE} 
on a regular discrete grid $\mathcal{V}$ by generating a sequence of states on $\mathcal{W}$ that
monotonically decrease the energy objective \eqref{eq:S-Flow_Pot}. In particular, we show that the geometric Euler scheme is equivalent to the basic two-step iterative approach provided by \cite{Hoai-An:2005aa} for solving nonconvex optimization problems in \textit{DC (difference of convex functions)} format.

In Section \ref{sec:higher-order-geometric},  
we prove the monotonic decrease property for a \textit{novel} class of geometric \textit{multistage} 
integration schemes that speed up convergence and show the relation of this 
class to the nonconvex optimization framework 
presented in  \cite{Fukushima:1981aa,Aragon:2018aa} 

Figure \ref{fig:Overview_Algorithm} provides a schematic overview over key components of the two proposed algorithms, including references 
to the corresponding subsections. Proofs are provided in Appendix 
\ref{app:Geometric-Integration} to enable efficient reading.

\begin{figure}
	\captionsetup{font=footnotesize}
	\centering
\scalebox{0.95}{\includegraphics[width=\textwidth]{./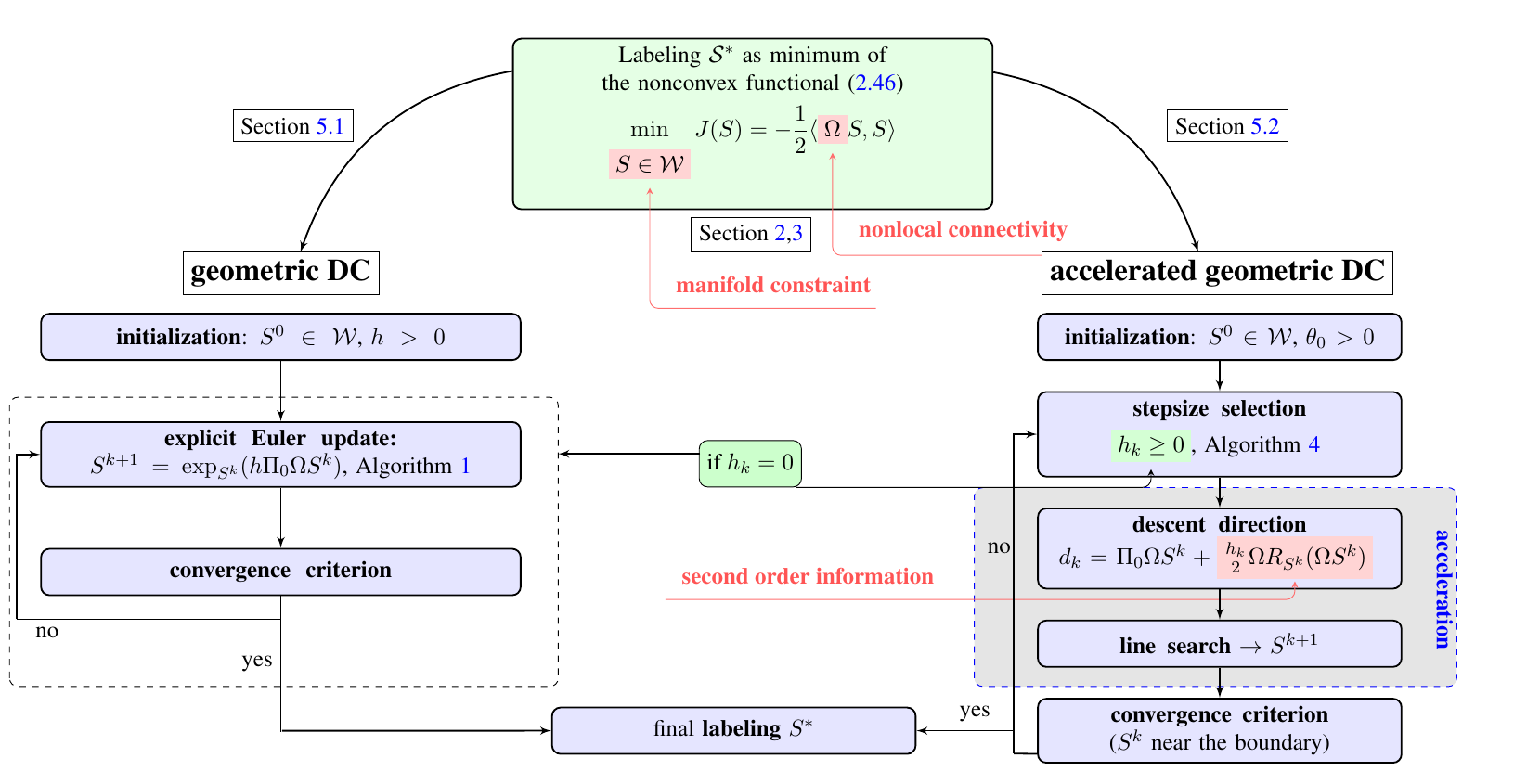}}
	\caption{Sketch of the two algorithmic schemes, Algorithm \ref{Geometric_Explicit_Euler} and Algorithm \ref{Geometric_Two_Stage}, developed in Section \ref{sec:Non_Convex_Optimization}. Common basic components as well as essential differences are highlighted. The major difference corresponds to the acceleration of the basic numerical scheme by geometric integration for solving the nonconvex DC program displayed in the top box. }
	\label{fig:Overview_Algorithm}
\end{figure}

\subsection{First-Order Geometric Integration and DC-Programming}\label{sec:first-order-geometric-DC}
We focus on an one-stage iterative numerical scheme derived by discretizing 
the explicit geometric Euler 
integration \eqref{eq:geometric_euler} in time with a fixed time-step size  
$h> 0$. In this specific case, \eqref{eq:geometric_euler} generates the 
sequence of iterates for approximately solving \eqref{eq:S-flow-S}  
given by
\begin{equation}\label{eq:explicit-Euler}
(S^k)_{k \geq 1} \subset 
\mathcal{F}_{\mathcal{V},\mathcal{W}},\quad
S^{k+1}(x) = \exp_{S^k(x)}\big(h(\Omega S)(x)\big), \quad S^0(x) = 
\exp_{\mathbb{1}_c}\Big(-\frac{D_{\mathcal{X}}(x)}{\rho}\Big),\quad
x\in\mc{V},
\end{equation}
where the index $k$ represents the point in time $kh$.      
We next show that the sequence \eqref{eq:explicit-Euler} locally minimizes the 
potential \eqref{eq:S-Flow_Pot} 
and hence, based on the formulation derived in Proposition 
\ref{Prop:Non_Local_PDE}, how geometric integration provides a finite 
difference scheme for numerically solving the nonlocal G-PDE 
\eqref{eq:Non_Local_PDE} for the particular case of zero nonlocal boundary 
conditions.
\begin{proposition}\label{prop:Numerical_Scheme}
	Let $\alpha,\Theta\in \mathcal{F}_{\ol{\mathcal{V}}\times 
	\ol{\mathcal{V}}},\lambda \in \mathcal{F}_{\mathcal{V}}$ and $\Omega \in 
	\mathcal{F}_{\mathcal{V}\times \mathcal{V}}$ be given as in 
	Lemma~\ref{Help_Lemma}. Then the sequence \eqref{eq:explicit-Euler} 
	satisfies
	\begin{align}\label{eq:PDE_Euler_Update}
	S^{k+1}(x) = 
	\exp_{S^{k}(x)}\bigg(h \Big(\frac{1}{2}\mathcal{D}^{\alpha}\big(\Theta 
	\mathcal{G}^{\alpha}(h \ol{S}^{k})\big) + \lambda \ol{S}^{k}\Big)(x)\bigg), 
	\qquad x 
	\in \mathcal{V},
	\end{align}  
	where the zero extension $\ol{S}^k$ of $S^k$ to $\ol{\mathcal{V}}$ is a 
	discrete approximation $S(h k)$ of the continuous time 
	solution to the
	system \eqref{eq:Non_Local_PDE}, initialized by 
	$S^0(x)$ \eqref{eq:explicit-Euler}
	with imposed zero nonlocal boundary conditions. 
	In addition, if 
	\begin{equation}\label{eq:h-lambda-min}
	h \leq 
	\frac{1}{|\lambda_{\min}(\Omega)|}, 
	\end{equation}
	where 
	$\lambda_{\min}(\Omega)$ denotes the smallest eigenvalue of 
	 $\Omega$, then the sequence $(S^{k})$ achieves the 
	monotone decrease property
	\begin{equation}\label{eq:Dec_Seq_Euler}
	J(S^{k+1}) \leq J(S^{k}),\qquad k\in\N
	\end{equation}  
	for the potential function \eqref{eq:S-Flow_Pot}.
\end{proposition}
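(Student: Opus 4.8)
The statement bundles three claims. The identity \eqref{eq:PDE_Euler_Update} (Part 1) and the consistency assertion (Part 2) are essentially bookkeeping: substitute \eqref{eq:Reform_Av} of \Cref{Help_Lemma} into the Euler recursion \eqref{eq:explicit-Euler}, which is legitimate because the zero extension $\ol{S}^{k}$ vanishes on $\mathcal{V}_{\mathcal{I}}^{\alpha}$, and then invoke \Cref{Prop:Non_Local_PDE} to recognise \eqref{eq:explicit-Euler} as the explicit (forward) Euler time discretization of the G-PDE \eqref{eq:Non_Local_PDE} with the zero nonlocal Dirichlet boundary condition already encoded in $\ol{S}^{k}$; linearity of $\mathcal{D}^{\alpha},\mathcal{G}^{\alpha}$ handles the rearrangement, and well-posedness of the flow \eqref{eq:S-flow-S} on $\mathcal{W}$ from \cite{Zern:2020aa} gives consistency. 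The bulk of the work is the monotone decrease \eqref{eq:Dec_Seq_Euler}, and I would prove it by showing that one Euler step \eqref{eq:explicit-Euler} is exactly one iteration of the DCA of \cite{Hoai-An:2005aa} applied to an entropy-regularized DC decomposition of $J$.

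Concretely, set $\mu:=1/h$ and let $\phi(S):=\sum_{x\in\mathcal{V}}\sum_{j\in[c]}S_{j}(x)\log S_{j}(x)$ be the negative entropy on $\mathcal{W}$. I would use the decomposition
\[
J(S)=g(S)-\widetilde h(S),\qquad g(S):=\mu\,\phi(S),\qquad \widetilde h(S):=\mu\,\phi(S)+\tfrac12\langle S,\Omega S\rangle .
\]
Here $g$ is convex, and the key point is that $\widetilde h$ is convex on $\mathcal{W}$: since $0<S_{j}(x)\le 1$ for all $S\in\mathcal{W}$, the diagonal Hessian obeys $\nabla^{2}\phi(S)=\Diag(1/S_{j}(x))\succeq I$, whence $\nabla^{2}\widetilde h(S)\succeq \mu I+(\Omega\otimes I_{c})\succeq(\mu+\lambda_{\min}(\Omega))I\succeq 0$ precisely when $\mu\geq|\lambda_{\min}(\Omega)|$, i.e. under the step-size restriction \eqref{eq:h-lambda-min}. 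The DCA iterate from $S^{k}$ is $S^{k+1}=\argmin_{S\in\mathcal{W}}\{g(S)-\langle\nabla\widetilde h(S^{k}),S\rangle\}$; writing the first-order optimality conditions with the simplex multipliers — and noting that the entropic barrier behaviour of $\phi$ (the term $t\log t$ pushes the minimizer off the boundary, and strict convexity of $\phi$ makes it unique) keeps the argmin inside the relatively open set $\mathcal{W}$ — one obtains $S^{k+1}_{j}(x)\propto S^{k}_{j}(x)\,e^{h(\Omega S^{k})_{j}(x)}$, i.e. $S^{k+1}(x)=\exp_{S^{k}(x)}\big(h(\Omega S^{k})(x)\big)$ by \eqref{eq:def-lifting-map} and the additive-constant invariance \eqref{eq:exp-constant}. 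This is exactly \eqref{eq:explicit-Euler}.

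With this identification, the decrease is the textbook two-inequality DCA estimate: minimality of $S^{k+1}$ gives $g(S^{k+1})-\langle\nabla\widetilde h(S^{k}),S^{k+1}\rangle\le g(S^{k})-\langle\nabla\widetilde h(S^{k}),S^{k}\rangle$, and convexity of $\widetilde h$ gives $\widetilde h(S^{k+1})\ge \widetilde h(S^{k})+\langle\nabla\widetilde h(S^{k}),S^{k+1}-S^{k}\rangle$; adding these and cancelling the common term yields $J(S^{k+1})=g(S^{k+1})-\widetilde h(S^{k+1})\le g(S^{k})-\widetilde h(S^{k})=J(S^{k})$. (An equivalent route is a relative-smoothness estimate: $J(S)=J(S^{k})+\langle\nabla J(S^{k}),S-S^{k}\rangle-\tfrac12\langle S-S^{k},\Omega(S-S^{k})\rangle$ together with $\KL(S\|S^{k})\ge\tfrac12\|S-S^{k}\|_{F}^{2}$ shows $J$ is $\tfrac1h$-smooth relative to $\KL(\cdot\|S^{k})$ under \eqref{eq:h-lambda-min}, and the Euler step minimizes the majorant; I would keep the DCA phrasing since it is what connects to \cite{Hoai-An:2005aa}.)

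I expect the main obstacle to be exactly the convexity of $\widetilde h$: one must combine the curvature $\nabla^{2}\phi(S)\succeq I$ forced by the probability-simplex constraint with the spectral lower bound $\lambda_{\min}(\Omega)$, and it is at this step that \eqref{eq:h-lambda-min} is used. The secondary technical points — that the DCA subproblem actually attains its minimum in $\mathcal{W}$ rather than on $\partial\mathcal{W}$, and that the closed-form minimizer genuinely coincides with $\exp_{S^{k}}$ modulo the invariance \eqref{eq:exp-constant} — are routine once the barrier property of $\phi$ and the lifting-map identities \eqref{eq:def-lifting-map}, \eqref{eq:exp-sum-constant} are invoked.
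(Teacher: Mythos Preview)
Your proposal is correct and follows essentially the same approach as the paper: an entropy-regularized DC decomposition $J=g-\widetilde h$ with $g=\mu\phi$ and $\widetilde h=\mu\phi+\tfrac12\langle S,\Omega S\rangle$, convexity of $\widetilde h$ via $\nabla^{2}\phi(S)=\Diag(1/S)\succeq I$ on $\mathcal{W}$ combined with the Kronecker spectral bound $\lambda_{\min}(\Omega\otimes I_{c})=\lambda_{\min}(\Omega)$, identification of the DCA subproblem minimizer with the lifting map $\exp_{S^{k}}$, and the standard DCA descent estimate. The paper phrases the descent step through Fenchel's inequality and the conjugate $h^{\ast}$ rather than your two-inequality version, and it includes the indicator $\delta_{\overline{\mathcal{W}}}$ explicitly in $g$ while you absorb the constraint into the $\argmin$, but these are cosmetic differences.
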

\noindent
\textit{Proof.}  Appendix 
\ref{app:first-order-geometric-DC}.

\vspace{0.3cm}
Recent work \cite{Zern:2020aa} on the convergence  
of \eqref{eq:S-flow-S} showed that, up to negligible situations that cannot 
occur when working with real data, limit points $S^{\ast}=\lim_{t\to\infty}S(t)$ of \eqref{eq:S-flow-S} are \textit{integral} assignments 
$S^{\ast}\in\ol{\mc{W}}$. Proposition \ref{prop:Numerical_Scheme} says that for stepsizes 
$h < 1$ the geometric integration step \eqref{eq:explicit-Euler} yields a 
\textit{descent} direction for moving $S(t) \in \mathcal{W}$ to $S(t+h) \in 
\mathcal{W}$ and therefore sufficiently approximates the integral curve corresponding to \eqref{eq:S-flow-S} at time $t+h$. We conclude that 
the fixed point determined by Algorithm \ref{Geometric_Explicit_Euler} listed 
below solves the nonlocal G-PDE \eqref{eq:Non_Local_PDE}.

\vspace{0.2cm}
\begin{algorithm}[H]
	\textbf{Initialization}: $\gamma>|\lambda_{\min}(\Omega)|$ (DC-decomposition 
	parameter, see proof ~ 
	\proofref{prop:Numerical_Scheme}) \\
	$S^{0}=S(0)\in \mathcal{W}$ (initial point by \eqref{eq:S-flow-S})\\
	$\epsilon>0$ (termination threshold)\\
	$\epsilon_0 = \|\ggrad_g J(S^{0})\|$ \quad ($\ggrad_{g}J(S)=R_{S}(\partial_{S}J(S))$) \\
	$k=0$ \\
	\While{$\epsilon_{k}>\epsilon$}{
		$\wt{S}^{k} = \Omega S^{k} + \gamma \log S^{k}$\\
		\text{compute: }\text{$S^{k+1}  = \argmin \limits_{S \in 
		\overline{\mathcal{W}}}\{\gamma S\log S -\langle \wt{S}^k,S 
		\rangle\}$} \text{given by} \eqref{eq:explicit-Euler} 
resp.~\eqref{eq:PDE_Euler_Update}\label{eq:def-next-iterate-alg1} 
with $h = 
\frac{1}{\gamma}$ \\
		$\epsilon_{k} = \|\ggrad_g J(S^{k+1})\|$ \label{eq:grad_eps_alg1}\\
		$k \leftarrow k+1$}
	\caption{Geometric DC-Programming Scheme.}
	\label{Geometric_Explicit_Euler}
\end{algorithm}

\subsection{Higher-Order Geometric Integration}\label{sec:Geometric-Integration}\label{sec:higher-order-geometric}
In this section we show how 
higher-order geometric integration schemes can be used and enhance the 
first-order method of the previous section.

We continue the discussion of the numerical integration of the assignment flow \eqref{eq:S-flow-S} by employing the 
tangent space parameterization \eqref{eq:T0_parameterization}. For a 
discussion of relations to the geometry of $\mc{W}$, we refer to 
\cite{Zeilmann:2020aa}. In what follows, we drop the argument $x\in\mc{V}$ and just work with matrix products -- cf.~\eqref{eq:def-Omega-S-matrix} -- besides the lifting map $\exp_{S}$ that acts row-wise as defined by \eqref{eq:Small_exp}.

Our starting point is the explicit geometric Euler scheme 
\eqref{eq:geometric_euler} and \eqref{eq:explicit-Euler}, respectively, 
\begin{equation}
S(t+h) \approx \exp_{S^0}\big( V(t) + h\dot V(t) \big)
= \exp_{S(t)}\big( h(\Omega S)(t) \big).
\end{equation}
Now compute the second-order derivative of all component functions on $\mc{T}_{0}$  
\begin{equation}\label{eq:second_derivative}
\ddot V(t) \overset{\eqref{eq:T0_parameterization2}}{=} \Pi_{0}\Omega \frac{\dd}{\dd t}\exp_{S^0}\big(V(t)\big)
\overset{\substack{\eqref{eq:T0_parameterization}\\ \eqref{eq:dt-S-exp-V}}}{=} \Pi_{0}\Omega R_{\exp_{S^0}(V(t))} \dot V(t)
\overset{\eqref{eq:T0_parameterization}}{=} \Pi_{0}\Omega R_{S(t)}\big(\Omega S(t)\big).
\end{equation}
Then the second-order expansion $V(t+h) = V(t) + h\dot V(t) + \frac{h^2}{2}\ddot 
V(t) + \mathcal{O}(h^3)$ in $\mc{T}_{0}$ leads to the second-order geometric integration scheme
\begin{subequations}\label{eq:second_order_scheme}
	\begin{align}
	S(t+h) 
	&\approx \exp_{S(t)}\Big( h\dot V(t) + \frac{h^2}{2}\ddot V(t) 
	\Big)\label{eq:second_order_scheme1}\\
	&= \exp_{S(t)}\Big( h\Omega S(t) + \frac{h^2}{2}\Omega R_{S(t)} (\Omega S(t))
	\Big),\label{eq:second_order_scheme2}
	\end{align}
\end{subequations}
which may be read due to \eqref{eq:exp-S-group} as the \textit{two-stage iterative algorithm}
\begin{subequations}\label{eq:two_stage_scheme}
	\begin{align}
	\wt S(t)   &= \exp_{S(t)}\big( h\Omega S(t) \big), 
	\label{eq:two_stage_scheme-a} \\ 
	\label{eq:two_stage_scheme-b}
	S(t+h) &= \exp_{\wt S(t)}\Big( \frac{h^2}{2}\Omega R_{S(t)}(\Omega 
	S(t)) \Big).
	\end{align}
\end{subequations}
Below, we set in view of \eqref{eq:T0_parameterization}
\begin{equation}\label{eq:def-JV}
J(V) := J(S)|_{S=\exp_{S^{0}}(V)} = J\big(\exp_{S^{0}}(V)\big)
\end{equation}
to simplify the notation. 
The following lemma prepares our main result.
\begin{lemma}\label{lem:Grad_Tangent}
	Based on the parametrization \eqref{eq:T0_parameterization}, the Euclidean 
	gradient of the function $V\mapsto J(V)$ is given by 
	\begin{align}\label{eq:Riemannian_Grad_Section}
		\partial J(V) = -R_{\exp_{S^0}(V)}\big(\Omega \exp_{S^0}(V)\big)
		=\ggrad_{g} J(S),
	\end{align} 
that is by the Riemannian gradient of the potential \eqref{eq:S-Flow_Pot}.
\end{lemma}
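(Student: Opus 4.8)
The plan is to obtain the Euclidean gradient of $V\mapsto J(V)=J(\exp_{S^0}(V))$ by the chain rule — pulling the ambient gradient of $J$ back through the differential of the lifting map $\exp_{S^0}$ — and then to recognize the result as the Riemannian gradient of the potential \eqref{eq:S-Flow_Pot}.

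First I would assemble the two ingredients. Writing $J$ in the form \eqref{eq:S-Flow_Pot} and using that $S\mapsto\Omega S$ is a symmetric operator on $\R^{n\times c}$ (because $\Omega(x,y)=\Omega(y,x)$, cf.~\eqref{eq:def-Omega-S-matrix}), the ambient Euclidean gradient is $\partial_S J(S)=-\Omega S$. Together with the symmetry of $R_p=\Diag(p)-pp^{\T}$ from \eqref{eq:def-Rp} and the fact that $\langle\eins_c,R_p w\rangle=0$ for all $p\in\mc{S}$ (so $R_p$ maps into $T_0$), this gives $\ggrad_g J(S)=R_S\big(\partial_S J(S)\big)=-R_S(\Omega S)$, which is already the second equality in \eqref{eq:Riemannian_Grad_Section}. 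The other ingredient is the differential of the lifting map at $V\in\mc{T}_0$, namely the linear map $U\mapsto R_{\exp_{S^0}(V)}U$; this is \cite[Lemma~3.1]{Savarino:2019ab}, the same fact already used to derive \eqref{eq:dt-S-exp-V}.

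With these in hand, for every test direction $U\in\mc{T}_0$ the chain rule gives, at $S=\exp_{S^0}(V)$,
\begin{equation*}
\langle\partial J(V),U\rangle=\big\langle\partial_S J(S),R_S U\big\rangle=\big\langle-\Omega S,R_S U\big\rangle=\big\langle-R_S(\Omega S),U\big\rangle,
\end{equation*}
where the last step uses $R_S^{\T}=R_S$. Since $J(V)$ depends only on $\Pi_0 V$ by \eqref{eq:exp-constant}, its gradient lies in $\mc{T}_0$, and since $-R_S(\Omega S)\in\mc{T}_0$ as noted above, comparing the two sides over all $U\in\mc{T}_0$ forces $\partial J(V)=-R_{\exp_{S^0}(V)}\big(\Omega\exp_{S^0}(V)\big)$. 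Combined with the first step this is exactly \eqref{eq:Riemannian_Grad_Section}.

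I do not expect a genuine obstacle; the only points requiring care are bookkeeping: that it suffices to test the differential against tangent directions $U\in\mc{T}_0$ (legitimate because $J(V)=J(\Pi_0 V)$), the symmetry of both $\Omega$ and $R_S$, and invoking the differential of $\exp_{S^0}$ in the correct form. Should one wish to avoid quoting \cite{Savarino:2019ab} here, the differential can be recomputed in a line by differentiating the closed-form expression $\exp_{S^0}(V)=\frac{S^0\odot e^{V}}{\langle S^0,e^{V}\rangle}$ (taken componentwise per vertex), which yields precisely $R_{\exp_{S^0}(V)}$.
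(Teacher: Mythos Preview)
Your proposal is correct and is essentially the paper's own argument: both compute $\partial J(V)$ by the chain rule through the differential of $\exp_{S^0}$ and identify the result as $-R_S(\Omega S)=\ggrad_g J(S)$. The only difference is presentational --- the paper differentiates $\exp_{S^0}(V)$ componentwise (exactly your suggested fallback), whereas you quote the differential as $R_S$ from \cite[Lemma~3.1]{Savarino:2019ab} and then use the symmetry $R_S^{\T}=R_S$ to move it across the inner product.
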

\noindent
\textit{Proof.}  Appendix \ref{app:Geometric-Integration}.

\vspace{0.3cm}
The next proposition asserts that applying the second-order geometric 
integration scheme \eqref{eq:two_stage_scheme} leads to a sufficient 
decrease of the sequence of values $(J(S^{k}))_{k\in\N}$, if at each iteration the step sizes are chosen according to a \textit{Wolfe rule} like line search procedure 
\cite{Dai:1999aa,Nocedal:2006aa}. Specifically, the step sizes $h$ and $h^{2}$ 
in \eqref{eq:two_stage_scheme-a} and \eqref{eq:two_stage_scheme-b}, 
respectively, are replaced by step size sequences $(\theta_{k})_{k\geq 0}$ and 
$(h_{k}\theta_{k})_{k\geq 0}$. In addition, the proposition 
reveals that, under mild assumptions on the sequence $(h_k)_{k \geq 0}$, the norm 
of the Riemannian gradient \eqref{eq:Riemannian_Grad_Section} becomes 
arbitrarily small. The proposition is proved in Appendix 
\ref{app:Geometric-Integration}.
\begin{proposition}\label{prop:Numerical_Scheme_Second_Order}
	Let $\Omega(x,y)$ be as in 
	Lemma~\ref{Help_Lemma} and let $d:\mathcal{W}\times \R_{+}\to 
	\mathcal{T}_{0}$ be a mapping given by 
	\begin{align}\label{eq:Descent_Dir_Two_Stage}
	d(S,h) = \Pi_{0}\Big(\Omega S +\frac{h}{2}\Omega R_{S}(\Omega S) \Big), 
	\qquad 
	S\in \mathcal{W},\quad h\in \R_{+}.
	\end{align}
	Then the following holds:
	\begin{itemize}
	\item[(i)] There exist sequences $(h_k)_{k\geq  
	0}, (\theta_k)_{k\geq 0}$ and constants $0<c_1<c_2<1$ such that setting 
	\begin{subequations}\label{eq:Second_Order_Scheme}
		\begin{align}
		S^{k+\frac{1}{2}} &= \exp_{S^k}(\theta_k\Omega S^k),\\
		S^{k+1} &= \exp_{S^{k+\frac{1}{2}}}\Big(\frac{h_k\theta_k}{2}\Omega 
		R_{S^k}(\Omega S^k)\Big) 
\label{eq:second_order_second},
		\end{align}
	\end{subequations}
	and 
\begin{equation}\label{eq:dk-accelerate}
	d^k \coloneqq d(S^k,h_k) \in \mathcal{T}_{0}
\end{equation}
	yields iterates 
\begin{equation}\label{eq:Sk+1-Alg-3}
S^{k+1}=\exp_{S^{k}}(\theta_kd^{k}),\qquad k\in\N
\end{equation}
	satisfying 
	\begin{subequations}\label{eq:Sufficient_Decrease_Properties}
	\begin{align}
		J(S^{k+1})-J(S^k) &\leq c_1 \theta_k \langle 
		\ggrad_{g} 
		J(S^k),R_{S^k}(d^k) 
		\rangle_{S^k}, \quad (\normalfont{\text{Armijo 
		condition}})\label{eq:Sufficient_Decrease_Properties_a}\\
		|\langle \ggrad_{g} J(S^{k+1}),R_{S^k}(d^k) 
		\rangle_{S^{k}}| &\leq c_2 |\langle \ggrad_{g} 
		J(S^k),R_{S^k}(d^k)
		\rangle_{S^k}|,\quad (\normalfont{\text{curvature condition}})
		\label{eq:Sufficient_Decrease_Properties_b} 
	\end{align} 
	\end{subequations}
and (recall \eqref{eq:Fisher_Rau})
	\begin{equation}\label{eq:FischerRao-matrix}
	\la U, V \ra_{S} = \sum_{x\in\mc{V}}g_{S(x)}\big(U(x),V(x)\big),
	\qquad U,V\in\mc{T}_{0},\quad S\in\mc{W}.
	\end{equation}
	\item[(ii)] Suppose the limit point $\gamma_{\ast}$ of $(\theta_k)_{k\geq 
	0}$ is bounded away from zero, i.e.~$\gamma_{\ast} = \lim 
	\limits_{k \to \infty} \theta_k
	>0$. Then any limit point $S^{\ast} \in 
	\overline{\mathcal{W}}$ 
	of the 
	sequence \eqref{eq:Second_Order_Scheme} is an 
	equilibrium of the flow \eqref{eq:S-flow-S}. 
	\item[(iii)] If $S^{\ast}$ is a limit point of 
	\eqref{eq:Second_Order_Scheme} which locally minimizes $J(S)$, 
	with sequences  
	$(\theta_k)_{k \geq 0},(h_k)_{k \geq 0}$ as in $\mrm{(ii)}$, then $S^{\ast}\in 
	\ol{\mathcal{W}}\setminus \mathcal{W}$. 
	\item[(iv)] If additionally   
	$\sum_{k \geq 
	0}h_k = 0$ holds in $(ii)$, then the sequence $(\epsilon_{k})_{k \geq 0}$ with 
	$\epsilon_{k} \coloneqq \|\text{\normalfont{grad}}_g J(S^k)\|$ is a zero 
	sequence.
\end{itemize} 
\end{proposition}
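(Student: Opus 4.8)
The plan is to read (i) as a Riemannian Wolfe line-search statement for the direction $d(S,h)$ of \eqref{eq:Descent_Dir_Two_Stage}, and then to bootstrap (ii)--(iv) from the monotone decrease of $J$ it produces. First I would compute the first-order data of the scheme. Abbreviate $u := R_{S}(\Omega S) = -\ggrad_{g}J(S)$ (Lemma~\ref{lem:Grad_Tangent}). Using $R_{p}=R_{p}\Pi_{0}$, the differential $D\exp_{p}(v)\,w=R_{\exp_{p}(v)}w$ of the lifting map \eqref{eq:def-lifting-map} (cf.~\cite[Lemma 3.1]{Savarino:2019ab} and \eqref{eq:dt-S-exp-V}), and the identity $g_{p}(v,R_{p}a)=\la a,v\ra$ for $v\in T_{0}$, $a\in\R^{c}$ — which follows at once from \eqref{eq:def-Rp}, \eqref{eq:Fisher_Rau} and is moreover \emph{independent of} $p$ — one gets $R_{S}d(S,h)=u+\tfrac{h}{2}R_{S}(\Omega u)$ and hence
\begin{equation}
\la\ggrad_{g}J(S),\,R_{S}d(S,h)\ra_{S}=-g_{S}(u,u)-\tfrac{h}{2}\,u^{\T}\Omega u .
\end{equation}
Since $S(x)_{j}\le 1$ gives $g_{S}(u,u)\ge\|u\|_{F}^{2}$ and $u^{\T}\Omega u\ge\lambda_{\min}(\Omega)\|u\|_{F}^{2}$ by symmetry of $\Omega$, the right-hand side is $\le-\big(1-\tfrac{h}{2}\max\{0,-\lambda_{\min}(\Omega)\}\big)g_{S}(u,u)$. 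So, fixing any $h_{\max}<2/|\lambda_{\min}(\Omega)|$ when $\lambda_{\min}(\Omega)<0$ (and $h_{\max}$ arbitrary otherwise), $d(S,h)$ is a strict descent direction for $J$, uniformly in $h\in[0,h_{\max}]$: $\la\ggrad_{g}J(S),R_{S}d(S,h)\ra_{S}\le-\delta\,\|\ggrad_{g}J(S)\|_{S}^{2}$ for a fixed $\delta>0$. This is precisely the estimate that forces a step-size restriction, in the spirit of \eqref{eq:h-lambda-min}.

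For (i), fix Wolfe constants $0<c_{1}<c_{2}<1$ and any admissible $(h_{k})\subset[0,h_{\max}]$, and for each $k$ consider $\phi_{k}(\theta):=J\big(\exp_{S^{k}}(\theta d^{k})\big)$ with $d^{k}:=d(S^{k},h_{k})$. Since $\exp_{S^{k}}(\theta d^{k})\in\ol{\mc{W}}$ for all $\theta$ and $\ol{\mc{W}}$ is compact, $\phi_{k}$ is smooth and bounded below, with $\phi_{k}'(0)=\la\ggrad_{g}J(S^{k}),R_{S^{k}}d^{k}\ra_{S^{k}}<0$ unless $\ggrad_{g}J(S^{k})=0$. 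The classical existence lemma for strong-Wolfe step sizes \cite{Nocedal:2006aa,Dai:1999aa} then supplies $\theta_{k}>0$ with $\phi_{k}(\theta_{k})\le\phi_{k}(0)+c_{1}\theta_{k}\phi_{k}'(0)$ and $|\phi_{k}'(\theta_{k})|\le c_{2}|\phi_{k}'(0)|$. It remains to identify these with \eqref{eq:Sufficient_Decrease_Properties}: composing the two stages of \eqref{eq:Second_Order_Scheme} via \eqref{eq:exp-S-group} and $\exp_{p}(v)=\exp_{p}(\Pi_{0}v)$ gives $S^{k+1}=\exp_{S^{k}}(\theta_{k}d^{k})$, so $\phi_{k}(0)=J(S^{k})$, $\phi_{k}(\theta_{k})=J(S^{k+1})$; and $\phi_{k}'(\theta_{k})=g_{S^{k+1}}(\ggrad_{g}J(S^{k+1}),R_{S^{k+1}}d^{k})=\la\ggrad_{g}J(S^{k+1}),R_{S^{k}}d^{k}\ra_{S^{k}}$ by the $p$-independence noted above, so the two Wolfe inequalities are exactly \eqref{eq:Sufficient_Decrease_Properties_a}--\eqref{eq:Sufficient_Decrease_Properties_b}.

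For (ii)--(iv), Armijo \eqref{eq:Sufficient_Decrease_Properties_a} combined with the descent estimate of the first step yields $J(S^{k+1})-J(S^{k})\le-c_{1}\delta\,\theta_{k}\,\|\ggrad_{g}J(S^{k})\|_{S^{k}}^{2}\le 0$; as $J$ is continuous on the compact $\ol{\mc{W}}$ it is bounded below, so $(J(S^{k}))$ converges and, telescoping, $\sum_{k}\theta_{k}\|\ggrad_{g}J(S^{k})\|_{S^{k}}^{2}<\infty$. If $\theta_{k}\to\gamma_{\ast}>0$ then $\theta_{k}$ is eventually bounded below, forcing $\|\ggrad_{g}J(S^{k})\|\to 0$, which is (iv) (the hypothesis on $(h_{k})$ serving only to keep $h_{k}\le h_{\max}$, so that the per-step decrease stays comparable to $\|\ggrad_{g}J(S^{k})\|^{2}$). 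For (ii): along any subsequence $S^{k_{l}}\to S^{\ast}\in\ol{\mc{W}}$, continuity of $S\mapsto R_{S}(\Omega S)=-\ggrad_{g}J(S)$ together with $\|\ggrad_{g}J(S^{k})\|\to 0$ gives $R_{S^{\ast}}(\Omega S^{\ast})=0$, i.e.\ $S^{\ast}$ is an equilibrium of \eqref{eq:S-flow-S}. For (iii): \eqref{eq:S-flow-S} is the Riemannian gradient-descent flow of $J$, so a local minimizer $S^{\ast}$ of $J$ is a Lyapunov-stable equilibrium; invoking the equilibrium analysis of the $S$-flow in \cite{Zern:2020aa} — which shows that interior points of $\mc{W}$ are never stable rest points (the linearization always admits an escaping direction) — one concludes $S^{\ast}\in\ol{\mc{W}}\setminus\mc{W}$.

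I expect (iii) to be the main obstacle: unlike the other parts it is not self-contained and must import the characterization of equilibria from \cite{Zern:2020aa} to rule out interior local minimizers. The second delicate point is pinning down the sharp constant in the descent estimate of the first step, which simultaneously underpins the Wolfe line search and dictates the admissible range of $h_{k}$; the remaining ingredients (existence of Wolfe step sizes, telescoping, continuity/compactness) are routine.
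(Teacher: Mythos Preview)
Your plan for (i), (ii) and (iv) is sound and in places cleaner than the paper's argument. The key identity $g_{p}(v,R_{p}a)=\langle v,a\rangle$ is exactly what makes the Wolfe derivatives line up across base points, and your telescoping argument for (ii)/(iv) is shorter than the paper's proof-by-contradiction. One point of comparison worth noting: you impose the \emph{uniform} bound $h_{k}\le h_{\max}<2/|\lambda_{\min}(\Omega)|$, which yields the inequality $\langle\ggrad_{g}J(S),R_{S}d(S,h)\rangle_{S}\le-\delta\,\|\ggrad_{g}J(S)\|_{S}^{2}$ with a fixed $\delta>0$, and this is what lets you telescope directly. The paper instead allows the \emph{adaptive} window $h_{k}\in\big(0,\ \|R_{S^{k}}(\Omega S^{k})\|_{S^{k}}^{2}/|\langle R_{S^{k}}(\Omega S^{k}),\Omega R_{S^{k}}(\Omega S^{k})\rangle|\big)$, which need not be bounded as $k\to\infty$; that is why the paper's proof of (iv) genuinely needs $h_{k}\to 0$ to split the limit, whereas in your setup this extra hypothesis is redundant. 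Both choices prove (i); yours buys a simpler (iv), the paper's buys potentially larger second-order steps in practice (cf.\ \eqref{eq:second_order_weighting-experiments}).

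For (iii) your instinct that this is the delicate part is right, but you do not need to import stability theory from \cite{Zern:2020aa}. The paper gives a self-contained, two-line argument you are missing: by (ii) any interior limit point $S^{\ast}\in\mc{W}$ is an equilibrium, and for such $S^{\ast}$ the equilibrium condition forces $(\Omega S^{\ast})(x)=\langle S^{\ast}(x),(\Omega S^{\ast})(x)\rangle\,\eins_{c}$ for every $x$. Now take a constant labeling $\ol{S}_{(l)}(x)=e_{l}$, which is a \emph{global} minimizer of $J$ on $\ol{\mc{W}}$ (immediate from $\Omega\ge 0$ entrywise and $S(x)\in\Delta_{c}$). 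Evaluating $J$ along the segment $S^{\ast}_{\epsilon}=S^{\ast}+\epsilon(\ol{S}_{(l)}-S^{\ast})\in\mc{W}$, the equilibrium condition kills the $O(\epsilon)$ term (because $\Omega S^{\ast}$ is constant across labels and $\langle\eins_{c},\ol{S}_{(l)}(x)-S^{\ast}(x)\rangle=0$), and a short computation gives $J(S^{\ast}_{\epsilon})=J(S^{\ast})+\epsilon^{2}\big(J(\ol{S}_{(l)})-J(S^{\ast})\big)<J(S^{\ast})$. Hence no interior equilibrium can be a local minimizer, which is precisely (iii). This replaces your appeal to linearized stability by an explicit quadratic decrease and closes the one gap in your outline.
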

\noindent
\textit{Proof.}  Appendix \ref{app:Geometric-Integration}.

\vspace{0.3cm}
Given a state $S^k \in \mathcal{W}$, Proposition 
\ref{prop:Numerical_Scheme_Second_Order} asserts the existence of step size sequences 
$(h_k)_{k\geq 0}, (\theta_k)_{k\geq 0} \subset \R_{+}$ that guarantee a sufficient 
decrease of the objective \eqref{eq:S-Flow_Pot} through 
\eqref{eq:Sk+1-Alg-3} while still remaining numerically 
efficient by avoiding too small step sizes through 
\eqref{eq:Sufficient_Decrease_Properties}. A corresponding proper stepsize selection procedure is summarized as Algorithm \ref{alg:Step_Size_Selection} that calls Algorithm \ref{alg:Search_Aux} as a subroutine. Based on Algorithm \ref{alg:Step_Size_Selection}, the two-stage geometric integration scheme \eqref{eq:two_stage_scheme} that \textit{accelerates}  Algorithm
\ref{Geometric_Explicit_Euler} is listed as Algorithm \ref{Geometric_Two_Stage}. Acceleration is accomplished by utilizing 
at each $S^k$ descent directions $d_k$ given by \eqref{eq:dk-accelerate}, based on second-order 
information provided by the vector field 
\eqref{eq:second_derivative}.

\begin{algorithm}[ht]
	\textbf{Input: } current iterate: $S^k \in \mathcal{W}$, initial step size 
	$\theta_k>0$,\\ descent direction $d_k$ with $\langle 
	\text{\normalfont{grad}}_g J(S^k),R_{S^k}d_k \rangle_{S^k} < 0$,\\
	$k = 1$. \\
	\Repeat{$\theta_k$ satisfies \eqref{eq:Sufficient_Decrease_Properties}}{
		$S^{k+1} = \exp_{S^k}(\theta_{k}d^k)$\\
		\If{$J(S^{k+1})-J(S^k) > \theta_k c_1 \langle 
			\text{\normalfont{grad}}_g J(S^k),R_{S^k}d_k \rangle_{S^k}$}{
		$a = a, b = \theta_{k}$} 
		\Else{
		\If{$|\langle \text{\normalfont{grad}}_g J(S^{k+1}),R_{S^k} d_k 
			\rangle_{S^{k}}| \leq |c_2\langle \text{\normalfont{grad}}_g 
			J(S^k),R_{S^k}d_k
			\rangle_{S^k}|$}{\textbf{stop}}
		$a = \theta_{k},b = b,\theta_{k+1} = \frac{a+b}{2}.$}
		$k \leftarrow k+1.$}
\textbf{Return: } $S^k,\theta_{k}$
\caption{\textrm{Search}\,($S^{k},\theta_k,d_k,c_1,c_2,a,b$).}
\label{alg:Search_Aux}
\end{algorithm}
\begin{algorithm}[h]
	\textbf{Input: } current iterate: $S^k \in \mathcal{W}$, initial step size 
	$\theta_k>0$,\\ descent direction $d_k$ with $\langle 
	\text{\normalfont{grad}}_g J(S^k),R_{S^k}d_k \rangle_{S^k} < 0$,\\
	smallest eigenvalue of $\Omega$, $\lambda_{\min}(\Omega)$ 
	$c_1,c_2\in (0,1)$ with $c_2 \in (c_1,1)$,\\
	\text{initial search interval:} $a_1 = \theta_{k}, 
	b_1 = 
	\frac{1}{|\lambda_{\text{min}}(\Omega)|} \text{ with }a_1 < b_1$,\\
	$k = 1$. \\
	\Repeat{$\theta_k$ satisfies \eqref{eq:Sufficient_Decrease_Properties_a}}{
	\text{$\theta_k = \frac{a_k+b_k}{2},S^{k+1} = \exp_{S^k}(\theta_k 
	d_k)$},\\
	\If{$J(S^{k+1})-J(S^k) > \theta_k c_1 \langle 
		\text{\normalfont{grad}}_g J(S^k),R_{S^k}d_k \rangle_{S^k}$ 
 	}
	{$S^{k+1},\theta_{k+1} \leftarrow 
	\text{Search}(S^{k},\theta_k,c_1,c_2,a_k,b_k)$ (Algorithm \ref{alg:Search_Aux}), 
	\textbf{stop}}
	\Else{
		\If{$|\langle \text{\normalfont{grad}}_g J(S^{k+1}),R_{S^k} d_k 
		\rangle_{S^{k}}| \leq |c_2\langle \text{\normalfont{grad}}_g 
		J(S^k),R_{S^k}d_k
		\rangle_{S^k}|$}
		{\textbf{stop}}
		\Else{
		$a_{k+1} = \theta_{k+1},b_{k+1} = b_k$.}
	}
	$k \leftarrow k+1.$
	}
	\textbf{Return: } $S^k$
	\caption{\textrm{Step}\,($S^k,\theta_k,d_k,c_1,c_2,\lambda_{\text{min}}(\Omega)$).}
	\label{alg:Step_Size_Selection}
\end{algorithm}

In Section \ref{sec:Convergence}, we show that 
Algorithm \ref{Geometric_Two_Stage} converges. This implies, in particular, that Algorithm \ref{Geometric_Explicit_Euler} and Algorithm \ref{Geometric_Two_Stage} terminate after a finite number of steps for any termination parameter $\veps$ with respect to the entropy of the assignment vectors, which measures closeness to an integral solution. Theorem \ref{thm:Existance-Epsilon} asserts the existence of basins of attraction around integral solutions from which the sequence $(S^{k})$ can never escape once it has reached such a region.

We elaborate in terms of Theorem \ref{theorem:convergence-1} a theoretical guideline 
for choosing a sequence $(h_{k})_{k \geq 0}$ which meets the condition of 
Proposition \ref{prop:Numerical_Scheme_Second_Order} (iv). In practice, to 
achieve an acceleration by Algorithm \ref{Geometric_Two_Stage} in comparison with Algorithm \ref{Geometric_Explicit_Euler}, we choose a large value of the step size parameter 
$h_k$ in the beginning and monotonically decrease $h_k$ to zero after a fixed 
number of iterations. One particular step size selection strategy that we used for the numerical experiments will be 
highlighted in Section \ref{sec:Exp_Dis}. 

\begin{algorithm}[h]
	\textbf{Initialization}: (DC-decomposition 
	parameter, 
	see the proof of Prop.~\ref{prop:Numerical_Scheme}),\\
	$S^0= S(0) \in \mathcal{W}$, (initial iterate 
	\eqref{eq:S-flow-S}),\\
	$\epsilon>0$, (termination threshold),\\
	$\lambda_{\min}(\Omega)$, (smallest 
	eigenvalue of $\Omega$), \\
	$c_1,c_2 \in (0,1)$, (cf.~Prop.~ 
	\ref{prop:Numerical_Scheme_Second_Order}),\\
	$\epsilon_0 = \|\text{\normalfont{grad}}_g J(S^0)\|$, 
	$\theta_0 = \frac{1}{\gamma}$ (cf.~\eqref{eq:J-DC}) \\
	$k = 0$. \\
	\While{$\epsilon_{k}>\epsilon$}{
		\text{Choose:} $h_k \in \Big(0,\frac{\|R_{S^k}(\Omega S^k) 
		\|^2_{S^k}}{|\langle 
		R_{S^k}(\Omega S^k),\Omega R_{S^k}(\Omega S^k) \rangle|}\Big)$ 
		\label{eq:alg:step_size_int}\\
		$d_k = \Pi_0\Omega S^{k}+\frac{h_k}{2}\Omega R_{S^k}(\Omega S^k)$ 
		\text{(descent direction by \eqref{eq:dk-accelerate},\eqref{eq:Descent_Dir_Two_Stage})}\\
		\If{$\theta_{k}$ satisfies 
		\eqref{eq:Sufficient_Decrease_Properties}}
		{\text{Set: } $\wt{S}^{k} = 
			\frac{1}{\theta_{k}}\log(\frac{S^k}{\mathbb{1}_c})+d_k$\\
		\text{Compute: }\text{$S^{k+1}  = \argmin \limits_{S \in 		
				\overline{\mathcal{W}}}\{\frac{1}{\theta_{k}}S\log
					 S -\langle 
					\wt{S}^k,S 
				\rangle\}$},\label{alg-final-upd} by\\
				 $S^{k+1} = \exp_{S^k}(\theta_{k}d^k)$
		 \label{alg:tilde_S}	
		}
		\Else{$S^{k+1} \leftarrow$ 
		\textrm{Step}$\big(S^k,\theta_k,d_k,c_1,c_2,\lambda_{\text{min}}(\Omega)\big)$
		 by \text{Algorithm} 
		\ref{alg:Step_Size_Selection}. \label{alg:Update-DC-par-line}}
		$\epsilon_{k+1} = \|\text{\normalfont{grad}}_g J(S^{k+1})\|$,\\
		$k \leftarrow k+1$. }
	\textbf{Returns: } $S^k \approx S^{\ast}$
	\caption{Accelerated Geometric DC Optimization}
	\label{Geometric_Two_Stage}
\end{algorithm}

\vspace{0.5cm}
The following remark  
clarifies how the line search procedure formulated as Algorithm \ref{alg:Step_Size_Selection}, that is used in Algorithm \ref{Geometric_Two_Stage}, differs from the common line search accelerated DC-programming schemes proposed by 
\cite{Fukushima:1981aa} and \cite{Aragon:2018aa}.
\begin{remark}[\textbf{directly related work}]
	Using the notation of Proposition \ref{prop:Numerical_Scheme} and its proof, 
	the step iterated by Algorithm \ref{Geometric_Explicit_Euler} at $S^k \in \mathcal{W}$ reads
	\begin{subequations}\label{eq:DC-not-ac-rem}
	\begin{align}
	\wt{S}^k &= \argmin_{S \in \R^n}\left\{ h^{\ast}(S)-\langle S^{k},S 
	\rangle 
	\right\}, &\quad &\text{with} &\qquad &h(S) = \langle S,\Omega S 
	\rangle+\gamma S \log S,\\
	S^{k+1} &= \argmin_{S \in \R^n}\left\{ 
	g(S)-\langle 
	S,\wt{S}^k \rangle \right\},&\quad &\text{with} &\qquad &g(S) = 
	\delta_{\overline{\mathcal{W}}}(S)+\gamma S \log S, \label{eq:rem_final_up}
	\end{align}
	\end{subequations}
	where $h^{\ast}$ is the conjugate of the convex function $h$.
	Motivated by the work \cite{Fukushima:1981aa}, Arag{\'o}n et al.~\cite{Aragon:2018aa} proposed an accelerated version of the above scheme  
	 by performing an additional line search step along the
	descent direction 
\begin{equation}
	\wt{d}^k = S^{k+1}-S^{k}
\end{equation}
	in 
	\eqref{eq:rem_final_up} for scenarios, where the primary variable $S$ to be determined is \underline{not} manifold-valued. 
	
	The direct comparison with Algorithm 
	\ref{Geometric_Explicit_Euler} reveals that for the 
	specific choice $h_k = 0, k \in \N$ in \eqref{eq:dk-accelerate}, \eqref{eq:Descent_Dir_Two_Stage}, 
line search is performed along the descent direction 
\begin{equation}
d^k = \Pi_{0} \Omega S^{k} = V^{k+1}-V^{k} \in\mc{T}_{0},
\end{equation}
where the last equation follows from applying the parametrization \eqref{eq:T0_parameterization} to \eqref{eq:Second_Order_Scheme} while taking into account \eqref{eq:def-lifting-map} and $R_{S}=R_{S}\Pi_{0}$ for $S\in\mc{W}$.

Comparing $\wt{d}^{k}$ and $d^{k}$ shows the geometric nature of our algorithm in order to handle properly the manifold-valued variable $S$ and the more general descent directions $d^{k}$ with step sizes $h_{k}>0$ in Algorithm \ref{Geometric_Two_Stage}.
\end{remark}

\subsection{Influence of Nonlocal Boundary 
Conditions}\label{sec:Influence_Nonlocal_Boundary}
We conclude this section by explaining in more detail the 
effect of imposing in  
\eqref{eq:Non_Local_PDE} the
zero nonlocal boundary condition on the nonempty interaction domain,  
on the stepsize selection 
procedure presented as Algorithm  \ref{alg:Step_Size_Selection}. This 
explanation is formulated as Remark \ref{rem:-Step_Size} below after the 
following proposition, that states a result analogous to \cite[Proposition 
2.3]{Andreu-Vaillo:2010aa}. The proposition is proved in Appendix
\ref{app:Influence_Nonlocal_Boundary}.
\begin{proposition}\label{lem:Omega_Invertibility}
	For mappings $\Theta,\alpha \in 
	\mathcal{F}_{\overline{\mathcal{V}}\times\overline{\mathcal{V}}}$, let 
	$\Omega \in \mathcal{F}_{\mathcal{V}\times \mathcal{V}}$ and $\lambda 
	 \in \mathcal{F}_{\mathcal{V}}$ be given as in Lemma  
	\ref{Help_Lemma} such that property \eqref{eq:Omega_AF} holds 
	and $\lambda = 1,x\in \mathcal{V}$ in \eqref{eq:def-lambda-x} is achieved. 
	Assume 
	further that the weighted graph $(\mathcal{V},\mathcal{E},\Omega)$ in 
	\eqref{def:weighted-graph} is connected. Then the  following holds:
	\begin{itemize}
	\item[(i)] The smallest Dirichlet eigenvalue of the nonlocal operator 
	\eqref{eq:Non_Local_Dif}
	\begin{align}\label{eq:Dirichlet_Eigvals}
	 \lambda_{1}^{D} = \inf_{f \neq 0} -\frac{\frac{1}{2}\langle f, 
	 \mathcal{D}^{\alpha}(\Theta 
	 	\mathcal{G}^{\alpha}f) 
	 	\rangle_{\ol{\mathcal{V}}}}{\langle f,f 
	 	\rangle_{\ol{\mathcal{V}}}}, \qquad f \in 
			\mathcal{F}_{\ol{\mathcal{V}}}, \quad  
			f_{|\mathcal{V}_{\mathcal{I}}^{\alpha}} = 0,
	\end{align}
	is bounded away from zero and admits the equivalent expression 
		\begin{align}\label{eq:Lemma_Direchtlet_Eigenvals}
		0 < \lambda_1^{D} = \inf_{f \neq 0} \frac{\langle 
			f,(\Lambda- \Omega) f \rangle_{\mathcal{V}}}{\langle f,f 
			\rangle_{\mathcal{V}}},
		\end{align}
		where 
\begin{equation}\label{eq:def-Lambda-lem}
		\Lambda = \Diag ( \lambda ),\qquad
		\lambda = (\dots,\lambda(x) ,\dots )^{\T}
\end{equation}
		with $\lambda(x)$ given by \eqref{eq:def-lambda-x}. 
	\item[(ii)] One has $\lambda_{\min}(\Omega) > -1$.    
	\end{itemize}
\end{proposition}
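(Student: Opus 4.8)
The plan is to establish both claims by exploiting the representation identity \eqref{eq:Reform_Av} of Lemma \ref{Help_Lemma}, which under the hypothesis that property \eqref{eq:Omega_AF} holds gives $\lambda(x)=1$ for all $x\in\mathcal{V}$, so that $\Lambda=\II$ and the action of $\Omega$ on zero-extended functions coincides with $\tfrac12\mathcal{D}^{\alpha}(\Theta\mathcal{G}^{\alpha}(\cdot))+(\cdot)$ on $\mathcal{V}$. First I would prove part (i). For the equivalence \eqref{eq:Lemma_Direchtlet_Eigenvals}, I would take any $f\in\mathcal{F}_{\ol{\mathcal{V}}}$ with $f|_{\mathcal{V}_{\mathcal{I}}^{\alpha}}=0$ and rewrite the numerator of \eqref{eq:Dirichlet_Eigvals}: using \eqref{eq:Reform_Av} with $\lambda(x)=1$, one has $\bigl(\tfrac12\mathcal{D}^{\alpha}(\Theta\mathcal{G}^{\alpha}f)\bigr)(x)=(\Omega f)(x)-f(x)$ for $x\in\mathcal{V}$, hence $-\tfrac12\langle f,\mathcal{D}^{\alpha}(\Theta\mathcal{G}^{\alpha}f)\rangle_{\ol{\mathcal{V}}}=\langle f,(\II-\Omega)f\rangle_{\mathcal{V}}=\langle f,(\Lambda-\Omega)f\rangle_{\mathcal{V}}$; the sum over $\ol{\mathcal{V}}$ reduces to the sum over $\mathcal{V}$ because $f$ vanishes on $\mathcal{V}_{\mathcal{I}}^{\alpha}$. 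Similarly $\langle f,f\rangle_{\ol{\mathcal{V}}}=\langle f,f\rangle_{\mathcal{V}}$, which yields \eqref{eq:Lemma_Direchtlet_Eigenvals}. To see that $\lambda_1^D>0$, I would argue by contradiction: the infimum over the finite-dimensional compact unit sphere $\{f\in\mathcal{F}_{\mathcal{V}}:\langle f,f\rangle_{\mathcal{V}}=1\}$ is attained, so if $\lambda_1^D=0$ there is a minimizer $f^{\ast}\neq 0$ with $\langle f^{\ast},(\II-\Omega)f^{\ast}\rangle_{\mathcal{V}}=0$. Now Greens nonlocal first identity \eqref{eq:Green_First_Identity} together with the vanishing boundary data shows that this quadratic form equals $\tfrac12\langle\mathcal{G}^{\alpha}f^{\ast},\Theta\mathcal{G}^{\alpha}f^{\ast}\rangle_{\ol{\mathcal{V}}\times\ol{\mathcal{V}}}$, a sum of nonnegative terms $\Theta(x,y)\bigl(f^{\ast}(y)-f^{\ast}(x)\bigr)^2\alpha^2(x,y)\ge 0$. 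Vanishing of this sum forces $f^{\ast}(x)=f^{\ast}(y)$ whenever $\Theta(x,y)\alpha^2(x,y)>0$, i.e.\ whenever $y\in\mathcal{N}(x)$ by \eqref{eq:Avaraging_Matrix}; connectedness of $(\mathcal{V},\mathcal{E},\Omega)$ then makes $f^{\ast}$ constant on $\mathcal{V}$, and the zero boundary condition propagates this constant across any edge linking $\mathcal{V}$ to $\mathcal{V}_{\mathcal{I}}^{\alpha}$, giving $f^{\ast}\equiv 0$ — a contradiction. (If no such boundary edge exists, i.e.\ the interaction domain is empty, then property \eqref{eq:Omega_AF} together with $\lambda(x)=1$ forces $\Theta(x,x)=0$ and $\sum_{y\in\mathcal{N}(x)}\Omega(x,y)=1$ already, and a constant $f^{\ast}$ gives $\langle f^{\ast},(\II-\Omega)f^{\ast}\rangle_{\mathcal{V}}=0$; this is the one case that must be handled separately, and it is consistent with the statement only if one implicitly assumes a nonempty interaction domain, which is the standing setting of the proposition.)

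For part (ii), I would combine the expression \eqref{eq:Lemma_Direchtlet_Eigenvals} with the bound on $\Omega$ from the latter part of Lemma \ref{Help_Lemma}. Since $\Lambda=\II$, \eqref{eq:Lemma_Direchtlet_Eigenvals} reads $\lambda_1^D=\inf_{f\neq 0}\langle f,(\II-\Omega)f\rangle_{\mathcal{V}}/\langle f,f\rangle_{\mathcal{V}}=1-\lambda_{\max}(\Omega)$, so $\lambda_1^D>0$ is equivalent to $\lambda_{\max}(\Omega)<1$. To get a bound on the \emph{smallest} eigenvalue, I would consider instead the quantity $1+\lambda_{\min}(\Omega)$ and show it equals an analogous strictly positive Dirichlet-type infimum. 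Concretely, the matrix $\II+\Omega$ is symmetric, and for $f|_{\mathcal{V}_{\mathcal{I}}^{\alpha}}=0$ one computes, again via \eqref{eq:Green_First_Identity} and \eqref{eq:Reform_Av} with $\lambda(x)=1$, that $\langle f,(\II+\Omega)f\rangle_{\mathcal{V}}$ decomposes into $2\langle f,f\rangle_{\mathcal{V}}$ minus the nonnegative diffusion form, i.e.\ $\langle f,(\II+\Omega)f\rangle_{\mathcal{V}}=2\|f\|_{\mathcal{V}}^2-\tfrac12\langle\mathcal{G}^{\alpha}f,\Theta\mathcal{G}^{\alpha}f\rangle_{\ol{\mathcal{V}}\times\ol{\mathcal{V}}}$. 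A direct estimate of the diffusion form by $2\|f\|^2_{\mathcal{V}}$ is \emph{not} automatic, so instead I would argue that $\lambda_{\min}(\Omega)=-1$ can only occur if $\Omega$ has an eigenvector $g\neq 0$ with $\Omega g=-g$; zero-extending $g$ and plugging into \eqref{eq:Reform_Av} gives $\tfrac12\mathcal{D}^{\alpha}(\Theta\mathcal{G}^{\alpha}g)(x)=-2g(x)$ on $\mathcal{V}$. Pairing with $g$ and using Greens identity yields $\tfrac12\langle\mathcal{G}^{\alpha}g,\Theta\mathcal{G}^{\alpha}g\rangle+\langle g,\mathcal{N}^{\alpha}(\Theta\mathcal{G}^{\alpha}g)\rangle_{\mathcal{V}_{\mathcal{I}}^{\alpha}}=-2\|g\|^2_{\mathcal{V}}$; since $g$ vanishes on $\mathcal{V}_{\mathcal{I}}^{\alpha}$ the boundary term drops, leaving a nonnegative left-hand side equal to a strictly negative right-hand side — impossible unless $g\equiv 0$. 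Hence $\lambda_{\min}(\Omega)>-1$.

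The main obstacle I anticipate is handling the interplay between the nonlocal boundary term $\mathcal{N}^{\alpha}(\Theta\mathcal{G}^{\alpha}f)$ on $\mathcal{V}_{\mathcal{I}}^{\alpha}$ and the assumption $\lambda(x)=1$: the identity \eqref{eq:Reform_Av} is only valid for functions that vanish on the interaction domain, so every quadratic-form manipulation must be carried out for zero-extended $f$, and one must verify that the nonlocal Gauss/Greens identities \eqref{eq:Non_local_Gauss}–\eqref{eq:Green_First_Identity} are applied with the correct domains of summation (over $\ol{\mathcal{V}}$ versus $\mathcal{V}$). A second subtle point is the degenerate case of an empty interaction domain together with property \eqref{eq:Omega_AF}, where the Perron eigenvalue of the stochastic matrix $\Omega$ equals $1$; the proposition tacitly assumes the nonlocal setting in which $\mathcal{V}_{\mathcal{I}}^{\alpha}\neq\emptyset$ and there is at least one edge crossing the boundary, and I would state this explicitly. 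Everything else — attainment of the infima by finite-dimensionality and compactness, positivity of the diffusion form, and the connectedness argument — is routine.
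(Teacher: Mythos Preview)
Your argument for part (i) is correct and essentially matches the paper's approach: the equivalence of the two Rayleigh quotients follows from \eqref{eq:Reform_Av} (the paper expands from the definitions instead, but obtains the same identity), and the strict positivity relies on the same two ingredients --- connectedness forces a minimizer to be constant, and a nontrivial boundary edge then forces the constant to be zero.

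Your argument for part (ii), however, has a genuine gap: the claimed contradiction does not arise because of a sign error in the application of Green's identity. With the correct sign, pairing $\tfrac12\mathcal{D}^{\alpha}(\Theta\mathcal{G}^{\alpha}g)=-2g$ against $g$ and using \eqref{eq:Green_First_Identity} yields
\[
-\tfrac12\big\langle \mathcal{G}^{\alpha}g,\Theta\mathcal{G}^{\alpha}g\big\rangle_{\ol{\mathcal{V}}\times\ol{\mathcal{V}}}=-2\|g\|_{\mathcal{V}}^{2},
\]
i.e.\ $\tfrac12\langle\mathcal{G}^{\alpha}g,\Theta\mathcal{G}^{\alpha}g\rangle=2\|g\|^{2}$, and both sides are nonnegative --- no contradiction. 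In fact this is nothing more than the identity $\langle g,(\Lambda-\Omega)g\rangle=\tfrac12\langle\mathcal{G}^{\alpha}g,\Theta\mathcal{G}^{\alpha}g\rangle$ from part (i), which for the $\lambda_{\min}$-eigenvector says $(1-\lambda_{\min})\|g\|^{2}\ge 0$, hence only $\lambda_{\min}\le 1$. The diffusion form controls $\Lambda-\Omega$ from below (equivalently $\lambda_{\max}(\Omega)$ from above), but gives no lower bound on $\lambda_{\min}(\Omega)$.

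The paper supplies the missing ingredient: an \emph{upper} bound on the diffusion form via the elementary inequality $(f(x)-f(y))^{2}\le 2(f^{2}(x)+f^{2}(y))$. Applied to a normalized $\lambda_{\min}$-eigenvector $f$, this bounds $-\lambda_{\min}(\Omega)=\langle f,(D-\Omega)f\rangle-\langle f,Df\rangle$ by $\sum_{x}\big(\sum_{y\in\mathcal{V}}\Omega(x,y)\big)f^{2}(x)$, and the subsequent decomposition $\mathcal{V}=\mathcal{V}_{i}\dot\cup\mathcal{V}_{b}$ into interior and boundary-touching nodes turns this into a strict inequality $-\lambda_{\min}(\Omega)<1$ via the nonzero interaction with $\mathcal{V}_{\mathcal{I}}^{\alpha}$. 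Your plan lacks any device that bounds the diffusion form from above, and without it the argument cannot close.
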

\noindent
\textit{Proof.}  Appendix 
\ref{app:Influence_Nonlocal_Boundary}.

\vspace{0.3cm}
We are now in the position to characterize the effect of imposing the 
zero nonlocal boundary condition on the step size selection procedure 
(Algorithm \ref{alg:Step_Size_Selection}). 
\begin{remark}[\textbf{parameter selection}]\label{rem:-Step_Size}
	Recalling the proof of Proposition  \ref{prop:Numerical_Scheme}, the update 
	\eqref{eq:PDE_Euler_Update} amounts to perform at 
	each 
	step $k\in\N$ 
	one iteration of a basic DC programming scheme \cite{Hoai-An:2005aa} 
	with respect to the suitable DC decomposition \eqref{eq:J-DC} of 
	\eqref{eq:S-Flow_Pot}, with $\Omega$ satisfying 
	\eqref{eq:Omega_Mat}, \eqref{eq:Omega_AF} by choosing parameter $\gamma 
	>0$ such that $\lambda_{\min}\big(\Omega+\gamma 
	\Diag(\frac{\eins}{S})\big)>0$.   
	In the case of a 
	\textit{nonzero} interaction domain 
	\eqref{eq:Interaction_Dom} with $\Omega,\alpha,\Theta$ as in Lemma 
	\ref{lem:Omega_Invertibility}, Proposition 
	\ref{lem:Omega_Invertibility}(ii) and estimate 
	\eqref{eq:proof-prop-num} yield for 
	$S \in \mathcal{W}$ 
	\begin{subequations}\label{eq:rem-Step_Size} 
	\begin{align}
	\hskip -0.6cm
	\lambda_{\min}\Big(\Omega+\gamma \Diag\big(\frac{\eins}{S}\big)\Big) 
	&> -1+\beta 
	+\gamma > 0 
	\qquad\text{\normalfont{for}}\qquad 
	\gamma > 1-\beta, 
	\\
	\beta &= 
	\sum_{x\in\mathcal{V}_b}\sum_{y \in 
		\mathcal{V}_{\mathcal{I}}^{\alpha}}\Theta(x,y)\alpha^2(x,y)f^2(x).
	\end{align}  
	\end{subequations} 
	In particular, following the steps in proof of Lemma 
	\ref{prop:Numerical_Scheme}, relation $h = \frac{1}{\gamma}$ in 
	connection 
	with \eqref{eq:rem-Step_Size} 
	accounts for bigger step sizes in Algorithm 
	\ref{Geometric_Explicit_Euler} for integrating
	\eqref{eq:Non_Local_PDE} with nonzero interaction domain 
	\eqref{eq:Interaction_Dom}. This will be numerically 
	validated in Section \ref{sec:Exp_Dis} (see Figure 
	\ref{fig:Step_Size_Algorithm1}). 
\end{remark}

We conclude this section with a final comment on the lower bound of the objective 
\eqref{eq:S-Flow_Pot}. 
\begin{remark}(\textbf{global minimizer of} 
\eqref{eq:S-Flow_Pot})\label{rem:constant_labeling}
	 Recalling the terms involved in the objective \eqref{eq:S-Flow_Pot}, the 
	 lower 
	 bound is attained precisely when the first term $\sum_{x \in 
	 	\mc{V}}\sum_{y \in \mathcal{N}(x)} \Omega(x,y) 
	 \|S(x)-S(y)\|^2$ is minimal and the last term $-\frac{1}{2}\|S\|_{F}^{2}$ is 
	 maximal. Therefore the global minimizers of $J(S)$ are given by the set of 
	 spatially constant assignments, where to each node in graph $\mathcal{V}$ 
	 the same prototype $X_j^{\ast} \in \mathcal{X}$ is assigned.   
\end{remark}

\section{Convergence Analysis}\label{sec:Convergence}
% !TEX root =  ../Major_Revision.tex
%%%%%%%%%%%%%%%%%%%%%%%%%%%%%%%%%%%%%%

This section is devoted to the convergence analysis of Algorithm 
\ref{Geometric_Two_Stage} that performs accelerated geometric integration of 
the Riemannian descent flow \eqref{eq:S-flow-S}. The main results are stated as 
Theorem \ref{theorem:convergence-1} and Theorem \ref{thm:Existance-Epsilon} in 
Section \ref{sec:convergence-main}. The lenghty proofs have been relegated to 
Appendix \ref{app:prep_lemmata}.

\subsection{Preparatory Lemmata}\label{sec:prep_lemmata} 

\begin{lemma}\label{lem:convergence}
	For a nonnegative, symmetric mapping $\Omega \in 
	\mathcal{F}_{\mathcal{V}\times \mathcal{V}}$, let the sequences $(S^k)_{k 
	\geq 
	0},(\theta_k)_{k \geq 0},(h_k)_{k \geq 0}$ be recursively defined by 
	Algorithm \ref{Geometric_Two_Stage} and 
	let $\Lambda$ denote the set of all limit points of the sequence
	$(S^{k})_{k \geq 0}$,   
	\begin{equation}\label{eq:limit_set-theorem-convergence}
	\Lambda = \{ S \in \overline{\mathcal{W}}: \exists (S^{k_l})_{l \geq 0} 
	\text{ 
		with } S^{k_l} \to S \text{ for } l \to \infty \}.
	\end{equation}
	Then there exists $J^{\ast} \in \R$ 
		with $\lim \limits_{k \to \infty} J(S^{k}) = J^{\ast}$, i.e. $J(S)$ is 
		constant on $\Lambda$.
\end{lemma}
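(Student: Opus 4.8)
The plan is to establish the two standard ingredients of any monotone-descent convergence argument: first, that the sequence $(J(S^k))_{k\geq 0}$ is monotonically nonincreasing and bounded below, hence convergent to some $J^{\ast}\in\R$; second, that continuity of $J$ forces every limit point to carry this same value $J^{\ast}$.

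First I would argue monotonicity. By construction, Algorithm \ref{Geometric_Two_Stage} produces at each step either the accelerated update $S^{k+1}=\exp_{S^k}(\theta_k d^k)$ when $\theta_k$ already satisfies the Armijo/curvature conditions \eqref{eq:Sufficient_Decrease_Properties}, or else invokes \textrm{Step} (Algorithm \ref{alg:Step_Size_Selection}), which by Proposition \ref{prop:Numerical_Scheme_Second_Order}(i) returns a step size meeting the Armijo condition \eqref{eq:Sufficient_Decrease_Properties_a}. In either case one has
\begin{equation}\label{eq:lem-conv-armijo}
J(S^{k+1})-J(S^k)\leq c_1\theta_k\langle \ggrad_g J(S^k),R_{S^k}(d^k)\rangle_{S^k}\leq 0,
\end{equation}
the last inequality because $d^k$ is a descent direction, i.e.~$\langle \ggrad_g J(S^k),R_{S^k}(d^k)\rangle_{S^k}<0$ (this is exactly the defining property of $d^k$ as used throughout Section \ref{sec:higher-order-geometric}; it can be verified from \eqref{eq:Descent_Dir_Two_Stage} and $R_S=R_S\Pi_0$, together with Lemma \ref{lem:Grad_Tangent}, for $h_k$ chosen in the admissible interval of line \ref{eq:alg:step_size_int}). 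Hence $(J(S^k))_{k\geq 0}$ is nonincreasing. For the lower bound I would note that $J(S)=-\tfrac12\langle S,\Omega S\rangle$ is continuous on the compact set $\ol{\mc{W}}$ and therefore bounded below on $\mc{W}\subset\ol{\mc{W}}$; equivalently one may invoke Remark \ref{rem:constant_labeling}, which identifies the global minimizers (constant labelings) and in particular shows $J$ is bounded below. A bounded monotone real sequence converges, so there is $J^{\ast}\in\R$ with $J(S^k)\to J^{\ast}$.

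Next I would pass to limit points. Let $S\in\Lambda$, so there is a subsequence $S^{k_l}\to S$ as $l\to\infty$. Since $J$ is continuous on $\ol{\mc{W}}$ (it is a polynomial in the matrix entries), $J(S^{k_l})\to J(S)$. But $J(S^{k_l})$ is a subsequence of the convergent sequence $J(S^k)$, so its limit is $J^{\ast}$. Therefore $J(S)=J^{\ast}$, and since $S\in\Lambda$ was arbitrary, $J$ is constant on $\Lambda$, equal to $J^{\ast}=\lim_{k\to\infty}J(S^k)$.

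I do not anticipate a serious obstacle; the only point requiring care is confirming that in \emph{every} branch of Algorithm \ref{Geometric_Two_Stage} — the direct accelerated step and the fallback through \textrm{Step} — the Armijo inequality \eqref{eq:lem-conv-armijo} genuinely holds, and that the line-search subroutine terminates so that $S^{k+1}$ is well defined. The former is guaranteed by Proposition \ref{prop:Numerical_Scheme_Second_Order}(i); the latter is the standard finiteness of a bisection-based Wolfe line search and is part of that same proposition's construction. With those facts in hand the lemma is immediate; the genuinely substantive convergence content (that limit points are equilibria, zero entropy, etc.) is deferred to the later theorems and is not needed here.
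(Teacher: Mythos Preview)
Your argument is correct and in fact considerably simpler than the paper's. You exploit directly that $J(S)=-\tfrac{1}{2}\langle S,\Omega S\rangle$ is a polynomial, hence continuous on $\overline{\mathcal{W}}$, so $J(S^{k_l})\to J(S)$ for any convergent subsequence, and the conclusion is immediate once monotonicity and boundedness give $J(S^k)\to J^{\ast}$.

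The paper instead follows a general DC-programming template: it only invokes lower semicontinuity of $J$ to get $\liminf_l J(S^{k_l})\geq J(S^{\ast})$, and then works hard for the reverse inequality by analysing the decomposition $J=g-h$ with $g(S)=\delta_{\overline{\mathcal{W}}}(S)+\gamma\langle S,\log S\rangle$ and $h(S)=\tfrac{1}{2}\langle S,\Omega S\rangle+\gamma\langle S,\log S\rangle$. It shows $\limsup_l g(S^{k_l})\leq g(S^{\ast})$ via the explicit update formula of Algorithm \ref{Geometric_Two_Stage} (line \ref{alg:tilde_S}) and a careful computation of $\lim_l\langle\log(S^{k_l-1}),S^{\ast}-S^{k_l}\rangle$, then combines this with lower semicontinuity of $h$ to obtain $\limsup_l J(S^{k_l})\leq J(S^{\ast})$. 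This machinery would be needed if $g$ and $h$ did not cancel to a continuous function, but here the entropy terms cancel exactly and your direct continuity argument suffices. What the paper's route buys is a proof pattern that transfers to more general DC objectives; what yours buys is brevity and transparency for the specific quadratic $J$ at hand.
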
  
\noindent
\textit{Proof.} \textcolor{blue}{ Appendix \ref{app:prep_lemmata}}.

\vspace{0.3cm}
Next, we inspect the behavior of the iterates generated by Algorithm 
\ref{Geometric_Two_Stage} near a limit point $S^{\ast}\in \ol{W}$. To this end, the 
following index sets are considered at each node $x \in \mathcal{V}$:
\begin{subequations} \label{eq:index-sets-proof-conv-theorem}
\begin{align}
	J_+(S^{\ast}(x)) &= \{j \in [c] \colon (\Omega S^{\ast})_j(x) - \langle 
	S^{\ast}(x),(\Omega S^{\ast})(x)\rangle <0  \}, 
	\label{eq:index-sets-proof-conv-theorem-a} \\
	J_-(S^{\ast}(x)) &= \{j \in [c] \colon (\Omega S^{\ast})_j(x) - \langle 
	S^{\ast}(x),(\Omega S^{\ast})(x)\rangle >0  \}, \\
	J_0(S^{\ast}(x)) &= \{j \in [c] \colon (\Omega S^{\ast})_j(x) - \langle 
	S^{\ast}(x),(\Omega S^{\ast})(x)\rangle =0  \}.
\end{align} 
\end{subequations}
\begin{lemma}\label{lem:Converegnce-Two-Stage}
	Let $\Omega \in 
	\mathcal{F}_{\mathcal{V}\times \mathcal{V}}$ and $(S^k)_{k \geq 
		0},(\theta_k)_{k \geq 0},(h_k)_{k \geq 0}$ be as in Proposition 
	\ref{prop:Numerical_Scheme_Second_Order} $\text{\normalfont{(iv)}}$ with a sequence 
	$(\theta_k)_{k\geq 0}$ bounded by $\theta_k \in [\theta_{\text{min}} 
	,\theta_{\text{max}}]$. Let  
	$S^{\ast}\in \ol{\mathcal{W}}$ be a limit point of $(S^k)_{k \geq 0}$. 
	Then, for the positive function 
$Q(S) = \sum\limits_{x \in \mathcal{V}}\sum \limits_{j \in 
J_+(S^{\ast}(x))}S_j(x)$,
	there are constants $\varepsilon> 0$, $M^{\ast} > 1$ and an index $k_0$ 
	such 
	that 
	for all $k\geq 
	k_0$ with
	$\|S^{\ast}-S^{k}\|<\varepsilon$ the inequality  
	\begin{equation}\label{eq:Estimate-Lemma-Conv}
		Q(S^{k+1})-Q(S^{k})<\frac{\theta_{k}}{M^{\ast}}\sum_{x \in 
		\mathcal{V}}\sum_{j \in 
		J_+(S^{\ast}(x))}\hskip -0.5cm S^k_j(x)((\Omega 
S^{\ast})_j(x)-\langle 
\Omega 
S^{\ast}(x),S^{\ast}(x) \rangle ) < 0
	\end{equation}  
is satisfied.
\end{lemma}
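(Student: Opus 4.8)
\textbf{Proof plan for Lemma \ref{lem:Converegnce-Two-Stage}.}
The plan is to track how the quantity $Q(S^{k})$ evolves along the two-stage update \eqref{eq:Second_Order_Scheme} near the limit point $S^{\ast}$, and to show that the dominant contribution to $Q(S^{k+1})-Q(S^{k})$ comes from the Armijo-type first-order term, which is strictly negative on the index set $J_{+}(S^{\ast}(x))$ by construction \eqref{eq:index-sets-proof-conv-theorem-a}. First I would write $Q(S^{k+1})-Q(S^{k}) = \sum_{x}\sum_{j\in J_{+}(S^{\ast}(x))}\big(S_{j}^{k+1}(x)-S_{j}^{k}(x)\big)$ and expand each difference using $S^{k+1}=\exp_{S^{k}}(\theta_{k}d^{k})$ with $d^{k}=\Pi_{0}\big(\Omega S^{k}+\tfrac{h_{k}}{2}\Omega R_{S^{k}}(\Omega S^{k})\big)$ from \eqref{eq:dk-accelerate}, \eqref{eq:Descent_Dir_Two_Stage}. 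A Taylor expansion of the lifting map \eqref{eq:def-lifting-map} in $\theta_{k}$ gives, componentwise,
\begin{equation}
S_{j}^{k+1}(x)-S_{j}^{k}(x) = \theta_{k}\, S_{j}^{k}(x)\Big(d_{j}^{k}(x)-\big\langle S^{k}(x),d^{k}(x)\big\rangle\Big) + \mathcal{O}(\theta_{k}^{2}),
\end{equation}
where the leading term, after substituting $d^{k}$ and using $\Pi_{0}$-invariance, is $\theta_{k}S_{j}^{k}(x)\big((\Omega S^{k})_{j}(x)-\langle S^{k}(x),(\Omega S^{k})(x)\rangle\big)$ plus an $h_{k}$-weighted second-order correction.

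Next I would exploit continuity: since $S^{k}\to S^{\ast}$ along the relevant subsequence and $h_{k}\to 0$ (which holds under the hypotheses inherited from Proposition \ref{prop:Numerical_Scheme_Second_Order}(iv)), for $k\geq k_{0}$ with $\|S^{\ast}-S^{k}\|<\varepsilon$ the quantity $(\Omega S^{k})_{j}(x)-\langle S^{k}(x),(\Omega S^{k})(x)\rangle$ is uniformly close to $(\Omega S^{\ast})_{j}(x)-\langle S^{\ast}(x),(\Omega S^{\ast})(x)\rangle$, which is $<0$ on $J_{+}(S^{\ast}(x))$ by \eqref{eq:index-sets-proof-conv-theorem-a}. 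Choosing $\varepsilon$ small enough that the sign is preserved and the magnitude is bounded below by, say, half its limiting value, one obtains
\begin{equation}
Q(S^{k+1})-Q(S^{k}) \leq \theta_{k}\sum_{x\in\mathcal{V}}\sum_{j\in J_{+}(S^{\ast}(x))} S_{j}^{k}(x)\big((\Omega S^{\ast})_{j}(x)-\langle \Omega S^{\ast}(x),S^{\ast}(x)\rangle\big)\big(1+o(1)\big),
\end{equation}
where the $o(1)$ absorbs both the $\mathcal{O}(\theta_{k})$ higher-order terms (controlled since $\theta_{k}\in[\theta_{\min},\theta_{\max}]$) and the $h_{k}$-dependent correction. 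Picking $M^{\ast}>1$ so that $(1+o(1))\leq 1/M^{\ast}$ for $k\geq k_{0}$ yields \eqref{eq:Estimate-Lemma-Conv}, and strict negativity of the right-hand side follows because $S_{j}^{k}(x)>0$ on $\mathcal{W}$ and at least one index lies in some $J_{+}(S^{\ast}(x))$ (otherwise $S^{\ast}$ would already be an equilibrium, handled separately).

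The main obstacle I anticipate is making the error control uniform: one must simultaneously bound the second-order Taylor remainder of $\exp_{S^{k}(x)}$ (which involves $R_{S^{k}(x)}$ and stays bounded only away from the boundary $\partial\mathcal{W}$, but here $S^{k}$ stays near $S^{\ast}$ which may itself be on $\partial\mathcal{W}$) and the $h_{k}$-weighted term $\tfrac{\theta_{k}h_{k}}{2}\Omega R_{S^{k}}(\Omega S^{k})$. The resolution is to note that on the coordinates $j\in J_{+}(S^{\ast}(x))$, if $S_{j}^{\ast}(x)>0$ the lifting map is smooth with locally bounded derivatives, while if $S_{j}^{\ast}(x)=0$ then $S_{j}^{k}(x)$ is already small and the whole $j$-term in $Q(S^{k+1})-Q(S^{k})$ is of order $S_{j}^{k}(x)$, so the inequality degenerates to $0<0$-type triviality that is absorbed. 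Thus one should split $J_{+}(S^{\ast}(x))$ according to whether $S_{j}^{\ast}(x)$ is positive or zero, handle the positive part by the smooth expansion above, and observe that the zero part contributes terms of the same sign (nonpositive) on the left and a negligible amount, preserving \eqref{eq:Estimate-Lemma-Conv}. The boundedness $h_{k}\to 0$ together with $\theta_{k}\leq\theta_{\max}$ then guarantees the correction is a genuine $o(1)$ relative to the leading negative term.
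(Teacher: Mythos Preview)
Your overall plan---expand $S^{k+1}_{j}-S^{k}_{j}$ via the lifting map, isolate the first-order term, and use continuity to replace $S^{k}$ by $S^{\ast}$---matches the paper's. The gap is in how you handle the higher-order terms. You Taylor-expand in $\theta_{k}$ and claim the $\mathcal{O}(\theta_{k}^{2})$ remainder is absorbed as a multiplicative $(1+o(1))$, writing that it is ``controlled since $\theta_{k}\in[\theta_{\min},\theta_{\max}]$.'' But that is exactly the problem: $\theta_{k}$ is bounded \emph{away from zero}, so the ratio of remainder to leading term is $\mathcal{O}(\theta_{k})$, which is bounded but has no reason to be an $o(1)$. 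Without knowing the \emph{sign} of the higher-order contribution, you cannot select any $M^{\ast}>1$ for which $(1+\text{remainder/leading})\le 1/M^{\ast}$ holds.

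The paper does not Taylor-expand. It writes the exact identity
\[
S^{k+1}_{j}(x)-S^{k}_{j}(x)=\frac{S^{k}_{j}(x)}{\langle S^{k}(x),e^{\theta_{k}d^{k}(x)}\rangle}\sum_{l\ge 1}\frac{\theta_{k}^{l}}{l!}\Big((d^{k}_{j}(x))^{l}-\big\langle S^{k}(x),(d^{k}(x))^{l}\big\rangle\Big)
\]
and then shows that for $j\in J_{+}(S^{\ast}(x))$, $S^{k}$ close to $S^{\ast}$, and $h_{k}$ small enough, \emph{every} summand with $l\ge 1$ is negative. This sign argument is the missing ingredient: once all higher-order terms push in the same (negative) direction, they may be dropped to obtain an upper bound, and the normalizing denominator $\langle S^{k}(x),e^{\theta_{k}d^{k}(x)}\rangle$ is handled by the uniform constant $M^{\ast}=\max_{S,\gamma}\max_{h}\langle S(x),e^{\gamma d(S,h)(x)}\rangle^{2}$ on the compact set $\overline{\mathcal{W}}\times[\theta_{\min},\theta_{\max}]$. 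A side remark: your proposed split of $J_{+}(S^{\ast}(x))$ according to whether $S^{\ast}_{j}(x)>0$ is vacuous---since $S^{\ast}$ is an equilibrium (Proposition~\ref{prop:Numerical_Scheme_Second_Order}(ii)), condition \eqref{eq:equilibrium-condition} forces $S^{\ast}_{j}(x)=0$ for every $j\in J_{+}(S^{\ast}(x))$.
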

\noindent
\textit{Proof.} \textcolor{blue}{ Appendix \ref{app:prep_lemmata}}.

\subsection{Main Results}\label{sec:convergence-main}

This section provides the main results of our convergence analysis: convergence of the accelerated Algorithm \ref{Geometric_Two_Stage} (Theorem \ref{theorem:convergence-1}) and an estimate of the basins of attraction around equilibria that enable early stopping of Algorithm \ref{Geometric_Two_Stage} (Theorem \ref{thm:Existance-Epsilon}).

\begin{definition}[\textbf{convex functions of Legendre type {\cite[Chapter 
26]{Rockafellar:1970aa}}}]\label{def:Legendre_Type_Fuction}
	Let $f:X \to (-\infty,\infty]$ be a lower-semicontinuous proper convex 
	function with nonempty open domain $C = \intr(\text{dom}f) \neq\emptyset$. Then $f$ is called  
	\begin{enumerate}[(i)]
		\item \textit{essentially smooth}, if $f$ is differentiable on 
		$C$ and for every sequence 
		$(x_k)_{k \in \N} \subset C$ with $x_k \to x^{\ast} \in 
		\overline{C} \setminus C$ converging to a boundary point for $k \to 
		\infty$, it follows $\| \nabla f 
		(x_k)\| \to \infty$;
		\item \textit{Legendre type function}, if $h$ is essentially 
		smooth and strictly convex on $C$.
	\end{enumerate}
\end{definition} 
Convex functions $f$ of Legendre type
yield a class of \textit{Bregman divergence functions} $D_{f}$ through   
\begin{equation}\label{eq:Bregman_Divergence}
\begin{aligned}
D_f \colon &\overline{C}\times C   \to  \R_{+},  \\
&\hspace{0,2cm} (x,y)   \mapsto  f(x) - f(y)-\langle \nabla f(y),x-y 
\rangle,
\end{aligned}
\end{equation}
see, e.g., \cite{Bregman:1967aa,Bauschke:1997aa} for a detailed 
exposition. 
Strict convexity of $f$ and Jensen's inequality imply 
\begin{equation}
\forall (x,y)\in\ol{C}\times C\colon\qquad
D_f(x,y) \geq 0
\quad\text{and}\quad
(D_f(x,y) = 0) \;\Leftrightarrow\; (x = y).
\end{equation} 
In the following, we will use the \textit{Kullback-Leibler (KL) divergence} (a.k.a.~\textit{relative entropy, information divergence}) $D_{\KL}=D_{f}$,
\begin{equation}
D_{\KL}\colon \ol{\mc{S}}\times\mc{S}\to\R_{+},\qquad
D_{\KL}(s,p) = \Big\la s,\log\frac{s}{p}\Big\ra,
\end{equation}
induced by the negative discrete entropy function 
\begin{equation}\label{eq:def-f-KL}
f = \la s,\log s\ra + \delta_{\ol{S}}(s)
\end{equation}
(with the convention $0\cdot\log 0 = 0$). Accordingly, we define with abuse of notation
\begin{equation}
D_{\KL}\colon\ol{\mc{W}}\times\mc{W}\to\R_{+},\qquad
D_{\KL}(S,P) = \sum_{x\in\mc{V}}D_{\KL}\big(S(x),P(x)\big).
\end{equation} 

\begin{theorem}[\textbf{convergence of Algorithm \ref{Geometric_Two_Stage}}]\label{theorem:convergence-1}
 Let $(S^k)_{k\geq 0}$ be a sequence generated by Algorithm 
 \ref{Geometric_Two_Stage}, where the sequences of step sizes 
 $(\theta_k)_{k\geq 
 0},(h_k)_{k \geq 0}$ additionally satisfy the assumptions of Lemma 
 \ref{lem:Converegnce-Two-Stage} and Proposition \ref{prop:Numerical_Scheme_Second_Order}, respectively. If there exists an index $K \in \N$ 
 such that the sequence $(h_k)_{k\geq K}$ satisfies  \begin{subequations}\label{eq:h_k-seq-assumption-theorem-conv}
\begin{align}
 	h_k &\leq C(\Omega) \frac{\|\text{\normalfont{grad}}_g 
 	J(S^k)\|^2_{S^k}}{n} 
	\\ \label{eq:def-lambda-Omega} &\qquad
	\text{with} \qquad C(\Omega) \coloneqq 
 	2\frac{ \theta_{\min} c_1 }{\lambda^2(\Omega)}, \qquad \lambda(\Omega) = 
 	\max\{|\lambda_{\min}(\Omega)|,|\lambda_{\max}|(\Omega)\},
\end{align}
\end{subequations}
then the set $\Lambda = \{S^{\ast}\}$ defined by \eqref{eq:limit_set-theorem-convergence} is a 
 singleton and $\lim_{k \to \infty} D_{\KL}(S^{\ast},S^k) = 0$ holds, i.e. 
 the sequence $(S^{k})_{k\geq 0}$ converges to a unique $S^{\ast}\in \ol{\mathcal{W}}$ 
 which is an equilibrium of \eqref{eq:S-flow-S}.  
\end{theorem}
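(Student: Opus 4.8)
The plan is to establish convergence of the sequence $(S^{k})_{k\geq 0}$ by combining the sufficient-decrease property of Algorithm~\ref{Geometric_Two_Stage} with a Bregman-divergence (\L ojasiewicz-type) argument. First I would use Proposition~\ref{prop:Numerical_Scheme_Second_Order}(i), which guarantees the Armijo condition \eqref{eq:Sufficient_Decrease_Properties_a}, together with Lemma~\ref{lem:convergence} to conclude that $J(S^{k})\downarrow J^{\ast}$ and that $J$ is constant on the limit set $\Lambda$. The Armijo inequality then yields $\sum_{k}\theta_{k}\,\|\ggrad_{g}J(S^{k})\|^{2}_{S^{k}}<\infty$ after bounding $\langle\ggrad_{g}J(S^{k}),R_{S^{k}}(d^{k})\rangle_{S^{k}}$ below by a multiple of $\|\ggrad_{g}J(S^{k})\|^{2}_{S^{k}}$; here the extra assumption \eqref{eq:h_k-seq-assumption-theorem-conv} is precisely what controls the second-order correction term $\tfrac{h_{k}}{2}\Omega R_{S^{k}}(\Omega S^{k})$ in \eqref{eq:Descent_Dir_Two_Stage} so that $d^{k}$ does not deviate too far from the pure gradient direction $\Pi_{0}\Omega S^{k}$, keeping the inner product bounded away from $0$ relative to $\|\ggrad_{g}J(S^{k})\|^{2}_{S^{k}}$. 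Since $(\theta_{k})$ is bounded below by $\theta_{\min}>0$ (Lemma~\ref{lem:Converegnce-Two-Stage}), this forces $\|\ggrad_{g}J(S^{k})\|_{S^{k}}\to 0$; consequently every limit point $S^{\ast}$ satisfies $\ggrad_{g}J(S^{\ast})=0$, i.e.\ is an equilibrium of \eqref{eq:S-flow-S}.

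**From subsequential to full convergence.**
To upgrade to convergence of the whole sequence and the quantitative statement $D_{\KL}(S^{\ast},S^{k})\to 0$, I would work in the tangent-space parametrization \eqref{eq:T0_parameterization}, writing $S^{k}=\exp_{S^{0}}(V^{k})$, and exploit that $\exp_{S^{0}(x)}$ is the inverse of the gradient map of the negative-entropy Legendre-type function $f$ in \eqref{eq:def-f-KL}, so that $D_{\KL}(S,P)$ is the associated Bregman divergence. The iteration \eqref{alg-final-upd}–\eqref{alg:tilde_S} is, by the proof of Proposition~\ref{prop:Numerical_Scheme}, a proximal/DC step in this Bregman geometry, which gives a three-point identity of the form $D_{\KL}(S^{\ast},S^{k+1})-D_{\KL}(S^{\ast},S^{k}) \le -D_{\KL}(S^{k+1},S^{k}) + \theta_{k}\langle d^{k},\,S^{\ast}-S^{k+1}\rangle$ (or the analogous inequality coming from the Step subroutine when the Wolfe test is invoked). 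Summing over $k$, using $\sum_{k}D_{\KL}(S^{k+1},S^{k})<\infty$ (a consequence of the monotone decrease of $J$, via a Bregman analogue of the decrease estimate used in Proposition~\ref{prop:Numerical_Scheme}) and $\|d^{k}\|\to 0$, one obtains that $D_{\KL}(S^{\ast},S^{k})$ converges along any subsequence realizing $S^{\ast}$, hence converges to $0$; since this holds for every limit point and $\ol{\mc W}$ is compact, the limit set must be the singleton $\{S^{\ast}\}$ and $D_{\KL}(S^{\ast},S^{k})\to 0$ globally.

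**Main obstacle.**
The delicate point — and the step I expect to be the main obstacle — is justifying the lower bound $\langle\ggrad_{g}J(S^{k}),R_{S^{k}}(d^{k})\rangle_{S^{k}} \le -c\,\|\ggrad_{g}J(S^{k})\|^{2}_{S^{k}}$ uniformly in $k$ \emph{near the boundary} $\ol{\mc W}\setminus\mc W$, where the Fisher–Rao metric degenerates and the replicator map $R_{S}$ becomes singular: one must show, using \eqref{eq:h_k-seq-assumption-theorem-conv} (which ties $h_{k}$ to $\|\ggrad_{g}J(S^{k})\|^{2}_{S^{k}}/n$ through the constant $C(\Omega)$ in \eqref{eq:def-lambda-Omega}), that the curvature term $\tfrac{h_{k}}{2}\|\Omega R_{S^{k}}(\Omega S^{k})\|$ is dominated by a fixed fraction of $\|\Pi_{0}\Omega S^{k}\| = \|\ggrad_{g}J(S^{k})\|_{S^{k}}$-scaled quantity, so that $d^{k}$ remains a genuine descent direction with a \emph{fixed} angle relative to the gradient. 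Once this angle condition is in place, the summability $\sum_{k}\|\ggrad_{g}J(S^{k})\|^{2}_{S^{k}}<\infty$ and the Bregman three-point inequality do the rest; I would also invoke Lemma~\ref{lem:Converegnce-Two-Stage} and the index sets \eqref{eq:index-sets-proof-conv-theorem} to rule out limit points with $J_{0}(S^{\ast}(x))$ mixing in a way that would spoil uniqueness, effectively isolating $S^{\ast}$ among nearby equilibria.
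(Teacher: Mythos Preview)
Your overall architecture---Armijo decrease, Lemma~\ref{lem:convergence} for constancy of $J$ on $\Lambda$, three-point Bregman identity---matches the paper's. But the step you flag only vaguely (``$\|d^{k}\|\to 0$'' and summing the three-point inequality) is where your argument actually fails.

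The descent direction $d^{k}=\Pi_{0}\big(\Omega S^{k}+\tfrac{h_{k}}{2}\Omega R_{S^{k}}(\Omega S^{k})\big)$ does \emph{not} go to zero. What vanishes at an equilibrium $S^{\ast}\in\ol{\mc W}\setminus\mc W$ is $R_{S^{\ast}}(\Omega S^{\ast})$, not $\Pi_{0}\Omega S^{\ast}$; the latter is typically bounded away from zero at integral points. Hence the cross term $\theta_{k}\langle d^{k},S^{\ast}-S^{k+1}\rangle$ in the three-point inequality is \emph{not} summable and has no favorable sign, so your telescoping argument collapses. The paper confronts exactly this: it expands $\langle d^{k},S^{\ast}-S^{k+1}\rangle$ to extract $-\langle S^{k},\Omega S^{\ast}\rangle-2J(S^{\ast})$ plus an $h_{k}$-term (the latter is where assumption~\eqref{eq:h_k-seq-assumption-theorem-conv} is used, via the crude Cauchy--Schwarz bound $|\langle\Omega R_{S}(\Omega S),S^{\ast}-S\rangle|\le \tfrac{1}{2}\lambda^{2}(\Omega)n$), and then rewrites the remaining term through the index sets $J_{\pm}(S^{\ast}(x))$ of~\eqref{eq:index-sets-proof-conv-theorem}. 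The $J_{-}$-part has the right sign, but the $J_{+}$-part does not, and $D_{\KL}(S^{\ast},\cdot)$ alone is \emph{not} monotone along the iterates.

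The fix is the point you underestimate: Lemma~\ref{lem:Converegnce-Two-Stage} is not a side tool to ``rule out mixing''---it is the core device. It shows that $Q(S)=\sum_{x}\sum_{j\in J_{+}(S^{\ast}(x))}S_{j}(x)$ decreases by an amount that exactly compensates the bad $J_{+}$-term, so that the \emph{modified} Lyapunov function $V(S)=D_{\KL}(S^{\ast},S)+M^{\ast}Q(S)$ satisfies $V(S^{k+1})<V(S^{k})$ strictly whenever $S^{k}$ is close to $S^{\ast}$. A level-set trapping argument then gives $V(S^{k})\to 0$ and hence $D_{\KL}(S^{\ast},S^{k})\to 0$. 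Without this construction your proof has a genuine gap at the passage from subsequential to full convergence.
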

\noindent
\textit{Proof.} \textcolor{blue}{ Appendix \ref{app:convergence-main}}.
 
\vspace{0.2cm}
According to Proposition 
\ref{prop:Numerical_Scheme_Second_Order} (iii),(iv) the sequence $(S^k)_{k \geq 0}$ converges to a critical point $S^{\ast} \in 
\ol{\mathcal{W}}\setminus\mc{W}$ on the boundary of convex set $\ol{\mathcal{W}}$. Since both functions $g, h$ of the DC-decomposition \eqref{eq:J-DC} have been regularized by the negative entropy, global Lipschitz continuity of the derivatives does $\textit{not}$ hold and hence does not allow to study the convergence rate of Algorithm \ref{Geometric_Two_Stage} along the lines pursued 
in \cite{Aragon:2018aa}, \cite{Bolte:2018aa},  \cite{Nhat_Phan:2018aa}. Therefore, we confine ourselves to establish a \textit{local linear} rate of convergence $S^{k}\to S^{\ast}$ within a suitably define basin of attraction in $\mc{W}$ around $S^{\ast}$. To this end, we adopt the following basic \\[0.2cm]
\textbf{Assumption:} Any stationary point $S^{\ast} \in 
\overline{\mathcal{W}}$ of the sequence $(S^{k})$ generated by Algorithm \ref{Geometric_Two_Stage} is a stable 
equilibrium of the flow \eqref{eq:S-flow-S}:
\begin{equation}
	(\Omega S^{\ast})_{j}(x)-(\Omega 
	S^{\ast})_{j^{\ast}(x)}(x) < 0, \qquad  j \in [c]\setminus {j^{\ast}(x) = 
	\argmax \limits_{l 
	\in 
	[c]} 
	S^{\ast}_l(x)}, \qquad \forall x \in 
	\mathcal{V}.\label{eq:Assumpotion_Convergence}
\end{equation}
\begin{remark}\label{rem:Assumption_Eq_Two_Stage}
	As worked out in \cite[Section 2.3.2]{Zern:2020aa}, the set of initial points $S(0)$ of the flow \eqref{eq:S-flow-S} for which Assumption 
	\eqref{eq:Assumpotion_Convergence} is not satisfied has measure zero. Hence Assumption \eqref{eq:Assumpotion_Convergence} holds in all practically relevant cases.  
\end{remark}

Based on Assumption \ref{eq:Assumpotion_Convergence}, we adopt the results 
reported in \cite[Section 2.3.3]{Zern:2020aa} by defining the open convex 
polytope for each integral equilibrium $S^{\ast} \in 
\mathcal{W}^{\ast}$ as
\begin{equation}\label{eq:AS-ast}
	A(S^{\ast}) \coloneqq \bigcap \limits_{x \in \mathcal{V}} \bigcap 
	\limits_{j \neq j^{\ast}(x)} \{ S \in \mathcal{F}_{\R^{n\times c}} \colon 
	(\Omega S)_j(x) < (\Omega S)_{j^{\ast}(x)}(x) 
	\},
\end{equation}  
and by introducing the \textit{basins of attraction}
\begin{align}\label{eq:def-basins-of-attraction}
	B_{\varepsilon}(S^{\ast}) \coloneqq \{ S \in \overline{W}\colon \max 
	\limits_{x 
	\in \mathcal{V}} \|S(x)-S^{\ast}(x)\|_1 < \varepsilon \} \subset 
	A(S^{\ast})\cap 
	\overline{W},
\end{align} 
where $\varepsilon> 0$ is small enough such that the inclusion in 
\eqref{eq:def-basins-of-attraction} holds.
Due to \cite[Proposition 2.3.13]{Zern:2020aa} a sufficient upper bound $\veps\leq \veps^{\ast}$ for 
the inclusion \eqref{eq:def-basins-of-attraction} to hold is
\begin{equation}\label{eq:Suff-Eps-Artjom}
	\varepsilon^{\ast} = \min_{x \in \mathcal{V}} \min_{j \in [c] \setminus 
	j^{\ast}(x)} \frac{2\big((\Omega S^{\ast})_{j^{\ast}(x)}-(\Omega 
	S^{\ast})_{j}\big)(x)}{\sum \limits_{y \in 
	\mathcal{N}(x)}\Omega(x,y)+\big((\Omega S^{\ast})_{j^{\ast}(x)}-(\Omega 
	S^{\ast})_{j}\big)(x)} >0.
\end{equation}
  The following theorem asserts that a modified criterium applies to the 
  sequence generated by Algorithm \ref{Geometric_Two_Stage}, together with a 
  linear convergence rate $S^{k}\to S^{\ast}$, whenever the sequence $(S^{k})$ 
  enters a basin on attraction $B_{\veps}(S^{\ast})$.  
 
 \begin{theorem}[\textbf{basins of attraction}]\label{thm:Existance-Epsilon}
 	For $\Omega \in 
 	\mathcal{F}_{\mathcal{V}\times \mathcal{V}}$ as in Lemma \ref{Help_Lemma}, 
 	let $(S^k)_{k \geq 0}$ be a sequence generated by Algorithm 
 	\ref{Geometric_Two_Stage}. Let $S^{\ast} \in \overline{\mc{W}}$ be a 
 	limiting point $(S^k)_{k \geq 0}$ that fulfills Assumption 
 	\ref{eq:Assumpotion_Convergence} and let $\varepsilon^{\ast}>0$ be as in 
 	\eqref{eq:Suff-Eps-Artjom}. Then, introducing the positive constants 
 	\begin{align}\label{eq:Lemma_Epsilon_Bound}
 		\overline{h} = 
 		\max_{k \in \N}h_k, \quad \rho^\ast = \max_{S \in 
 			\overline{\mathcal{W}}} \Big( \max_{\mathclap{\substack{ x \in 
 			\mathcal{V},\\j \in [c]\setminus 
 			j^{\ast}(x)}}}\big( 
 			(\Omega S)_{j^{\ast}(x)}-(\Omega S)_{j}\big)(x) \Big), \quad N = 
 			\max 
 			\limits_{y \in \mathcal{V}} 
 			|\mathcal{N}(y)|,
 	\end{align}
 	for all $\varepsilon > 0$ small enough such that  
 	\begin{equation}\label{eq:Lemma_Epsilon_Upper_bound}
 		\varepsilon \leq \min \limits_{x \in \mathcal{V}}\min \limits_{j \in 
 			[c]\setminus 
 			j^{\ast}(x)} \frac{2 \cdot\big((\Omega
 		S^{\ast})_{j^{\ast}(x)}-(\Omega 
 		S^{\ast})_{j}\big)(x)}{1+C \cdot\rho^{\ast}+\big((\Omega
 		S^{\ast})_{j^{\ast}(x)}-(\Omega 
 		S^{\ast})_{j}\big)(x)}, \quad C = 
 	\overline{h} \cdot c\cdot N,
 	\end{equation}
 	the following applies:
 	If for some index $k_0 \in \N$ it holds that $S^{k_0} \in 
 	B_{\varepsilon}(S^{\ast}) \subset B_{\varepsilon^{\ast}}(S^{\ast})$, 
 	then 
 	for all $k \geq k_0$ there exists a mapping $\xi\in 
 	\mathcal{F}_{\mathcal{V}}$ with $\xi(x) \in (0,1),\,\forall x\in\mc{V}$, such that 
 	\begin{equation}\label{eq:Lemma_Inequality_Convergence}
 		\|S^{k}(x)-S^{\ast}(x)\|_1 < \xi^{k-k_0}(x) 
 		\|S^{k_0}(x)-S^{\ast}(x)\|_1, 
 		\qquad 
 		\forall x\in\mathcal{V}.
 	\end{equation}    
 \end{theorem}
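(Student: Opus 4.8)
The strategy is to track the evolution of the scalar quantities $\|S^{k}(x)-S^{\ast}(x)\|_{1}$ for $x\in\mc{V}$ under one step of Algorithm \ref{Geometric_Two_Stage}, and to show that each step contracts these quantities by a node-dependent factor $\xi(x)\in(0,1)$ as long as the iterate stays inside $B_{\veps}(S^{\ast})$. First I would recall from the structure of the update \eqref{eq:Sk+1-Alg-3}, $S^{k+1}=\exp_{S^{k}}(\theta_{k}d^{k})$ with $d^{k}=\Pi_{0}\Omega S^{k}+\tfrac{h_{k}}{2}\Omega R_{S^{k}}(\Omega S^{k})$, that componentwise this is the multiplicative (replicator-type) update
\begin{equation}\label{eq:proof-epsilon-mult}
S^{k+1}_{j}(x)=\frac{S^{k}_{j}(x)\,e^{\theta_{k}d^{k}_{j}(x)}}{\sum_{l\in[c]}S^{k}_{l}(x)\,e^{\theta_{k}d^{k}_{l}(x)}},\qquad j\in[c],\ x\in\mc{V}.
\end{equation}
Since $S^{k_{0}}\in B_{\veps}(S^{\ast})\subset A(S^{\ast})$ and the polytope $A(S^{\ast})$ in \eqref{eq:AS-ast} is defined precisely so that $(\Omega S)_{j^{\ast}(x)}(x)>(\Omega S)_{j}(x)$ for all $j\neq j^{\ast}(x)$, the exponent $\theta_{k}d^{k}_{j^{\ast}(x)}(x)$ dominates $\theta_{k}d^{k}_{j}(x)$ for each competing label $j$; the role of the bound \eqref{eq:Lemma_Epsilon_Upper_bound} on $\veps$, together with the constants $\ol{h},\rho^{\ast},N$ in \eqref{eq:Lemma_Epsilon_Bound}, is to guarantee that the second-order correction term $\tfrac{h_{k}}{2}\Omega R_{S^{k}}(\Omega S^{k})$ cannot overturn this ordering. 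The key quantitative step is to show that $B_{\veps}(S^{\ast})$ is forward invariant, i.e.~$S^{k}\in B_{\veps}(S^{\ast})\Rightarrow S^{k+1}\in B_{\veps}(S^{\ast})$, so that the ordering argument can be iterated.

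**Key steps.** The main computation is the per-node contraction estimate. Writing $p=j^{\ast}(x)$ and summing \eqref{eq:proof-epsilon-mult} over $j\neq p$, one has $\|S^{k+1}(x)-S^{\ast}(x)\|_{1}=2\bigl(1-S^{k+1}_{p}(x)\bigr)$ when $S^{\ast}(x)=e_{p}$ (using that $S^{\ast}$ is an integral equilibrium on the boundary, which holds by Proposition \ref{prop:Numerical_Scheme_Second_Order}(iii)), and similarly $\|S^{k}(x)-S^{\ast}(x)\|_{1}=2\bigl(1-S^{k}_{p}(x)\bigr)$. Hence it suffices to bound
\begin{equation}\label{eq:proof-epsilon-ratio}
\frac{1-S^{k+1}_{p}(x)}{1-S^{k}_{p}(x)}=\frac{\sum_{j\neq p}S^{k}_{j}(x)e^{\theta_{k}d^{k}_{j}(x)}}{e^{\theta_{k}d^{k}_{p}(x)}\sum_{j\neq p}S^{k}_{j}(x)}\cdot\frac{\sum_{l}S^{k}_{l}(x)}{\sum_{l}S^{k}_{l}(x)e^{\theta_{k}(d^{k}_{l}(x)-d^{k}_{p}(x))}}\ \le\ \max_{j\neq p}e^{\theta_{k}(d^{k}_{j}(x)-d^{k}_{p}(x))}=:\xi(x),
\end{equation}
where the denominator factor is $\ge 1$ because $d^{k}_{p}(x)\ge d^{k}_{l}(x)$ for all $l$ inside $A(S^{\ast})$. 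It then remains to verify $\xi(x)<1$: this follows because $d^{k}_{j}(x)-d^{k}_{p}(x)=\bigl((\Omega S^{k})_{j}-(\Omega S^{k})_{p}\bigr)(x)+\tfrac{h_{k}}{2}\bigl(\Omega R_{S^{k}}(\Omega S^{k})\bigr)_{j-p}(x)$, the first bracket is strictly negative and bounded away from $0$ by Assumption \ref{eq:Assumpotion_Convergence} on $A(S^{\ast})$, and the second bracket is bounded in magnitude by $C\rho^{\ast}/\ol h\cdot h_{k}\le C\rho^{\ast}$ via $\|R_{S^{k}}\|\le 1$, $|\mc N(\cdot)|\le N$, and the definition $C=\ol h\,c\,N$ — so by the choice of $\veps$ in \eqref{eq:Lemma_Epsilon_Upper_bound} the sum stays negative, giving $\xi(x)\in(0,1)$. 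Iterating \eqref{eq:proof-epsilon-ratio} from $k_{0}$ to $k$ yields \eqref{eq:Lemma_Inequality_Convergence}; forward invariance of $B_{\veps}(S^{\ast})$ is a byproduct since the $\ell_{1}$-distance is nonincreasing at every node.

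**Main obstacle.** The delicate point is the interplay between the contraction argument and forward invariance: the contraction factor $\xi(x)$ depends on $\Omega S^{k}$, hence on the current iterate, so one must argue simultaneously (by induction on $k\ge k_{0}$) that $S^{k}$ remains in $B_{\veps}(S^{\ast})$ — which keeps the label ordering $(\Omega S^{k})_{p}>(\Omega S^{k})_{j}$ intact — and that this ordering forces the $\ell_{1}$-distances down. I expect the technical heart to be the bookkeeping that converts the bound \eqref{eq:def-basins-of-attraction}–\eqref{eq:Suff-Eps-Artjom} on $\|S(x)-S^{\ast}(x)\|_{1}$ into a uniform bound on how far $(\Omega S)_{j}(x)-(\Omega S)_{j^{\ast}(x)}(x)$ can drift from its value at $S^{\ast}$, and then showing that the extra slack $C\rho^{\ast}$ coming from the second-order term \eqref{eq:second_derivative} is absorbed by shrinking $\veps$ as prescribed in \eqref{eq:Lemma_Epsilon_Upper_bound}. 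This mirrors \cite[Prop.~2.3.13]{Zern:2020aa} but with the additional $h_{k}$-dependent perturbation, so care is needed to ensure all constants are uniform in $k$ (which uses $\ol h=\max_{k}h_{k}<\infty$, guaranteed by the step-size rule in Algorithm \ref{Geometric_Two_Stage}, line \ref{eq:alg:step_size_int}).
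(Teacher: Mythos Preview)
Your overall strategy matches the paper's: reduce $\|S^{k+1}(x)-S^{\ast}(x)\|_{1}$ to $2(1-S^{k+1}_{j^{\ast}(x)}(x))$, exploit the multiplicative update, and show the exponent differences $d^{k}_{j}(x)-d^{k}_{j^{\ast}(x)}(x)$ stay negative inside $B_{\veps}(S^{\ast})$ so as to obtain a nodewise contraction. Two concrete steps in your sketch are incorrect, however.

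\textbf{The ratio bound \eqref{eq:proof-epsilon-ratio} is false.} With $p=j^{\ast}(x)$, $\alpha=1-S^{k}_{p}(x)$ and $m=\max_{j\neq p}e^{\theta_{k}(d^{k}_{j}(x)-d^{k}_{p}(x))}$, the second factor in your decomposition is $\ge 1$ (since $d^{k}_{l}-d^{k}_{p}\le 0$ implies the denominator is $\le 1$), not $\le 1$, so multiplying by it does not help. A direct computation gives
\[
\frac{1-S^{k+1}_{p}(x)}{1-S^{k}_{p}(x)}\ \le\ \frac{m}{S^{k}_{p}(x)+(1-S^{k}_{p}(x))\,m}\ =\ \frac{m}{1-\alpha(1-m)}\ >\ m,
\]
and this sharper expression is exactly the paper's contraction factor $\xi(x)$. (A two-label example with $S^{k}=(0.9,0.1)$, $\theta_{k}=1$, $d^{k}=(1,0)$ gives ratio $\approx 0.393>m=e^{-1}\approx 0.368$.) The bound $\frac{m}{1-\alpha(1-m)}$ is still $<1$ whenever $m<1$, so the contraction argument survives, but your claimed $\xi(x)=m$ does not.

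\textbf{The second-order bound misses the $\veps$-factor.} You estimate the correction $\tfrac{h_{k}}{2}\bigl((\Omega R_{S^{k}}(\Omega S^{k}))_{j^{\ast}(x)}-(\Omega R_{S^{k}}(\Omega S^{k}))_{j}\bigr)(x)$ crudely by $C\rho^{\ast}$, a constant independent of $\veps$. With that bound you cannot force $d^{k}_{j}-d^{k}_{p}<0$ by shrinking $\veps$: the first-order gap is at most $(\Omega S^{\ast})_{p}-(\Omega S^{\ast})_{j}$, which is fixed, so a large $C\rho^{\ast}$ would win regardless. The paper exploits that for $S^{k}\in B_{\veps}(S^{\ast})$ the replicator term satisfies $\bigl(R_{S^{k}}(\Omega S^{k})\bigr)_{j}(x)=S^{k}_{j}(x)(\cdots)$ with $S^{k}_{j}(x)<\veps/2$ for $j\neq p$, yielding $\bigl(R_{S^{k}}(\Omega S^{k})\bigr)_{j}(x)>-\tfrac{\veps}{2}\rho^{\ast}$ and hence a second-order difference bounded below by $-\veps\,cN\rho^{\ast}$. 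It is precisely this $O(\veps)$ scaling that, combined with the first-order lower bound $(1-\tfrac{\veps}{2})\bigl((\Omega S^{\ast})_{p}-(\Omega S^{\ast})_{j}\bigr)(x)-\tfrac{\veps}{2}$, makes the condition \eqref{eq:Lemma_Epsilon_Upper_bound} equivalent to $H_{j}(x)>0$ for all $j\neq p$.
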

 \noindent
 \textit{Proof.} See Appendix \ref{app:convergence-main}.

\section{Experiments and Discussion}\label{sec:Exp_Dis}
% !TEX root =  ../Major_Revision.tex
In this section, we report numerical results obtained with the algorithms 
introduced in Section \ref{sec:Non_Convex_Optimization}. Details of the 
implementation and parameters settings are provided in Section 
\ref{sec:Exp-subsec-1}. Section \ref{sec:Exp-subsec-2} deals with the impact of the
nonlocal boundary conditions of system \eqref{eq:Non_Local_PDE_T0} on 
properties of averaging matrices $\Omega$ (see Section 
\ref{sec:Non_Local_PDE}), and how this effects the selection of the step size 
parameter $h>0$ in Algorithm \ref{Geometric_Explicit_Euler}. Section 
\ref{sec:Exp-subsec-3} reports results obtained by computing the assignment 
flow with Algorithm \ref{Geometric_Explicit_Euler} and different constant step 
sizes $h > 0$ using the nonlocal G-PDE parametrization 
\eqref{eq:Non_Local_PDE_T0}. In addition, we studied numerical consequences of 
nonlocal boundary conditions \eqref{S_Flow_boundary1}, \eqref{S_Flow_boundary2} 
using the maximal allowable step size \eqref{eq:h-lambda-min} 
according to Proposition \ref{prop:Numerical_Scheme}. Finally, in Section 
\ref{sec:Exp-subsec-4}, we 
compare Algorithm \ref{Geometric_Explicit_Euler} 
and the accelerated Algorithm \ref{Geometric_Two_Stage} by evaluating their 
respective convergence rates to an integral solution of the assignment flow corresponding to a stationary point of the potential \eqref{eq:S-Flow_Pot}, 
for various nonlocal connectivities.
\subsection{Implementation Details}\label{sec:Exp-subsec-1}
All evaluations were performed using the noisy image data depicted by Figure 
\ref{fig:Regularization_comparison} (b). System \eqref{eq:Non_Local_PDE} 
was initialized by $S^0 = L(\eins_{\mc{W}}) \in \mathcal{W}$ with 
$\rho=1$, as specified by \eqref{schnoerr-eq:def-Li}. 
Since the iterates 
$(S^{k})$ converge in all cases to integral solutions which are located 
at vertices on the boundary $\partial \mc{W}$ of $\mc{W}$, whereas the 
numerics is designed for evolutions \textit{on} $\mc{W}$, we applied 
the renormalization routine adopted in \cite[Section 3.3.1]{Astrom:2017ac} with 
$\varepsilon = 10^{-10}$ whenever the sequence $(S^{k})_{k\geq 0}$ came that 
close to $\partial\mc{W}$ on its path to the vertex. 

The averaging matrix $\Omega$ was assembled in two ways as specified in Section \ref{sec:Num_Ex} as items (i) and (ii), called \textit{uniform} and \textit{nonuniform} averaging in this section. In the latter case, the parameter values $\sigma_s = 1,\sigma_p 
= 5$ in were chosen \eqref{eq:nonuniform-params}, as for the experiments reported in Section \ref{sec:Num_Ex}.
The iterative algorithms were terminated at step $k$ when the averaged gradient norm
\begin{equation}\label{eq:Iter_Crit_Av_grad}
	\epsilon_{k} = \frac{1}{n}\sum \limits_{x \in \mathcal{V}} \| 
	R_{S^k(x)}(\Omega S^k(x))\| \leq \epsilon
\end{equation}  
reached a threshold $\epsilon$ which when chosen sufficiently small to satisfy 
bound \eqref{eq:Lemma_Epsilon_Upper_bound} that guarantees a linear 
convergence rate as specified in Theorem \ref{thm:Existance-Epsilon}. 

We point out that during the evaluation and discussion of realized 
experiments our focus was \textit{not} on assessing a comparison of
computational speed in term of absolute runtimes, but on the numerical
behavior of the proposed schemes with regard to number of iterations required 
to solve system  
\eqref{eq:Non_Local_PDE_T0} and in terms of the labeling performance. Thus, we 
did not confine ourselves to impose any restriction on the minimum time step 
size and the maximum
number of iterations and instead appropriately adjusted the parameter 
\eqref{eq:Iter_Crit_Av_grad} to stop the algorithm when a stationary point at 
the boundary of $\mathcal{W}$ was reached.

Since 
$S^{\ast}$ is unknown, we can not directly 
access the 
exact bound in \eqref{eq:Lemma_Epsilon_Upper_bound} beforehand and therefore it 
is not evident how to set $\epsilon$ in practice. However, based on experimental 
evidence, setting the termination 
threshold by $\epsilon = 10^{-7}$ in \eqref{eq:Iter_Crit_Av_grad} serves as 
good estimate, see Figures \ref{fig:Conv_Rates_Algorithm1} and 
\ref{fig:Conv_Rates_Algorithm_2}.   
Algorithm \ref{alg:Step_Size_Selection} requires to specify two parameters 
$c_{1}, c_{2}$ (see line 3). We empirically found that using $c_1 = 0.4, c_2 = 0.95$ is a good choice that we used in all experiments.  

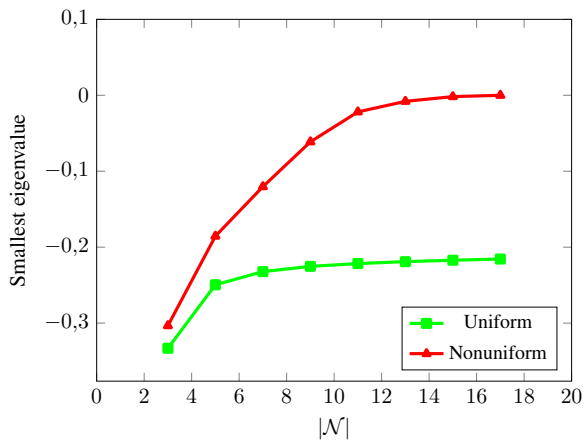
\begin{figure}
	\centering
	\scalebox{0.75}{\begin{tikzpicture}
	\begin{axis}[
		%nodes near coords,
		every other node near coord/.append style={font=\tiny},
		legend columns=1,
		legend style={row sep=0.1cm},
		height=8cm,
		width = 10cm,
		legend entries={{\small Uniform},{\small Nonuniform}},
		legend pos= south east,
		mark repeat=1,
		xlabel = $|\mathcal{N}|$,
		ylabel = Smallest eigenvalue,
		xmin=0,
		xmax=20,
		ymax = 0.1,
		scaled ticks=true,
		/pgf/number format/.cd,
		use comma,
		1000 sep={}
		]
		\addplot[line join=round,mark = square*,color = green,ultra thick]
		coordinates {(3,-0.3331411) (5,-0.24957526) (7,-0.23223186) 
		(9,-0.2253046) (11,-0.22157785) (13,-0.21910794) (15,-0.21720773) 
		(17,-0.2156241)};	
		\addplot[line join=round,mark = triangle*,color = red,ultra thick]
		coordinates {(3,-0.3033669) (5,-0.18550295) 
		(7,-0.1203382) (9,-0.06131989) (11,-0.02190208) (13,-0.00800925)
		(15,-0.00177068) (17,-3.7908554e-05)};	
		]
	\end{axis}
\end{tikzpicture}}
	\captionsetup{font=footnotesize}
	\caption{Effect of imposing nonlocal boundary conditions. The green 
	(\protect\markerone) and the 
		red (\protect\markertwo) curves 
		plot the smallest eigenvalues $\lambda_{\min}(\Omega)$ of the parameter matrix 
		\eqref{eq:Avaraging_Matrix} for uniform and nonuniform averaging, respectively, and for different neighborhood sizes $|\mathcal{N}|$. Choosing 
		larger neighborhoods \eqref{eq:def-neighborhoods} increases the 
		smallest eigenvalue and consequently, by \eqref{eq:h-lambda-min}, enables to choose bigger step sizes in 
		Algorithm \ref{Geometric_Explicit_Euler} that achieve the monotone decrease property \eqref{eq:Dec_Seq_Euler}.}
	\label{fig:Step_Size_Algorithm1}
\end{figure}   

\begin{figure}[h]
	\centering
	\scalebox{0.75}{\begin{tikzpicture}
	\begin{axis}[
		%nodes near coords,
		every other node near coord/.append style={font=\tiny},
		legend columns=1,
		legend style={row sep=0.1cm},
		height=8cm,
		width = 10cm,
		legend entries={{\small $|\mathcal{N}| = 5\times 5$},{\small 
		$|\mathcal{N}| = 
			7\times 7$},{\small $|\mathcal{N}| = 
			9\times 9$}},
		legend pos= north east,
		mark repeat=1,
		xlabel = Step Size $h$,
		ylabel = Iterations,
		xmin=0,
		xmax=50,
		xmode=log,
		ymax = 400,
		ymin=8,
		scaled ticks=true,
		/pgf/number format/.cd,
		use comma,
		1000 sep={}
		]
		\addplot[line join=round,mark = rectangle*,color = Layer_4,ultra thick]
	coordinates {((1,248) (2,169) 
	(3,128) (4,91) (5,76) 
	(7,96) (10,52) (12,61) (15,49) 
	(20,43) (25,37) (30,48) (40,48) (50,35)};
		\addplot[line join=round,mark = triangle*,color = Layer_1,ultra thick]
		coordinates {((1,231) (2,182) 
			(3,135) (4,100) (5,70) 
			(7,79) (10,81) (12,86) (15,50) 
			(20,45) (25,60) (30,38) (40,48) (50,43)};
		\addplot[line join=round,mark = diamond*,color = green,Layer_6,ultra 
		thick]
coordinates {(1,329) (2,184) 
		(3,156) (4,96) (5,84) 
		(7,62) (10,58) (12,81) (15,50) 
		(20,68) (25,56) (30,37) (40,48) (50,44)};
		\draw[dashed,color = Layer_4] (5.4,0) -- (5.4,399);
		\draw[dashed,color = Layer_1] (8.3,0) -- (8.3,399);
		\draw[dashed,color = Layer_6] (16.6,0) -- (16.6,399);
		]
	\end{axis}
\end{tikzpicture}}
	\hspace{1cm}
	\scalebox{0.75}{\begin{tikzpicture}
	\begin{axis}[
		%nodes near coords,
		every other node near coord/.append style={font=\tiny},
		legend columns=1,
		legend style={row sep=0.1cm},
		height=8cm,
		width = 10cm,
		legend entries={{\small $|\mathcal{N}| = 5\times 5$},{\small 
		$|\mathcal{N}| = 
		7\times 7$},{\small $|\mathcal{N}| = 
		9\times 9$}},
		legend pos= north west,
		mark repeat=1,
		xlabel = Step Size $h$,
		ylabel = Labeling Error in \%,
		xmin=0,
		xmax=50,
		xmode=log,
		ymax = 11,
		ymin=7,
		scaled ticks=true,
		/pgf/number format/.cd,
		use comma,
		1000 sep={}
		]
		\addplot[line join=round,mark = square*,color = Layer_4,ultra thick]
		coordinates {(1,5116/65536*100) (2,5098/65536*100) (3,5069/65536*100) 
			(4,5075/65536*100) (5,5104/65536*100) (7,5170/65536*100) 
			(10,5285/65536*100) (12,5342/65536*100) (15,5481/65536*100) 
			(20,5594/65536*100) 
			(25,5786/65536*100) (30,5979/65536*100) (35,6072/65536*100) 
			(40,6213/65536*100) 
			(50,6582/65536*100)};	
		\addplot[line join=round,mark = triangle*,color = Layer_1,ultra thick]
		coordinates {(1,5038/65536*100) (2,5043/65536*100) (3,5044/65536*100) 
			(4,5035/65536*100) (5,5027/65536*100) (7,5062/65536*100) 
			(10,5209/65536*100) (12,5214/65536*100) (15,5323/65536*100) 
			(20,5472/65536*100) (25,5623/65536*100) (30,5806/65536*100) 
			(35,5929/65536*100) 
			(40,6046/65536*100) 
			(50,6387/65536*100)};
		\addplot[line join=round,mark = diamond*,color = Layer_6,ultra thick]
		coordinates {(1,4970/65536*100) (2,4936/65536*100) (3,4920/65536*100) 
			(4,4886/65536*100) (5,4877/65536*100) (7,4852/65536*100) 
			(10,4879/65536*100) (12,4897/65536*100) (15,4911/65536*100) 
			(20,5006/65536*100) (25,5075/65536*100) (30,5171/65536*100) 
			(35,5311/65536*100) 
			(40,5417/65536*100) 
			(50,5584/65536*100)};
		\draw[dashed,color = Layer_4] (5.4,0) -- (5.4,11);
		\draw[dashed,color = Layer_1] (8.3,0) -- (8.3,11);
		\draw[dashed,color = Layer_6] (16.6,0) -- (16.6,11);
		]
	\end{axis}
\end{tikzpicture}}
	\captionsetup{font=footnotesize}
	\caption{Effects of selecting the step size $h$ in Algorithm 
	\ref{Geometric_Explicit_Euler} for various 
	neighborhood sizes $|\mathcal{N}|$. Dashed vertical lines indicate the step size 
	upper bound $\frac{1}{|\lambda_{\min}(\Omega)|}$ that guarantees the monotone decrease property (Proposition \ref{prop:Numerical_Scheme}). \textbf{Left:} Number of iterations required to satisfy the termination criterion \eqref{eq:Iter_Crit_Av_grad}. Larger step sizes decrease the number of iterations but yield unreliable numerical computation when $h$ exceeds the upper bound (see text). 
\textbf{Right:} Pixel-wise labeling error compared to ground truth. Labeling accuracy quickly deteriorates when $h$ exceeds the upper bound.}
	\label{fig:Stopping_Criterium_Algorithm1}
\end{figure}
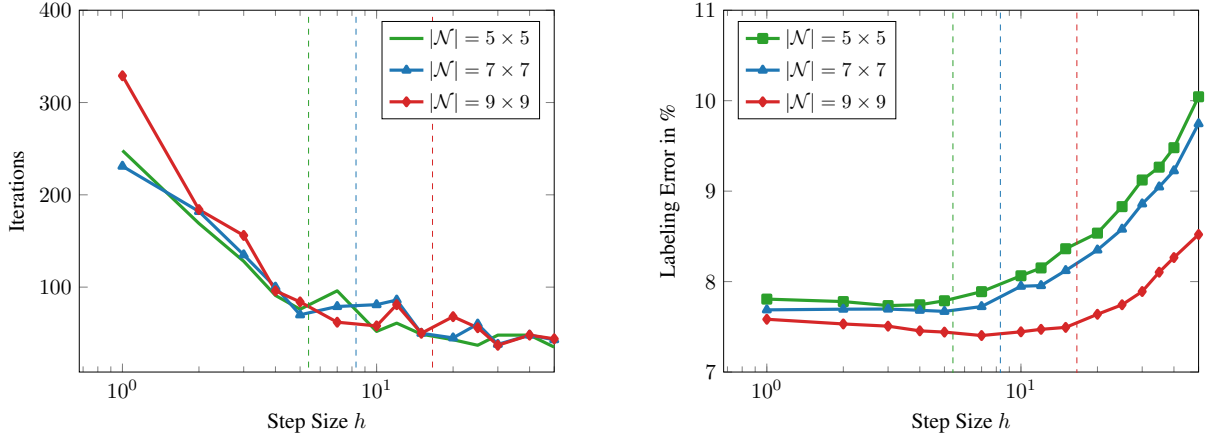 

\subsection{Step Size Selection}\label{sec:Exp-subsec-2}
This section reports results of several experiments that highlight aspects of imposing nonlocal boundary conditions 
\eqref{S_Flow_boundary1}, \eqref{S_Flow_boundary2}  and their influence on the selection of step sizes in Algorithms 
\ref{Geometric_Explicit_Euler} and 
\ref{Geometric_Two_Stage}.
 
To demonstrate these effects we used two different  parameter matrices $\Omega$ defined in accordance with Lemma 
\ref{Help_Lemma}, with $\Theta,\alpha$ given as in Section \ref{sec:Num_Ex}, called  
\textit{uniform} and \textit{nonuniform} averaging,  respectively. To 
access the maximal bound \eqref{eq:h-lambda-min} for the step size $h>0$, as derived in Proposition 
\ref{prop:Numerical_Scheme} in order to achieve the monotone decrease property \eqref{eq:Dec_Seq_Euler}, we 
directly approximated the exact smallest eigenvalue 
$\lambda_{\min}(\Omega)$ 
%up to a tolerance of $\text{tol} = 10^{-3}$ using  
%available implementation of implicitly restarted Arnoldi method for sparse 
%matrices presented in 
using available software \cite{Lehoucq:1998aa}. 

Figure \ref{fig:Step_Size_Algorithm1} displays values of the smallest eigenvalue for uniform and nonuniform averaging, respectively, and different sizes of the nonlocal neighborhoods \eqref{eq:def-neighborhoods}: Increasing the size $|\mathcal{N}|$  
decreases the value of $\lambda_{\text{min}}(\Omega)$ and consequently, by virtue of relation $h \geq \frac{1}{|\lambda_{\min}(\Omega)|}$ 
in Proposition \ref{prop:Numerical_Scheme}, to a larger upper bound for setting the 
step size $h$ in Algorithm \ref{Geometric_Explicit_Euler}. This confirms our observation and statement formulated as Remark \ref{rem:-Step_Size}. 

In practice, however, it is too expensive to compute $\lambda_{\min}$ numerically for choosing the step size $h$. Figure \ref{fig:Stopping_Criterium_Algorithm1} shows for three sizes of neighborhoods $|\mc{N}|$ and for step sizes $h$ \textit{smaller and larger} than the upper bound \eqref{eq:h-lambda-min} indicated by dashed vertical lines, 
\begin{enumerate}[(i)]
\item the number of iterations required to reach the termination criterion \eqref{eq:Iter_Crit_Av_grad} (Figure \ref{fig:Stopping_Criterium_Algorithm1}, left panel);
\item the labeling accuracy compared to ground truth (Figure \ref{fig:Stopping_Criterium_Algorithm1}, right panel).
\end{enumerate}
The results show that the bound \eqref{eq:h-lambda-min} should be considered as a hard constraint indeed: Increasing the step size $h$ up to this bound (cf.~Fig.~\ref{fig:Stopping_Criterium_Algorithm1}, left panel) decreases the required number of iterations, as to be expected. But exceeding the bound yields unreliable computation, possibly caused by a too small DC decomposition parameter $\gamma < |\lambda_{\min}(\Omega)|$ which compromises the convexity and hence convergence of the auxiliary optimization problems in Algorithm \ref{Geometric_Explicit_Euler}, line  \eqref{eq:def-next-iterate-alg1}). Likewise, Fig.~\ref{fig:Stopping_Criterium_Algorithm1}, right panel, shows that labelings quickly become inaccurate once the step size exceeds the upper bound. Figure \ref{fig:Regulrization_Plot_Veersus_Stepsizes} visualizes examples.

Overall, these results show that a wide range of save choices of the step size 
parameter $h$ exists, and that choosing the ``best'' value depends on how 
accurate $\lambda_{\min}(\Omega)$ is known beforehand.

\begin{figure}
	\begin{tikzpicture}[spy using outlines={rectangle, magnification=2, 
			size=1cm, 
			connect spies}]
		\centering
		\node[scale=0.7] at (2,-9.5) 
		{\includegraphics[width=5cm,height=5cm]{./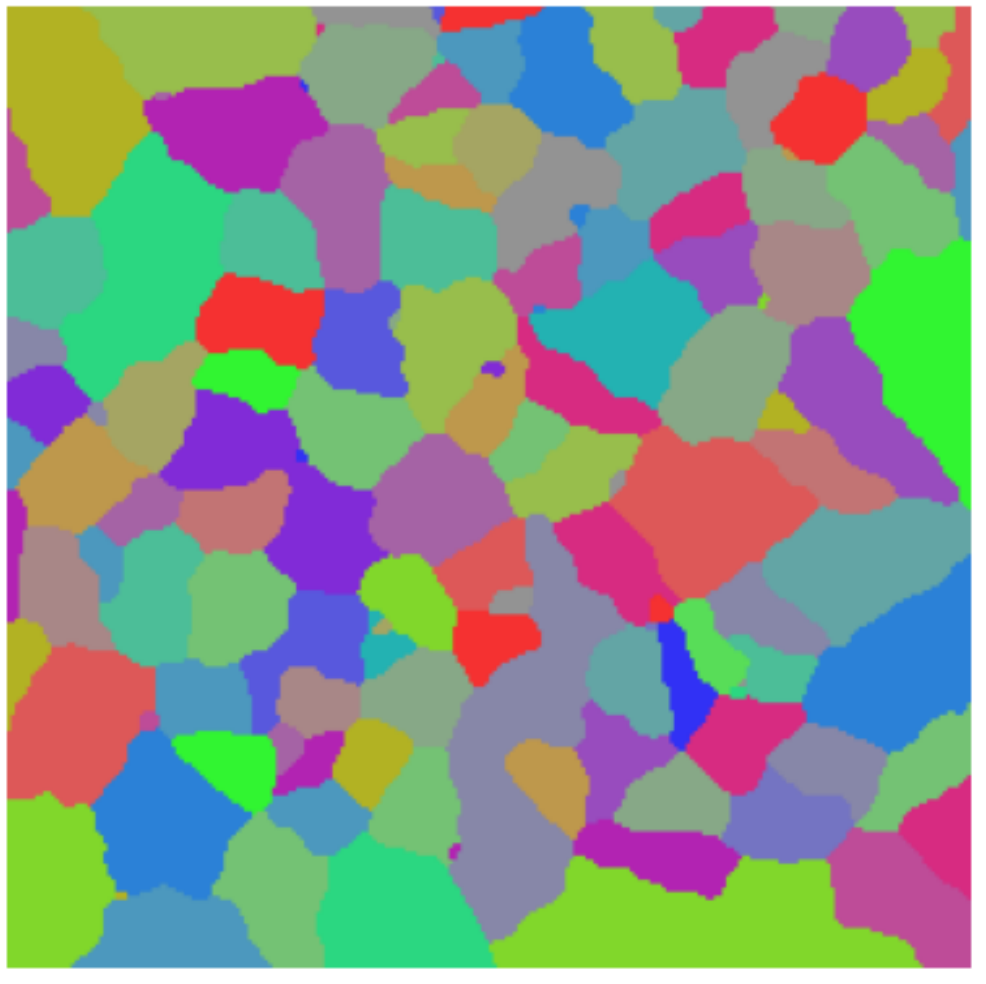}};
		\node[scale=0.7] at (6,-9.5) 		
		{\includegraphics[width=5cm,height=5cm]{./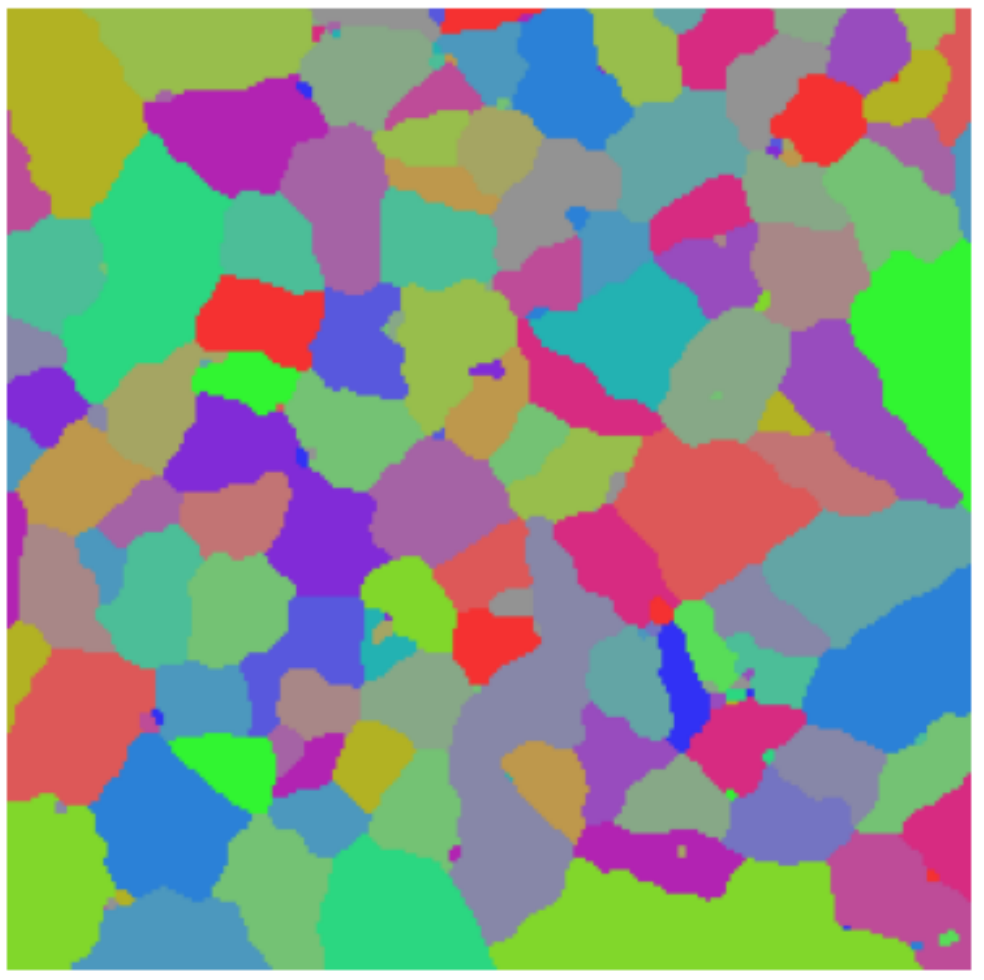}};
		\node[scale=0.7] at (10,-9.5) 
		{\includegraphics[width=5cm,height=5cm]{./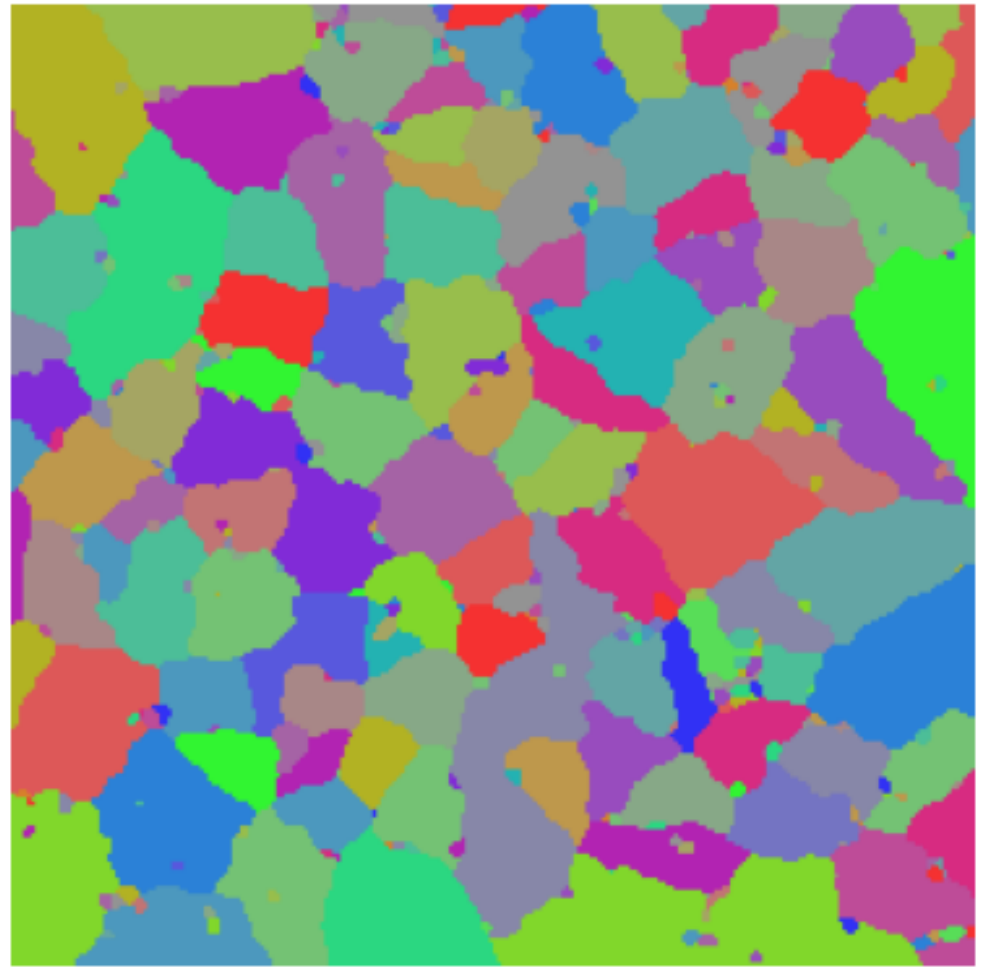}};
		\node[scale=0.7] at (-2,-9.5) 
		{\includegraphics[width=5cm,height=5cm]{./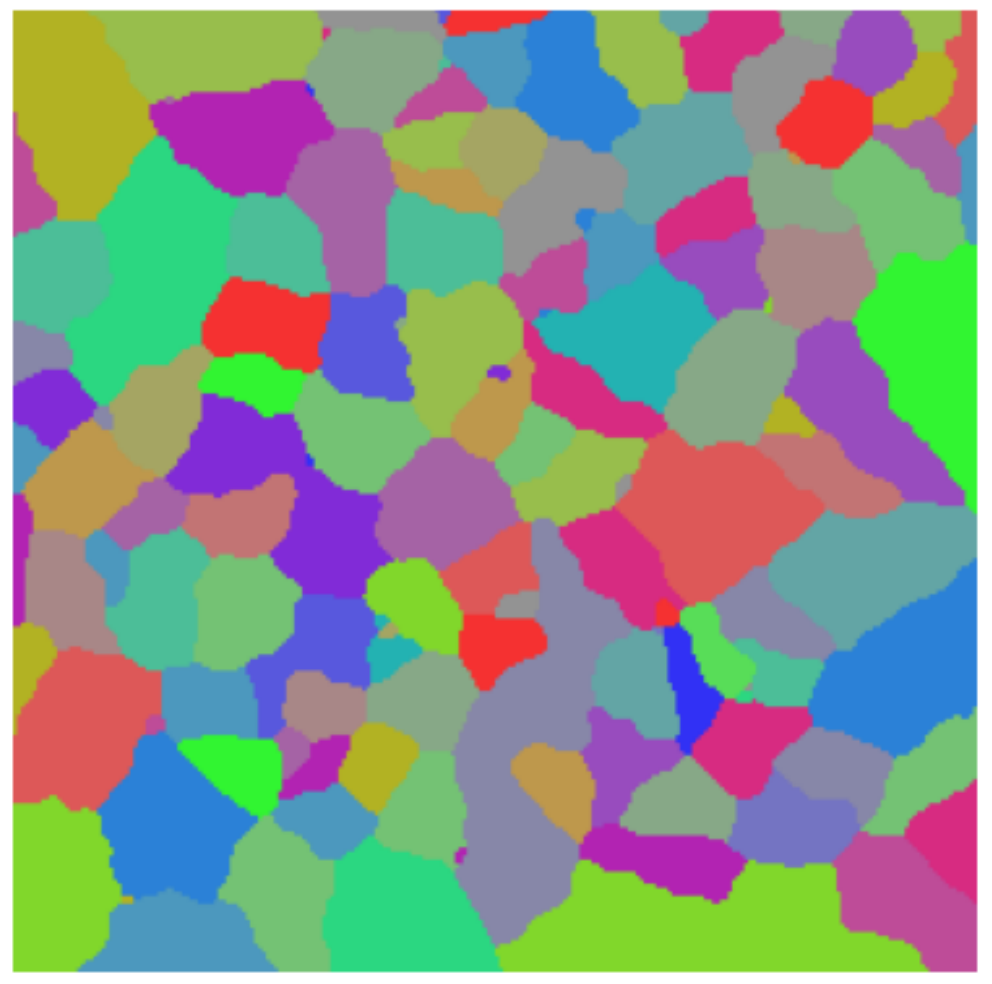}};
		\node[scale=0.85] at (2,-7.5) {$\boldsymbol{h = 1$}};
		\node[scale=0.85] at (6,-7.5) {$\boldsymbol{h = 10$}};
		\node[scale=0.85] at (10,-7.5) {$\boldsymbol{h = 25$}};
		\node[scale=0.85] at (-2,-7.5) {$\boldsymbol{h = 0.5}$};
		\begin{scope}
			\spy[green!70!black,size=2cm] on (-2.5,-8.3) in node [fill=white] 
			at 
			(-2,-6);
		\end{scope}
		\begin{scope}
			\spy[green!70!black,size=2cm] on (5.49,-8.3) in node [fill=white] 
			at 
			(3,-6);
		\end{scope}
		\begin{scope}
			\spy[green!70!black,size=2cm] on (9.51,-8.3) in node 
			[fill=white] 
			at (8.5,-6);
		\end{scope}
	\end{tikzpicture}
	\captionsetup{font=footnotesize}
	\caption{Visualization of 
	regularization 
	impacts when increasing the step 
		size $h$ corresponding to the results in Figure 
		\ref{fig:Stopping_Criterium_Algorithm1}. Labeling results for various step sizes and the neighborhood size $|\mc{N}|=9\times 9$. Conforming to Figure \ref{fig:Stopping_Criterium_Algorithm1}, right panel, labeling accuracy quickly deteriorates once $h$ exceeds the upper bound \eqref{eq:h-lambda-min} (rightmost panel).}
	\label{fig:Regulrization_Plot_Veersus_Stepsizes}
\end{figure}

\begin{figure}
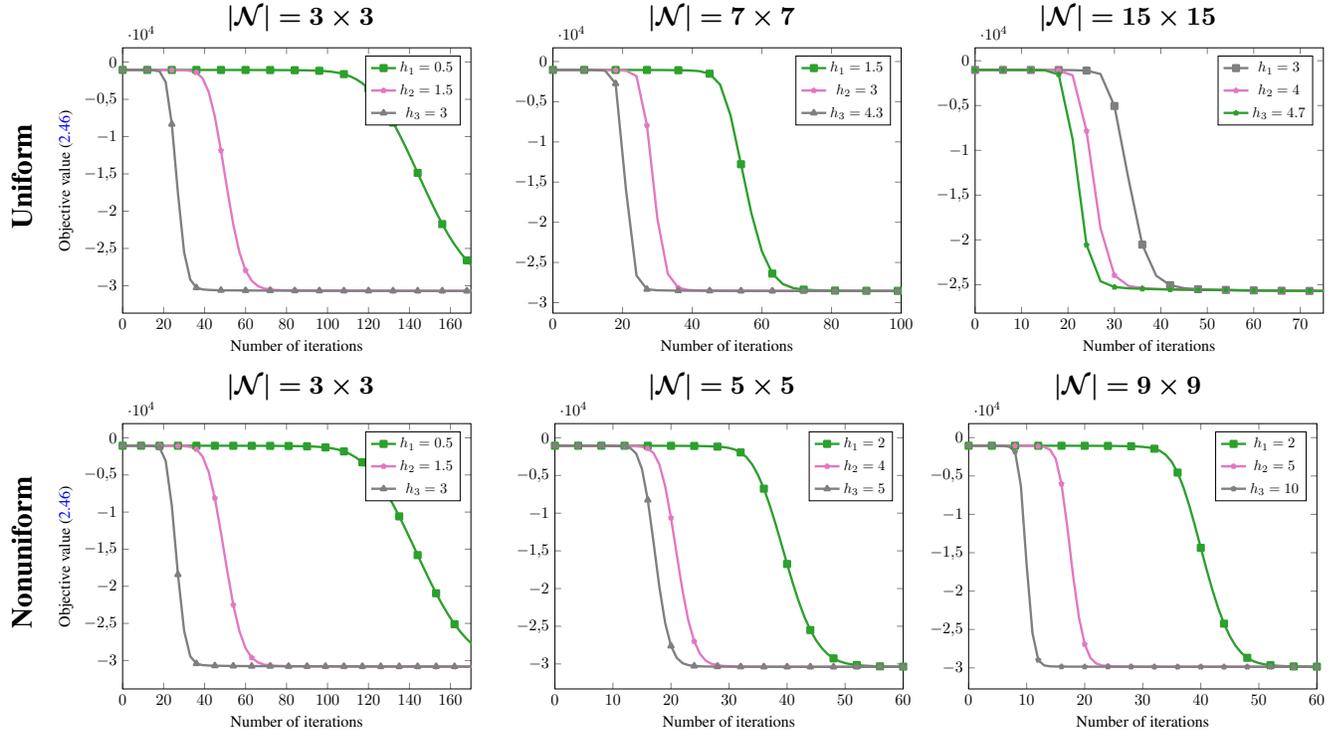

	\centering
	\begin{tikzpicture}
	\node[scale=0.9] at (0.5,2.3) {\boldsymbol{$|\mathcal{N}| = 3\times 
	3$}}; 
	\node[scale=0.9] at (6.1,2.3) {\boldsymbol{$|\mathcal{N}| = 7\times 
	7$}}; 
	\node[scale=0.9] at (11.5,2.3) {\boldsymbol{$|\mathcal{N}| = 
	15\times 15$}};
	\node[rotate=90] at (-3.2,0.2) {\textbf{Uniform}}; 
	\node[rotate=90] at (-3.2,-4.8) {\textbf{Nonuniform}};
	\node[scale=0.9] at (0.5,-2.6) {\boldsymbol{$|\mathcal{N}| = 
	3\times 3$}}; 
	\node[scale=0.9] at (6.1,-2.6) {\boldsymbol{$|\mathcal{N}| = 
	5\times 5$}}; 
	\node[scale=0.9] at (11.5,-2.6) {\boldsymbol{$|\mathcal{N}| = 
	9\times 9$}}; 
	\node[scale = 0.55] at 
	(0,0){\input{./Graphics/First_Order_Step_Size_Uniform_3.tex}};
	\node[scale=0.55] at (6,0) {\input{./Graphics/First_Order_Step_Size.tex}};
	\node[scale = 0.55] at (11.5,0)  
	{\input{./Graphics/First_Order_Step_Size15.tex}};
	\node[scale = 0.55] at (0,-5) 
	{\input{./Graphics/First_Order_Step_Size_Non_Local_3.tex}};
	\node[scale = 0.55] at (6,-5) 
	{\input{./Graphics/First_Order_Step_Size_Non_Local_5.tex}};
	\node[scale = 0.55] at (11.5,-5) 
	{\input{./Graphics/First_Order_Step_Size_Non_Local_9.tex}};
	\end{tikzpicture}
	\captionsetup{font=footnotesize}
	\caption{Minimization of the nonconvex potential \eqref{eq:S-Flow_Pot} by Algorithm 
		\ref{Geometric_Explicit_Euler} for various neigborhood sizes 
		$|\mc{N}|$, for uniform averaging (top row) and nonuniform averaging 
		(bottom row), and for three constant step sizes $0<h_{1}<h_{2}<h_{3}$, 
		where in each experiment $h_{3}$ was chosen smaller than the upper 
		bound discussed in Section \ref{sec:Exp-subsec-2} that guarantees a 
		monotonously decreasing sequence of potential values (Proposition 
		\ref{prop:Numerical_Scheme}). All experiments illustrate this property 
		and that the largest admissible step size $h_{3}$ is most effective. 
		\protect\footnotemark  } 
	\label{fig:Objective_Evaluation_Alg1}
\end{figure}

\footnotetext[1]{ The plotted curves in the 
figure illustrate  
progressing objective 
values of $J(S)$ stagnating near a \textit{local minimizer} $S^{\ast}$. In 
particular, the depicted stagnating value is \textit{not} the  
lower bound of $J(S)$ 
on $\mathcal{W}$ that is given by $J(S^{\ast}) = -\frac{|\mc{V}|}{2}$ and 
attained at the  \textit{global} minimizer $S^{\ast}$, that is always a 
constant labeling and therefore of no interest, see 
Remark \ref{rem:constant_labeling}}.
\subsection{First-Order Optimization}\label{sec:Exp-subsec-3}

This section is devoted to the evaluation of Algorithm 
\ref{Geometric_Explicit_Euler}. We examine how effectively this algorithm 
converges to an integral solution (labeling) for both uniform and nonuniform 
averaging, for different sizes of nonlocal neighborhoods $|\mc{N}|$, and for 
different admissible step sizes $h$ based on the insights gained in Section 
\ref{sec:Exp-subsec-2}: the largest admissible step size increases with the 
neighborhood size $|\mc{N}|$ and when using nonuniform, rather than uniform, 
averaging.  

Figure \ref{fig:Objective_Evaluation_Alg1} displays the corresponding values of 
the objective function \eqref{eq:S-Flow_Pot} as a function of the iteration 
counter. We observe that this first-order algorithm minimizes quite effectively 
the non-convex objective function during the first few dozens of iterations. 

Figure \ref{fig:Conv_Rates_Algorithm1} displays the same information, this time in term of the function $k\mapsto \frac{1}{n}\|S^k-S^{\ast}\|_1$, however. We observe two basic facts: (i) Due to using admissible step sizes, the sequences $(S^{k})_{k\geq 0}$ always converge to the integral solution $S^{\ast}$. (ii) In agreement with Theorem 
\ref{thm:Existance-Epsilon}, the order of convergence increases whenever the 
sequence $(S^{k})_{k\geq 0}$ reaches the basin of attraction.

\begin{figure}
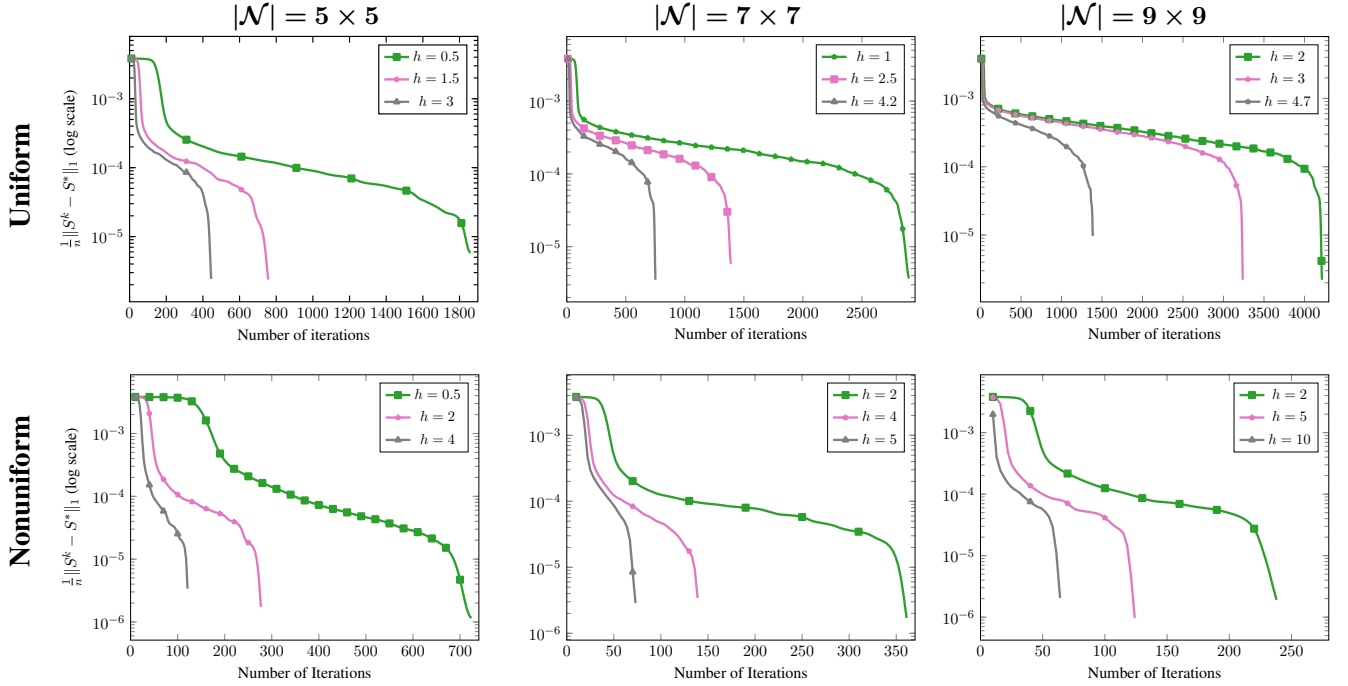

\begin{tikzpicture}
	\node[scale=0.55] at (0,0) 
	{\input{./Graphics/Convergence_Rate_First_Order_Uniform_7.tex}};
		\node[scale=0.55] at (6,0) 
	{\input{./Graphics/Convergence_Rate_First_Order_Nonlocal_5.tex}};
	\node[scale=0.55] at (11.5,0) 
	{\input{./Graphics/Convergence_Rate_First_Order_Nonlocal_9.tex}};
	\node[scale=0.9] at (0.5,2.3) {\boldsymbol{$|\mathcal{N}| = 5\times 
	5$}}; 
	\node[scale=0.9] at (6.1,2.3) {\boldsymbol{$|\mathcal{N}| = 7\times 
	7$}}; 
	\node[scale=0.9] at (11.5,2.3) {\boldsymbol{$|\mathcal{N}| = 
	9\times 9$}};
	\node[scale=0.55] at (0,-4.5) 
	{\input{./Graphics/Convergence_Rates_First_Order_Non_Uniform_3.tex}};
	\node[scale=0.55] at (6,-4.5) 
	{\begin{tikzpicture}
	\begin{axis}[
		%nodes near coords,
		every other node near coord/.append style={font=\tiny},
		legend columns=1,
		legend style={row sep=0.1cm},
		height=8cm,
		width = 10cm,
		legend entries={{\small $h = 2$},{\small $h = 4$},{\small 
		$h =5$}},
		legend pos=north east,
		mark repeat=20,
		ymode = log,
		%ylabel = $\frac{1}{n}\| S^k-S^{\ast} \|_2$,
		xlabel =Number of Iterations,
		xmin=0,
		xmax=370,
		scaled ticks=true,
		/pgf/number format/.cd,
		use comma,
		1000 sep={}
		]
		\addplot[line join=round,mark = square*,color = Layer_4,ultra thick]
		coordinates {(10,0.003815493249751695)
			(13,0.003808451233083787)
			(16,0.0037990294818647597)
			(19,0.003786024190211133)
			(22,0.0037672595731389224)
			(25,0.003738327460850152)
			(28,0.0036886926605306956)
			(31,0.003587698046865216)
			(34,0.0033511820842354657)
			(37,0.002879066979977192)
			(40,0.0022067559255688593)
			(43,0.0014906773986139888)
			(46,0.0009060077553627329)
			(49,0.000563805623416812)
			(52,0.00040636710876282224)
			(55,0.00033020203683522626)
			(58,0.0002885900676082674)
			(61,0.0002598091594710298)
			(64,0.00023675954452174417)
			(67,0.00021786891252136765)
			(70,0.00020217559766491037)
			(73,0.00018880486115656033)
			(76,0.0001771558873251561)
			(79,0.00016697516185631075)
			(82,0.00015824339030499435)
			(85,0.00015083137177861482)
			(88,0.00014443615953849858)
			(91,0.00013865766000483744)
			(94,0.00013336681701355446)
			(97,0.00012896744101526048)
			(100,0.00012567306169468108)
			(103,0.00012301109308902135)
			(106,0.00012053271958534043)
			(109,0.00011801334179279735)
			(112,0.00011543597153792145)
			(115,0.00011289884678624906)
			(118,0.0001104033362304939)
			(121,0.0001079160817452736)
			(124,0.00010549259465130744)
			(127,0.00010320740253976255)
			(130,0.00010109937534606138)
			(133,9.924213416119842e-05)
			(136,9.768485220080108e-05)
			(139,9.629160436118889e-05)
			(142,9.493105241425125e-05)
			(145,9.364533913937875e-05)
			(148,9.253480507132924e-05)
			(151,9.158538622195391e-05)
			(154,9.070157894027963e-05)
			(157,8.981960939105775e-05)
			(160,8.890212821274074e-05)
			(163,8.787638714424031e-05)
			(166,8.667944204829651e-05)
			(169,8.539214107755686e-05)
			(172,8.425153800200771e-05)
			(175,8.337199165457249e-05)
			(178,8.265136222701834e-05)
			(181,8.196949132137345e-05)
			(184,8.127166887104784e-05)
			(187,8.059738401161149e-05)
			(190,7.999982731107952e-05)
			(193,7.94263748771318e-05)
			(196,7.877203918309486e-05)
			(199,7.795882527112826e-05)
			(202,7.692431879759903e-05)
			(205,7.560956036197969e-05)
			(208,7.398447243184094e-05)
			(211,7.210482361114977e-05)
			(214,7.011472505356004e-05)
			(217,6.812755426491911e-05)
			(220,6.62032645907304e-05)
			(223,6.448802586996402e-05)
			(226,6.323184433814015e-05)
			(229,6.251058645718026e-05)
			(232,6.207664020028243e-05)
			(235,6.168123241079965e-05)
			(238,6.121935296593851e-05)
			(241,6.0642057974295455e-05)
			(244,5.990103743916334e-05)
			(247,5.893020810258319e-05)
			(250,5.7638327231887725e-05)
			(253,5.592051443325625e-05)
			(256,5.374837202113628e-05)
			(259,5.143448171913095e-05)
			(262,4.9654681662395695e-05)
			(265,4.8542503146533786e-05)
			(268,4.769328887767317e-05)
			(271,4.685366207547112e-05)
			(274,4.59905757191258e-05)
			(277,4.513039115685462e-05)
			(280,4.4210501213306954e-05)
			(283,4.308125685352993e-05)
			(286,4.164384250763898e-05)
			(289,4.003872666350548e-05)
			(292,3.8680788441382876e-05)
			(295,3.7700362224798626e-05)
			(298,3.686960764101867e-05)
			(301,3.61362242711077e-05)
			(304,3.554980260677936e-05)
			(307,3.506785889785153e-05)
			(310,3.4579716278860445e-05)
			(313,3.3992997698158284e-05)
			(316,3.324049765383523e-05)
			(319,3.226012837334855e-05)
			(322,3.1002758793023375e-05)
			(325,2.9496466013238832e-05)
			(328,2.794985192904207e-05)
			(331,2.6647427620884002e-05)
			(334,2.5551570000390976e-05)
			(337,2.4358543105804988e-05)
			(340,2.2827498489949402e-05)
			(343,2.079211043964537e-05)
			(346,1.8122323362017467e-05)
			(349,1.4779076031826118e-05)
			(352,1.0894880690479954e-05)
			(355,6.885945341898669e-06)
			(358,3.5948374195736846e-06)
			(361,1.7017171449558679e-06)
		};
	\addplot[line join=round,mark = star,color = Layer_10,ultra thick]
	coordinates {(10,0.003784404724994949)
		(13,0.003738138562420523)
		(16,0.0036070203850190104)
		(19,0.003057929213569691)
		(22,0.0018330986037681878)
		(25,0.000755734478532101)
		(28,0.00038707191562109057)
		(31,0.0002919910657946821)
		(34,0.00024599370696611246)
		(37,0.00021003372901537063)
		(40,0.00018235093084557728)
		(43,0.0001610461855036006)
		(46,0.0001417926679782954)
		(49,0.000126484745262514)
		(52,0.00011713943156612857)
		(55,0.00011107486256684323)
		(58,0.00010548546007669866)
		(61,9.853767555505833e-05)
		(64,9.341342127755303e-05)
		(67,8.881830496671515e-05)
		(70,8.368885198088787e-05)
		(73,7.9075698853464e-05)
		(76,7.384758422620734e-05)
		(79,6.810769043583328e-05)
		(82,6.343133933633176e-05)
		(85,6.0675385860263017e-05)
		(88,5.8182014938323256e-05)
		(91,5.480195742113297e-05)
		(94,5.122329042241162e-05)
		(97,4.87766973402737e-05)
		(100,4.7044166504826676e-05)
		(103,4.5097996739074854e-05)
		(106,4.2296416654243276e-05)
		(109,3.8977122767000724e-05)
		(112,3.602101740751186e-05)
		(115,3.305424802213694e-05)
		(118,2.974410575186709e-05)
		(121,2.6300712509680643e-05)
		(124,2.258277328874444e-05)
		(127,1.95535651159314e-05)
		(130,1.753247497137416e-05)
		(133,1.4616386686028897e-05)
		(136,9.381614000036104e-06)
		(139,3.412617761131684e-06)
	};
		\addplot[line join=round,mark = triangle*,color = Layer_12,ultra thick]
		coordinates {(10,0.003753045547583711)
			(13,0.003613848079787167)
			(16,0.002885770419583213)
			(19,0.001359473527833549)
			(22,0.0004798889650970362)
			(25,0.00029783960228609843)
			(28,0.0002365188795085399)
			(31,0.0001993857947550156)
			(34,0.0001717817056624014)
			(37,0.000149617531933155)
			(40,0.00013155771154649008)
			(43,0.00011547427609072335)
			(46,0.00010231050405133108)
			(49,9.007957168796821e-05)
			(52,7.852915015736454e-05)
			(55,6.70528763992759e-05)
			(58,5.7830495752814777e-05)
			(61,5.045119734590772e-05)
			(64,4.0609072953641856e-05)
			(67,2.629304879519688e-05)
			(70,8.467553569624586e-06)
			(73,2.861807158436924e-06)
		};]
\end{axis}
\end{tikzpicture}};
	\node[scale=0.55] at (11.5,-4.5) 
	{\begin{tikzpicture}
	\begin{axis}[
		%nodes near coords,
		every other node near coord/.append style={font=\tiny},
		legend columns=1,
		legend style={row sep=0.1cm},
		height=8cm,
		width = 10cm,
		legend entries={{\small $h = 2$},{\small $h = 5$},{\small $h =10$}},
		legend pos=north east,
		mark repeat=10,
		ymode = log,
		ylabel =,
		xlabel =Number of Iterations,
		xmin=0,
		xmax=280,
		scaled ticks=true,
		/pgf/number format/.cd,
		use comma,
		1000 sep={}
		]
		\addplot[line join=round,mark = square*,color = Layer_4,ultra thick]
		coordinates {(10,0.0038157770398366355)
			(13,0.0038088555574512964)
			(16,0.0037996207251412687)
			(19,0.0037869222788330724)
			(22,0.0037687033042171935)
			(25,0.0037408620388355834)
			(28,0.003693823348257977)
			(31,0.003600182327743038)
			(34,0.003380585542318803)
			(37,0.0029229078869960168)
			(40,0.0022521392250063523)
			(43,0.001527173360531471)
			(46,0.0009263328531868269)
			(49,0.0005724209481245406)
			(52,0.0004124307147592783)
			(55,0.0003387434117246038)
			(58,0.0002993984500795678)
			(61,0.00027199516268952085)
			(64,0.00024998941433811676)
			(67,0.00023168930520231607)
			(70,0.0002160614703851955)
			(73,0.0002022550389942302)
			(76,0.0001896299317667677)
			(79,0.00017809019739109687)
			(82,0.00016763998951413426)
			(85,0.00015841287965724846)
			(88,0.0001501739927156065)
			(91,0.00014260102918232166)
			(94,0.00013584122994144893)
			(97,0.00013006723203419084)
			(100,0.00012508375850025448)
			(103,0.00012066181799642308)
			(106,0.00011663541960350412)
			(109,0.00011283935318831301)
			(112,0.0001090576266717182)
			(115,0.00010512640841562434)
			(118,0.00010106746882726329)
			(121,9.709579474833247e-05)
			(124,9.338629339610249e-05)
			(127,8.982838875479517e-05)
			(130,8.62444683344919e-05)
			(133,8.266215449700025e-05)
			(136,7.943736048168485e-05)
			(139,7.719067207354476e-05)
			(142,7.590538022041883e-05)
			(145,7.500966028723907e-05)
			(148,7.418722320649798e-05)
			(151,7.330395372109843e-05)
			(154,7.224949630181007e-05)
			(157,7.093607793165497e-05)
			(160,6.935821284190115e-05)
			(163,6.764607809150248e-05)
			(166,6.592196205285228e-05)
			(169,6.422350484257628e-05)
			(172,6.269407885472919e-05)
			(175,6.137089717647315e-05)
			(178,6.018584303664746e-05)
			(181,5.910623406997559e-05)
			(184,5.80405296985792e-05)
			(187,5.684880207029717e-05)
			(190,5.542783269240883e-05)
			(193,5.3748706762195685e-05)
			(196,5.1874745041745304e-05)
			(199,4.993432356918854e-05)
			(202,4.7969259684781614e-05)
			(205,4.583829068127173e-05)
			(208,4.338267593205266e-05)
			(211,4.051347098082772e-05)
			(214,3.7075919561883914e-05)
			(217,3.28122973924156e-05)
			(220,2.7443533106191354e-05)
			(223,2.0837962882227927e-05)
			(226,1.4049956089026733e-05)
			(229,9.04974155343653e-06)
			(232,5.5931133948599036e-06)
			(235,3.205901213320741e-06)
			(238,1.9083380280592363e-06)
		};
	\addplot[line join=round,mark = star,color = Layer_10,ultra thick]
	coordinates {(10,0.003754881885539844)
		(13,0.003623088144264165)
		(16,0.0029246193274200145)
		(19,0.0013858002934651144)
		(22,0.00048485649499162054)
		(25,0.00030406051702942084)
		(28,0.000241945402446372)
		(31,0.0002035871022655127)
		(34,0.00017548226818513718)
		(37,0.00015327005333529164)
		(40,0.0001371705708589578)
		(43,0.00012327760629655612)
		(46,0.00011154067702726736)
		(49,0.00010249335999498223)
		(52,9.50071447050508e-05)
		(55,8.92227925876656e-05)
		(58,8.49879744453115e-05)
		(61,8.219967796121205e-05)
		(64,8.034930056224131e-05)
		(67,7.73373042416136e-05)
		(70,7.089438960547978e-05)
		(73,6.273952297823259e-05)
		(76,5.720479098284668e-05)
		(79,5.447542195876743e-05)
		(82,5.3324322555594295e-05)
		(85,5.216588136419158e-05)
		(88,5.090070788440027e-05)
		(91,4.983746852714477e-05)
		(94,4.835441419576555e-05)
		(97,4.556700492521169e-05)
		(100,4.1171390686859155e-05)
		(103,3.6810602741646614e-05)
		(106,3.23521838155238e-05)
		(109,2.8988625873280473e-05)
		(112,2.6636531770559692e-05)
		(115,2.297560898033547e-05)
		(118,1.4355443484235533e-05)
		(121,4.735880618860861e-06)
		(124,9.67954872248955e-07)};
		\addplot[line join=round,mark = triangle*,color = Layer_12,ultra thick]
		coordinates {(10,0.0019831205369286106)
			(13,0.0003939392116969396)
			(16,0.00024277850780436805)
			(19,0.00018710326547842986)
			(22,0.00015170070769115248)
			(25,0.00012958202645277822)
			(28,0.00011681661449930601)
			(31,0.00010695829050758128)
			(34,9.368683811769287e-05)
			(37,8.327458668612982e-05)
			(40,7.535185224932793e-05)
			(43,6.728794597518402e-05)
			(46,6.193397569317719e-05)
			(49,5.899161845846715e-05)
			(52,5.177804912517138e-05)
			(55,4.3160275220048696e-05)
			(58,2.966955338870285e-05)
			(61,1.1136505074149191e-05)
			(64,2.045114125043005e-06)};
		]
\end{axis}
\end{tikzpicture}}; 
	\node[rotate=90] at (-3.3,0.2) {\textbf{Uniform}}; 
	\node[rotate=90] at (-3.3,-4) {\textbf{Nonuniform}}; 
\end{tikzpicture}
\captionsetup{font=footnotesize}
\caption{Norm convergence of the sequence generated by Algorithm \ref{Geometric_Explicit_Euler} towards an integral solution (labeling). Once the basin of attraction of the integral solution has been reached (Theorem \ref{thm:Existance-Epsilon}), the convergence rate increases considerably. }
\label{fig:Conv_Rates_Algorithm1}
\end{figure}

\subsection{Accelerated Geometric Optimization}\label{sec:Exp-subsec-4}
In this section, we report the evaluation of Algorithm \ref{Geometric_Two_Stage} using Algorithm \ref{Geometric_Explicit_Euler} as baseline. The main ingredients of Algorithm \ref{Geometric_Two_Stage} are:
\begin{enumerate}[(i)]
\item The descent direction $d^{k}$ given by \eqref{eq:Descent_Dir_Two_Stage} exploits the second-order term  $\frac{1}{2}\Omega R_{S_k}(\Omega S_k)$ weighted by parameter $h_k$ which, according to line \ref{eq:alg:step_size_int} of Algorithm \ref{Geometric_Two_Stage}, is determined with negligible additional computational cost by 
\begin{equation}\label{eq:second_order_weighting-experiments}
	h_k = \tau \cdot \Big(\frac{\|R_{S^k}(\Omega S^k) \|^2_{S^k}}{|\langle 
	R_{S^k}(\Omega S^k),\Omega R_{S^k}(\Omega 
			S^k) 
			\rangle|}\Big), \qquad \tau \in (0,1).
\end{equation} 
Choosing the parameter $\tau$ is a compromise between making larger steps (large value of $\tau$) and accuracy of labeling (small value of $\tau$). According to our experience, $\tau=0.1$ is a reasonable choice that did never compromise labeling accuracy. This value was chosen for all experiments discussed in this section.
\item Algorithm \ref{Geometric_Two_Stage} calls Algorithm \ref{alg:Step_Size_Selection} which in turn calls Algorithm \ref{alg:Search_Aux} in order to satisfy both conditions \eqref{eq:Sufficient_Decrease_Properties} for sufficient decrease. In order to reduce the computational costs of the inner loop started in line \ref{alg:Update-DC-par-line} of Algorithm \ref{Geometric_Two_Stage}, we only checked the conditions \eqref{eq:Sufficient_Decrease_Properties_a} and \eqref{eq:Sufficient_Decrease_Properties_b} at each iteration up to $K_{\max} = 100$ iterations. Figure \ref{fig:Wolfe_Conditions} illustrates that, while condition \eqref{eq:Sufficient_Decrease_Properties_a} is satisfied throughout all outer loop iterations,  condition \eqref{eq:Sufficient_Decrease_Properties_b} is satisfied too except for a tiny fraction of inner loops, and therefore the validity of \eqref{eq:Sufficient_Decrease_Properties} is still guaranteed up to a negligible part of iteration steps.
\end{enumerate}

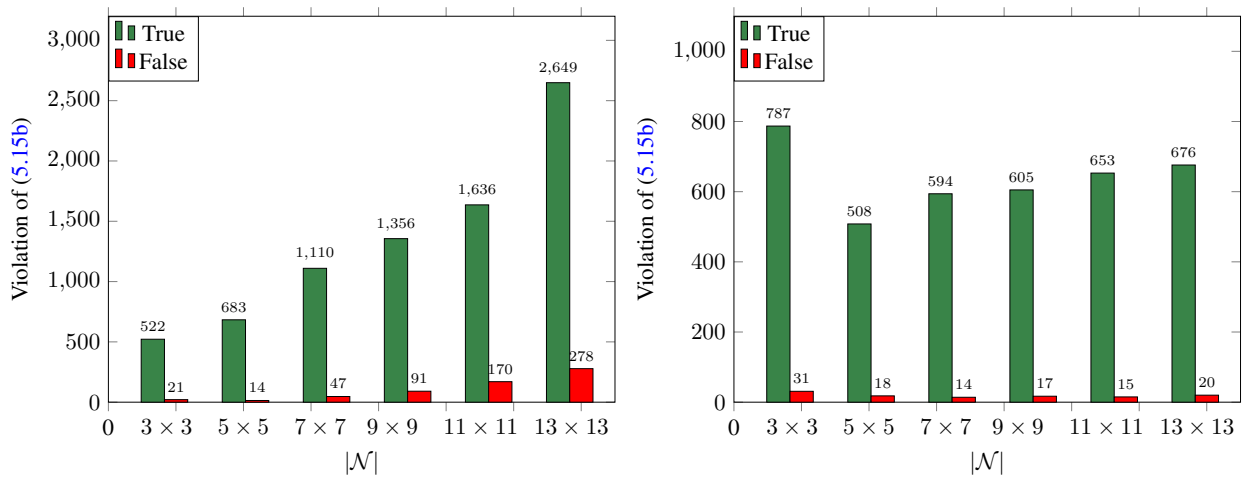
\begin{figure}
	\centering
	\scalebox{0.8}{\begin{tikzpicture}
	\begin{axis}[
		height=8cm,width = 10cm,
		xmin=0, xmax=7.5,
		xticklabels={0,0,$3 \times 3$,$5 \times 5$,$7 \times 7$,$9 
			\times 9$,$11 \times 11$,$13 \times 13$},
		ylabel=Violation of \eqref{eq:Sufficient_Decrease_Properties_b},
		xlabel= {$|\mathcal{N}|$},
		legend style={
			legend columns=1,
			at={(0,1)},
			anchor=north west, ymax = 3200,ymin = 0
		},
		nodes near coords,
		every node near coord/.append style={font=\tiny},
		ybar=0pt,
		x tick label style={xshift=-{(\ticknum==2)*0.4em}},
		x tick label style={xshift={(\ticknum==4)*0.4em}},
		x tick label style={xshift={(\ticknum==5)*0.6em}},
		x tick label style={xshift={(\ticknum==6)*1.5em}},
		x tick label style={xshift={(\ticknum==7)*2.5em}},
		bar width=11pt,
		]
		\addplot[black!20!black,fill=Mea_Deep_Color!40!Layer_4]
		coordinates {(0.83,522) (2.03,683) (3.23,1110) (4.43,1356) 
		(5.63,1636) 
			(6.83,2649) };
		\addplot[black!20!black,fill=red]
		coordinates {(0.83,21) (2.03,14)
			(3.23,47) (4.43,91) (5.63,170) (6.83,278) };
		\legend{True ,False}
	\end{axis}
\end{tikzpicture}}
	\scalebox{0.8}{%\begin{tikzpicture}
%	\begin{axis}[
%		%nodes near coords,
%		every other node near coord/.append style={font=\tiny},
%		legend columns=1,
%		legend style={row sep=0.1cm},
%		height=8cm,
%		width = 10cm,
%		legend entries={{\small Nonuniform},{\small Uniform}},
%		legend pos= north west,
%		mark repeat=1,
%		xlabel = $|\mathcal{N}|$,
%		ylabel = Violation of condition 
%		\eqref{eq:Sufficient_Decrease_Properties_b} in \%,
%		xmin=0,
%		xmax=17,
%		ymax = 10,
%		ymin=0,
%		scaled ticks=true,
%		/pgf/number format/.cd,
%		use comma,
%		1000 sep={}
%		]
%		\addplot[line join=round,mark = rectangle*,color = Layer_4,ultra thick]
%		coordinates {((3,9/340*100) (5,12/373*100) 
%			(7,13/340*100) 
%			(9,18/465*100) (11,17/391*100) (13,17/391*100) (15,17/391*100)};
%		\addplot[line join=round,mark = rectangle*,color = Layer_6,ultra thick]
%coordinates {((3,21/543*100) (5,24/697*100) 
%	(7,47/1157*100) 
%	(9,91/1446*100) (11,170/1805*100) (13,278/2926*100) (15,278/2926*100)};
%		]
%	\end{axis}
%\end{tikzpicture}
\begin{tikzpicture}
	\begin{axis}[
		height=8cm,width = 10cm,
		xmin=0, xmax=7.5,
		xticklabels={0,0,$3 \times 3$,$5 \times 5$,$7 \times 7$,$9 
		\times 9$,$11 \times 11$,$13 \times 13$},
		ylabel=Violation of \eqref{eq:Sufficient_Decrease_Properties_b},
		xlabel= {$|\mathcal{N}|$},
		legend style={	legend columns=1,
			at={(0,1)},
			anchor=north west, ymax = 1100,ymin = 0
		},
		nodes near coords,
		every node near coord/.append style={font=\tiny},
		ybar=0pt,
		bar width=11pt,
		x tick label style={xshift=-{(\ticknum==2)*0.4em}},
		x tick label style={xshift={(\ticknum==4)*0.4em}},
		x tick label style={xshift={(\ticknum==5)*0.6em}},
		x tick label style={xshift={(\ticknum==6)*1.5em}},
		x tick label style={xshift={(\ticknum==7)*2.5em}}
	]
		\addplot[black!20!black,fill=Mea_Deep_Color!40!Layer_4]
		coordinates {(0.83,787) (2.03,508) (3.23,594) (4.43,605) (5.63,653) 
		(6.83,676) };
		\addplot[black!20!black,fill=red]
		coordinates {(0.83,31) (2.03,18)
			(3.23,14) (4.43,17) (5.63,15) (6.83,20) };
		\legend{True ,False}
	\end{axis}
\end{tikzpicture}}
	\captionsetup{font=footnotesize}	\caption{Fraction of inner loops of Algorithm \ref{Geometric_Two_Stage} based on condition \eqref{eq:Sufficient_Decrease_Properties_a} that also satisfied condition \eqref{eq:Sufficient_Decrease_Properties_b} ($\{\protect\markerfour \}$ $=$ True) or not ($\{\protect\markertwo \} $ $=$ False), with initialization $\theta_{0}=0.5$ and uniform averaging (left panel) or nonuniform averaging (right panel). Up to a tiny fraction, condition \eqref{eq:Sufficient_Decrease_Properties_b} is satisfied which justifies to reduce the computational costs of the inner loop by only checking condition \eqref{eq:Sufficient_Decrease_Properties_a} and dispensing with condition \eqref{eq:Sufficient_Decrease_Properties_b} after $K_{\max}$ iterations.
	} 
	\label{fig:Wolfe_Conditions}
\end{figure}

\begin{figure}
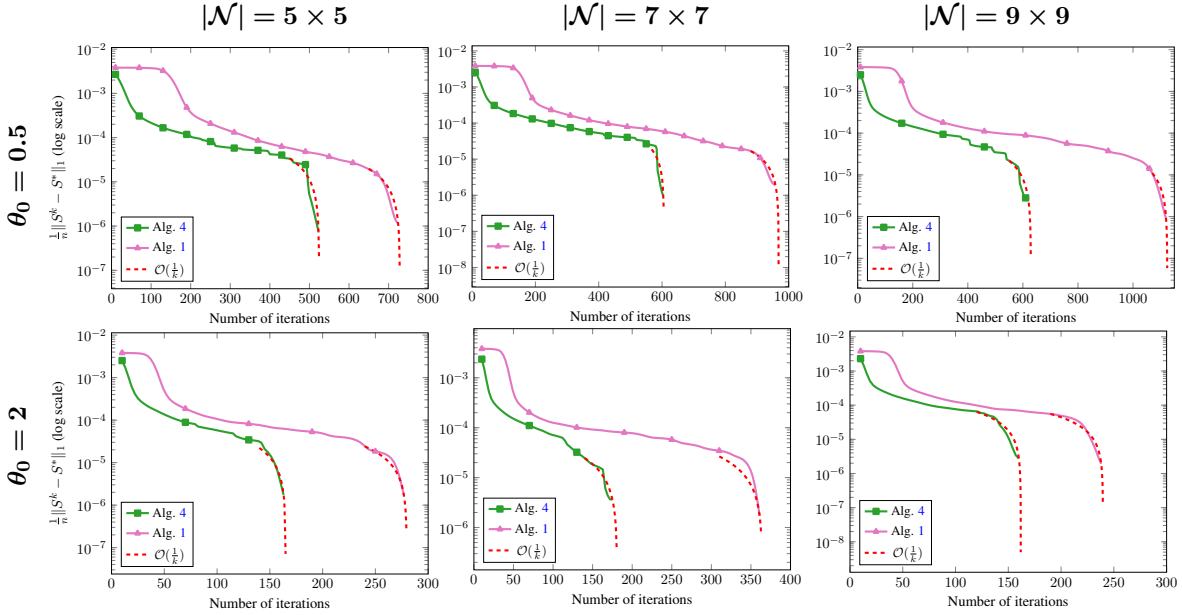

	\begin{tikzpicture}
		\node[scale=0.5] at (0,0) 
		{\input{
				./Graphics/Two_Stage_Convergence_Rates_5.tex}};
		\node[scale=0.5] at (5,0) 
		{\input{./Graphics/Two_Stage_Convergence_Rates_7.tex}};
		\node[scale=0.5] at (10,0) 
		{\input{./Graphics/Two_Stage_Convergence_Rates_3.tex}};
		\node[scale=0.9] at (0.4,2.2) {$\boldsymbol{|\mathcal{N}| = 
		5\times 
		5}$}; 
		\node[scale=0.9] at (5.2,2.2) {$\boldsymbol{|\mathcal{N}| = 
		7\times 
		7}$}; 
		\node[scale=0.9] at (10,2.2) {$\boldsymbol{|\mathcal{N}| = 
		9\times 
		9}$};
		\node[scale=0.5] at (0,-3.8) 
		{\begin{tikzpicture}
	\begin{axis}[
		%nodes near coords,
		every other node near coord/.append style={font=\tiny},
		legend columns=1,
		legend style={row sep=0.1cm},
		height=8cm,
		width = 10cm,
		legend entries={{\small 
				Alg. \ref{Geometric_Two_Stage}},{\small Alg. 
				\ref{Geometric_Explicit_Euler}},{\small 
				$\mathcal{O}(\frac{1}{k})$}},
		legend pos=south west,
		mark repeat=20,
		xlabel = Number of iterations,
		ymode = log,
		xmin=0,
		xmax=300,
		ylabel = $\frac{1}{n}\| S^k-S^{\ast} \|_1$ (log scale),
		scaled ticks=true,
		/pgf/number format/.cd,
		use comma,
		1000 sep={}
		]
		\addplot[line join=round,mark = square*,color = Layer_4,ultra thick]
		coordinates {
		(10,0.0025161765070133026)
		(13,0.0017237932239673724)
		(16,0.001047360590253717)
		(19,0.0006177290011279762)
		(22,0.0004203650187535024)
		(25,0.0003252317475904631)
		(28,0.00027409336969112216)
		(31,0.00024065288505843856)
		(34,0.00021473737029793552)
		(37,0.00019509416455534682)
		(40,0.00017898282797177003)
		(43,0.00016496715640505774)
		(46,0.00015142448488056982)
		(49,0.00013970065887778548)
		(52,0.00012906440101489628)
		(55,0.00011985269264701471)
		(58,0.00011178697631387457)
		(61,0.0001048229733203466)
		(64,9.87544423590516e-05)
		(67,9.32572645622071e-05)
		(70,8.884114269547023e-05)
		(73,8.581319899505307e-05)
		(76,8.327184459909999e-05)
		(79,8.077006043292135e-05)
		(82,7.060020623907231e-05)
		(85,6.806880501264879e-05)
		(88,6.582000803217718e-05)
		(91,6.366021286424116e-05)
		(94,6.170827265885735e-05)
		(97,5.9743241831628284e-05)
		(100,5.759449269357574e-05)
		(103,5.549133180684316e-05)
		(106,5.365828516029253e-05)
		(109,5.185578810613856e-05)
		(112,5.018748162781572e-05)
		(115,4.883124606278385e-05)
		(118,4.163886996093677e-05)
		(121,3.903899467933879e-05)
		(124,3.683670428480972e-05)
		(127,3.534297995436722e-05)
		(130,3.4313099115416634e-05)
		(133,3.360380943285742e-05)
		(136,3.293380996044059e-05)
		(139,3.210048435864751e-05)
		(142,2.9338120161866605e-05)
		(145,2.100482245623296e-05)
		(148,1.7734781717261584e-05)
		(151,1.414948179617714e-05)
		(154,1.0387924633527708e-05)
		(157,6.637409114251561e-06)
		(160,3.5029810316125125e-06)
		(163,1.740754928282415e-06)
		};	
		\addplot[line join=round,mark = triangle*,color = Layer_10,ultra thick]
		coordinates {
	(10,0.0038146259179153534)
	(13,0.0038072014356805867)
	(16,0.0037971731754777908)
	(19,0.003783142677218046)
	(22,0.003762480412752006)
	(25,0.003729531593165173)
	(28,0.0036697159092590856)
	(31,0.0035404868521860494)
	(34,0.003253381487528284)
	(37,0.0027508605616794134)
	(40,0.0020875650383667113)
	(43,0.0014115332088473344)
	(46,0.0008765823626280476)
	(49,0.0005497497008965721)
	(52,0.00039590663604277125)
	(55,0.00031248085441818477)
	(58,0.0002679111173846287)
	(61,0.0002402649320644077)
	(64,0.00021862105497490666)
	(67,0.00020084126820336902)
	(70,0.00018580545865799143)
	(73,0.0001727947117826807)
	(76,0.00016105960047069963)
	(79,0.00015047829583787864)
	(82,0.00014110471945317485)
	(85,0.0001326872321044762)
	(88,0.00012562899745075437)
	(91,0.00011987208554368873)
	(94,0.00011522965939689159)
	(97,0.0001109972390692479)
	(100,0.00010629621394470157)
	(103,0.0001018658961346911)
	(106,9.787971145227055e-05)
	(109,9.415980808822844e-05)
	(112,9.118139255276293e-05)
	(115,8.901393223924424e-05)
	(118,8.73505189041771e-05)
	(121,8.601778009465543e-05)
	(124,8.480405072160326e-05)
	(127,8.351436690629071e-05)
	(130,8.206805772206719e-05)
	(133,8.051063450511484e-05)
	(136,7.891092458271089e-05)
	(139,7.715190116720826e-05)
	(142,7.503230630079337e-05)
	(145,7.270257801305106e-05)
	(148,7.033543128012904e-05)
	(151,6.786751211505573e-05)
	(154,6.593608636408052e-05)
	(157,6.47786569417025e-05)
	(160,6.380873939902269e-05)
	(163,6.272555155215695e-05)
	(166,6.149357733413413e-05)
	(169,6.015530202835829e-05)
	(172,5.8777969734512786e-05)
	(175,5.7499428568046956e-05)
	(178,5.6457178843594236e-05)
	(181,5.559441166223039e-05)
	(184,5.4773192616525925e-05)
	(187,5.3952824043248915e-05)
	(190,5.312333322316429e-05)
	(193,5.2184285757182335e-05)
	(196,5.09629353381275e-05)
	(199,4.928331983637341e-05)
	(202,4.706458810424558e-05)
	(205,4.457252510287793e-05)
	(208,4.246980807878429e-05)
	(211,4.121949486570521e-05)
	(214,4.0521931778585184e-05)
	(217,3.9966750498180613e-05)
	(220,3.9327999566134044e-05)
	(223,3.847673100447961e-05)
	(226,3.7299458838089976e-05)
	(229,3.5672796591536786e-05)
	(232,3.3509235774827654e-05)
	(235,3.073136353447378e-05)
	(238,2.7060067546967323e-05)
	(241,2.3031466844176398e-05)
	(244,2.0305549844041496e-05)
	(247,1.8998183603009563e-05)
	(250,1.8370562350544452e-05)
	(253,1.7787649505057862e-05)
	(256,1.7041390415504496e-05)
	(259,1.6028193717135702e-05)
	(262,1.4616409824103482e-05)
	(265,1.2629394112254973e-05)
	(268,9.917483148442381e-06)
	(271,6.64030620704169e-06)
	(274,3.602410360859035e-06)
	(277,1.7171392846610478e-06)
		};	
\addplot[dashed,domain = 240:300,smooth,samples=100,ultra thick, red]
{(4.706458810424558e-05-(4.706458810424558e-05-1.7171392846610478e-06)/(277-202)*(-202+x)};
\addplot[dashed,domain = 140:450,smooth,samples=300,ultra thick, red]
{(5.759449269357574e-05-(5.759449269357574e-05-1.740754928282415e-06)/(163-100)*(-100+x)};
		]
	\end{axis}
\end{tikzpicture}};
		\node[scale=0.5] at (5,-3.8) 
		{\begin{tikzpicture}
	\begin{axis}[
		%nodes near coords,
		every other node near coord/.append style={font=\tiny},
		legend columns=1,
		legend style={row sep=0.1cm},
		height=8cm,
		width = 10cm,
		legend entries={{\small 
				Alg. \ref{Geometric_Two_Stage}},{\small Alg. 
				\ref{Geometric_Explicit_Euler}},{\small 
				$\mathcal{O}(\frac{1}{k})$}},
		legend pos=south west,
		mark repeat=20,
		xlabel = Number of iterations,
		ymode = log,
		xmin=0,
		xmax=400,
		%ylabel = $\frac{1}{n}\| S^k-S^{\ast} \|_1$ (log scale),
		scaled ticks=true,
		/pgf/number format/.cd,
		use comma,
		1000 sep={}
		]
		\addplot[line join=round,mark = square*,color = Layer_4,ultra 
		thick]
		coordinates {
			(10,0.002362694363484593)
			(13,0.0015253548903815676)
			(16,0.0008707475701898278)
			(19,0.0005243081255391651)
			(22,0.00038124757390342227)
			(25,0.00031738783910842286)
			(28,0.0002785107848871039)
			(31,0.0002493199635548375)
			(34,0.00022589259492244665)
			(37,0.00020635223527260297)
			(40,0.00018962229045422444)
			(43,0.0001755829628090463)
			(46,0.00016407674371457325)
			(49,0.0001542749440841997)
			(52,0.00014560770661771065)
			(55,0.0001380610197424141)
			(58,0.0001314570485073481)
			(61,0.00012547247043110307)
			(64,0.00012002950187926726)
			(67,0.00011517509207359738)
			(70,0.00011058448086098774)
			(73,0.0001059409215752596)
			(76,0.00010107179903617676)
			(79,9.649782753020317e-05)
			(82,9.265211925700083e-05)
			(85,8.90192551886662e-05)
			(88,8.51023166374639e-05)
			(91,8.080748029989704e-05)
			(94,7.663007952450112e-05)
			(97,7.343219001854677e-05)
			(100,7.119900690667828e-05)
			(103,6.924969559671681e-05)
			(106,6.719856205440978e-05)
			(109,6.496605209269691e-05)
			(112,6.247157788928405e-05)
			(115,5.514244773601624e-05)
			(118,4.5934708646430515e-05)
			(121,4.188688709512465e-05)
			(124,3.8058069988035936e-05)
			(127,3.4813009776274313e-05)
			(130,3.2251318461176605e-05)
			(133,2.9861265359129924e-05)
			(136,2.7542958809549827e-05)
			(139,2.5230632236151132e-05)
			(142,2.2914313014325947e-05)
			(145,2.075731264629785e-05)
			(148,1.903210394696161e-05)
			(151,1.791352233946799e-05)
			(154,1.717956755980631e-05)
			(157,1.6448137792611084e-05)
			(160,1.55463652937026e-05)
			(163,1.4431361111874911e-05)
			(166,6.1107473197858475e-06)
			(169,4.4008042420384335e-06)
			(172,3.6659550912603525e-06)
			(175,3.620260675484032e-06)
		};	
		\addplot[line join=round,mark = triangle*,color = 
		Layer_10,ultra thick]
		coordinates {
			(10,0.003815493249751695)
			(13,0.003808451233083787)
			(16,0.0037990294818647597)
			(19,0.003786024190211133)
			(22,0.003767259573138923)
			(25,0.003738327460850152)
			(28,0.0036886926605306956)
			(31,0.003587698046865216)
			(34,0.0033511820842354657)
			(37,0.002879066979977192)
			(40,0.0022067559255688593)
			(43,0.0014906773986139888)
			(46,0.0009060077553627329)
			(49,0.0005638056234168121)
			(52,0.00040636710876282224)
			(55,0.0003302020368352263)
			(58,0.0002885900676082674)
			(61,0.00025980915947102987)
			(64,0.00023675954452174422)
			(67,0.00021786891252136768)
			(70,0.00020217559766491031)
			(73,0.00018880486115656025)
			(76,0.000177155887325156)
			(79,0.00016697516185631056)
			(82,0.00015824339030499413)
			(85,0.00015083137177861468)
			(88,0.00014443615953849863)
			(91,0.0001386576600048377)
			(94,0.00013336681701355498)
			(97,0.00012896744101526113)
			(100,0.00012567306169468173)
			(103,0.00012301109308902197)
			(106,0.0001205327195853411)
			(109,0.00011801334179279803)
			(112,0.00011543597153792203)
			(115,0.00011289884678624944)
			(118,0.000110403336230494)
			(121,0.00010791608174527354)
			(124,0.00010549259465130755)
			(127,0.00010320740253976292)
			(130,0.000101099375346062)
			(133,9.92421341611995e-05)
			(136,9.768485220080272e-05)
			(139,9.629160436119095e-05)
			(142,9.493105241425355e-05)
			(145,9.36453391393809e-05)
			(148,9.25348050713308e-05)
			(151,9.158538622195468e-05)
			(154,9.070157894027986e-05)
			(157,8.981960939105777e-05)
			(160,8.890212821274078e-05)
			(163,8.787638714424046e-05)
			(166,8.667944204829679e-05)
			(169,8.539214107755706e-05)
			(172,8.42515380020077e-05)
			(175,8.337199165457246e-05)
			(178,8.265136222701843e-05)
			(181,8.19694913213736e-05)
			(184,8.127166887104797e-05)
			(187,8.059738401161152e-05)
			(190,7.99998273110794e-05)
			(193,7.942637487713157e-05)
			(196,7.877203918309455e-05)
			(199,7.795882527112783e-05)
			(202,7.692431879759849e-05)
			(205,7.560956036197897e-05)
			(208,7.398447243184004e-05)
			(211,7.210482361114875e-05)
			(214,7.011472505355912e-05)
			(217,6.812755426491849e-05)
			(220,6.620326459073007e-05)
			(223,6.448802586996396e-05)
			(226,6.323184433814038e-05)
			(229,6.251058645718077e-05)
			(232,6.20766402002832e-05)
			(235,6.168123241080068e-05)
			(238,6.121935296593984e-05)
			(241,6.064205797429721e-05)
			(244,5.9901037439165674e-05)
			(247,5.8930208102586304e-05)
			(250,5.7638327231891885e-05)
			(253,5.592051443326166e-05)
			(256,5.3748372021142754e-05)
			(259,5.143448171913712e-05)
			(262,4.96546816623992e-05)
			(265,4.854250314653389e-05)
			(268,4.769328887767144e-05)
			(271,4.6853662075469014e-05)
			(274,4.59905757191241e-05)
			(277,4.513039115685368e-05)
			(280,4.421050121330661e-05)
			(283,4.3081256853529876e-05)
			(286,4.164384250763899e-05)
			(289,4.003872666350523e-05)
			(292,3.868078844138209e-05)
			(295,3.770036222479748e-05)
			(298,3.6869607641017536e-05)
			(301,3.613622427110686e-05)
			(304,3.5549802606778884e-05)
			(307,3.506785889785128e-05)
			(310,3.457971627886026e-05)
			(313,3.399299769815809e-05)
			(316,3.3240497653834984e-05)
			(319,3.226012837334824e-05)
			(322,3.1002758793022975e-05)
			(325,2.9496466013238324e-05)
			(328,2.794985192904144e-05)
			(331,2.6647427620883216e-05)
			(334,2.5551570000389953e-05)
			(337,2.4358543105803602e-05)
			(340,2.282749848994748e-05)
			(343,2.0792110439642644e-05)
			(346,1.812232336201357e-05)
			(349,1.4779076031820651e-05)
			(352,1.0894880690472919e-05)
			(355,6.88594534189138e-06)
			(358,3.5948374195684656e-06)
			(361,1.701717144953539e-06)
		};	
		\addplot[dashed,domain = 310:460,smooth,samples=100,ultra 
		thick, red]
		{(5.143448171913712e-05-(5.143448171913712e-05-1.701717144953539e-06)/(361-259)*(-259+x)};
		\addplot[dashed,domain = 140:450,smooth,samples=300,ultra 
		thick, red]
		{(2.5230632236151132e-05-(2.5230632236151132e-05-3.620260675484032e-06)/(175-139)*(-139+x)};
		]
	\end{axis}
\end{tikzpicture}};
		\node[scale=0.5] at (10,-3.8) 
		{\begin{tikzpicture}
	\begin{axis}[
		%nodes near coords,
		every other node near coord/.append style={font=\tiny},
		legend columns=1,
		legend style={row sep=0.1cm},
		height=8cm,
		width = 10cm,
		legend entries={{\small 
				Alg. \ref{Geometric_Two_Stage}},{\small Alg. 
				\ref{Geometric_Explicit_Euler}},{\small 
				$\mathcal{O}(\frac{1}{k})$}},
		legend pos=south west,
		mark repeat=280,
		xlabel = Number of iterations,
		ymode=log,
		xmin=0,
		xmax=300,
		scaled ticks=true,
		/pgf/number format/.cd,
		use comma,
		1000 sep={}
		]
		\addplot[line join=round,mark = square*,color = Layer_4,ultra thick]
		coordinates {(10,0.002301671651283164)
			(13,0.001455719482247356)
			(16,0.0008198229622107314)
			(19,0.0005065821766600907)
			(22,0.0003828419068482177)
			(25,0.00032585329782224346)
			(28,0.00028871449509944644)
			(31,0.00026012497155346033)
			(34,0.00023636515302186053)
			(37,0.0002160349952318051)
			(40,0.00019898265246585603)
			(43,0.000184935304118119)
			(46,0.00017283285982371997)
			(49,0.0001620406231505651)
			(52,0.000152377038283703)
			(55,0.0001438351332958516)
			(58,0.00013627765959745247)
			(61,0.0001293225768288986)
			(64,0.00012251195018501436)
			(67,0.00011569151409459814)
			(70,0.00010935486312740455)
			(73,0.00010366151536099166)
			(76,9.869894896315005e-05)
			(79,9.45656007342591e-05)
			(82,9.113879275228345e-05)
			(85,8.820884941268298e-05)
			(88,8.565776227929064e-05)
			(91,8.332364042737922e-05)
			(94,8.103069703005996e-05)
			(97,7.872969913055942e-05)
			(100,7.636927226494738e-05)
			(103,7.401784616641624e-05)
			(106,7.202411476762571e-05)
			(109,7.052246683714669e-05)
			(112,6.921011720087398e-05)
			(115,6.794954217888821e-05)
			(118,6.66304886169054e-05)
			(121,6.518919083345546e-05)
			(124,5.928499755819865e-05)
			(127,5.736264899269628e-05)
			(130,5.2249156619677914e-05)
			(133,4.810524989792245e-05)
			(136,4.600175866510586e-05)
			(139,3.85236544800719e-05)
			(142,2.5849019902013756e-05)
			(145,2.033338203397134e-05)
			(148,1.4896935914416853e-05)
			(151,9.717492510009223e-06)
			(154,5.671898723709752e-06)
			(157,3.4491585962954486e-06)
			(160,2.7453560655146888e-06)
		};			
		\addplot[line join=round,mark = triangle*,color = Layer_10,ultra thick]
		coordinates {(10,0.0038157770398366355)
			(13,0.0038088555574512964)
			(16,0.0037996207251412687)
			(19,0.0037869222788330724)
			(22,0.0037687033042171935)
			(25,0.0037408620388355834)
			(28,0.003693823348257977)
			(31,0.003600182327743038)
			(34,0.003380585542318803)
			(37,0.0029229078869960168)
			(40,0.0022521392250063523)
			(43,0.001527173360531471)
			(46,0.0009263328531868269)
			(49,0.0005724209481245406)
			(52,0.0004124307147592783)
			(55,0.0003387434117246038)
			(58,0.0002993984500795678)
			(61,0.00027199516268952085)
			(64,0.00024998941433811676)
			(67,0.00023168930520231607)
			(70,0.0002160614703851955)
			(73,0.0002022550389942302)
			(76,0.0001896299317667677)
			(79,0.00017809019739109687)
			(82,0.00016763998951413426)
			(85,0.00015841287965724846)
			(88,0.0001501739927156065)
			(91,0.00014260102918232166)
			(94,0.00013584122994144893)
			(97,0.00013006723203419084)
			(100,0.00012508375850025448)
			(103,0.00012066181799642308)
			(106,0.00011663541960350412)
			(109,0.00011283935318831301)
			(112,0.0001090576266717182)
			(115,0.00010512640841562434)
			(118,0.00010106746882726329)
			(121,9.709579474833247e-05)
			(124,9.338629339610249e-05)
			(127,8.982838875479517e-05)
			(130,8.62444683344919e-05)
			(133,8.266215449700025e-05)
			(136,7.943736048168485e-05)
			(139,7.719067207354476e-05)
			(142,7.590538022041883e-05)
			(145,7.500966028723907e-05)
			(148,7.418722320649798e-05)
			(151,7.330395372109843e-05)
			(154,7.224949630181007e-05)
			(157,7.093607793165497e-05)
			(160,6.935821284190115e-05)
			(163,6.764607809150248e-05)
			(166,6.592196205285228e-05)
			(169,6.422350484257628e-05)
			(172,6.269407885472919e-05)
			(175,6.137089717647315e-05)
			(178,6.018584303664746e-05)
			(181,5.910623406997559e-05)
			(184,5.80405296985792e-05)
			(187,5.684880207029717e-05)
			(190,5.542783269240883e-05)
			(193,5.3748706762195685e-05)
			(196,5.1874745041745304e-05)
			(199,4.993432356918854e-05)
			(202,4.7969259684781614e-05)
			(205,4.583829068127173e-05)
			(208,4.338267593205266e-05)
			(211,4.051347098082772e-05)
			(214,3.7075919561883914e-05)
			(217,3.28122973924156e-05)
			(220,2.7443533106191354e-05)
			(223,2.0837962882227927e-05)
			(226,1.4049956089026733e-05)
			(229,9.04974155343653e-06)
			(232,5.5931133948599036e-06)
			(235,3.205901213320741e-06)
			(238,1.9083380280592363e-06)
		};	
\addplot[dashed,domain = 190:300,smooth,samples=500,ultra thick, red]
{(5.542783269240883e-05-(5.542783269240883e-05-1.9083380280592363e-06)/(238-190)*(-190+x)};
\addplot[dashed,domain = 120:200,smooth,samples=1000,ultra thick, red]
{(6.66304886169054e-05-(6.66304886169054e-05-2.7453560655146888e-06)/(160-118)*(-118+x)};
		]
	\end{axis}
\end{tikzpicture}};
		\node[scale=0.9,rotate=90] at (-3,0.1) {$\boldsymbol{\theta_0 = 
		0.5}$};
		\node[scale=0.9,rotate=90] at (-3,-3.5) {$\boldsymbol{\theta_0 
		= 2$}};     
	\end{tikzpicture}
	\captionsetup{font=footnotesize}
	\caption{Comparison of the convergence of 
	Algorithm 
	\ref{Geometric_Explicit_Euler} (\protect\markersix) and Algorithm 
	\ref{Geometric_Two_Stage} (\protect\markerfour) 
	towards integral solutions (labelings) for various sizes $|\mc{N}|$ of 
	neighborhoods and nonuniform averaging. For all parameter settings 
	Algorithm \ref{Geometric_Two_Stage} terminates after a smaller number of 
	iterations.
}
	\label{fig:Conv_Rates_Algorithm_2}
\end{figure}

Parameter $\theta_{k}$ of Algorithm \ref{Geometric_Two_Stage} corresponds to the step size parameter $h_{k}$ of Algorithm \ref{Geometric_Two_Stage}. According to the discussion of proper choices of $h_{k}$ in Section \ref{sec:Exp-subsec-2}, parameters $\theta_{k}$ was initialized by values $\theta_{0}\in\{\frac{1}{2},2\}$ and the adaptive search of $\theta_{k}$ was not allowed to exceed the upper bound $\theta_{\max}=10$.

Like Algorithm \ref{Geometric_Explicit_Euler}, Algorithm 
\ref{Geometric_Two_Stage} terminated when condition 
\eqref{eq:Iter_Crit_Av_grad} was satisfied with $\epsilon=10^{-7}$.

\vspace{0.5cm}
Figure \ref{fig:Conv_Rates_Algorithm_2} illustrates the convergence of Algorithms \ref{Geometric_Explicit_Euler} and \ref{Geometric_Two_Stage} towards labelings for the two initial step sizes $\theta_{0}\in\{\frac{1}{2},2\}$ corresponding to the fixed step size $h\in\{\frac{1}{2},2\}$ of Algorithm \ref{Geometric_Explicit_Euler}), and for different sizes $|\mc{N}|$ of neighborhoods with nonuniform averaging. Throughout all experiments, we observed that due to using adaptive step sizes $\theta_{k}$ and second-order information for determining the search direction, Algorithm \ref{Geometric_Two_Stage} terminates after a smaller number of iterations. In particular, the fast convergence of Algorithm \ref{Geometric_Explicit_Euler} within the basins of attraction is preserved.

Table \ref{tab:Iterations} compares Algorithms \ref{Geometric_Explicit_Euler} 
and \ref{Geometric_Two_Stage} in terms of factors of additional iterations 
required by Algorithm \ref{Geometric_Explicit_Euler} to terminate. We observe 
that the efficiency of Algorithm \ref{Geometric_Two_Stage} is more pronounced 
when larger neighborhood sizes $|\mc{N}|$ or uniform averaging are used.

\begin{table*}
	\centering
	\begin{tabular}{lll|l|ll|l}
		\toprule & \hspace{0.7cm} \textbf{    Uniform} &\hskip -2cm &  
		&\hspace{0.7cm} 
		\textbf{  Nonuniform}& 
		\\
		\midrule
		$|\mc{N}|$ & Alg. \ref{Geometric_Explicit_Euler} & 
		Alg. \ref{Geometric_Two_Stage} &Acc. & Alg. 
		\ref{Geometric_Explicit_Euler} &\hskip -0.6cm Alg. 
		\ref{Geometric_Two_Stage} &Acc.\\ 
		\midrule
		$3\times 3$ &  828 &\textbf{543} &1.52 & 760
		&\hskip -0.6cm \textbf{557} &1.36 \\
		$5 \times 5$ &1860 & \textbf{697} &2.66 &726 
		&\hskip -0.6cm \textbf{526} &1.38\\
		$7\times 7$ &3465 & \textbf{1158} &3  &961 &\hskip -0.6cm 
		\textbf{608} &1.58\\
		$9\times 9$ &4707  &\textbf{1447} &3.25 &1123  &\hskip -0.6cm 
		\textbf{622} &1.81 \\
		$11 \times 11$ &9216 &\textbf{1806} &5.10 &1402 &\hskip -0.6cm 
		\textbf{668} &2.1 \\
		$13 \times 13$ &9957 &\textbf{2927} &3.40 &1510 &\hskip -0.6cm 
		\textbf{696} &2.17 \\
		\bottomrule
	\end{tabular} 
	\captionsetup{font=footnotesize}
	\caption{Number of iterations required by Algorithms 
		\ref{Geometric_Explicit_Euler} and \ref{Geometric_Two_Stage} until convergence to a solution of the nonlocal 
	PDE \eqref{eq:Non_Local_PDE}, for \textit{uniform} and \textit{nonuniform} averaging and various neighborhood sizes $|\mc{N}|$. 
Columns Acc.~list the additional factor of iterations required by Algorithm \ref{Geometric_Explicit_Euler} relative to Algorithm \ref{Geometric_Two_Stage}.} 
	\label{tab:Iterations}
\end{table*}

\section{Conclusion and Future Work}\label{sec:Conclusion}
% !TEX root =  ../Major_Revision.tex
%%%%%%%%%%%%%%%%%%%%%%%%%%%%%%%%%%%%%%

%\textcolor{blue}{
%	\begin{remark}[\textbf{Tensor-valued diffusion 
%	processes}]\label{rem:Omega_Tensor_Extension}
%	 Given nonnegative and symmetric matrices $A,B \in 
%	 \R^{c\times c}$, Lemma \ref{Help_Lemma} allows a 
%	straightforward generalization to a wider range of averaging matrices 
%	$\Omega \in 
%	\R^{nc\times nc}_{+}$ by extending the \textit{scalar-valued} mappings 
%	$\Theta,\alpha\in \mathcal{F}_{\mathcal \ol{V}\times \mathcal \ol{V}}$ 
%to 
%	\textit{tensor-valued} mappings $\Theta \otimes A,\alpha \otimes B
%	\in \mathcal{F}_{\ol{\mathcal{V}}\times \ol{\mathcal{V}}}(\R^{c\times 
%c})$ 
%	respectively. Replacing parameter matrix $\Omega(x,y)$ through 
%	$\Omega(x,y)AB^T$ for $x,y\in \mathcal{V}$ in \eqref{eq:Reform_Av} leads 
%to 
%	diffusion processes that additionally capture 
%	the interrelation across labels $i,j,\in [c]$ for different spatial 
%states 
%	$S(x),S(y) \in \mathcal{S}$. However, such class of nonlocal 
%	operators requires an exterior calculus-based formalism in Section 
%	\ref{sec:Nonlocal-Calculus} which is beyond the scope of this paper and 
%	will be investigated in future work.
%	\end{remark}
%	}

\textbf{Conclusion.} Using established nonlocal calculus, we devised a novel nonlocal PDE with nonlocal boundary conditions on weighted graphs. Our work adds a novel approach to the literature on PDE-based image analysis that extends the scope from denoising and inpainting to image labeling. An in-depth discussion (Section \ref{sec:Related-Work-PDE}) clarified common aspects and differences to related nonlocal approaches from the mathematical viewpoint. Our work has been motivated by the assignment flow approach 
\cite{Astrom:2017ac,Schnorr:2019aa} to metric data labeling which was shown to constitute a special instance of our general approach introduced in this paper. In particular, our PDE contains the local PDE derived in \cite{Savarino:2019ab} as special case and thus provides a natural nonlocal generalization.

%for In this paper, based on the recent work 
% we extended the notion of nonlocal 
%differential operators and nonlocal boundary conditions on \textit{continuous} 
%domains to nonlocal operators formulated on weighted graphs. This 
%provides us with a rigorous discrete mathematical framework that is capable of 
%transferring regularization techniques from the 
%field of image filtering \cite{Buades:2010aa} and image inpainting 
%\cite{Gilboa:2007aa,Gilboa:2009aa} to the data labeling task.

The second major contribution of our work rest upon the reparametrization introduced in \cite{Savarino:2019ab} that turns the assignment flow into a Riemannian descent flow with respect to a nonconvex potential. We established in the present paper two relations to numerical schemes \cite{Zeilmann:2020aa} for the geometric integration of the assignment flow: (i) Geometric integration can be applied to solve the novel nonlocal PDE. (ii) We showed that the basic geometric Euler integration scheme corresponds to the basic DC-algorithm of DC programming \cite{Hoan-An:2018aa}. Moreover, the geometric viewpoint reveals how second-order information can be used in connection with line-search in order to accelerate the basic DC-algorithm for nonconvex optimization.

A range of numerical results were reported in order to illustrate properties of the approach and the theoretical convergence results. This includes, in particular a linear convergence rate whenever a basin of attraction corresponding to an integral labeling solution is reached, whose existence was establised in \cite{Zern:2020aa}.

\vspace{0.25cm}
\textbf{Future work.} The assignment flow approach \eqref{eq:assignment-flow} may be considered as a particular ``neural ODE'' from the viewpoint of machine learning that generates layers of a deep network by geometric integration of the flow at discrete points of time. For recent work on learning the parameters from data and on quantifying the uncertainty of label assignments, respectively, we refer to \cite{Huhnerbein:2021th,Zeilmann:2021wt,Zeilmann:2022ul} and \cite{Gonzalez-Alvarado:2021vn}. In the present paper, Lemma \ref{Help_Lemma} characterizes parametrizations for which the theoretical results hold. Uniform and data-driven nonuniform parametrizations were used in the experiments to demonstrate broad applicability. Learning these parameters from data is conceivable but beyond the scope of this paper and hence left for future work. Generalizations of the scalar-valued mappings $\Theta, \alpha$ to tensor-valued mappings are conceivable as well in order to not only model the interaction across the graph but also the interaction between labels. For the specific case of classification entire data sets, rather than labeling individual data points, a first step has been done recently using deep linearized assignment flows \cite{Boll:2022wz}.

Finally, we point out recent work \cite{Savarino:2021ts,Savarino:2021aa} on characterizing assignment flows as critical points of an action functional, provided the nonlocal mapping which specifies the interaction of label assignments across the graph satisfies a certain condition. Reconsidering the PDE \eqref{eq:S-Non_Local_PDE} from this viewpoint defines another problem to be addressed by future work.

\section*{ORCID iDs}

Dmitrij Sitenko    \hspace{1cm}    \orcidaffil{0000-0002-0022-3891}\\
\vskip -0.4cm
Bastian Boll       \hspace{1.48cm} \orcidaffil{0000-0002-3490-3350}\\
\vskip -0.4cm
Christoph Schn\"{o}rr \hspace{0.55cm}  \orcidaffil{0000-0002-8999-2338}

%%%%%%%%%%%%%%%%%%%%%%%%%%%%
\subsection*{Acknowledgement}  
This work is funded by Deutsche Forschungsgemeinschaft (DFG) under Germany's
Excellence Strategy EXC-2181/1 - 390900948 (the Heidelberg STRUCTURES 
Excellence Cluster).

B.B.~is funded by the Deutsche Forschungsgemeinschaft (DFG), grant SCHN 457/17-1, within the priority programme SPP 2298: “Theoretical Foundations of Deep Learning”.

%%%%%%%%%%%%%%%%%%%%%%%%%%%%
\bibliographystyle{amsalpha}
\bibliography{Non-Local-S-Flow,CS}
%\newpage

\appendix
\section{Proofs}
\label{sec:Appendix}
% !TEX root =  ../Major_Revision.tex
\subsection{Proofs of Section 
\ref{sec:Nonlocal_PDE_Formulation}}\label{app:Nonlocal_PDE_Formulation}

\begin{proof}[Proof of Lemma \ref{Help_Lemma}]
	In order to show \eqref{eq:Reform_Av}, we directly compute using assumption 
	\eqref{eq:Mapping_assumption} and the parametrization 
	\eqref{eq:Avaraging_Matrix}, for any $x \in 
	\mathcal{V}$, 
	\begin{subequations} \allowdisplaybreaks
		\begin{align}
			\sum_{y \in \mathcal{V}}  
			\Omega(x,y)f(y) &\overset{\eqref{eq:Avaraging_Matrix}}{=} \sum_{y 
			\in \mc{N}(x)}\Theta(x,y)\alpha^{2}(x,y)
			f(y)+\Theta(x,x)f(x)			
			\\
			&\overset{\phantom{\eqref{eq:Avaraging_Matrix}}}{=}\sum_{y \in 
			\mc{N}(x)
			}\Theta(x,y)\alpha^{2}(x,y)f(y)+\Theta(x,x)f(x) 
			+\big(\lambda(x)-\lambda(x)\big)f(x)\\
			&\overset{\eqref{eq:def-lambda-x}}{=} \sum_{y \in 
			\mc{N}(x)}\Theta(x,y)\alpha^{2}(x,y)\big(f(y)-f(x)\big)+
			\lambda(x)f(x)\\
			&\overset{f|_{\mathcal{V}^{\alpha}_{\mathcal{I}}} = 
				0}{\hspace{-0.35cm}=} -\sum_{y 
				\in 
				\ol{\mathcal{V}}}\Theta(x,y)\alpha^{2}(x,y)\Big(-\big(f(y)-f(x)\big)\Big)+
			\lambda(x)f(x)\\
			&\overset{\eqref{eq:NL-Grad_op}}{=}
			-\sum_{y \in 
				\overline{\mathcal{V}}}\Theta(x,y)\big((\mathcal{D}^{\alpha})^{\ast}(f)(x,y)\big)
			\alpha(x,y)+\lambda(x)f(x)
			\\
			&\overset{\phantom{\eqref{eq:Avaraging_Matrix}}}{=}\sum_{y \in 
				\overline{\mathcal{V}}}\frac{1}{2}\Theta(x,y)\big(-2(\mathcal{D}^{\alpha})^{\ast}(f)(x,y)
			\alpha(x,y)\big)+\lambda(x)f(x) 
			\\
			&\overset{\eqref{eq:def-nonl-grad}}{=}
			\sum_{y \in 
				\overline{\mathcal{V}}}\frac{1}{2}\Theta(x,y)\big(2\mathcal{G}^{\alpha}(f)(x,y)
			\alpha(x,y)\big)+\lambda(x)f(x) \\
			&\overset{\eqref{eq:Non_Local_Dif}}{=}
			\frac{1}{2}\mc{D}^{\alpha}\big(\Theta\mc{G}^{\alpha}(f)\big)(x) + 
			\lambda(x)f(x)
		\end{align}
	\end{subequations}
	which proves \eqref{eq:Reform_Av}. 
	
	Assume that $\lambda(x) \leq 1$ for all $x \in \mathcal{V}$. Then,
	properties \eqref{eq:Omega_Mat} easily 
	follows from the nonnegativity of $\Theta \in 
	\mathcal{F}_{\overline{\mathcal{V}}\times \overline{\mathcal{V}}}$ and  
	definition \eqref{eq:def-lambda-x}. In addition, if $\Omega$ is given by
	\eqref{eq:Avaraging_Matrix} and also satisfies \eqref{eq:Omega_AF}, then 
	equality in 
	\eqref{eq:def-lambda-x} is achieved: 
	\begin{equation}
		1 = \sum_{y \in \mathcal{V}}\Omega(x,y) = \sum_{y \in 
			\mathcal{V}}\Theta(x,y)\alpha^2(x,y)+\Theta(x,x) = 
		\lambda(x)-\hspace{-0.2cm} \underbrace{\sum_{y\in 
				\mathcal{V}_{\mathcal{I}}^{\alpha}}\Theta(x,y)\alpha^2(x,y)}_{\geq
			0} \leq 
		\lambda(x) \overset{\eqref{eq:def-lambda-x}}{\leq} 1.
	\end{equation}
\end{proof}

\begin{proof}[Proof of Proposition \ref{Prop:Non_Local_PDE}]
	Recalling  definition 
	\eqref{eq:RS-Omega}, we directly compute
	\begin{subequations}
		\begin{align}
			R_{S(x,t)}\big((\Omega S)(x,t)\big) 
			&\overset{\phantom{\eqref{S_Flow_boundary1}}}{=} R_{S(x,t)}\Big( 
			\sum_{y\in 
				\mathcal{V}}\Omega(x,y)S(y,t) 
			\Big) \\
			&\overset{\eqref{eq:Reform_Av}}{=} R_{S(x,t)}\Big( 
			\frac{1}{2}\mathcal{D}^{\alpha}\big(\Theta 
			\mathcal{G}^{\alpha}(S)\big)(x)+\lambda(x)S(x)
			\Big).  
		\end{align}  
	\end{subequations}
\end{proof}

\subsection{Proofs of Section 
\ref{sec:Non_Local_PDE_Bal_Law}}\label{app:sec:Non_Local_PDE_Bal_Law}

\begin{proof}[Proof of Proposition \ref{Prop:Non_Local_Bal_Law}]
	For brevity, we omit the argument $t$ and simply write $S = S(t), V = 
	V(t)$. Recall the componentwise operation $\odot$ defined by 
	\eqref{eq:def-odot}, e.g.~$(S\odot V)_{j}(x)=S_{j}(x)V_{j}(x)$ for 
	$j\in[c]$, and $S^{2}(x) = (S\odot S)(x)$.
	
	Multiplying both sides of \eqref{eq:balance-PDE-a} with 
	$S(x) = \exp_{S^0}(V(x))$ and summing over  $x \in \mathcal{V}$ yields  
	\begin{align}\label{eq:proof}
		\sum_{x \in \mathcal{V}}\big(S\odot \dot{V}\big)_j(x) &- \sum_{x 
			\in 
			\mathcal{V}} 
		\frac{1}{2} \Big(S\odot\mathcal{D}^{\alpha}\big(\Theta 
		\mathcal{G}^{\alpha}(S)\big)\Big)_j(x)= \sum_{x \in 
			\mathcal{V}}\big(\lambda S^{2} \big)_j(x).
	\end{align}
	Applying Greens nonlocal first identity 
	\eqref{eq:Green_First_Identity} with $u(x) = S_j(x)$  
	to the second term on the left-hand side yields with \eqref{eq:def-olV} 
	\begin{subequations}\label{eq:Green_Reformulation}
		\begin{align}
			\sum_{x \in \mathcal{V}}\big(S\odot \dot{V}\big)_j(x) &+ 
			\frac{1}{2}\sum_{ 
				x \in \ol{\mc{V}}}\sum_{y \in 
				\ol{\mc{V}}}\big(\mathcal{G}^{\alpha}(S)
			\odot
			(\Theta \mathcal{G}^{\alpha}(S))\big)_j(x,y) \\
			&+ \sum_{y \in 
				\mathcal{V}_{I}^{\alpha}}S_j(y) \mathcal{N}^{\alpha}\big(\Theta
			\mathcal{G}^{\alpha}(S_j) \big)(y) = \sum_{x \in 
				\mathcal{V}}\big(\lambda S^{2} \big)_j(x).
		\end{align}
	\end{subequations}
	Now, using the
	parametrization \eqref{eq:T0_parameterization} of $S$, we compute at each 
	$x\in\mc{V}$: 
	\begin{subequations}\allowdisplaybreaks
		\begin{align}\label{eq:Ref_Id}
			\dot{S}(x) &\overset{\phantom{\eqref{eq:T0_parameterization2}}}{=} 
			\frac{d}{dt}\exp_{S^0(x)}\big(V(x)\big) \\
			&\overset{\eqref{eq:def-small-exp}}{=} 
			\frac{\big(\frac{d}{dt}\big(S^0(x)\odot e^{V(x)}\big)\big)\langle 
				S^0(x),e^{V(x)} \rangle-\big(\frac{d}{dt}\langle 
				S^0(x),e^{V(x)} \rangle \big) S^0(x)\odot e^{V(x)}}{\langle 
				S^0(x),e^{V(x)} \rangle^2}\\
			&\overset{\phantom{\eqref{eq:T0_parameterization2}}}{=} 
			\frac{\langle 
				S^0(x),e^{V(x)} 
				\rangle (S^0 \odot e^{V})(x) \odot 
				\dot{V}(x)-\langle 
				S^0(x)\odot e^{V(x)} , \dot{V}(x)\rangle (S^0 \odot 
				e^{V})(x)}{\langle 
				S^0(x),e^{V(x)} \rangle^2}\\
			&\overset{\phantom{\eqref{eq:T0_parameterization2}}}{=} (S \odot 
			\dot{V})(x) - \langle S(x), \dot{V}(x) \rangle S(x)\\
			&\overset{\eqref{eq:T0_parameterization2}}{=} (S \odot \dot{V})(x) 
			- \langle S(x), (\Pi_{0} \Omega 
			\exp_{S^0}(V))(x)\rangle S(x)\\
			&\overset{\eqref{eq:Phi_Func}}{=} (S \odot \dot{V})(x) - 
			\phi_{S}(x) S(x).
		\end{align}
	\end{subequations}
	Solving the last equation for $(S \odot \dot{V})(x)$ and substitution into 
	\eqref{eq:Green_Reformulation} yields after taking the sum over $x 
	\in \mathcal{V}$, for 
	each $S_j = \{S_{j}(x)\colon x\in\mc{V}\},\,j\in[c]$  
	\begin{subequations}
		\begin{align}
			\frac{1}{2}\frac{d}{d t} \big(\sum_{x \in \mathcal{V}}S_j(x) \big) 
			&+ 
			\frac{1}{2}\langle 
			\mathcal{G}^{\alpha}(S_j), \Theta \mathcal{G}^{\alpha}(S_j)   
			\rangle_{\overline{\mathcal{V}}\times \overline{\mathcal{V}}}  
			+ \sum_{x \in \mathcal{V}}\phi_{S}(x)  S_j(x)  \\ &+ \sum_{y \in 
				\mathcal{V}_{\mathcal{I}^{\alpha}}} S_j 
			\mathcal{N}^{\alpha}\big(\Theta \mathcal{G}^{\alpha}(S_j)\big)(y) = 
			\sum_{x \in 
				\mathcal{V}}\big(\lambda 
			S^2_j\big)(x),
		\end{align}
	\end{subequations}
	which after rearranging the terms is equal to 
	\eqref{eq:Non_Local_Balance_Law}. 
\end{proof}

\subsection{Proofs of Section 
\ref{sec:first-order-geometric-DC}}\label{app:first-order-geometric-DC}
\begin{delayedproof}{prop:Numerical_Scheme}
	Equation \eqref{eq:PDE_Euler_Update} directly follows from  Proposition 
	\ref{Prop:Non_Local_PDE}, from the specification \eqref{eq:def-Si} of the 
	similarity mapping and from the relation $\exp_{p}=\Exp_{p}\circ R_{p}$ for 
	$p\in\mc{S}$ (cf.~\eqref{eq:Small_exp}, \eqref{eq:def-lifting-map}). 
	Leveraging the 
	parametrization 
	\eqref{eq:Non_Local_PDE_T0} of 
	system \eqref{eq:Non_Local_PDE}, discretization of 
	\eqref{eq:Non_Local_PDE_T0} by forward finite differences with step size 
	parameter $h> 0$ yields for $x\in \mathcal{V}$
	\begin{equation}
		\frac{V^{k+1}(x)-V^k(x)}{h} = \Big( 
		\frac{1}{2}\mathcal{D}^{\alpha}\big(\Theta 
		\mathcal{G}^{\alpha}(\exp_{S^0}(V^k))\big)+\lambda\exp_{S^0}(V^k)\Big)(x)
	\end{equation}
	which is \eqref{eq:PDE_Euler_Update} after applying the lifting map 
	\eqref{eq:def-lifting-map} to $V^{k+1}$. Consequently, in view of zero 
	nonlocal 
	boundary conditions, the zero extension of \eqref{eq:PDE_Euler_Update} to 
	$\ol{\mathcal{V}}$ verifies that $\ol{S}^k$ is indeed a first 
	order approximation of solution $S(kh)$ to \eqref{eq:Non_Local_PDE}.
	
	It remains to show that 
	\eqref{eq:explicit-Euler} implies 
	\eqref{eq:Dec_Seq_Euler}. 
	Adding and subtracting a convex negative entropy term 
	\begin{equation}
		\la S , \log S \ra = \sum_{x\in\mc{V}} \la S(x),\log S(x)\ra,\qquad 
		\log S(x) = \big(\log S_{1}(x),\dotsc,\log S_{c}(x)\big)^{\T}
	\end{equation}	
	to the
	potential \eqref{eq:S-Flow_Pot}, we write with the convex constraint 
	$S\in\ol{\mc{W}}$ represented by the delta-function $\delta_{\ol{\mc{W}}}$,
	\begin{equation}\label{eq:J-DC}
		J(S) = \underbrace{\gamma\la S,\log S\ra + \delta_{\ol{W}}(S)}_{g(S)} 
		-  
		\Big(\underbrace{\frac{1}{2}\la S,\Omega S\ra + \gamma\la S,\log 
			S\ra}_{h(S)}\Big),\quad \gamma>|\lambda_{\min}(\Omega)|,
	\end{equation}
	which is a DC-function \cite{Hartman:1959aa} if 
	$\gamma>|\lambda_{\min}(\Omega)|$, i.e.~both $g(S)$ and 
	$h(S)$ are 
	convex. Indeed, while the convexity of $g$ is obvious, the convexity of $h$ 
	becomes apparent when inspecting its Hessian. Writing
	\begin{equation}\label{eq:vec-def-proof}
		s = \vvec_{r}(S)
	\end{equation}
	with the row-stacking mapping $\vvec_{r}$, we have ($\otimes$ denotes the 
	Kronecker matrix product)
	\begin{subequations}
		\begin{align}
			\la S, \Omega S\ra
			&= \la s, (\Omega\otimes I_{c}) s\ra \\
			\la S,\log S\ra 
			&= \la s, \log s\ra,\qquad \log s = (\dotsc,\log s_{i},\dotsc)^{\T}
		\end{align}
	\end{subequations}
	and hence for any $v\in\R^{n c}$ with $\|v\|=1$
	\begin{equation}\label{eq:proof-prop-num}
		d^{2}h(S)(v,v)
		= \Big\la v,\Big((\Omega\otimes I_{c}) + \gamma 
		\Diag\Big(\frac{\eins}{s}\Big)\Big) v\Big\ra
		> \lambda_{\text{min}}(\Omega) + \gamma,
	\end{equation}
	where the last inequality follows from $\lambda \geq 
	\lambda_{\text{min}}(\Omega)$ for any eigenvalue $\lambda$ of the symmetric 
	matrix $\Omega$ (recall \eqref{eq:Omega_Mat}, \eqref{eq:Omega_AF}), 
	$\lambda(A\otimes B)=\lambda_{i}(A)\lambda_{j}(B)$ for some $i,j$ 
	\cite{Graham:1981wj}, and $\lambda_{\min}(\Diag(\frac{\eins}{s}))>1$ if 
	$S\in\ol{\mc{W}}$.
	
	Thus, if $\gamma > |\lambda_{\min}(\Omega)|$ then $h$ is convex and 
	minimizing \eqref{eq:J-DC} is a 
	DC-programming problem 
	\cite{Horst:1999aa,Hoai-An:2005aa}. Using Fenchel's inequality 
	$-h(S^{k})\leq 
	h^{\ast}(\wt{S})-\la S^{k}, \wt{S}\ra,\,\forall \wt{S}$, let $\wt{S}^{k}$ 
	minimize at the current iterate $S^{k}$ the upper bound
	\begin{subequations}
		\begin{gather}
			J(S^{k}) = g(S^{k})-h(S^{k})
			\leq g(S^{k}) + h^{\ast}(\wt{S})-\la S^{k}, \wt{S}\ra,\qquad 
			\forall \wt{S}
			\intertext{with respect to $\wt{S}$, i.e.} \label{eq:DC-proof-wtSk}
			0 = \partial h^{\ast}(\wt{S}^{k})-S^{k} 
			\qquad\gdw\qquad \wt{S}^{k}\in\partial h(S^{k})=\nabla h(S^{k}).
		\end{gather}
	\end{subequations}
	In particular, $-h(S^{k}) = h^{\ast}(\wt{S}^{k})-\la S^{k},\wt{S}^{k}\ra$ 
	and hence
	\begin{equation}
		J(S^{k}) = g(S^{k})+h^{\ast}(\wt{S}^{k})-\la S^{k},\wt{S}^{k}\ra. 
	\end{equation}
	Minimizing in turn the right-hand side with respect to $S^{k}$ guarantees 
	\eqref{eq:Dec_Seq_Euler} and defines the update $S^{k+1}$ by
	\begin{subequations}
		\begin{align}
			S^{k+1}=\arg\min_{S}\{g(S)-\la S,\wt{S}^{k}\ra\}
			\qquad\gdw\qquad 
			0 &= \partial g(S^{k+1})-\wt{S}^{k}
			\\ \gdw\quad 
			\gamma (\log S^{k+1}(x) + \eins) + \partial 
			\delta_{\ol{\mc{S}}}\big(S^{k+1}(x)\big) 
			&\overset{\eqref{eq:DC-proof-wtSk}}{=} 
			\nabla h(S^{k})(x) 
			\\ &=
			(\Omega S^{k})(x) + \gamma (\log S^{k}(x) + \eins).
		\end{align}
	\end{subequations} 
	Solving for $S^{k+1}(x)$ yields \eqref{eq:explicit-Euler} 
	resp.~\eqref{eq:PDE_Euler_Update} with stepsize $h = \frac{1}{\gamma} < 1$ 
	due 
	to $\gamma >|\lambda_{\min}(\Omega)|$.
\end{delayedproof}

\subsection{Proofs of Section 
\ref{sec:Geometric-Integration}}\label{app:Geometric-Integration}
\begin{proof}[Proof of Lemma \ref{lem:Grad_Tangent}]
	Taking into account the parametrization \eqref{eq:T0_parameterization}, we 
	compute the partial derivative of 
	\eqref{eq:S-Flow_Pot} (recall the operation $\odot$ defined by 
	\eqref{eq:def-odot})
	\begin{subequations}
		\begin{align}
			\partial_i J(V) &= -\big\langle \Omega \exp_{S^0}(V),\partial_i 
			\exp_{S^0}(V) 
			\big\rangle \\
			&= -\big\langle \Omega \exp_{S^0}(V), \exp_{S^0}(V) \odot 
			e_i+\exp_{S^0}(V)_i\exp_{S^0}(V)\big\rangle \\
			&=-\big(\Omega \exp_{S^0}(V) \odot \exp_{S^0}(V)\big)_i+  
			\big\langle 
			\Omega 
			\exp_{S}(V),\exp_{S^0}(V)\big\rangle \exp_{S^0}(V)_i\\
			&= -\big(R_{\exp_{S^0}(V)}(\Omega \exp_{S^0}(V))\big)_i
		\end{align}
	\end{subequations}
	and consequently $\partial J(V) = \partial_{V} J(V) = 
	-R_{\exp_{S^0}(V)}(\Omega 
	\exp_{S^0}(V))= R_{S}\partial_{S}J(S)=\ggrad_{g}J(S)$. 
\end{proof}

\begin{proof}[Proof of Proposition 
	\ref{prop:Numerical_Scheme_Second_Order}]$\text{ }$
	\begin{enumerate}[(i)]
		\item
		Using $S^{k}=\exp_{S^{0}}(V^{k})$ and 
		\begin{equation}\label{eq:partial-J}
			\partial J(V^{k})=-R_{S^{k}}(\Omega S^{k})=\ggrad_{g}J(S^{k})
		\end{equation}
		by Lemma \ref{lem:Grad_Tangent} 
		along with the identities (recall that both $R_{S}$ and the orthogonal 
		projection $\Pi_{0}$ act row-wise)
		\begin{equation}\label{eq:R-identities}
			R_{S} = 
			\Pi_0 R_{S} = R_{S}\Pi_{0} = \Pi_0 R_{S}\Pi_{0} = 
			R_{S}|_{\mc{T}_{0}},\quad S\in\mc{W},\qquad
			\Pi_{0}^{2}=\Pi_{0} 
		\end{equation}
		and 
		\begin{equation}\label{eq:R_S_inverse}
			\big(R_{S^k}|_{\mc{T}_{0}}\big)^{-1}V
			= \Big(\dotsc,\Pi_{0}\frac{V(x)}{S^{k}(x)},\dotsc\Big)^{\T},\quad 
			x\in V,\quad V\in\mc{T}_{0},\quad S^{k}\in \mathcal{W}
		\end{equation}
		by \cite[Lemma 3.1]{Savarino:2019ab}, we have
		\begin{subequations}\label{eq:Descent_Direction_proof}\allowdisplaybreaks
			\begin{align}
				\la \partial J(V^k), d^{k}\ra 
				&\overset{\eqref{eq:dk-accelerate}}{=} 
				\langle \partial J(V^k),d(S^{k},h_{k}) \rangle 
				\\
				&\overset{\phantom{\eqref{eq:Fisher_Rau}}}{=} -\langle 
				R_{S^k}(\Omega 
				S^k),\Pi_{0}\Omega S^k \rangle-\frac{h_{k}}{2}\langle \partial 
				J(V^k),\Pi_{0}\Omega \partial J(V^k) \rangle \\
				\hskip -0.25cm
				&\overset{\phantom{\eqref{eq:Fisher_Rau}}}{=}\hskip -0.25cm 
				-\langle 
				R_{S^k}(\Omega 
				S^k),\big((R_{S^k}|_{\mc{T}_{0}})^{-1}R_{S^k}|_{\mc{T}_{0}}\big)
				 \Pi_0 \Omega S^k 
				\rangle-\frac{h_{k}}{2}\langle \partial 
				J(V^k),\Pi_{0}\Omega \partial J(V^k) \rangle\\
				\hskip -0.25cm
				&\hskip 
				-0.75cm\overset{\eqref{eq:FischerRao-matrix},\eqref{eq:R-identities},\eqref{eq:R_S_inverse}}{=}\hskip
				-0.25cm-\langle R_{S^k}(\Omega 
				S^k),R_{S^k}(\Omega S^k) \rangle_{S^k}-\frac{h_{k}}{2}\langle 
				\partial 
				J(V^k),\Pi_{0}\Omega \partial J(V^k) \rangle. 
				\label{eq:Descent_Direction_proof-c}
			\end{align} 
		\end{subequations}
		Since the first term on the right-hand side of 
		\eqref{eq:Descent_Direction_proof-c} 
		is negative on $\mc{T}_{0}$, setting 
		\begin{equation}\label{eq:proof-descent-def-step-size-h}
			h_k 
			\in 
			\bigg(0,\frac{\|R_{S^k}(\Omega S^k) \|^2_{S^k}}{|\langle \partial 
				J(V^k),\Pi_{0}\Omega \partial J(V^k) \rangle|}\bigg)
		\end{equation}
		yields a sequence 
		$(d^k)_{k\geq 1}$ satisfying 
		\begin{equation}\label{eq:dk-descent}
			\langle \partial J(V^k), d^k\rangle < 0,\qquad k \geq 1.
		\end{equation}
		Consider $c_1,c_2 \in (0,1)$ with $c_1 < c_2$ and set 
		\begin{subequations}
			\begin{align}
				G(\gamma) &= J(V^k+\gamma d^k), \\ \label{eq:def-L-gamma}
				L(\gamma) &= J(V^k)+c_1\gamma\langle \partial 
				J(V^k),d^k\rangle
				\quad\text{for}\quad\gamma \geq 0.
			\end{align}
		\end{subequations}
		Due to $c_1 < 1$ and \eqref{eq:dk-descent}, the inequality  
		\begin{equation}
			G'(0) = \langle \partial J(V^k), 
			d^k\rangle < c_1\langle \partial J(V^k), d^k\rangle = L'(0)<0
		\end{equation}
		holds. Hence there is a constant $t_k>0$ such that
		\begin{subequations}\label{eq:proof_1_Descent}
			\begin{align}
				G(\gamma) &< L(\gamma), \quad\gamma \in (0,t_k), \\ 
				\label{eq:proof_1_Descent-b}
				G(t_k) &= L(t_k).
			\end{align}
		\end{subequations}
		Substituting the first-order Taylor expansion
		\begin{subequations}\label{eq:inequality-obj-proof-descent}
			\begin{align}
				G(t_{k})&=J(V^{k}+t_{k}d^{k})
				= G(0)+t_{k} G'(\wt{\gamma}_{k})
				\\
				&= J(V^{k})+t_{k}\la\partial 
				J(V^{k}+\wt{\gamma}_{k}d^{k}),d^{k}\ra,\qquad 
				\wt{\gamma}_{k}\in (0,t_{k})
			\end{align}
		\end{subequations}
		into \eqref{eq:proof_1_Descent-b} yields with \eqref{eq:def-L-gamma}, 
		\eqref{eq:dk-descent} and $0<c_{1}<c_{2}<1$
		\begin{subequations}\label{eq:proof_3_Descent}
			\begin{align}
				\langle \partial J(V^k+\wt{\gamma}_k d^k),d^k \rangle = 
				c_1\langle 
				\partial J(V^k),d^k \rangle \geq c_2\langle \partial J(V^k),d^k 
				\rangle.
			\end{align}
			Therefore, with $\partial 
			J(V^k),d^k \in 
			\mathcal{T}_0$ and using that the restriction 
			$R_{S^k}|_{\mathcal{T}_0}$ of the map $R_{S^k}$ to 
			$\mathcal{T}_0$ is 
			invertible 
			with the inverse
			$(R_{S^k})_{|\mathcal{T}_0}^{-1}$ 
			acting row-wise as specified by \eqref{eq:R_S_inverse}, the 
			right-hand side of \eqref{eq:proof_3_Descent}  
			becomes
			\begin{align}
				c_2\langle \partial J(V^k),d^k 
				\rangle
				&= c_2 \big\langle \partial 
				J(V^k),(R_{S^k}|_{\mathcal{T}_0})^{-1}(R_{S^k}(d^k) ) 
				\big\rangle 
				\\ & 
				\overset{\eqref{eq:FischerRao-matrix},\eqref{eq:R_S_inverse}}{=}
				 c_2 \big\langle 
				\Pi_0 \partial J(V^k), R_{S^k}(d^k)\big\rangle_{S^{k}
				}.
			\end{align}   
		\end{subequations}
		By virtue of \eqref{eq:partial-J} and $\Pi_{0} \partial J(V^k) = 
		\partial J(V^k)$, both sides of \eqref{eq:proof_3_Descent} correspond 
		to the expressions of \eqref{eq:Sufficient_Decrease_Properties_b} 
		between the bars $|\dotsb|$. Since the above derivation shows that both 
		sides of \eqref{eq:proof_3_Descent} are negative, taking the magnitude 
		on both sides proves \eqref{eq:Sufficient_Decrease_Properties_b}. 
		
		Recalling the shorthand \eqref{eq:def-JV}, inequality 
		\eqref{eq:inequality-obj-proof-descent} and setting $\theta_k$ 
		small enough 
		with $\theta_k \leq \wt{\gamma}_k$, the iterates $S^{k+1} = 
		\exp_{S^0}(V^k+\theta_kd_k)$ satisfy 
		\begin{subequations}
			\begin{align}
				J(S^{k+1})-J(S^{k}) 
				&\overset{\eqref{eq:inequality-obj-proof-descent}}{=}t_{k}\la\partial
					 J(V^{k}+\wt{\gamma}_{k}d^{k}),d^{k}\ra\\
				&\overset{\phantom{\eqref{eq:inequality-obj-proof-descent}}}{\leq}
				 \theta_k \la\partial 
				J(V^{k}+\wt{\gamma}_{k}d^{k}),d^{k}\ra \\
				&\hskip -0.05cm 
				\overset{\eqref{eq:proof_3_Descent}}{\leq} \theta_k 
				c_2\langle 
				\partial J(V^k),d^k 
				\rangle \\
				& \overset{\substack{\eqref{eq:partial-J} 
				\\\eqref{eq:proof_3_Descent}}}{=}	
				\theta_kc_{2}\la\ggrad_{g}J(S^{k}),R_{S^{k}}(d^{k})\ra_{S^{k}}
			\end{align}
		\end{subequations}
		which proves inequality \eqref{eq:Sufficient_Decrease_Properties_a} 
		since both sides are non-positive and $c_{1}<c_{2}$.
		\item
		We prove by contradiction: Assume, on the contrary, that there exists a 
		sequence 
		$(S^{k})_{k\geq 
			0}\subset\ol{\mc{W}}$ in the compact set $\ol{\mc{W}}$ and a 
			convergent 
		subsequence $(S^{k_{l}})_{l\geq 0}$ with limit point $\lim 
		\limits_{l\to 
			\infty}S^{k_l} = S^{\ast}$ which is \textit{not} an equilibrium of 
			\eqref{eq:S-flow-S}. 
		Then, since the functional 
		\eqref{eq:S-Flow_Pot} is bounded from below on 
		$\overline{\mathcal{W}}$, 
		taking the sum in  
		\eqref{eq:Sufficient_Decrease_Properties_a} yields
		\begin{equation}
			\sum_{l = 0}^{\infty} c_1 \gamma_{k_l} \langle 
			\text{\normalfont{grad}}_g 
			J(S^{k_l}),R_{S^{k_l}}(d^{k_l}) 
			\rangle_{S^{k_l}} >\sum_{l = 0}^{\infty} 
			\big(J(S^{k_{l+1}})-J(S^{k_l})\big)
			= \underbrace{J(S^{\ast})-J(S^0)}_{> -\infty},  
		\end{equation}    
		and consequently   
		\begin{equation}
			c_1 \gamma_{\ast} \langle 
			\text{\normalfont{grad}}_g 
			J(S^{\ast}),R_{S^{\ast}}(d^{\ast}) 
			\rangle_{S^{\ast}} = 0.
		\end{equation}
		Using $d^{\ast}=d(S^{\ast},h_{\ast})$ given by 
		\eqref{eq:Descent_Dir_Two_Stage} along with $c_{1}>0$ and the 
		assumption 
		$\gamma_{\ast} >0$, we evaluate this equation similarly to 
		\eqref{eq:Descent_Direction_proof} 
		\begin{subequations}\label{eq:proof_contradiction}\allowdisplaybreaks
			\begin{align}
				0 &\overset{\phantom{\eqref{eq:R-identities}}}{=} \la 
				\ggrad_{g}J(S^{\ast}),R_{S^{\ast}}(d^{\ast})\ra_{S^{\ast}}
				\\
				&\overset{\eqref{eq:R-identities}}{=} 
				\Big\la-R_{S^{\ast}}(\Omega S^{\ast}),R_{S^{\ast}}\Big(\Omega 
				S^{\ast} + \frac{h_{\ast}}{2}\Omega R_{S^{\ast}}(\Omega 
				S^{\ast})\Big)\Big\ra_{S^{\ast}}
				\\
				&\hskip - 0.41cm \overset{\eqref{eq:FischerRao-matrix}, 
					\eqref{eq:R-identities}}{=} -\sum_{x\in\mc{V}}\Big\la 
					\Pi_{0} 
				R_{S^{\ast}(x)}(\Omega 
				S^{\ast})(x),\frac{R_{S^{\ast}(x)}\big(\Omega S^{\ast} + 
					\frac{h_{\ast}}{2}\Omega R_{S^{\ast}}(\Omega 
					S^{\ast})\big)(x)}{S^{\ast}(x)}\Big\ra
				\\
				&\overset{\eqref{eq:R_S_inverse}}{=} 
				-\sum_{x\in\mc{V}}\Big\la R_{S^{\ast}(x)}(\Omega S^{\ast})(x), 
				(R_{S^{\ast}(x)}|_{T_{0}})^{-1} R_{S^{\ast}(x)}\Big(\Omega 
				S^{\ast} + \frac{h_{\ast}}{2}\Omega R_{S^{\ast}}(\Omega 
				S^{\ast})\Big)(x)\Big\ra
				\\
				&\overset{\eqref{eq:R-identities}}{=} 
				-\la\Omega S^{\ast},R_{S^{\ast}}(\Omega 
				S^{\ast})\ra-\frac{h_{\ast}}{2}\big\la\Omega 
				S^{\ast},R_{S^{\ast}}\big(\Omega R_{S^{\ast}}(\Omega 
				S^{\ast})\big)\big\ra.
			\end{align}
		\end{subequations}
		Hence
		\begin{subequations}\label{eq:proof-contr-step-size-h-limits}\allowdisplaybreaks
			\begin{align}
				\frac{h_{\ast}}{2}\big\langle \Omega S^{\ast}, & 
				R_{S^{\ast}}\big(\Omega 
				R_{S^{\ast}} (\Omega 
				S^{\ast})\big) \big\rangle 
				= -\langle \Omega S^{\ast}, R_{S^{\ast}}(\Omega 
				S^{\ast}) 
				\rangle 
				\\
				&= -\sum_{x\in\mc{V}}\big\la(\Omega 
				S^{\ast})(x),R_{S^{\ast}(x)}(\Omega S^{\ast})(x)\big\ra
				\intertext{using $R_{p}\eins_{c}=0,\; p \in\mc{S}$}
				&\overset{\phantom{\eqref{eq:def-Rp}}}{=} 
				-\sum_{x\in\mc{V}}\Big\la(\Omega 
				S^{\ast})(x)-\big\la(\Omega 
				S^{\ast})(x),S^{\ast}(x)\big\ra\eins_{c},R_{S^{\ast}(x)}(\Omega 
				S^{\ast})(x)\Big\ra
				\\
				&\overset{\eqref{eq:def-Rp}}{=} 
				-\sum_{x\in\mc{V}}\Big\la(\Omega S^{\ast})(x)-\big\la(\Omega 
				S^{\ast})(x),S^{\ast}(x)\big\ra\eins_{c},
				\\
				&\qquad\qquad
				S^{\ast}(x)\odot\Big((\Omega S^{\ast})(x)-\big\la 
				S^{\ast}(x),(\Omega S^{\ast})(x)\big\ra \eins_{c}\Big)\Big\ra
				\\
				&\overset{\phantom{\eqref{eq:def-Rp}}}{=} -\sum \limits_{x \in 
					\mathcal{V}}\sum \limits_{j  \in  
					[c]}S^{\ast}_j(x)\Big((\Omega S^{\ast})_j(x)-\big\langle 
					(\Omega 
				S^{\ast})(x),S^{\ast}(x) \big\rangle \Big)^2.
			\end{align}
			By \cite[Proposition 5]{Zern:2020aa}, $S^{\ast}$ is an equilibrium 
			of 
			the 
			flow \eqref{eq:S-flow-S} if and only if 
			\begin{equation}\label{eq:equilibrium-condition}
				(\Omega S^{\ast})_{j}(x)=\langle 
				(\Omega S)^{\ast}(x),S^{\ast}(x) \rangle,\qquad \forall x\in 
				\mathcal{V},\qquad
				\forall j\in\supp(S^{\ast}(x)). 
			\end{equation}
			Therefore, by assumption, there exists 
			$\widetilde{x} \in 
			\mathcal{V}$ and $l\in \supp\big(S^{\ast}(\widetilde{x})\big)$ with 
			$(\Omega S^{\ast})_{l}(\wt{x})\neq \langle 
			\Omega S^{\ast}(\wt{x}),S^{\ast}(\wt{x}) \rangle$ and consequently 
			\begin{align}\label{eq:proof-contradition-eq}
				\frac{h_{\ast}}{2}\big\langle \Omega S^{\ast},  
				R_{S^{\ast}}\big(\Omega 
				R_{S^{\ast}} (\Omega 
				S^{\ast})\big) \big\rangle 
				&= -\langle \Omega S^{\ast}, R_{S^{\ast}}(\Omega 
				S^{\ast}) 
				\rangle 
				\\
				&\leq -S_{l}^{\ast}(\widetilde{x})\Big((\Omega 
				S^{\ast})_{l}(\widetilde{x})-\big\langle 
				(\Omega S^{\ast})(\widetilde{x}),S^{\ast}(\widetilde{x}) 
				\big\rangle\Big)^2 
				\\
				&< 0. 
			\end{align}
		\end{subequations}
		Since the first two expressions are strictly negative, this yields the 
		contradiction
		\begin{subequations}
			\begin{align}
				-\frac{1}{2}\langle \Omega S^{\ast}, R_{S^{\ast}}(\Omega 
				S^{\ast}) \rangle
				&= -\frac{1}{2}\frac{\langle \Omega S^{\ast}, 
				R_{S^{\ast}}(\Omega S^{\ast}) \rangle}{|\langle
					\Omega S^{\ast},R_{S^{\ast}}(\Omega 
					R_{S^{\ast}}(\Omega 
					S^{\ast}))  \rangle|}|\langle \Omega 
				S^{\ast},R_{S^{\ast}}(\Omega R_{S^{\ast}}(\Omega 
				S^{\ast}))  \rangle|
				\\
				&\hskip -0.57cm 
				\overset{\eqref{eq:R-identities},\eqref{eq:def-JV}}{=} 
				-\frac{1}{2}\frac{\langle \Omega S^{\ast}, R_{S^{\ast}}(\Omega 
				S^{\ast}) 
					\rangle}{|\la\ggrad_{g}J(S^{\ast}),\Pi_{0}\Omega 
					\ggrad_{g}J(S^{\ast})\ra|}|\langle \Omega 
				S^{\ast},R_{S^{\ast}}(\Omega R_{S^{\ast}}(\Omega 
				S^{\ast}))  \rangle|
				\\
				&\hskip 
				-0.57cm\overset{\eqref{eq:proof-descent-def-step-size-h},\eqref{eq:def-JV}}{\leq}
				-\frac{h_{\ast}}{2}|\langle \Omega 
				S^{\ast},R_{S^{\ast}}(\Omega R_{S^{\ast}}(\Omega 
				S^{\ast}))  \rangle|
				\\
				&\hskip -0.27cm\overset{\eqref{eq:proof-contradition-eq}}{=}
				-\la\Omega S^{\ast},R_{S^{\ast}}(\Omega S^{\ast})\ra
			\end{align}
		\end{subequations}
		which proves (ii).
		\item We prove by contraposition and show that a limit point 
		$S^{\ast}\in\mc{W}$ cannot locally minimize $J(S)$.  
		Let $\ol{S}_{(l)} \in \ol{\mathcal{W}}$ be a constant vector field 
		given for each $x\in 
		\mathcal{V}$ by 
		\begin{equation}\label{eq:def-bar-S-l-fixed}
			\ol{S}_{(l)}(x) = e_{l}=(0,\dots,0,1,0\dots,0)^{\T} \in \R^c, 
		\end{equation}
		for arbitrary $l \in [c]$. 
		Then, for any $S \in \ol{\mathcal{W}}$ with $S(x)\in\Delta_{c}$ for 
		each $x\in\mc{V}$, and with $\Omega(x,y)\geq 0$,
		\begin{subequations}\label{eq:proof-global-min}
			\begin{align}
				\langle S, \Omega S \rangle = \sum_{x \in \mathcal{V}}\sum_{j 
				\in [c]} 
				\sum_{y \in 
					\mathcal{N}(x)}\Omega(x,y)S_j(x) S_j(y) 
				&\leq \sum_{x \in 
					\mathcal{V}}\big(\sum_{y \in 
					\mathcal{N}(x)} \Omega(x,y)\big)\underbrace{\sum_{j \in 
						[c]}S_j(x)}_{ = 1} \\
				&= \sum_{x \in \mathcal{V}}\sum_{j \in [c]} 
				\sum_{y \in 
					\mathcal{N}(x)}\Omega(x,y)\ol{S}_{(l)j}(x) \ol{S}_{(l)j}(y) 
					\\
				&= \langle \ol{S}_{(l)},\Omega \ol{S}_{(l)} \rangle,
			\end{align}
		\end{subequations}
		where the inequality is strict if $S\in \mathcal{W}$.
		Consequently, 
		the constant vector $\ol{S}_{(l)}$ is a 
		global minimizer of the objective function $J(S)$ \eqref{eq:S-Flow_Pot} 
		with minimal value
		$J(\ol{S}_{(l)}) = -\frac{1}{2}\sum \limits_{x\in \mathcal{V}}\sum 
		\limits_{y \in 
			\mathcal{N}(x)}\Omega(x,y)$. Let 
			$B_{\delta}(S^{\ast})\subset\mc{W}$ be the open ball 
		with radius $\delta>0$ containing $S^{\ast}$. By assumption,  
		$S^{\ast}_j(x) > 0,\,\forall x\in\mc{V},\,\forall j \in [c]$ and there 
		exists an $\epsilon > 0$ small enough such that 
		\begin{equation}\label{eq:proof-contradiction-eps-local-min}
			S^{\ast}_{\epsilon} \coloneqq 
			S^{\ast}+\epsilon(\ol{S}_{(l)}-S^{\ast}) \in B_{\delta}(S^{\ast}) 
			\subset
			\mathcal{W}.
		\end{equation}
		Evaluating $J(S)$ at $S^{\ast}_{\epsilon}$ yields
		\begin{subequations}\label{eq:proof-contr-(iii)}\allowdisplaybreaks
			\begin{align}
				J(S^{\ast}_{\epsilon}) 
				&\overset{\eqref{eq:proof-contradiction-eps-local-min}}{=} 
				-\frac{1}{2}\big\langle 
				S^{\ast}+\epsilon(\ol{S}_{(l)}-S^{\ast}), \Omega 
				(S^{\ast}+\epsilon(\ol{S}_{(l)}-S^{\ast}))  \big\rangle \\
				&\overset{\phantom{\eqref{eq:proof-contradiction-eps-local-min}}}{=}
				 J(S^{\ast})-\epsilon \langle 
				S^{\ast},\Omega(\ol{S}_{(l)}-S^{\ast})\rangle-\frac{\epsilon^2}{2}\langle
				 \ol{S}_{(l)}-S^{\ast},\Omega(\ol{S}_{(l)}-S^{\ast}) \rangle\\
				&\hskip -0.1cm \overset{\text{(ii)},\eqref{eq:Omega_Mat}}{=} 
				J(S^{\ast})-\epsilon \Big\langle \langle S^{\ast},\Omega 
				S^{\ast} \rangle 
				\mathbb{1},\ol{S}_l - S^{\ast} 
				\Big\rangle+\frac{\epsilon^2}{2}\Big\langle 
				\langle  
				S^{\ast},\Omega 
				S^{\ast} \rangle 
				\mathbb{1},\ol{S}_{(l)} - S^{\ast} \Big\rangle \\
				&\hspace{5cm}+\epsilon^2\big(J(\ol{S}_{(l)})+\frac{1}{2}\langle 
				\ol{S}_{(l)},\Omega 
				S^{\ast} \rangle \big),
			\end{align}
			and since $\la\eins,\ol{S}_{(l)}-S^{\ast}\ra = 
			\sum_{x\in\mc{V}}\sum_{j\in[c]}(\ol{S}_{(l)j}(x)-S^{\ast}_{j}(x)) 
			\overset{\eqref{eq:def-bar-S-l-fixed}}{=}\sum_{x\in\mc{V}}(1-\sum_{j\in[c]}S^{\ast}_{j}(x))=0$,
			\begin{align}
				= J(S^{\ast})+\epsilon^2\big(J(\ol{S}_{(l)})+\frac{1}{2}\langle 
				\overline{S}_{(l)},\Omega S^{\ast} 
				\rangle 
				\big).
			\end{align}
			It follows from (ii) that $S^{\ast}$ is an equilibrium point. Hence 
			we can invoke condition \eqref{eq:equilibrium-condition} to obtain 
			the identity
			\begin{align}
				\frac{1}{2}\langle \overline{S}_{(l)}, \Omega S^{\ast} 
				\rangle &= \frac{1}{2}\sum_{x\in \mathcal{V}} \sum_{j \in [c]} 
				(\Omega 
				S^{\ast})_j(x)\overline{S}_{(l)j}(x) = \frac{1}{2} \sum_{x\in 
				\mathcal{V}} (\Omega 
				S^{\ast})_l(x) 
				\\
				&\overset{\eqref{eq:equilibrium-condition}}{=} 
				\frac{1}{2}\sum_{x\in \mathcal{V}} \langle 
				S^{\ast}(x),\Omega S^{\ast}(x)  \rangle
				= - J(S^{\ast})
			\end{align}
			and consequently, since $\ol{S}_{(l)}$ was shown above to be a 
			global minimizer of $J$,
			\begin{align}
				J(S_{\epsilon}^{\ast}) = J(S^{\ast}) + \epsilon^2\big( 
				J(\overline{S}_{(l)})- J(S^{\ast})\big) < J(S^{\ast}).
			\end{align}
		\end{subequations}
		By assumption we have $S^{\ast}\in \mathcal{W}$ and using  
		\eqref{eq:proof-global-min} it holds $J(S^{\ast}_{\epsilon})< 
		J(S^{\ast})$. As  
		$\delta > 0$ was chosen arbitrarily subject to the constraint 
		\eqref{eq:proof-contradiction-eps-local-min}, this shows that 
		$S^{\ast}$ cannot 
		be a local minimizer which proves (iii).
		\item Analogous to 
		\eqref{eq:proof-contr-step-size-h-limits} we compute
		\begin{equation}\label{eq:zero-seq-h_k-proof-twostage-iv}
			\begin{aligned}
				-&\frac{h_{k}}{2}\big\langle \Omega S^{k},  
				R_{S^{k}}\big(\Omega 
				R_{S^{k}} (\Omega 
				S^{k})\big) \big\rangle -\langle \Omega S^{k}, R_{S^{k}}(\Omega 
				S^{k}) 
				\rangle 
				\\
				&= -\frac{h_{k}}{2}\big\langle \Omega S^{k}, 
				R_{S^{k}}\big(\Omega 
				R_{S^{k}} (\Omega 
				S^{k})\big) \big\rangle-\sum \limits_{x \in 
					\mathcal{V}}\sum 
				\limits_{j  \in  
					[c]}S^{k}_j(x)\Big((\Omega S^{k})_j(x)-\big\langle (\Omega 
				S^{k})(x),S^{k}(x) \big\rangle \Big)^2 
				\\
				&= -\frac{h_{k}}{2}\big\langle \Omega S^{k}, 
				R_{S^{k}}\big(\Omega 
				R_{S^{k}} (\Omega 
				S^{k})\big) \big\rangle-\sum \limits_{x \in 
					\mathcal{V}}\sum 
				\limits_{j  \in  
					[c]}\frac{1}{S^k_j(x)}\Big(S^{k}_j(x)\big((\Omega 
				S^{k})_j(x)-\big\langle (\Omega 
				S^{k})(x),S^{k}(x) \big\rangle \big)\Big)^2 
				\\
				&= -\frac{h_{k}}{2}\big\langle \Omega S^{k}, 
				R_{S^{k}}\big(\Omega 
				R_{S^{k}} (\Omega 
				S^{k})\big) \big\rangle-\sum \limits_{x \in 
					\mathcal{V}}\Big\langle \frac{\eins}{S^k(x)}, 
				\text{\normalfont{grad}}_g(J(S^k))(x)\odot 
				\text{\normalfont{grad}}_g(J(S^k))(x) \Big\rangle.
			\end{aligned}
		\end{equation}
		Since this expression converges to $0$ for $k\to\infty$, the additional 
		assumption $\sum_{k = 0}^{\infty}h_k < 
		\infty$ implies that the second term on the right hand side is a zero 
		sequence which shows  
		$(\text{\normalfont{iv}})$.  \qedhere
	\end{enumerate}
\end{proof}

\subsection{Proofs of Section 
\ref{sec:Influence_Nonlocal_Boundary}}\label{app:Influence_Nonlocal_Boundary}

\begin{proof}[Proof of Proposition \ref{lem:Omega_Invertibility}]$\text{ }$
	\begin{enumerate}[(i)]
		\item
		Let $D$ be the diagonal degree matrix  
		\begin{equation}\label{eq:def-degree-D}
			D(x,x) = \sum_{y \in 
				\mathcal{V}}\Omega(x,y),\qquad 
		\end{equation}
		and let $f\in 
		\mathcal{F}_{\mathcal{V}}$. Then, using  
		$\sum \limits_{x,y \in \mathcal{V}}f^2(x) 
		= \sum \limits_{x,y \in \mathcal{V}} f^2(y)$, one has
		\begin{subequations}\allowdisplaybreaks\label{eq:Expression_Innerproduct}
			\begin{align}
				\langle f, (D-\Omega)f \rangle_{\mathcal{V}} 
				&\overset{\phantom{\Omega(x,y) = \Omega(y,x)}}{=} \sum_{x\in 
					\mathcal{V}}\sum_{y \in 
					\mathcal{V}} \Omega(x,y)\big((f^2(x)-f(x)f(y)\big) \\
				&\overset{\Omega(x,y) = \Omega(y,x)}{=} \sum_{x\in 
					\mathcal{V}}\sum_{y \in \mathcal{V}} 
				\Omega(x,y)\big((\frac{1}{2}f^2(x)-f(x)f(y)+\frac{1}{2}f^2(y)\big)\\
				&\overset{\phantom{\Omega(x,y) = \Omega(y,x)}}{=} 
				\frac{1}{2}\sum_{x\in 
					\mathcal{V}}\sum_{y \in 
					\mathcal{V}} \Omega(x,y)(f(x)-f(y))^2.
			\end{align}
		\end{subequations}
		Now we directly derive the right-hand side of 
		\eqref{eq:Lemma_Direchtlet_Eigenvals} from
		\eqref{eq:Dirichlet_Eigvals}. 
		\begin{subequations}\allowdisplaybreaks
		\begin{align}
			-\frac{\langle f, \mathcal{D}^{\alpha}(\Theta 
				\mathcal{G}^{\alpha}f) 
				\rangle_{\ol{\mathcal{V}}}}{\langle f,f 
				\rangle_{\ol{\mathcal{V}}}} 
			&\overset{\eqref{eq:Non_Local_Dif},\eqref{eq:def-nonl-grad}}{=} 
			\frac{\sum \limits_{x\in \ol{\mathcal{V}}}f(x)2\big(\sum 
				\limits_{y \in 			
\ol{\mathcal{V}}}\Theta(x,y)\alpha^2(x,y)(f(x)-f(y))\big)}{\sum
				\limits_{x\in 
					\ol{ 
						\mathcal{V}}} 
				f^2(x)} \\
			&\hskip -0.2cm \overset{\eqref{eq:def-olV}, \hspace{0.1cm}
				f|_{\mathcal{V}_{\mathcal{I}}^{\alpha}} = 0}{=} 
			\frac{\sum \limits_{x\in \mathcal{V}}f(x)2\big(\sum 
				\limits_{y \in \mathcal{V} \dot{\cup} 				
\mathcal{V}_{\mathcal{I}}^{\alpha}}\Theta(x,y)\alpha^2(x,y)(f(x)-f(y))\big)}{\sum
				\limits_{x\in  \mathcal{V}} f^2(x)}\\
&\overset{\phantom{\eqref{eq:Non_Local_Dif},\eqref{eq:def-nonl-grad}}}{=}
			\frac{\sum \limits_{x\in \mathcal{V}}\sum \limits_{y \in 
\mathcal{V}}\big(\Theta(x,y)\alpha^2(x,y)(f^2(x)-2f(x)f(y)+f^2(x))\big)}{\sum
				\limits_{x\in \mathcal{V}}f^2(x)} \\
			&\hspace{3cm}+\frac{2\sum \limits_{x\in 
					\mathcal{V}}\big(\sum \limits_{y \in 
\mathcal{V}_{\mathcal{I}}^{\alpha}}\Theta(x,y)\alpha^2(x,y)\big)f^2(x)}{\sum
				\limits_{x\in \mathcal{V}}f^2(x)}
			\intertext{and analogous to \eqref{eq:Expression_Innerproduct}} 
			&\hskip 
-1.8cm\overset{\phantom{\eqref{eq:alpha-boundary-0},\eqref{eq:Avaraging_Matrix}}}{=}
			\frac{\sum \limits_{x\in 
					\mathcal{V}}\sum \limits_{y \in \mathcal{V}} 
				\Theta(x,y)\alpha^2(x,y)(f(x)-f(y))^2 + 
				2\sum \limits_{x\in 
					\mathcal{V}}\big(\sum \limits_{y \in 
\mathcal{V}_{\mathcal{I}}^{\alpha}}\Theta(x,y)\alpha^2(x,y)\big)f^2(x)}{\sum
				\limits_{x \in \mc{V}} f^2(x)}\label{eq:proof-dir-eigval}
			\\
			&\hskip 
			-1.45cm\overset{\substack{\eqref{eq:def-olV} 
			\\\eqref{eq:def-lambda-x} \\ 
					\eqref{eq:Avaraging_Matrix}}}{=} 
			\frac{\sum \limits_{x\in 
					\mathcal{V}}\sum \limits_{y \in \mathcal{V}} 
				\Omega(x,y)(f(x)-f(y))^2 + 
				2\sum \limits_{x\in 
					\mathcal{V}}\big(\lambda(x)-\sum_{y\in 
					\mathcal{V}}\Omega(x,y)\big)f^2(x)}{\sum
				\limits_{x 
					\in 
					\mathcal{V}} 
				f^2(x)}\label{eq:Inequality}\\
			&\overset{\eqref{eq:Expression_Innerproduct}}{=}
			2\frac{\langle f,(D-\Omega) f\rangle_{\mathcal{V}} + 
				\langle f, (\Lambda-D)f \rangle_{\mathcal{V}}}{\langle f,f 
				\rangle_{\mathcal{V}}}
			\\
			\label{eq:proof-lambda-min}
			&\overset{\phantom{\eqref{eq:Expression_Innerproduct}}}{=} 
			2\frac{\langle f,(\Lambda-\Omega) f\rangle_{\mathcal{V}}}{\langle 
			f,f 
				\rangle_{\mathcal{V}}}
		\end{align}
		\end{subequations}
		which proves that the right-hand sides of \eqref{eq:Dirichlet_Eigvals} 
		and \eqref{eq:Lemma_Direchtlet_Eigenvals} are equal.
		
		By virtue of \eqref{eq:def-lambda-x} which is an \textit{equation} by 
		assumption, the matrix $\Lambda - \Omega$ defined by 
		\eqref{eq:def-Lambda-lem} and \eqref{eq:Avaraging_Matrix} is diagonal 
		dominant, i.e.
		\begin{align}
			\big|\big(\Lambda(x,x)-\Omega(x,x)\big) 
			-\sum \limits_{\substack{y \in \mathcal{V} \\ y \neq 
			x}}\Omega(x,y)\big| = 
			\sum_{y \in 
				\mathcal{V}_{\mathcal{I}}^{\alpha}} \Theta(x,y)\alpha^2(x,y) 
				\geq 0,\qquad x \in \mc{V},
		\end{align}
		and therefore positive semidefinite, which shows $\lambda^{D}_{1} \geq 
		0$. 
		In order to show that in fact the strict inequality $\lambda^{D}_{1} > 
		0$ holds, let $f \in \mathcal{F}_{\mathcal{V}}$ be such that equality 
		is 
		achieved in 
		\eqref{eq:Dirichlet_Eigvals}. We distinguish constant and non-constant 
		functions $f$. For constant $f = c\,\mathbb{1},\; c\in\R$, since the 
		set $\mc{V}_{\mc{I}}^{\alpha}$ given by \eqref{eq:Interaction_Dom} is 
		nonempty, there 
		exists an $\wt{x} \in \mathcal{V}$ with $\sum_{y\in 
\mathcal{V}_{\mathcal{I}^{\alpha}}}\Theta(\wt{x},y)\alpha^2(\wt{x},y)
		> 0$. Hence by \eqref{eq:proof-dir-eigval}, 
		\eqref{eq:proof-lambda-min},
		\begin{align}
			\lambda_{1}^D = \frac{\langle f,(\Lambda-\Omega)f 
			\rangle_{\mathcal{V}} 
			}{\langle f,f 
				\rangle_{\mathcal{V}}} > \frac{\sum_{y\in 	
\mathcal{V}_{\mathcal{I}^{\alpha}}}\Theta(\wt{x},y)\alpha^2(\wt{x},y)}{2n}>0.
		\end{align}
		If $f$ is non-constant, then there exist $\wt{x},\wt{y} \in 
		\mathcal{V}$ with 
		$f(\wt{y}) \neq f(\wt{x})$. Hence, since $\mathcal{V}$ is connected, 
		\eqref{eq:proof-dir-eigval}, 
		\eqref{eq:proof-lambda-min} yield 
		\begin{align}
			\lambda_{1}^D = \frac{\langle f,(\Lambda-\Omega)f 
				\rangle_{\mathcal{V}}}{\langle f,f 
				\rangle_{\mathcal{V}}} > \frac{
				\Omega(\wt{x},\wt{y})(f(\wt{x})-f(\wt{y}))^2}{2\max 
				\limits_{x \in \mathcal{V}} f^2(x) } > 0.
		\end{align} 
		\item
		We 
		perform similarly to \eqref{eq:Interaction_Dom} a disjoint 
		decomposition of the vertex set $\mathcal{V}$ and introduce the sets
		\begin{align}\label{eq:def-Vi-Vb}
			\mathcal{V}_i = \{ x \in \mathcal{V} \colon \alpha(x,y) = 0 \text{ 
				for 
			} y 
			\in 
			\mathcal{V}_{\mathcal{I}}^{\alpha} \}, \qquad \mathcal{V}_b = 
			\mathcal{V} \setminus \mathcal{V}_i.
		\end{align}
		Hence $\mathcal{V}_b\neq \emptyset$ if and only if 
		$\mathcal{V}_{\mathcal{I}}^{\alpha} \neq \emptyset$ and
		\eqref{eq:Mapping_assumption}, \eqref{eq:Avaraging_Matrix} yield 
		\begin{align}\label{eq:identities-lemma-proof}
			\forall x \in \mc{V}_{i},\qquad
			\lambda(x)-\sum_{y\in \mathcal{V}} \Omega(x,y) = 0.
		\end{align} 
		Let $f$ be a 
		\textit{normalized} eigenvector to the 
		smallest eigenvalue $\lambda_{\text{min}}(\Omega)$ of $\Omega$. Then, 
		using  
		\eqref{eq:identities-lemma-proof} and the inequality
		\begin{align}\label{eq-lemma-ineq-proof}
			(f(x)-f(y))^2 \leq 2(f^2(x) + f^2(y)), \qquad x,y \in 
			\mathcal{V},f\in 
			\mathcal{F}_{\mathcal{V}}
		\end{align}
		further yields 
		\begin{subequations}\allowdisplaybreaks
			\begin{align}
				\hskip +1cm -\lambda_{\text{min}}(\Omega)
				&\overset{\phantom{\eqref{eq-lemma-ineq-proof}}}{=} -\langle f, 
				\Omega f \rangle_{\mathcal{V}} 
				= \langle f 
				,(D-\Omega) f \rangle_{\mathcal{V}} - \langle f, 
				D 
				f \rangle_{\mathcal{V}} \\
				&\hskip 
-0.4cm\overset{\eqref{eq:def-degree-D},\eqref{eq:Expression_Innerproduct}}{=}
				\frac{1}{2}\sum_{x \in 
					\mathcal{V}}  \sum_{y\in 
					\mathcal{V}} 
				\Omega(x,y)(f(x)-f(y))^2 - \sum_{x \in 
					\mathcal{V}}\sum_{y\in 
					\mathcal{V}}\Omega(x,y)f^2(x) \\
				&\overset{\eqref{eq-lemma-ineq-proof}}{\leq} \sum_{x \in 
				\mathcal{V}}  
				\sum_{y\in 
					\mathcal{V}} 
				\Omega(x,y)f^2(x)\\
				&\overset{\eqref{eq:def-Vi-Vb}}{=}  \sum_{x \in 
					\mathcal{V}_i}  \sum_{y\in 
					\mathcal{V}} 
				\Omega(x,y)f^2(x) + \sum_{x \in 
					\mathcal{V}_b}\sum_{y\in 
					\mathcal{V}}\Omega(x,y)f^2(x) \\
&\hskip-0.27cm\overset{\eqref{eq:Omega_AF},\eqref{eq:Avaraging_Matrix}}{\leq}\sum_{x\in
					\mathcal{V}_i}f^2(x)+\sum_{x\in 
					\mathcal{V}_b}\big(1-\sum_{y\in 
\mathcal{V}_{\mathcal{I}}^{\alpha}}\Theta(x,y)\alpha^2(x,y)\big)f^2(x)
				\\
				&\overset{\phantom{\eqref{eq:def-olV}}}{=} 
				\sum_{x\in\mc{V}}f^{2}(x) - 
				\sum_{x\in \mathcal{V}_b}\sum_{y\in 					
\mathcal{V}_{\mathcal{I}}^{\alpha}}\Theta(x,y)\alpha^2(x,y)f^2(x)
				\\
				&\overset{\eqref{eq:def-olV}}{=} 
				1 - 
\sum_{x\in\mc{V}_{b}}\Big(1-\Theta(x,x)-\sum_{y\in\mc{V}}\Theta(x,y)\alpha^{2}(x,y)\Big)
				 f^{2}(x)
				\\
				&\overset{\eqref{eq:def-lambda-x}}{<} 1.
			\end{align}
		\end{subequations}
	\end{enumerate}
\end{proof}

\subsection{Proofs of Section \ref{sec:prep_lemmata}}\label{app:prep_lemmata}
\begin{proof}[Proof of Lemma \ref{lem:convergence}]
	Since $\overline{\mathcal{W}}\subset \R^{nc}$ is compact, 
	$(S^{k})_{k \geq 0} \subset \ol{\mathcal{W}}$ is bounded and there exists a 
	convergent subsequence $(S^{k_l})_{l \geq 0}$ with 
	$\lim \limits_{l\to\infty} S^{k_l} = S^{\ast}$ and $\Lambda$ nonempty and 
	compact. Due to Proposition 
	\ref{prop:Numerical_Scheme_Second_Order}, the sequence $(J(S^k))_{k 
		\geq 0}$ is nonincreasing and bounded from below with $\lim 
	\limits_{k \to \infty} J(S^{k}) = J^{\ast}$ for 
	some $J^{\ast} > -\infty$. 
	
	In view of the definition \eqref{eq:RS-Omega} of the mapping $S\mapsto 
	R_{S}(\Omega S)$, the right-hand side of \eqref{eq:Descent_Dir_Two_Stage} 
	is bounded for any $S\in\mc{S}$. Hence the subsequence $(d^{k_{l}})_{l\geq 
	0}$ induced by $(S^{k_{l}})_{l\geq 0}$ through 
	\eqref{eq:Descent_Dir_Two_Stage}, \eqref{eq:dk-accelerate} is convergent as 
	well. Consequently, for any limit point $S^{\ast} \in 
	\Lambda$,
	there exists a subsequence $(S^{k_l} )_{l \geq 0}$ with 
	\begin{equation}\label{eq:proof-Lem-6.1-Skl}
		S^{k_{l}} 
		\to S^{\ast} \quad\text{and}\quad 
		d^{k_l} \to d^{\ast} \quad\text{as}\quad l\to\infty. 
	\end{equation}
	It remains to show that 
	$\lim \limits_{l\to\infty} J(S^{k_l}) = J(S^{\ast}) = J^{\ast}$. 
	
	Analogous to the proof of 
	Proposition \ref{prop:Numerical_Scheme}, we adopt 
	the decomposition  
	\eqref{eq:J-DC} of $J(S)$ by
	\begin{subequations}\label{eq:DC-dec-proof-lem-convergence}
		\begin{align}
			J(S) = g(S)-h(S) \quad \text{with} \quad  g(S) &= 
			\delta_{\ol{\mathcal{W}}}(S)+\gamma  \langle S,\log S 
			\rangle, 
			\\ \label{eq:def-h}
			h(S) 
			&= \frac{1}{2}\langle S,\Omega S \rangle + \gamma 
			\langle S,\log S 
			\rangle,
		\end{align}
	\end{subequations}
	with appropriately chosen initial decomposition parameter $\gamma$ in 
	Algorithm 
	\ref{Geometric_Two_Stage}  such 
	that $g,h$ are strictly convex on 
	$\mathcal{W}$.
	By the lower semicontinuity of $J(S)$, we have 
	\begin{equation}\label{eq:proof-convergence-theorem-lower-bound}
		\liminf \limits_{l\to\infty} 
		J(S^{k_l}) \geq J(S^{\ast}).
	\end{equation}
	In addition, by invoking line \ref{alg-final-upd} of 
	Algorithm \ref{Geometric_Two_Stage} defining the iterate $S^{k_l}$ by the 
	inclusion $\gamma  
	\theta_{k_l-1}\widetilde{S}^{k_l-1} \in  \partial g(S^{k_l})$ if $\theta_k$ 
	satisfy the Wolfe conditions, and by line 
	\eqref{alg:Update-DC-par-line} otherwise, we have 
	\begin{align}
		&g(S^{k_l}) - \gamma  
		\theta_{k_l-1}\langle \widetilde{S}^{k_l-1}, S^{k_l}-S^{k_{l}-1} 
		\rangle 
		\leq 
		g(S^{\ast}) 
		-\gamma  
		\theta_{k_l-1}\langle \widetilde{S}^{k_l-1},S^{\ast}-S^{k_{l}-1} 
		\rangle, 
	\end{align}
	which after rearranging reads
	\begin{align}\label{eq:proof-theorem-ineq-g}
		g(S^{k_l}) \leq g(S^{\ast}) -\gamma  
		\theta_{k_l-1}\langle d^{k_l-1}, 
		S^{\ast}-S^{k_{l}} 
		\rangle-\gamma\Big\langle 
		\log\Big(\frac{S^{k_l-1}}{\mathbb{1}_c}\Big),S^{\ast}-S^{k_l} 
		\Big\rangle.
	\end{align}
	Setting
	\begin{equation}\label{eq:def-delta-proof-Lemma}
		\delta = \sum \limits_{x\in\mathcal{V}}\sum 
		\limits_{j \in \supp(S^{\ast}(x))}\log(S^{\ast}_j(x))\cdot S^{\ast}_j(x)
	\end{equation}
	and using  \eqref{eq:proof-Lem-6.1-Skl}, we obtain for the last term 
	\begin{subequations}
		\begin{align}
			\lim \limits_{l \to \infty} & \Big\langle 
			\log\Big(\frac{S^{k_l-1}}{\mathbb{1}_c}\Big), 
			S^{\ast}-S^{k_l}\Big\rangle 
			= 	
			\lim \limits_{l \to \infty} \langle 
			\log(S^{k_l-1}),S^{\ast}-S^{k_l} \rangle \\
			&= \lim \limits_{l \to \infty}\Big( \langle 
			\log(S^{k_l-1})+\log(e^{\theta_{k_l-1}d^{k_l-1}}),S^{\ast}-S^{k_l} 
			\rangle -\theta_{k_l-1}\langle d^{k_l-1},S^{\ast}-S^{k_l} \rangle 
			\Big)
			\\
			&= \lim \limits_{l \to 
			\infty}\Big(\Big\la\log\big(\exp_{S^{k_{l}-1}}(\theta_{k_l-1}d^{k_l-1})\big)
			 + \log\la S^{k_{l}-1},e^{\theta_{k_l-1}d^{k_l-1}}\ra \eins_{c}, 
			S^{\ast}-S^{k_{l}}\Big\ra 
			\\ &\qquad\qquad
			-\theta_{k_l-1}\langle d^{k_l-1},S^{\ast}-S^{k_l} \rangle 
			\Big)
			\intertext{using $\la\eins_{c},S^{\ast}-S^{k_{l}}\ra=1-1=0$}
			&\overset{\eqref{eq:def-delta-proof-Lemma}}{=}
			\underbrace{\lim \limits_{l \to \infty}\langle 
				\log(S^{k_l}),S^{\ast}-S^{k_l} 
				\rangle}_{\to \delta - \delta = 0} - \underbrace{\lim 
				\limits_{l \to 
					\infty}\langle 
				\theta_{k_l-1}d^{k_l-1}, 
				S^{\ast}-S^{k_l} \rangle}_{\to 0} \\
			&= 0.
		\end{align}
	\end{subequations}
	Hence by noticing $\theta_k \in 
	[\theta_0,\frac{1}{|\lambda_{\min}(\Omega)|}]$, the sequence 
	$(\theta_{k_l})$ is bounded and taking the limit in 
	\eqref{eq:proof-theorem-ineq-g} yields 
	\begin{align}\label{eq:limsup-g}
		\limsup \limits _{l\to\infty} g(S^{k_l}) \leq g^{\ast}(S^{\ast}). 
	\end{align}  
	Now, turning to the function $h$ of 
	\eqref{eq:DC-dec-proof-lem-convergence}, lower semicontinuity yields 
	$\liminf \limits_{l\to\infty} h(S^{k_{l}})\geq h(S^{\ast})$ and hence
	\begin{subequations}\label{eq:proof_estimate}
		\begin{align}
			\limsup \limits_{l\to\infty} J(S^{k_l}) 
			&= \limsup \limits_{l\to\infty} 
			\big(g(S^{k_l})-h(S^{k_l})\big) 
			\leq \limsup \limits_{l\to\infty} 
			g(S^{k_l})-\liminf \limits_{l\to\infty} h(S^{k_l}) \\
			&\overset{\eqref{eq:limsup-g}}{\leq} 
			g(S^{\ast})-h(S^{\ast}).
		\end{align} 
	\end{subequations}
	Finally, combining this with 
	\eqref{eq:proof-convergence-theorem-lower-bound} and by uniqueness of the 
	limit $J^{\ast}$, we have $J(S^{\ast}) = J^{\ast}$ for any $S^{\ast} \in 
	\Lambda$, which completes the proof.  
\end{proof}

\begin{proof}[Proof of Lemma \ref{lem:Converegnce-Two-Stage}]
	Throughout the proof we skip 
	the action of projection operator $\Pi_{0}$ in $d^k(x)$ given by 
	\eqref{eq:Descent_Dir_Two_Stage} and \eqref{eq:Sk+1-Alg-3}, due to the 
	invariance of lifting map 
	\eqref{eq:def-lifting-map} by property \eqref{eq:exp-constant}.  
	By definition \eqref{eq:Sk+1-Alg-3} of $S^{k+1}$, it follows for $x \in 
	\mathcal{V}$ and $j \in 
	J_+(S^{\ast}(x))$ that
	\begin{equation}\label{eq:proof-convergence-iterate-decrease}
		\begin{aligned}
			\big(S^{k+1}(x)-S^k(x)\big)_j &= 
			S^k_j(x)\Big(\frac{e^{\theta_k d^k(x)}}{\langle 
				S^k(x),e^{\theta_kd^k(x)} \rangle}-\mathbb{1} \Big)_j \\
			&=\frac{S^k_j(x)}{\langle 
				S^k(x),e^{\theta_k d^k(x)} 
				\rangle}\Big(e^{\theta_k d_j^k(x)}-\langle 
			S^k(x),e^{\theta_k d^k(x)} \rangle\Big)
			\\
			&=\frac{S^k_j(x)}{\langle 
				S^k(x),e^{\theta_k d^k(x)} 
				\rangle}\Big(\sum_{l = 0}^{\infty}\beta^k_{l,j}(x) \Big),\qquad
			\forall J_{+}(S^{\ast}(x)),
		\end{aligned}
	\end{equation}
	where we employed the power series of the exponential function and the 
	shorthand $(\beta^k_{l,j}(x))_{l\geq 0}$  
	\begin{subequations}\label{eq:conv-proof-beta-def}
		\begin{align}
			\beta^k_{l,j}(x) &= 
			\frac{\theta_k^{l}}{l!}\Big((d^k_j(x))^l-\langle 
			S^k(x),(d^k(x))^l \rangle\Big)  
			\\
			&\overset{\eqref{eq:Descent_Dir_Two_Stage}}{=} 
			\frac{\theta_k^{l}}{l!}\Big((\Omega S^k)_j^l(x)-\langle 
			S^k(x),(\Omega S^k)^l(x) \rangle\Big)+\mc{O}(h_k).
		\end{align}
	\end{subequations}
	Let $M:\ol{\mathcal{W}}\times \R_{+}\to \R_{+}$ denote the function
	\begin{equation}\label{eq:MSgamma-ub}
		M(S,\gamma) = \max_{x\in \mathcal{V}}\max_{h \in 
			[0,h_{\text{max}}]} 
		\langle S(x),e^{\gamma d(S,h)(x)}\rangle^2 \leq M^{\ast},\qquad S\in 
		\mathcal{W},
	\end{equation} 
	with $h_{\text{max}} = \max \limits_{k\geq 0}h_k$ and $d(S,h)$ as in 
	\eqref{eq:Descent_Dir_Two_Stage}. Since $M(S,\gamma)$ is a continuous 
	mapping on a 
	compact set $\mathcal{W}\times [\theta_{\text{min}},\theta_{\text{max}}]$,  
	it attains its maximum $M^{\ast}>1$. 
	Due to the equilibrium condition \eqref{eq:equilibrium-condition} there 
	exists an 
	$\varepsilon_1 >0 $ such that, for all $S\in \mathcal{W}$ with 
	$\|S^{\ast}-S\|<\varepsilon_1$, the inequality 
	\begin{equation}\label{eq:Inequality-proof-lemma1}
		-\Big((\Omega S)_j(x)-\langle \Omega S(x), 
		S(x)\rangle \Big) > -\frac{1}{\sqrt{M^{\ast}}}\Big( (\Omega 
		S^{\ast})_j(x)-\langle \Omega S^{\ast}(x), 
		S^{\ast}(x)\rangle\Big)>0.
	\end{equation}
	is satisfied for 
	all indices
	$j \in J_+(S^{\ast}(x))$ given by \eqref{eq:index-sets-proof-conv-theorem} 
	(i.e.~the terms inside the brackets on either side are negative) and $x \in 
	\mathcal{V}$.
	In particular, since $S^{\ast}\in \ol{\mathcal{W}}$ is a limit point of 
	$(S^k)_{k \geq 0}$, 
	there is a convergent subsequence $(S^{k_s})_{s\geq 0}$ with $S^{k_s} \to 
	S^{\ast}$ and consequently $\|S^{k_{s_0}}-S^{\ast}\|<\varepsilon_1$ for 
	some 
	$k_{s_0} \in \N$. 
	Now, using the componentwise inequality $p^l \leq p$ for $l \in \N$ 
	and $p \in \mathcal{S}$, we have 
	\begin{align}\label{eq:conv-proof-beta-ineq-1}
		0  \leq \big\langle \mathbb{1},\big(S^k(x) 
		\odot \Omega S^k(x)\big)^l \big\rangle \leq \big\langle S^k(x),(\Omega 
		S^k(x))^l\big\rangle.
	\end{align} 
	Employing \eqref{eq:conv-proof-beta-ineq-1} in 
	\eqref{eq:conv-proof-beta-def} and using $h^{k_{s}} \to 0$ shows 
	that 
	there 
	exists a smallest index $k_{0} \geq k_{s_0}$ such that 
	\begin{equation}\label{eq:Inequality-proof-lemma2}
		\beta_{l,j}(x) \leq \frac{\theta_k^{l}}{l!}\Big((\Omega 
		S^{k_{s_0}})^l_j(x)-\langle 
		S^{k_{s_0}}(x),(\Omega S^{k_{s_0}}(x)) \rangle^l\Big)+O(h^{k_{s_0}}) 
		<0, \quad \forall j \in J_+(S^{\ast}(x)),\; l\in\N.  
	\end{equation}
	Therefore, setting $\varepsilon_1 \coloneqq \| S^{\ast}-S^{k_{0}} \|$ for 
	all $S^{k}$ satisfying $\|S^k-S^{\ast} \| < \varepsilon$ and $k \geq k_0$ 
	with 
	$\varepsilon 
	\coloneqq \min\{\varepsilon_0,\varepsilon_1\}$, the inequalities 
	\eqref{eq:Inequality-proof-lemma1} and \eqref{eq:Inequality-proof-lemma2} 
	are simultaneously satisfied   
	and using 
	\begin{equation}
		(\Omega S^{k_{s_0}})_j^l(x)
		\overset{\eqref{eq:index-sets-proof-conv-theorem}}{<}\langle (\Omega 
		S^{k_{s_0}})(x), 
		S^{k_{s_0}}(x) \rangle^l,\quad \forall j\in J_+(S^{\ast}(x)),\quad 
		l\in\N
	\end{equation}
	enables to estimate 
	\eqref{eq:proof-convergence-iterate-decrease} by
	\begin{subequations}\label{eq:Lemma-proof-conv-final-estimate}
		\begin{align}
			\big(S^{k+1}(x)-S^k(x)\big)_j 
			&\overset{\phantom{\eqref{eq:Inequality-proof-lemma2}}}{=} 
			\frac{S^k_j(x)}{\langle 
				S^k(x),e^{\theta_kd^k(x)} 
				\rangle}\Big(\sum_{l = 1}^{\infty} \beta^k_{l,j}(x)\Big)
			\\
			&\overset{\eqref{eq:Inequality-proof-lemma2}}{\leq}\frac{S^k_j(x)}{\langle
				S^k(x),e^{\theta_kd^k(x)} 
				\rangle}\Big(\theta_k\big((\Omega 
			S^k)_j(x)-\langle 
			S^k(x),\Omega S^k(x) \rangle\big)  \\
			&\hskip 1cm+\sum_{l = 
				2}^{\infty}\frac{\theta_k^{l}}{l!}\big((\Omega 
			S^k)^l_j(x)-\langle 
			S^k(x),\Omega S^k(x) \rangle^l\big) + \mc{O}(h^{k}) \Big)
			\\
			&\overset{\eqref{eq:Inequality-proof-lemma1}}{\leq} 
			\frac{-S^k_j(x)}{\langle 
				S^k(x),e^{\theta_kd^k(x)} 
				\rangle \cdot \sqrt{M^{\ast}}}\Big(\theta_{k}\big(\langle 
			\Omega 
			S^{\ast}(x),S^{\ast}(x) \rangle 
			-(\Omega 
			S^{\ast})_j(x)\big)\Big)
			\\
			&\overset{\eqref{eq:MSgamma-ub}}{\leq} 
			-\theta_{k} 
			\frac{S^k_j(x)}{M^{\ast}}\big(\langle
			\Omega 
			S^{\ast}(x),S^{\ast}(x) \rangle 
			-(\Omega 
			S^{\ast})_j(x)\big),\qquad
			\forall J_{+}(S^{+}(x)).
		\end{align}
	\end{subequations} 
	Taking the sum over $x\in \mathcal{V}$ shows 
	\eqref{eq:Estimate-Lemma-Conv}. 
\end{proof}

\subsection{Proofs of Section 
\ref{sec:convergence-main}}\label{app:convergence-main}

\begin{proof}[Proof of Theorem \ref{theorem:convergence-1}]
	Let $S^{\ast} \in \Lambda$ be a limiting point of $(S^k)_{k\geq 0}$ with 
	$S^{\ast}(x) \in \ol{\mathcal{S}}\setminus \mathcal{S},\;\forall 
	x\in\mc{V}$, by Proposition \ref{prop:Numerical_Scheme_Second_Order}(iii), 
	and let 
	$\theta_k \in \R_+, 
	S^{k+1} \in 
	\mathcal{W}$ and $\widetilde{S}^{k}$ be determined by Algorithm 
	\ref{Geometric_Two_Stage} (see lines 
	\eqref{alg-final-upd} 
	and \eqref{alg:tilde_S}), respectively. Then, by the well-known 
	\textit{three-point 
		identity}
	\cite[Lemma 3.1]{Chen:1993aa} with respect to 
	$S^{k+1},S^{k}\in\mc{W},\,S^{\ast} 
	\in 
	\ol{\mathcal{W}}$, one has
	\begin{align}\label{eq:three-point-id-proof}
		D_{\KL}(S^{\ast},S^{k+1})-D_{\KL}(S^{\ast},S^{k}) = 
		-D_{\KL}(S^{k+1},S^{k})-\langle \nabla f(S^{k+1})-\nabla 
		f(S^{k}),S^{\ast}-S^{k+1} \rangle.
	\end{align} 
	Recalling step size selection \ref{alg:Step_Size_Selection} it holds 
	$\theta_k \in (\theta_0,\frac{1}{|\lambda_{\min}(\Omega)|})$ and leveraging 
	the DC-decomposition \eqref{eq:DC-dec-proof-lem-convergence} with 
	$\gamma = \frac{1}{\theta_k}$, the inclusion
	$\Omega S^{k}+\frac{1}{\theta_k}\log(\frac{S^k}{\mathbb{1}_c})\in \partial 
	h(S^k)$ and the strict convexity of 
	$h(S)$ on $\mathcal{W}$ imply by the gradient inequality
	\begin{equation}\label{eq:theorem-ineq-convergence-h}
		h(S^{k+1})-h(S^k)-\Big\langle \Omega 
		S^{k}+\frac{1}{\theta_k}\log\Big(\frac{S^k}{\mathbb{1}_c}\Big) 
		,S^{k+1}-S^{k} 
		\Big\rangle> 0
	\end{equation}
	and hence
	\begin{subequations}\allowdisplaybreaks
		\begin{align}
			h(S^{k+1}) & -h(S^k)-\Big\langle \Omega 
			S^k+\frac{1}{\theta_k}\log\Big(\frac{S^k}{\mathbb{1}_c}\Big) 
			,S^{k+1}-S^{k} 
			\Big\rangle
			\\
			&\overset{\eqref{eq:def-h}}{=} 
			\frac{1}{2}\langle S^{k+1},\Omega S^{k+1}\rangle-\frac{1}{2}\langle 
			S^{k},\Omega S^{k}
			\rangle 
			\\ & \qquad
			+\frac{1}{\theta_k} \Big(\langle 
			S^{k+1},\log(S^{k+1})\rangle-\langle 
			S^{k},\log S^{k} \rangle - \Big\langle 
			\log\Big(\frac{S^k}{\mathbb{1}_c}\Big),S^{k+1}-S^k \Big\rangle\Big)
			\\ &\qquad
			-\langle \Omega 
			S^k, 
			S^{k+1}-S^k \rangle \\
			&\hskip -0.3cm\overset{\eqref{eq:S-Flow_Pot}, 
				\eqref{eq:Bregman_Divergence}}{=} 
			J(S^{k})-J(S^{k+1})+\frac{1}{\theta_k} 
			D_{\KL}(S^{k+1},S^k)-\langle  \Omega S^k, 
			S^{k+1}-S^k \rangle.
		\end{align}
	\end{subequations}
	Therefore inequality \eqref{eq:theorem-ineq-convergence-h} is 
	equivalent to
	\begin{equation}\label{eq:proof-conv-thoerem-ineq-h-mod}
		\begin{aligned}
			-D_{\KL}(S^{k+1},S^{k}) \leq \theta_k\big( J(S^k)-J(S^{k+1}) 
			-\langle \Omega S^k, 
			S^{k+1}-S^k \rangle \big).
		\end{aligned}
	\end{equation}
	Combining \eqref{eq:proof-conv-thoerem-ineq-h-mod} and 
	\eqref{eq:three-point-id-proof} yields
	\begin{equation}\label{eq:proof-conv-theorem-ineq-1}
		\begin{aligned}
			D_{\KL}(S^{\ast},S^{k+1})-D_{\KL}(S^{\ast},S^{k}) &\leq 
			\theta_k\big( J(S^k)-J(S^{k+1}) 
			-\langle \Omega S^k, 
			S^{k+1}-S^k \rangle \big) \\
			&\qquad
			-\langle 
			\nabla f(S^{k+1})-\nabla 
			f(S^{k}),S^{\ast}-S^{k+1} 
			\rangle.
		\end{aligned}
	\end{equation}
	Next, in view of Algorithm 
	\ref{Geometric_Two_Stage}, line \eqref{alg:tilde_S}, we rewrite 
	the last term in 
	\eqref{eq:proof-conv-theorem-ineq-1} in the form 
	\begin{subequations}\label{eq:ident-conv-theorem-proof}\allowdisplaybreaks
		\begin{align}
			\langle 
			\nabla f(S^{k+1})-\nabla 
			f(S^{k}),S^{\ast}-S^{k+1} 
			\rangle &\overset{\substack{\eqref{eq:def-f-KL}\\ 
			S^{k},S^{k+1}\in\mc{W}}}{=} \langle 
			\mathbb{1}_c+\log(S^{k+1})-(\mathbb{1}_c+\log(S^k)),S^{\ast}-S^{k+1}
				 			\rangle   \\
			& \overset{\substack{\text{Algorithm \ref{Geometric_Two_Stage}} \\ 
			\text{line  
						\eqref{alg:tilde_S}}}}{=} \langle \log 
						(S^k)+\log(e^{\theta_k 
				d^k})-\log(S^k), S^{\ast} - S^{k+1}\rangle\\
			&\hspace{3cm}-\underbrace{\langle \log(\langle S^k,e^{\theta_k d^k} 
				\rangle)\mathbb{1}_c, S^{\ast} - S^{k+1}\rangle}_{ = 0} \\
			&\overset{\phantom{\substack{\text{Algorithm 
			\ref{Geometric_Two_Stage}} \\ 
						\text{line \eqref{alg:tilde_S}}}}}{=} \theta_k \langle 
						d^k,S^{\ast}-S^{k+1} 
			\rangle.
		\end{align}
	\end{subequations}
	Consequently, \eqref{eq:proof-conv-theorem-ineq-1} becomes 
	\begin{subequations}\label{eq:proof-conv-theorem-ineq-2}
		\begin{align}
			D_{\KL}&(S^{\ast},S^{k+1})-D_{\KL}(S^{\ast},S^{k})
			\\
			&\overset{\phantom{\eqref{eq:S-Flow_Pot}}}{\leq}
			\theta_k\big( 
			J(S^k)-J(S^{k+1})\big)-\theta_k\langle \Omega 
			S^k,S^{\ast}-S^k 
			\rangle
			- \frac{\theta_kh_k}{2}\langle \Omega R_{S^k}(\Omega S^k), 
			S^{\ast}-S^{k+1}  \rangle
			\\
			&\overset{\eqref{eq:S-Flow_Pot}}{=}\theta_k\Big(2\big(J(S^{\ast})-J(S^{k+1})\big)+J(S^{k+1})-J(S^k)
			\\ &\qquad\qquad
			-\frac{h_k}{2}\langle
			\Omega 
			R_{S^k}(\Omega S^k), 
			S^{\ast}-S^{k+1}  \rangle 
			-\langle 
			S^k, \Omega  S^{\ast}\rangle-2J(S^{\ast}) \Big).
		\end{align}
	\end{subequations}
	Using the inequality of Cauchy Schwarz and taking into account 
	$S^{\ast}\in\ol{W},\,S\in \mathcal{W}$, we estimate with $\lambda(\Omega)$ 
	defined by \eqref{eq:def-lambda-Omega} 
	\begin{equation}\label{eq:CS-inequality-second-order-norm}
		|\langle \Omega R_{S}(\Omega S),S^{\ast}-S \rangle|\leq \|\Omega R_{S} 
		(\Omega S)\| \cdot \|S^{\ast} -S \| \leq 
		\frac{\lambda^2(\Omega)}{2} \| S \| \sqrt{n} \leq 
		\frac{\lambda^2(\Omega)\cdot n}{2},
	\end{equation}
	where the factor $\frac{1}{2}$ is due to the fact that the matrices 
	$R_{S(x)}$ given by \eqref{eq:def-Rp} are positive semidefinite with 
	$\lambda_{\max}(R_{S(x)})\leq \frac{1}{2}$, which easily follows from 
	Gershgorin's circle theorem.
	
	Using the descent step based on \eqref{eq:Descent_Dir_Two_Stage} and 
	\eqref{eq:dk-descent}, we consider three further terms of 
	\eqref{eq:proof-conv-theorem-ineq-2}.
	\begin{subequations}\label{eq:conv-proof-ineq-less-zero}\allowdisplaybreaks
		\begin{align}
			J(S^{k+1})-J(S^k)&-\frac{h_k}{2}\langle
			\Omega 
			R_{S^k}(\Omega S^k), 
			S^{\ast}-S^{k+1}  \rangle 	
			\\	&\overset{\eqref{eq:Sufficient_Decrease_Properties_a}}{\leq}
			\theta_{k}c_{1}\underbrace{\la R_{S^{k}}(\Omega S^{k}), 
			R_{S^{k}}(d^{k})\ra_{S^{k}}}_{\leq 0}
			-\frac{h_k}{2}\langle
			\Omega 
			R_{S^k}(\Omega S^k), 
			S^{\ast}-S^{k+1}  \rangle
			\\
			&\overset{\eqref{eq:Descent_Dir_Two_Stage}}{\leq} 
			-\theta_k
			c_1(\langle
			R_{S^k}(\Omega 
			S^k),R_{S^k}(\Omega 
			S^k) \rangle_{S^k}\\
			&+\frac{\theta_k c_1 h_k}{2}|\langle R_{S^k}(\Omega 
			S^k),R_{S^{k}}\Omega 
			R_{S^k}\Omega S^k \rangle_{S^{k}}|) +\frac{h_k}{2}|\langle
			\Omega 
			R_{S^k}(\Omega S^k), 
			S^{\ast}-S^{k+1}  \rangle| 
			\\
			&\hskip -0.4cm 
			\overset{\eqref{eq:proof-descent-def-step-size-h},\eqref{eq:CS-inequality-second-order-norm}}{\leq}
			-\frac{\theta_k c_1}{2}\langle R_{S^k}(\Omega S^k),R_{S^k}(\Omega 
			S^k) \rangle_{S^k}+\frac{\lambda^2(\Omega) n h_k}{4}\\
			&\overset{\phantom{\eqref{eq:Descent_Dir_Two_Stage}}}{=} 
			-\frac{\theta_{k}c_{1}}{2}\|\ggrad J(S^{k})\|_{S^{k}}^{2} + 
			\frac{\lambda^2(\Omega) n h_k}{4}
			\\
			&\overset{\phantom{\eqref{eq:Descent_Dir_Two_Stage}}}{\leq}
			0,
		\end{align}
	\end{subequations}
	where the last inequality is holds due to assumption 
	\eqref{eq:h_k-seq-assumption-theorem-conv}.
	Now we focus on the last remaining term occurring in 
	\eqref{eq:proof-conv-theorem-ineq-2}. Using the index sets 
	\eqref{eq:index-sets-proof-conv-theorem} with respect to the limit point 
	$S^{\ast} \in \ol{\mathcal{W}}$ 
	along with $S^k(x)\in \mathcal{S}$, we get 
	\begin{subequations}\label{eq:proof-conv-reformulation-limit-point}\allowdisplaybreaks
		\begin{align}
			-\langle S^k,\Omega S^{\ast}\rangle-2 J(S^{\ast}) 
			&\overset{\eqref{eq:S-Flow_Pot}}{=} 
			-\sum_{x\in 
				\mathcal{V}} \langle S^{k}(x),\Omega S^{\ast}(x) 
				\rangle+\sum_{x\in 
				\mathcal{V}} \langle S^{\ast}(x),\Omega S^{\ast}(x) \rangle \\
			&\overset{\phantom{\eqref{eq:S-Flow_Pot}}}{=} -\sum_{x\in 
			\mathcal{V}} 
			\sum_{j \in [c]}S^{k}_j(x) (\Omega 
			S^{\ast})_j(x)+\sum_{x\in \mathcal{V}} \underbrace{\sum_{j\in [c]} 
			S^k_j(x)}_{=1}\langle 
			S^{\ast}(x),\Omega 
			S^{\ast}(x) \rangle \\
			&\overset{\phantom{\eqref{eq:S-Flow_Pot}}}{=} -\sum_{x\in 
			\mathcal{V}} 
			\sum_{j \in [c]}S^{k}_j(x)\big((\Omega 
			S^{\ast})_j(x)-\langle 
			S^{\ast}(x),\Omega 
			S^{\ast}(x) \rangle\big) \\
			&\overset{\eqref{eq:index-sets-proof-conv-theorem}}{=} -\sum_{x\in 
				\mathcal{V}}\Big( 
			\sum_{j \in J_-(S^{\ast}(x))}S^{k}_j(x)\big((\Omega 
			S^{\ast})_j(x)-\langle 
			S^{\ast}(x),\Omega 
			S^{\ast}(x) \rangle\big)\\
			&\hspace{1.5cm}+\sum_{j \in 
				J_+(S^{\ast}(x))}S^{k}_j(x)\big((\Omega 
			S^{\ast})_j(x)-\langle 
			S^{\ast}(x),\Omega 
			S^{\ast}(x) \rangle\big)\Big).
		\end{align}
	\end{subequations}
	As a result, combining 
	\eqref{eq:conv-proof-ineq-less-zero} and 
	\eqref{eq:proof-conv-reformulation-limit-point} for all $k \geq K$ and 
	using $J(S^{\ast})-J(S^{k+1})<0$, \eqref{eq:proof-conv-theorem-ineq-2} 
	becomes
	\begin{subequations}\label{eq:final-estimate-divergence-proof-conv}
		\begin{align}
			D_{\KL}(S^{\ast},S^{k+1})&-D_{\KL}(S^{\ast},S^{k}) \leq 
			\theta_k\Big(J(S^{\ast})-J(S^{k+1})-\sum_{x\in 
				\mathcal{V}}\big( 
			\sum_{j \in J_-(S^{\ast}(x))}S^{k}_j(x)\big((\Omega 
			S^{\ast})_j(x)\\
			&-\langle 
			S^{\ast}(x),\Omega 
			S^{\ast}(x) \rangle\big)
			+\sum_{j \in 
				J_+(S^{\ast}(x))}S^{k}_j(x)\big((\Omega 
			S^{\ast})_j(x)-\langle 
			S^{\ast}(x),\Omega 
			S^{\ast}(x) \rangle\big)\big) \Big)
		\end{align}
	\end{subequations}
	By Lemma \ref{lem:Converegnce-Two-Stage}, there exist $\varepsilon >0$ and 
	$k_0 \in \N$ such that for all $S^k\in\mathcal{W}$ with $k\geq k_0$ and 
	$\|S^k-S^{\ast}\| < \varepsilon$ inequality \eqref{eq:Estimate-Lemma-Conv} 
	is satisfied, where $$Q(S)=\sum_{x\in\mc{V}}\sum_{j\in J_{+}(S^{\ast}(x))} 
	S_{j}(x).$$
	%To avoid the positive term in 
	%\eqref{eq:final-estimate-divergence-proof-conv} 
	Introducing the mapping $$V\colon\mathcal{W}\to \R_{+},\qquad V(S) = 
	D_{\KL}(S^{\ast},S)+M^{\ast} \, Q(S)$$ with $M^{\ast}>1$ as in Lemma 
	\ref{lem:Converegnce-Two-Stage}, we obtain 
	\begin{equation}\label{eq:estimate-proof-conv}
		\begin{aligned}
			V(S^{k+1})&-V(S^{k}) 
			= 
			D_{\KL}(S^{\ast},S^{k+1})-D_{\KL}(S^{\ast},S^{k})+ 
			M^{\ast}\big(Q(S^{k+1})-Q(S^{k})\big) \\
			&\overset{\substack{\eqref{eq:index-sets-proof-conv-theorem-a} \\ 
			\eqref{eq:final-estimate-divergence-proof-conv}}}{\leq} 
			\theta_k\Big( 
			J(S^{\ast})-J(S^{k})-\sum_{x\in 
				\mathcal{V}} 
			\sum_{j \in J_-(S^{\ast}(x))}\hskip -0.4cm S^{k}_j(x)\big((\Omega 
			S^{\ast})_j(x)-\langle 
			S^{\ast}(x),\Omega 
			S^{\ast}(x) \rangle\big).
		\end{aligned} 
	\end{equation}
	%By Proposition \ref{prop:Numerical_Scheme_Second_Order} and 
	By Lemma 
	\ref{lem:convergence} $J(S)$ is constant on the set of limit points of the 
	sequence $(S^{k})$ and the right-hand side of 
	\eqref{eq:estimate-proof-conv} is strictly negative unless $S^k$ is a 
	stationary point of $J(S)$. Consequently,  \eqref{eq:estimate-proof-conv} 
	is 
	\textit{strictly} negative for all $k \geq k_0$ with $\|S^k-S^{\ast} \| < 
	\varepsilon$. Consider $U_{\delta} = \{S \in 
	\ol{\mathcal{W}}\colon V(S) < \delta\}$ with $\delta$ small enough such 
	that $U_{\delta} \subset \{S \in 
	\ol{\mathcal{W}} 
	\colon \| S-S^{\ast} \| < \epsilon  \}$. Then, as $S^{\ast} \in \Lambda$ 
	is a limit point, there exists an index $K \geq k_0$ such that $S^{K} \in 
	U_{\delta}$ and $(S^{k})_{k\geq K}\subset U_{\delta}$ due to $V(S^{K+1}) < 
	V(S^K) < \delta$ by \eqref{eq:estimate-proof-conv}. Therefore, for $k\geq 
	K$ we 
	conclude
	\begin{equation}
		0 \leq D_{\KL}(S^{\ast},S^k) \leq V(S^k) \to 0 \quad \text{ for } \quad 
		k \to \infty, 
	\end{equation}  
	which shows $S^k \to S^{\ast}$.
\end{proof} 
%

%%
%\label{sec:proof-convergence-b}

\begin{proof}[Proof of Theorem \ref{thm:Existance-Epsilon}]
	For $\varepsilon > 0$ let $k \in \N$ be such that $S^k \in 
	B_{\varepsilon}(S^{\ast})$. Then, with $S^{k+\frac{1}{2}},S^{k+1} \in 
	\mathcal{W}$ given by 
	\eqref{eq:Second_Order_Scheme} and taking into account assumption 
	\eqref{eq:Assumpotion_Convergence}, we have for any $x \in \mathcal{V}$ 
	with $S^{\ast}(x)=e_{j^{\ast}(x)}$
	\begin{subequations}\label{eq:key_estimate}\allowdisplaybreaks
		\begin{align}
			\|S^{k+1}(x)-S^{\ast}(x)\|_1 
			&\overset{\phantom{\eqref{eq:Second_Order_Scheme}}}{=} 
			\sum_{j\in[c]\setminus j^{\ast}(x)} S^{k+1}_{j}(x) + 
			1-S^{k+1}_{j^{\ast}(x)}(x)
			\\
			&\overset{\phantom{\eqref{eq:Second_Order_Scheme}}}{=}  2 - 2 
			S^{k+1}_{j^{\ast}(x)}(x) \\
			&\overset{\eqref{eq:Second_Order_Scheme}}{=}2-2 
			\frac{S^{k}_{j^{\ast}(x)}(x)e^{\theta_{k}(\Omega 
					S^k)_{j^{\ast}(x)}(x)+\frac{\theta_{k}h_k}{2}(\Omega
					R_{S^k}(\Omega S^k))_{j^{\ast}(x)}(x)}}{\langle 
				S^{k}(x),e^{\theta_k(\Omega 
				S^k)(x)+\frac{\theta_{k}h_k}{2}\Omega 
					R_{S^k}(\Omega S^k)(x)} \rangle}\\
			&\overset{\phantom{\eqref{eq:Second_Order_Scheme}}}{=}  2 - 
			\frac{2 S^{k}_{j^{\ast}(x)}(x)}{S^{k}_{j^{\ast}(x)}(x)+\sum
				\limits_{j 
					\neq j^{\ast}(x)}S_j^{k}(x)e^{-\theta_k H_j(x)}},
		\end{align}
	\end{subequations}
	with the shorthand 
	\begin{equation}\label{eq:Proof_Barrior_H_Func}
		H_j(x) := (\Omega 
		S^k)_{j^{\ast}(x)}(x)-(\Omega S^k)_{j}(x)+\frac{h_k}{2}\big( 
		(\Omega R_{S^k}(\Omega S^{k}))_{j^{\ast}(x)}(x)-(\Omega R_{S^k}(\Omega 
		S^{k}))_{j}(x)\big).
	\end{equation}
	We consider the first two terms of the right-hand side of 
	\eqref{eq:Proof_Barrior_H_Func}. 
	Since $S^k(x)\in 
	B_{\varepsilon}(S^{\ast})$, we have
	\begin{equation}\label{eq:epsilon-inequality-proof-lemma}
		S^k_{j^{\ast}(x)}(x) > 1-\frac{\varepsilon}{2}, \qquad S^{k}_{j}(x) 
		< 
		\frac{\varepsilon}{2} \quad \text{for all}  \quad j  \neq j^{\ast}(x)
	\end{equation}
	and get 
	\begin{subequations}\label{eq:inequality_lower_bound-lemma-proof}
		\begin{align}
			(\Omega S)_{j^{\ast}(x)}(x)-(\Omega S)_{j}(x) 
			&\overset{\eqref{eq:def-Omega-S-matrix}}{=} 
			\sum_{\mathclap{y \in 
					\mathcal{N}(x)}}\Omega(x,y)S_{j^{\ast}(x)}(y) - 
					\sum_{\mathclap{y \in 
					\mathcal{N}(x)}}\Omega(x,y)S_{j}(y)
			\\
			&\hspace{-3cm}=\sum_{\mathclap{\substack{y \in 
						\mathcal{N}(x)\\j^{\ast}(y) = 
						j^{\ast}(x)}}}\Omega(x,y)S_{j^{\ast}(x)}(y)+\sum_{\mathclap{\substack{y
						 \in \mathcal{N}(x)\\j^{\ast}(y) \neq 
						j^{\ast}(x)}}}\Omega(x,y)S_{j^{\ast}(x)}(y)- 
			\sum_{\mathclap{\substack{y \in 
						\mathcal{N}(x)\\j^{\ast}(y) = 
						j}}}\Omega(x,y)S_{j}(y)
			-\sum_{\mathclap{\substack{y \in 
						\mathcal{N}(x)\\j^{\ast}(y) \neq 
						j}}}\Omega(x,y)S_{j}(y).
		\end{align}
		Skipping the nonnegative second term and applying the 
		constraint $S_j(y) 
		< 1$ for indices $j^{\ast}(y) = j$, it follows with 
		\eqref{eq:epsilon-inequality-proof-lemma}
		\begin{align}
			(\Omega S)_{j^{\ast}(x)}(x)-(\Omega S)_{j}(x)
			&\overset{\phantom{\eqref{eq:epsilon-inequality-proof-lemma}}}{>} 
			\sum_{\mathclap{\substack{y \in 
						\mathcal{N}(x)\\j^{\ast}(y) = 
						j^{\ast}(x)}}}\Omega(x,y)S_{j^{\ast}(x)}(y) -	
			\sum_{\mathclap{\substack{y \in 
						\mathcal{N}(x)\\j^{\ast}(y) = 
						j}}}\Omega(x,y)
			-\sum_{\mathclap{\substack{y \in 
						\mathcal{N}(x)\\j^{\ast}(y) \neq 
						j}}}\Omega(x,y)S_{j}(y)\\
			&\overset{\eqref{eq:epsilon-inequality-proof-lemma}}{>}
			(1-\frac{\varepsilon}{2})\sum_{\mathclap{\substack{y \in 
						\mathcal{N}(x)\\j^{\ast}(y) = 
						j^{\ast}(x)}}}\Omega(x,y)- 
			\sum_{\mathclap{\substack{y \in 
						\mathcal{N}(x)\\j^{\ast}(y) = 
						j}}}\Omega(x,y)
			-\frac{\varepsilon}{2}\sum_{\mathclap{\substack{y \in 
						\mathcal{N}(x)\\j^{\ast}(y) \neq j}}}\Omega(x,y)
			\intertext{and after rewriting the last sum as 
			$1-\sum_{\substack{y\in\mc{N}(x) \\ j^{\ast}(x)=j}}\Omega(x,y)$ and 
			using $S^{\ast}(x)=e_{j^{\ast}(x)}$}
			&\geq(1-\frac{\varepsilon}{2})\big((\Omega
			S^{\ast})_{j^{\ast}(x)}-(\Omega 
			S^{\ast})_{j}\big)(x)-\frac{\varepsilon}{2}.
		\end{align}
	\end{subequations}
	
	\vspace{0.25cm}
	Now we consider the last two terms of the right-hand side of 
	\eqref{eq:Proof_Barrior_H_Func}, starting with the expression 
	$R_{S^{k}}(\Omega S^{k})$. 
	As $\ol{B_{\varepsilon}(S^{\ast})}$ is compact, the maximum  
	\begin{equation}\label{eq:rho-star-estimate}
		\rho^{\ast} = 
		\max \limits_{S \in \ol{B_{\varepsilon}(S^{\ast})}}\rho(S),\qquad
		\rho(S) = \max_{x \in 
			\mathcal{V}}\max_{l \in [c]\setminus j^{\ast}(x)}\big( 
		(\Omega 
		S)_{j^{\ast}(x)}-(\Omega S)_{l} \big)(x)
	\end{equation}  
	is attained. For $j\in [c]$ with 
	$(R_{S^k}(\Omega 
	S^k)\big)_j(x) < 0$, we get
	\begin{subequations}\label{eq:Replicator-bound-proof-lemma}
		\begin{align}
			\big(R_{S^k}(\Omega S^k)\big)_j(x) &= S_j^k(x)\big( (\Omega 
			S^k)_j(x)-\langle 
			S^k(x),(\Omega S^k)(x)\rangle \big) \\
			&=S_j^k(x)\Big( \sum_{l \neq j}S_{l}^k(x)\big((\Omega 
			S^k)_j(x)-(\Omega S^k)_{l}(x)\big) \Big).
		\end{align}
		Taking into account 
		\eqref{eq:def-basins-of-attraction} for $S^{k} 
		\in 
		B_{\varepsilon}(S^{\ast})$, we have $(\Omega 
		S^k)_{j^{\ast}(x)}(x)>(\Omega 
		S^k)_{l}(x)$ for all $l \in [c]\setminus {j^{\ast}(x)}$ by 
		\eqref{eq:AS-ast} and due to $R_{S^k}(\Omega S^k)_j(x) <0$, we conclude 
		$j\neq j^{\ast}(x)$ in the preceding equation. 
		Consequently, applying the second inequality in 
		\eqref{eq:epsilon-inequality-proof-lemma} further yields
		\begin{align}
			\big(R_{S^k}(\Omega S^k)\big)_j(x)
			&\overset{\eqref{eq:epsilon-inequality-proof-lemma}}{>}\frac{\varepsilon}{2}\sum_{l\neq
				j}S^k_{l}(x)\big((\Omega 
			S^k)_j-(\Omega S^k)_{l}\big)(x)\\
			&\overset{\eqref{eq:Assumpotion_Convergence}}{\geq}\frac{\varepsilon}{2}\sum_{l\neq
				j}S^k_{l}(x)\big((\Omega 
			S^k)_j-(\Omega S^k)_{j^{\ast}(x)}\big)(x)
			\\
			&\overset{\phantom{\eqref{eq:rho-star-estimate}}}{=}  
			\frac{\varepsilon}{2}(1-S_j^k(x))\big((\Omega 
			S^k)_j-(\Omega S^k)_{j^{\ast}(x)}\big)(x)
			\\
			&\overset{\eqref{eq:rho-star-estimate}}{\geq} 
			- \frac{\varepsilon}{2}\rho^{\ast}.
		\end{align}
	\end{subequations}
	In view of the last two terms of the right-hand side of 
	\eqref{eq:Proof_Barrior_H_Func}, we introduce the index sets
	\begin{equation}
		\begin{aligned}
			\mathcal{N}^j_{-}(x) := \{y \in 
			\mathcal{N}(x) \colon \big(R_{S}(\Omega S)\big)_j (y) < 
			\big(R_{S}(\Omega S)\big)_{j^{\ast}(x)}(y) \}, \\  
			\mathcal{N}^j_{+}(x) 
			:= \{y \in 
			\mathcal{N}(x) \colon \big(R_{S}(\Omega S)\big)_j(y) > 
			\big(R_{S}(\Omega S)\big)_{j^{\ast}(x)}(y) \},
		\end{aligned}
	\end{equation}
	and estimate 
	\begin{subequations}\label{eq:proof-conv-theorem-sec-order-bound}
		\begin{align} 
			(\Omega R_{S^k}(\Omega S^{k}))_{j^{\ast}(x)}(x)-(\Omega 
			R_{S^k}(\Omega 
			S^{k}))_{j}(x)&= \hskip -0.3cm\sum_{y \in 
				\mathcal{N}(x)} \hskip -0.2cm \Omega(x,y)\big( 
			R_{S^k}(\Omega S^k)_{j^{\ast}(x)}- R_{S^k}(\Omega 	
			S^k)_{j}\big)(y)\\
			&\geq \hskip -0.3cm \sum_{y \in 
				\mathcal{N}^j_+(x)} \hskip -0.2cm\Omega(x,y)\big( 
			R_{S^k}(\Omega S^k)_{j^{\ast}(x)}- R_{S^k}(\Omega 
			S^k)_{j}\big)(y).
		\end{align}
	\end{subequations}
	Regarding the term $(\dotsb)$ in round brackets, using  $\eins^{\T} 
	R_{S^{k}} = 0^{\T}$ and consequently $\sum_{l \in [c]}(R_{S^k}(\Omega 
	S^k))_l(y) = 0$ for $y \in  \mathcal{N}^j_+(x)$, it follows that 
	\begin{subequations}\label{eq:bound-replicator-proof-conv-lem}
		\begin{align} 
			R_{S^k}(\Omega S^k)_{j^{\ast}(x)}(y)- 
			R_{S^k}(\Omega 
			S^k)_{j}(y) 
			&\overset{\phantom{\eqref{eq:Replicator-bound-proof-lemma}}}{=} 
			2(R_{S^k}(\Omega 
			S^k))_{j^{\ast}(x)}(y)+\sum_{\mathclap{\substack{l\in 
						[c]\\l 
						\notin  \{j^{\ast}(x),j\}}}}(R_{S^k}(\Omega 
						S^k))_{l}(y) \\
			&\overset{\phantom{\eqref{eq:Replicator-bound-proof-lemma}}}{\geq} 
			2c 
			\min_{l\in [c]\setminus j^{\ast}(y)}(R_{S^k}(\Omega 
			S^k))_{l}(y)\\
			&\overset{\eqref{eq:Replicator-bound-proof-lemma}}{>}-\varepsilon c 
			\rho^{\ast}.
		\end{align}
	\end{subequations}
	Consequently, applying \eqref{eq:bound-replicator-proof-conv-lem} and 
	$\Omega(x,y)\leq 1$, inequality 
	\eqref{eq:proof-conv-theorem-sec-order-bound} becomes
	\begin{equation}
		\Big( 
		\big(\Omega R_{S^k}(\Omega S^{k})\big)_{j^{\ast}(x)}-\big(\Omega 
		R_{S^k}(\Omega 
		S^{k})\big)_{j}\Big)(x) > -\varepsilon |\mathcal{N}(x)| c
		\rho^{\ast}.
	\end{equation}
	
	\vspace{0.25cm}
	\noindent
	Substituting this estimate and 
	\eqref{eq:inequality_lower_bound-lemma-proof} into  
	\eqref{eq:Proof_Barrior_H_Func} yields for any $x \in \mathcal{V}$ and $j 
	\in [c]\setminus\{ j^{\ast}(x)\}$
	\begin{equation}
		H_j(x) \geq (1-\frac{\varepsilon}{2})((\Omega
		S^{\ast})_{j^{\ast}(x)}-(\Omega 
		S^{\ast})_{j})(x)-\frac{\varepsilon}{2} - 
		\frac{\overline{h}c}{2}\varepsilon 
		|\mathcal{N}(x)|\rho^{\ast}, \quad \overline{h} = 
		\max_{k\geq 
			k_0}h_k.
	\end{equation}
	
	\vspace{0.25cm}
	\noindent
	Thus, returning to \eqref{eq:key_estimate}, we finally obtain for all 
	$\varepsilon$ satisfying \eqref{eq:Lemma_Epsilon_Upper_bound} and 
	using 
	\begin{equation}\label{eq:proof-H-ast}
		H^{\ast}(x) := \min \limits_{j \neq 
			j^{\ast}(x)} H_j(x) > 
		0
	\end{equation}
	the bound
	\begin{subequations}\allowdisplaybreaks
		\begin{align}
			\|S^{k+1}(x)-S^{\ast}(x)\|_1
			&\leq 2 - 
			\frac{2 S^{k}_{j^{\ast}(x)}(x)}{S^{k}_{j^{\ast}(x)}(x)+\sum
				\limits_{j 
					\neq j^{\ast}(x)}S_j^{k}(x)e^{-\theta_k H^{\ast}(x)}} 
			\\
			&=
			\frac{2\big( 1-S^k_{j^{\ast}(x)}(x)\big)e^{-\theta_k 
					H^{\ast}(x)}}{S^{k}_{j^{\ast}(x)}(x)+\big( 
				1-S^k_{j^{\ast}(x)}(x)\big)e^{-\theta_k H^{\ast}(x)}} \\
			&\overset{S^{k}_{j^{\ast}(x)}(x)=e_{j^{\ast}(x)}}{=} 
			\|S^k(x)-S^{\ast}\|_1	\underbrace{\frac{e^{-\theta_k 
						H^{\ast}(x)}}{S^{k}_{j^{\ast}(x)}(x)+\big( 
					1-S^k_{j^{\ast}(x)}(x)\big)e^{-\theta_k 
					H^{\ast}(x)}}}_{\eqqcolon 
				\xi(x)< 1, \text{ if } H^{\ast}(x) > 0.}
			\\
			&=: \|S^k(x)-S^{\ast}\|_1 \cdot \xi(x) 
		\end{align}
	\end{subequations} 
	with $\xi(x)<1$, since $H^{\ast}(x)>0$ by \eqref{eq:proof-H-ast}. Induction 
	over $k>k_{0}$ yields
	\begin{align}
		\|S^{k+1}(x)-S^{\ast}(x)\|_1 < 
		\xi^{k-k_0}(x)\|S^{k_0}(x)-S^{\ast}(x)\|_1 
	\end{align}
	which proves \eqref{eq:Lemma_Inequality_Convergence}.
\end{proof}

\end{document}